\DeclareSymbolFontAlphabet{\mathbb}{AMSb}
\DeclareSymbolFontAlphabet{\mathbbl}{bbold}
\newcommand{\prism}{{\mathlarger{\mathbbl{\Delta}}}}
\newcommand{\BA}{{\mathbb{A}}}
\newcommand{\BG}{{\mathbb{G}}}
\newcommand{\Id}{{\mathop{\operatorname{\rm Id}}}}
\DeclareMathOperator{\bbig}{{big}}
\DeclareMathOperator{\Aff}{{Aff}}
\DeclareMathOperator{\alg}{{alg}}
\DeclareMathOperator{\com}{{com}}
\DeclareMathOperator{\pre}{{pre}}
\DeclareMathOperator{\Div}{{Div}}
\DeclareMathOperator{\Int}{{Int}}
\DeclareMathOperator{\Ob}{{Ob}}
\newcommand{\HHom}{\underline{\on{Hom}}}
\newcommand{\MMor}{\underline{\on{Mor}}}
\newcommand{\cA}{{\mathcal A}}
\newcommand{\cI}{{\mathcal I}}
\newcommand{\cK}{{\mathcal K}}
\newcommand{\cM}{{\mathcal M}}
\newcommand{\cO}{{\mathcal O}}
\newcommand{\sD}{{\mathscr D}}
\newcommand{\sF}{{\mathscr F}}
\newcommand{\sL}{{\mathscr L}}
\newcommand{\sM}{{\mathscr M}}
\newcommand{\sN}{{\mathscr N}}
\newcommand{\sR}{{\mathscr R}}
\newcommand{\sX}{{\mathscr X}}
\newcommand{\sY}{{\mathscr Y}}
\newcommand{\fD}{{\mathfrak D}}
\newcommand{\fF}{{\mathfrak F}}
\newcommand{\nc}{\newcommand}
\nc\wh{\widehat}
\nc\on{\operatorname}
\nc\Gr{\on{Gr}}
\nc\Fl{\on{Fl}}
\newtheorem{cor}[subsubsection]{Corollary}
\newtheorem{lem}[subsubsection]{Lemma}
\newtheorem{prop}[subsubsection]{Proposition}
\newtheorem{conj}[subsubsection]{Conjecture}
\newtheorem{thm}[subsubsection]{Theorem}
\newtheorem{quest}[subsubsection]{Question}
\theoremstyle{remark}
\newtheorem{rem}[subsubsection]{Remark}
\newcommand{\BF}{{\mathbb{F}}}
\newcommand{\BM}{{\mathbb{M}}}
\newcommand{\BN}{{\mathbb{N}}}
\newcommand{\BQ}{{\mathbb{Q}}}
\newcommand{\BZ}{{\mathbb{Z}}}
\DeclareMathOperator{\Lie}{{Lie}}
\DeclareMathOperator{\red}{{red}} \DeclareMathOperator{\Spf}{{Spf}}
\DeclareMathOperator{\Cone}{{Cone}}
\newcommand{\limto}{{\displaystyle\lim_{\longrightarrow}}}
\newcommand{\rightlim}{\mathop{\limto}}
\newcommand{\leftlim}{\mathop{\displaystyle\lim_{\longleftarrow}}}
\newcommand{\limfromn}{\leftlim\limits_{\raise3pt\hbox{$n$}}}
\newcommand{\limton}{\rightlim\limits_{\raise3pt\hbox{$n$}}}
\newcommand{\rightlimit}[1]{\mathop{\lim\limits_{\longrightarrow}}\limits%
                    _{\raise3pt\hbox{$\scriptstyle #1$}}}
\newcommand{\leftlimit}[1]{\mathop{\lim\limits_{\longleftarrow}}\limits%
                    _{\raise3pt\hbox{$\scriptstyle #1$}}}
\newcommand{\epi}{\twoheadrightarrow}
\newcommand{\iso}{\buildrel{\sim}\over{\longrightarrow}}
\newcommand{\mono}{\hookrightarrow}
\DeclareMathOperator{\prim}{{prim}}
\DeclareMathOperator{\Alg}{{Alg}}
\DeclareMathOperator{\Aut}{{Aut}}
\DeclareMathOperator{\dR}{{dR}}
\DeclareMathOperator{\Distr}{{Distr}}
\DeclareMathOperator{\HT}{{HT}}
\DeclareMathOperator{\End}{{End}}
\DeclareMathOperator{\gr}{{gr}} 
\DeclareMathOperator{\Hom}{{Hom}} \DeclareMathOperator{\Ind}{{Ind}}
\DeclareMathOperator{\inj}{{inj}} 
\DeclareMathOperator{\nilp}{{nilp}}
\DeclareMathOperator{\Ker}{{Ker}} \DeclareMathOperator{\id}{{id}}
\DeclareMathOperator{\im}{{Im}}
\DeclareMathOperator{\Mor}{{Mor}}
\DeclareMathOperator{\Polyd}{{Polyd}}
\DeclareMathOperator{\Spec}{{Spec}}
\DeclareMathOperator{\WCart}{{WCart}}
\theoremstyle{definition}
\numberwithin{equation}{section}
\newcommand{\Fr}{\operatorname{Fr}}
\newcommand{\Fun}{\operatorname{Fun}}
\begin{document}
\title[A 1-dimensional formal group]{A 1-dimensional formal group over the prismatization of $\Spf\BZ_p$}
\author{Vladimir Drinfeld}
\address{University of Chicago, Department of Mathematics, Chicago, IL 60637}

\begin{abstract}
Let $\Sigma$ denote the prismatization of $\Spf\BZ_p$. The multiplicative group over $\Sigma$ maps to the prismatization of $\BG_m\times\Spf\BZ_p$. We prove that the kernel of this map is the Cartier dual of some $1$-dimensional formal group over $\Sigma$. We obtain some results about this formal group (e.g., we describe its Lie algebra).
We give a very explicit description of the pullback of the formal group to the quotient stack $Q/\BZ_p^\times$, where $Q$ is the $q$-de Rham prism.
\end{abstract}

\keywords{Prismatic cohomology, prismatization, $q$-de Rham prism, formal group, Breuil-Kisin twist}
\subjclass[2010]{14F30}

\maketitle

\tableofcontents

\section{Introduction}
Let $p$ be a prime. 

\subsection{Subject of this article}
In their remarkable work   \cite{BS} B.~Bhatt and P.~Scholze introduced the theory of \emph{prismatic cohomology} of $p$-adic formal schemes.
B.~Bhatt and J.~Lurie realized that the theory of \cite{BS} has a stacky reformulation; it is based on a certain \emph{prismatization functor,} which we denote\footnote{Bhatt and Lurie \cite{BL,BL2} write $\WCart_X$ instead of $X^\prism$ and $\WCart$ instead of $\Sigma:=(\Spf\BZ_p)^\prism$.} by 
$X\mapsto X^\prism$. This is a functor from the category of bounded $p$-adic formal schemes to that of stacks\footnote{Bhatt and Lurie also define a derived version of the prismatization functor. The difference between derived and non-derived prismatization is irrelevant for our article.}.

Following \cite{Prismatization}, we write $\Sigma:=(\Spf\BZ_p)^\prism$. 
The stack $\Sigma$ plays a fundamental role in the theory of prismatic cohomology. 

In general, there is no canonical map $X\times\Sigma\to X^\prism$. However, such a map exists if $X=\BG_m\times\Spf\BZ_p$. Moreover, this map is a faithfully flat group homomorphism (more precisely, a homomorphism from a commutative group scheme over $\Sigma$ to a Picard stack over~$\Sigma$). Let $G_\Sigma$ be its kernel; it is a flat affine commutative group scheme over $\Sigma$.

Our first main result (Theorem~\ref{t:dual to formal group}) says that $G_\Sigma$ is the Cartier dual of some 1-dimensional formal group over~$\Sigma$, which we denote
by $H_\Sigma$. Then $\Lie (H_\Sigma)=\HHom (G_\Sigma ,(\BG_a)_\Sigma )$ is a line bundle on $\Sigma$. It turns out to be inverse to the Breuil-Kisin-Tate module
$\cO_\Sigma \{ 1\}$ (see Theorem~\ref{t:2Hom (G_Sigma ,G_a)}). The corresponding homomorphism $G_\Sigma\to\cO_\Sigma  \{ 1\}$ 
is explicitly constructed in \cite{BL,BL2} and called the \emph{prismatic logarithm}; it is used in \cite{BL} to define the \emph{prismatic first Chern class}.

We obtain some results about the formal group $H_\Sigma$ (see \S\ref{ss:Results on H_Sigma}), but we are unable to describe it explicitly. However, in \S\ref{ss:H_Q}-\ref{ss:Z_p^times-action on H_Q} we give a very explicit description of the pullback of $H_\Sigma$ to the quotient stack $Q/\BZ_p^\times$, where $Q$ is the $q$-de Rham prism.

The author's study of $G_\Sigma$ and $H_\Sigma$ was motivated by the desire to understand certain aspects of \cite{BS} and \cite{BL,BL2}
(see Remark~\ref{r:why we care about G_Sigma , H_Sigma} and Appendix~\ref{s:prismatic cohomology of G_m} for more details). On the other hand, $H_\Sigma$ could be interesting from the topologist's point of view.

Let us note that the group scheme $G_\Sigma$ is also introduced in \cite{BL2} (under the name of~$G_{\WCart}$).

\subsection{Organization}
The main results are formulated in \S\ref{s:main results}. We also formulate there a question about $G_\Sigma\,$ and a conjecture about $H_\Sigma\,$ (see \S\ref{ss:dualizing the deformation of H_Sigma to its Lie algebra} and Conjecture~\ref{c:H_Sigma^alg}).

In \S\ref{s:formal groups} we discuss some general results and constructions related to formal groups.
In \S\ref{s:Proofs} we prove the results formulated in~\S\ref{s:main results}. 

In \S\ref{s:realizations of G_Q} we describe and compare several ``realizations'' of the group scheme $G_Q:=G_\Sigma\times_\Sigma Q$; the first one immediately follows from the definition of $G_Q$, and the others come from the description of its Cartier dual. 
A key role is played by the expressions $(1+(q-1)z)^{\frac{t}{q-1}}$ and $q^{\frac{pt}{q-1}}$; the second expression  is closely related to the \emph{$q$-logarithm} in the sense of \cite[\S 4]{ALB}.

In Appendix~\ref{s:prismatic cohomology of G_m} we explain how to compute the prismatic cohomology of the punctured affine line over $\Spf\BZ_p$ using some results formulated 
in \S\ref{s:main results}.

In Appendix~\ref{s:Cartier dual of G_m^sharp} we discuss the Cartier dual of the divided powers version of $\BG_m\,$. As explained in \S\ref{sss:A way to think about G_dR}, 
the end of Appendix~\ref{s:Cartier dual of G_m^sharp} is related to \S\ref{s:Proofs}. Appendix~\ref{s:Cartier dual of G_m^sharp} is closely related to the material from \cite{BL} about the ``Sen operator''.

In Appendices~\ref{s:dual of hat G_m} and \ref{s:rescaled G_m and its Cartier dual} we describe the Cartier dual of $\hat\BG_m$ and of  its ``rescaled'' version. 
This material is used in \S\ref{s:realizations of G_Q}. As noted by the reviewer, a substantial part of Appendices ~\ref{s:dual of hat G_m} and \ref{s:rescaled G_m and its Cartier dual} is contained in \cite{MRT}.

\subsection{Acknowledgements}
I thank Alexander Beilinson,  Bhargav Bhatt,  Lance Gurney, and Jacob Lurie for valuable discussions.

The author's work on this project was partially supported by NSF grant DMS-2001425.

\section{Formulations of the main results}  \label{s:main results}
We fix a prime $p$. Let $W$ denote the scheme of $p$-typical Witt vectors; this is a ring scheme over $\BZ$.

\subsection{Some conventions}
A ring in which $p$ is nilpotent is said to be \emph{$p$-nilpotent.}
A scheme $S$ is said to be \emph{$p$-nilpotent} if $p\in H^0(S,\cO_S)$ is locally nilpotent. 

Unless specified otherwise, the word ``stack'' will mean a stack of groupoids on the category of schemes equipped with the fpqc topology.

Schemes and formal schemes are particular classes of stacks. E.g., $\Spf\BZ_p$ is the functor that associates to a scheme $S$ the set with one element if $S$ is $p$-nilpotent and the empty set otherwise.

For us, $\BA^1:=\Spec\BZ [x]$. Given a stack $\sX$, we write $\BA^1_\sX:=\BA^1\times\sX$. E.g., $\BA^1_{\Spf\BZ_p}$ is the $\Spf$ of the $p$-adic completion of $\BZ_p [x]$.

Similarly, $\BG_a$, $\BG_m$, $W$ are group schemes over $\BZ$, from which $(\BG_a)_\sX$, $(\BG_m )_\sX$, $W_\sX$ are obtained by base change to $\sX$.

\subsection{$\delta$-schemes and $\delta$-stacks}
\subsubsection{Definitions} \label{sss:def of delta-stack}
A \emph{Frobenius lift} for a stack $\sX$ is a morphism $F:\sX\to\sX$ equipped with a 2-isomorphism between 
the endomorphism of $\sX\otimes\BF_p$ induced by $F$ and the Frobenius endomorphism of $\sX\otimes\BF_p$. A \emph{$\delta$-stack} is a stack $\sX$ equipped with a Frobenius lift.

We say ``$\delta$-structure'' instead of  ``$\delta$-stack structure''. We say ``$\delta$-morphism'' instead of  ``morphism of $\delta$-stacks''. 

A $\delta$-stack which is a scheme (resp.~formal scheme) is called a \emph{$\delta$-scheme} (resp.~\emph{formal $\delta$-scheme}).

\subsubsection{Comparison with $\delta$-rings}  \label{sss:caveat}
According to \cite[Def.~2.1]{BS}, a $\delta$-ring is a ring $A$ equipped with a map $\delta :A\to A$ satisfying certain identities. These identities ensure that the map $\phi :A\to A$ given by $\phi (a)=a^p+p\delta (a)$ is a ring homomorphism (and therefore a Frobenius lift). If $A$ is $p$-torsion-free then a $\delta$-ring structure on $A$ is \emph{the same} as a Frobenius lift for $A$ or equivalently, a $\delta$-structure on $\Spec A$ in the sense of \S\ref{sss:def of delta-stack}. If $A$ is not $p$-torsion-free then the two notions are different, so the definitions of \S\ref{sss:def of delta-stack} are not so good. However, they are convenient enough for this article (because the rings that appear in it are $p$-torsion-free).

\subsubsection{Group $\delta$-schemes and ring $\delta$-schemes}     \label{sss:group delta-scheme}
By a \emph{group $\delta$-scheme} over a $\delta$-stack $\sX$ we mean a group object in the category of $\delta$-stacks equipped with a schematic\footnote{A morphism of stacks $\sY\to\sX$ is said to be \emph{schematic} if $\sY\times_{\sX}S$ is a scheme for any scheme $S$ equipped with a morphism to $s\to\sX$.} $\delta$-morphism to $\sX$. The definition of 
 \emph{ring $\delta$-scheme}  is similar. 
 
 \subsubsection{Examples}  \label{sss:examples of group delta-schemes}
 (i) The endomorphism $F:\BG_m\to\BG_m$ defined by $F(x)=x^p$ makes $\BG_m$ into a group $\delta$-scheme over $\BZ$.
 
 (ii) The \emph{Witt vector Frobenius} $F:W\to W$ makes $W$ into a ring $\delta$-scheme over $\BZ$.

\subsection{The formal $\delta$-scheme $W_{\prim}$}   \label{ss:W_prim}
Let us recall the material from \cite[\S 4.1]{Prismatization}. The same material is contained in \cite{BL}, but the notation in \cite{BL} is different: our $W_{\prim}$ is denoted there by~$\WCart_0$. 

\subsubsection{A locally closed subscheme of $W$}   \label{sss:A locally closed subscheme}
Let $A\subset W\otimes\BF_p$ be the locally closed subscheme obtained by removing $\Ker (W\epi W_2)\otimes\BF_p$ from $\Ker (W\epi W_1)\otimes\BF_p$.
In terms of the usual coordinates $x_0, x_1,\ldots$ on the scheme $W$, the subscheme $A\subset W$ is defined by the equations $p=x_0=0$ and the inequality $x_1\ne 0$.

\subsubsection{Definition of $W_{\prim}$}    \label{sss:W_prim}
Define $W_{\prim}$ to be the formal completion of $W$ along the locally closed subscheme $A$ from \S\ref{sss:A locally closed subscheme}.
In other words, for any scheme $S$, an $S$-point of $W_{\prim}$ is a morphism $S\to W$ which maps $S_{\red}$ to $A$. 
If $S$ is $p$-nilpotent and if we think of a morphism $S\to W$ as a sequence of functions $x_n\in H^0(S,\cO_S )$ then the condition is that $x_0$ is locally nilpotent and $x_1$ is invertible. If $S$ is not $p$-nilpotent then $W_{\prim} (S)=\emptyset$. 

$W_{\prim}$ is a formal affine $\delta$-scheme (the $\delta$-structure is induced by the one on $W$, see \S\ref{sss:examples of group delta-schemes}). In terms of the usual coordinates $x_0, x_1,\ldots$ on $W$, the  coordinate ring of $W_{\prim}$ is the completion of $\BZ_p[x_0,x_1,\ldots ][x_1^{-1}]$ with respect to the ideal $(p,x_0)$ or equivalently,  the $p$-adic completion of $\BZ_p [x_1,x_1^{-1},x_2,x_3,\ldots][[x_0]]$.

\subsection{The $\delta$-stack $\Sigma$}   \label{ss:def of Sigma}
Let us recall the material from \cite[\S 4.2]{Prismatization}. The same material is contained in \cite{BL}, but the notation in \cite{BL} is different: our $\Sigma$ is denoted there 
by~$\WCart$ and called the \emph{Cartier-Witt stack}.

\subsubsection{Action of $W^\times$ on $W_{\prim}$}   \label{sss:Action of W^times on W_prim} 
The morphism
\begin{equation}   \label{e:action by division}
W^\times\times W_{\prim}\to W_{\prim}, \quad (u ,\xi)\mapsto u^{-1}\xi
\end{equation}
defines an action of  $W^\times$ on $W_{\prim}$ (``action by division''). The reason why we prefer it to the action by multiplication is explained in \cite[\S 4.2.6]{Prismatization}.   The difference between the two actions is irrelevant for most purposes. Note that $W^\times$ is a $\delta$-scheme, $W_{\prim}$ is a formal $\delta$-scheme, and \eqref{e:action by division} is a $\delta$-morphism.

\subsubsection{$\Sigma$ as a quotient stack}   \label{sss:Sigma as quotient}
The $\delta$-stack $\Sigma$ is defined as follows:
\begin{equation}   \label{e:Sigma as quotient}
\Sigma:=W_{\prim}/W^\times . 
\end{equation}
In other words, $\Sigma$ is the fpqc-sheafification of the presheaf of groupoids $$R\mapsto~W_{\prim} (R)/W(R)^\times.$$
It is also the Zariski sheafification of this presheaf (see \cite{BL} or \cite[\S 4.2.2]{Prismatization}).

\subsubsection{The $S$-points of $\Sigma$} 
Instead of using the definition from \S\ref{sss:Sigma as quotient}, one can use a direct description of the groupoid of $S$-points of $\Sigma$, where $S$ is any scheme (see \cite{BL} or \cite[\S 4.2.2]{Prismatization}).

\subsection{The group $\delta$-scheme $G'_\Sigma$ over $\Sigma$}   \label{ss:G'_Sigma}
\subsubsection{The group scheme $G'_{W_{\prim}}$}   \label{sss:G'_W_prim}
We are going to define a flat affine commutative group $\delta$-scheme $G'_{W_{\prim}}$ over $W_{\prim}$ equipped with a homomorphism 
\begin{equation}  \label{e:G' to W^times}
G'_{W_{\prim}}\to W^\times_{W_{\prim}}:=W_{\prim}\times W^\times
\end{equation}
of group $\delta$-schemes over $W_{\prim}$.

As a formal $\delta$-scheme, $G'_{W_{\prim}}:=W_{\prim}\times W$. The map
\[
G'_{W_{\prim}}\times_{W_{\prim}}G'_{W_{\prim}}\to G'_{W_{\prim}}, \quad (\xi ,x_1,x_2)\mapsto (\xi ,x_1+x_2+\xi x_1x_2)
\]
is a group operation (to check this, use that $\xi$ is topologically nilpotent).

The homomorphism \eqref{e:G' to W^times} is given by
\[
(\xi ,x)\mapsto (\xi, 1+\xi x).
\]

\subsubsection{The group scheme $G'_{\Sigma}$ over $\Sigma$}   \label{sss:G'_Sigma}
Recall that $\Sigma =W_{\prim}/W^\times$. The $\delta$-morphism
\[
W^\times\times (W_{\prim}\times W)\to W_{\prim}\times W; \quad (u ,\xi, x)\mapsto (u^{-1}\xi ,ux)
\]
defines an action of $W^\times$ on $G'_{W_{\prim}}:=W_{\prim}\times W$, which lifts the action \eqref{e:action by division} on $W_{\prim}$ and 
preserves the group structure on $G'_{W_{\prim}}$ and the map \eqref{e:G' to W^times}. 

So $G'_{W_{\prim}}$ descends to a commutative group $\delta$-scheme $G'_{\Sigma}$  over $\Sigma$ equipped with a $\delta$-homomorphism
\begin{equation}  \label{e: 2G' to W^times}
G'_{\Sigma}\to W^\times_\Sigma:=W^\times\times\Sigma .
\end{equation}
$G'_{\Sigma}$ is affine and flat over $\Sigma$ because $G'_{W_{\prim}}$ is affine and flat over $W_{\prim}\,$.

\subsubsection{Relation to the prismatization of $\BG_m$} \label{sss:BG_m^prism}
The Bhatt-Lurie approach to prismatic cohomology is based on the \emph{prismatization functor} $X\mapsto X^\prism$ from the category of $p$-adic formal schemes\footnote{A \emph{$p$-adic formal scheme} is a stack $X$ equipped with a schematic morphism $X\to\Spf\BZ_p\,$.} to that of
$\delta$-stacks algebraic over $\Sigma$, see \cite[\S 1.4]{Prismatization}. If $X$ is a scheme over $\BZ$ we set $X^\prism:=(X\hat\otimes\BZ_p)^\prism$, where $X\hat\otimes\BZ_p$ is the $p$-adic completion of $X$. 

In particular, one can apply the prismatization functor to $\BG_m=\BA^1\setminus \{ 0\}$. It is easy to check that $\BG_m^\prism$ has a natural structure of strictly commutative Picard stack over $\Sigma$, and one has a canonical isomorphism of of strictly commutative Picard stacks
\begin{equation}  \label{e:BG_m^prism as Cone}
\BG_m^\prism\iso\Cone (G'_{\Sigma}\to W^\times_\Sigma  ),
\end{equation}
where the meaning of ``Cone'' is explained in \cite[\S 1.3.1-1.3.2]{Prismatization}; moreover, the isomorphism~\eqref{e:BG_m^prism as Cone} is compatible with the $\delta$-structures. We skip the details because the isomorphism~\eqref{e:BG_m^prism as Cone} will be used only to \emph{motivate} the study of $G'_{\Sigma}$ and its subgroup $G_\Sigma$ introduced below.

\subsection{The group $\delta$-scheme $G_\Sigma$ over $\Sigma$}   \label{ss:G_Sigma}
\subsubsection{Teichm\"uller embedding}
We have the Teichm\"uller embedding $\BG_m\mono W^\times$ and the retraction $W^\times\epi\BG_m$ (to a Witt vector one assigns its $0$-th component). Both $\BG_m$ and 
$W^\times$ are group $\delta$-schemes over $\BZ$ (see \S\ref{sss:examples of group delta-schemes}). The Teichm\"uller embedding is a $\delta$-homomorphism. The retraction $W^\times\to\BG_m$ is a homomorphism but \emph{not} a $\delta$-homomorphism.

\subsubsection{Definition}
$G_\Sigma$ is the preimage of the subgroup $(\BG_m)_\Sigma\subset W^\times_\Sigma$ under the homomorphism \eqref{e: 2G' to W^times}. Equivalently,  $G_\Sigma$ is the kernel of the homomorphism
\begin{equation}  \label{e: G' to W^times/G_m}
G'_{\Sigma}\to (W^\times/\BG_m)_\Sigma
\end{equation}
that comes from \eqref{e: 2G' to W^times}.

\subsubsection{Pieces of structure on $G_\Sigma$}
$G_\Sigma$ is a commutative affine group $\delta$-scheme over $\Sigma$ equipped with a $\delta$-homomorphism 
\begin{equation}  \label{e: G to G_m}
G_{\Sigma}\to (\BG_m )_\Sigma\, .
\end{equation}

\subsubsection{Notation}
For a stack $\sX$ over $\Sigma$, we write $G_{\sX}$ (resp.~$G'_{\sX}$) for the pullback of $G_{\Sigma}$ (resp.~$G'_{\Sigma}$) to $\sX$.

\subsection{Results about $G_\Sigma$}  \label{ss:Results on G_Sigma}
\begin{prop}   \label{p:G' to W^times/G_m faithfully flat}
The homomorphism \eqref{e: G' to W^times/G_m} is faithfully flat.
\end{prop}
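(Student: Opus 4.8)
The plan is to pull everything back to $W_{\prim}$, where the homomorphism becomes explicit, and then to deduce flatness from the case modulo $p$ and the $0$-th Witt coordinate.

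Since $\Sigma=W_{\prim}/W^\times$, the projection $\pi\colon W_{\prim}\to\Sigma$ is an fpqc cover, so by fpqc descent of faithful flatness it suffices to prove that the base change of \eqref{e: G' to W^times/G_m} along $\pi$ is faithfully flat. That base change is the composite
\[
G'_{W_{\prim}}=W_{\prim}\times W\ \longrightarrow\ W^\times_{W_{\prim}}\ \longrightarrow\ (W^\times/\BG_m)_{W_{\prim}},
\]
the first map being $x\mapsto 1+\xi x$, with $\xi$ the tautological point of $W_{\prim}$, and the second the quotient by the Teichm\"uller copy of $\BG_m$. The Teichm\"uller section gives $W^\times\cong\BG_m\times(1+VW)$, where $1+VW\subset W$ is the affine subscheme of Witt vectors with $0$-th component $1$ (noncanonically $\cong\BA^\infty$); thus $W^\times/\BG_m\cong 1+VW$, and the composite becomes the morphism of $W_{\prim}$-schemes carrying a Witt vector $x$ to $[1+\xi_0x_0]^{-1}(1+\xi x)$, where $\xi_0$ and $x_0$ are the $0$-th Witt components of $\xi$ and of $x$. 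Both source and target are flat over $W_{\prim}$.

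Let $I\subset\cO(W_{\prim})$ be the ideal generated by $p$ and by the $0$-th Witt coordinate, so that $\cO(W_{\prim})/I=\BF_p[x_1^{\pm1},x_2,x_3,\dots]=:k$. Modulo $I$ the tautological $\xi$ becomes a distinguished element of $W(k)$ with vanishing $0$-th component, hence equals $V(\eta)$ for a \emph{unit} $\eta\in W(k)^\times$, where $V$ is the Verschiebung; this is where the defining condition of $W_{\prim}$, invertibility of the first Witt coordinate, is used. So modulo $I$ the morphism above is $x\mapsto 1+V(\eta\,\phi(x))$, where $\phi$ is the Frobenius of $W$. Since $\phi$ acts on $W$ over an $\BF_p$-algebra by $p$-th powers on Witt components, one checks that the $n$-th component of $1+V(\eta\,\phi(x))$ is a polynomial in the $p$-th powers of the components of $x$, namely a unit multiple of the $p$-th power of the $(n-1)$-st component of $x$ plus a polynomial in the $p$-th powers of the earlier ones. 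This triangular shape with invertible leading coefficients shows that the reduction modulo $I$ of our morphism factors as the canonical morphism $\Spec k[x_0,x_1,\dots]\to\Spec k[x_0^p,x_1^p,\dots]$ (free on a monomial basis, and integral, hence faithfully flat) followed by an isomorphism of $k$-schemes; in particular it is faithfully flat.

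To conclude I would work on the finite-order thickenings of $\Spec k$ that exhaust $W_{\prim}$. On each of them the source of the morphism in question is the Spec of a polynomial ring over the base, hence free (in particular flat) over it; its reduction modulo $I$ has just been shown to be flat; and $I$ is nilpotent there. Hence the morphism is flat, by the local criterion of flatness in its relative form (equivalently, by the fibral criterion of flatness, since both source and target are flat over the base and every fibre is faithfully flat by the computation above); and since its reduction modulo $I$ is surjective, the morphism is surjective, hence faithfully flat. Passing to the colimit over the thickenings and descending along $\pi$ gives the faithful flatness of \eqref{e: G' to W^times/G_m}. The one genuinely computational point is the triangularity modulo $I$ of the Witt components of $[1+\xi_0x_0]^{-1}(1+\xi x)$; the descent and the choice of thickenings are routine.
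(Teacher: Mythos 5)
Your proof is correct and follows essentially the same route as the paper's: both arguments use flatness of source and target over the base to reduce modulo the topologically nilpotent ideal cutting out the Hodge--Tate locus, write the distinguished element there as $V(\text{unit})$ so that $1+\xi x=1+V(\text{unit}\cdot Fx)$, and conclude from faithful flatness of the Witt-vector Frobenius. The only cosmetic differences are that the paper descends to $\Delta_0$ and evaluates at the point of $\Delta_0$ given by $V(1)\in W(\BZ_p)$ (so it only kills $x_0$, not $p$), whereas you stay on the cover $W_{\prim}$, reduce modulo $(p,x_0)$, and verify the faithful flatness of $F$ modulo $p$ by hand via the triangularity and the monomial basis.
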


The proof is given in \S\ref{ss:G' to W^times/G_m faithfully flat}.

\begin{cor}   \label{c:G_Sigma is flat}
$G_\Sigma$ is flat over $\Sigma$.
\end{cor}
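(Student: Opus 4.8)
The plan is to deduce the Corollary from \propref{p:G' to W^times/G_m faithfully flat} by a single base-change argument, being careful to use the \emph{kernel} description of $G_\Sigma$ rather than its description as the preimage of $(\BG_m)_\Sigma\subset W^\times_\Sigma$. By the definition in \S\ref{ss:G_Sigma}, $G_\Sigma$ is the kernel of the homomorphism \eqref{e: G' to W^times/G_m}; hence, writing $e\colon\Sigma\to (W^\times/\BG_m)_\Sigma$ for the identity section, there is a Cartesian square
\[
\begin{array}{ccc}
G_\Sigma & \longrightarrow & G'_\Sigma\\[2pt]
\downarrow & & \downarrow\\[2pt]
\Sigma & \xrightarrow{\ e\ } & (W^\times/\BG_m)_\Sigma
\end{array}
\]
whose right vertical arrow is \eqref{e: G' to W^times/G_m} and whose lower horizontal arrow is $e$.

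First I would recall that flatness of a schematic morphism over the stack $\Sigma$ is tested after base change to schemes and is stable under arbitrary base change. Since \propref{p:G' to W^times/G_m faithfully flat} asserts that \eqref{e: G' to W^times/G_m} is faithfully flat, in particular flat, its base change $G_\Sigma\to\Sigma$ along $e$ is flat, which is exactly the assertion of the Corollary. Equivalently, one may first pull the whole picture back along the fpqc cover $W_{\prim}\to\Sigma$ and run the same Cartesian-square argument over the affine scheme $W_{\prim}$, where $G'_{W_{\prim}}=W_{\prim}\times W$.

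There is no serious obstacle; the only point requiring attention is the choice of fibre-product presentation. Presenting $G_\Sigma$ as $G'_\Sigma\times_{W^\times_\Sigma}(\BG_m)_\Sigma$ would not suffice, because the Teichm\"uller embedding $(\BG_m)_\Sigma\hookrightarrow W^\times_\Sigma$ is a closed immersion, not a flat morphism, so base change along it need not preserve flatness. Using instead the presentation of $G_\Sigma$ as the base change of the flat morphism \eqref{e: G' to W^times/G_m} along the identity section is what makes the argument go through, and no input beyond \propref{p:G' to W^times/G_m faithfully flat} is needed (the ``faithfully'' part of that proposition is used elsewhere, e.g. for the short exact sequence / cone description, but not here).
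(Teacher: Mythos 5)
Your argument is correct and is exactly the paper's proof: the Corollary is deduced from Proposition~\ref{p:G' to W^times/G_m faithfully flat} by viewing $G_\Sigma$ as the kernel of \eqref{e: G' to W^times/G_m}, i.e.\ as the base change of that flat morphism along the identity section. Your additional remark about why the preimage-of-$(\BG_m)_\Sigma$ presentation would not suffice is a sensible clarification but does not change the substance.
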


\begin{proof}
Follows from Proposition~\ref{p:G' to W^times/G_m faithfully flat} because $G_\Sigma$ is the kernel of \eqref{e: G' to W^times/G_m}.
\end{proof}

Combining Proposition~\ref{p:G' to W^times/G_m faithfully flat} with \eqref{e:BG_m^prism as Cone}, one gets the following

\begin{cor}   \label{c:2 BG_m^prism as Cone}
One has a canonical isomorphism of strictly commutative Picard stacks
\begin{equation}  \label{e:2 BG_m^prism as Cone}
\BG_m^\prism\iso\Cone (G_{\Sigma}\to (\BG_m)_\Sigma  ).
\end{equation}
compatible with the $\delta$-structures. \qed
\end{cor}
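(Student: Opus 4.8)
\emph{Proof proposal.} The plan is to deduce \eqref{e:2 BG_m^prism as Cone} from \eqref{e:BG_m^prism as Cone} by showing that passing from $W^\times_\Sigma$ to its subgroup $(\BG_m)_\Sigma$ does not change the cone, and that this is exactly what Proposition~\ref{p:G' to W^times/G_m faithfully flat} guarantees. First I would record the commutative square of group $\delta$-schemes over $\Sigma$ whose vertical arrows are \eqref{e: G to G_m} and \eqref{e: 2G' to W^times} and whose horizontal arrows are the inclusions $G_\Sigma\hookrightarrow G'_\Sigma$ and $(\BG_m)_\Sigma\hookrightarrow W^\times_\Sigma$. By the very definition of $G_\Sigma$ as the preimage of $(\BG_m)_\Sigma$ under \eqref{e: 2G' to W^times}, this square is Cartesian, and the composite $G'_\Sigma\to W^\times_\Sigma\to (W^\times/\BG_m)_\Sigma$ is the map \eqref{e: G' to W^times/G_m}, whose kernel is $G_\Sigma$. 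By \cite[\S 1.3.1-1.3.2]{Prismatization}, both sides of \eqref{e:2 BG_m^prism as Cone} and \eqref{e:BG_m^prism as Cone} are the strictly commutative Picard stacks over $\Sigma$ attached to the two-term complexes $[G_\Sigma\to(\BG_m)_\Sigma]$ and $[G'_\Sigma\to W^\times_\Sigma]$ of fpqc sheaves of abelian groups (placed in degrees $[-1,0]$), and the inclusion of the first square column into the second induces a morphism of such complexes; so it suffices to check that this morphism is a quasi-isomorphism.

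To see this, set $Q:=(W^\times/\BG_m)_\Sigma$. By Proposition~\ref{p:G' to W^times/G_m faithfully flat} the map \eqref{e: G' to W^times/G_m} is faithfully flat, hence an epimorphism of fpqc sheaves with kernel $G_\Sigma$; and $W^\times\to W^\times/\BG_m$ is an fpqc epimorphism with kernel $\BG_m$. Using that $G'_\Sigma\to W^\times_\Sigma\to Q$ equals \eqref{e: G' to W^times/G_m}, these two short exact sequences assemble into a termwise-exact short exact sequence of two-term complexes of fpqc sheaves over $\Sigma$
\[
0\to[G_\Sigma\to(\BG_m)_\Sigma]\to[G'_\Sigma\to W^\times_\Sigma]\to[Q\xrightarrow{\ \id\ }Q]\to 0 ,
\]
in which the third complex is acyclic. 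The associated long exact sequence of cohomology sheaves then gives $H^i[G_\Sigma\to(\BG_m)_\Sigma]\iso H^i[G'_\Sigma\to W^\times_\Sigma]$ for all $i$, so the morphism of complexes is a quasi-isomorphism. Passing to the associated Picard stacks, this yields a canonical isomorphism $\Cone(G_\Sigma\to(\BG_m)_\Sigma)\iso\Cone(G'_\Sigma\to W^\times_\Sigma)$ of strictly commutative Picard stacks over $\Sigma$, and composing it with \eqref{e:BG_m^prism as Cone} gives the isomorphism \eqref{e:2 BG_m^prism as Cone}.

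For the compatibility with $\delta$-structures: every morphism entering the construction — the maps \eqref{e: G to G_m}, \eqref{e: 2G' to W^times}, \eqref{e: G' to W^times/G_m}, the Teichm\"uller inclusion $(\BG_m)_\Sigma\hookrightarrow W^\times_\Sigma$, the inclusion $G_\Sigma\hookrightarrow G'_\Sigma$, and the quotient $W^\times\to W^\times/\BG_m$ — is a $\delta$-morphism, so the short exact sequence of complexes above is one of complexes of $\delta$-sheaves over the $\delta$-stack $\Sigma$, and the quasi-isomorphism it produces is $\delta$-equivariant; since \eqref{e:BG_m^prism as Cone} is itself compatible with $\delta$-structures, so is the composite \eqref{e:2 BG_m^prism as Cone}. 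I do not expect a genuine obstacle here: the only substantive input is Proposition~\ref{p:G' to W^times/G_m faithfully flat}, which is precisely what makes the fpqc cokernel of $G_\Sigma\hookrightarrow G'_\Sigma$ equal to all of $(W^\times/\BG_m)_\Sigma$; the remainder is routine bookkeeping with two-term complexes and the dictionary between them and strictly commutative Picard stacks.
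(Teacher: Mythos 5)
Your proposal is correct and is exactly the argument the paper intends: the corollary is stated with only the remark that it follows by ``combining Proposition~\ref{p:G' to W^times/G_m faithfully flat} with \eqref{e:BG_m^prism as Cone}'', and your termwise short exact sequence of two-term complexes with acyclic quotient $[Q\xrightarrow{\id}Q]$ is the standard way to make that combination precise. Nothing further is needed.
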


\begin{rem}  \label{r:why we care about G_Sigma , H_Sigma}
Combining Corollary~\ref{c:2 BG_m^prism as Cone} with our results on $G_\Sigma$ and its Cartier dual $H_\Sigma\,$, one can compute the derived direct images of the structure sheaf under the morphism 
$$(\BA^1\setminus\{ 0\})^\prism =\BG_m^\prism\to (\Spf\BZ_p)^\prism =\Sigma,$$
see Appendix~\ref{s:prismatic cohomology of G_m}. This is not really a new result but rather a new point of view\footnote{The prismatization functor and the groups $G_\Sigma$, $H_\Sigma$ do not appear in \cite{BS}.} on a key result of \cite{BS}. 
\end{rem}

\medskip

Corollary~\ref{c:G_Sigma is flat} can be strengthened as follows.

\begin{thm}   \label{t:dual to formal group}
$G_\Sigma$ is the Cartier dual of some 1-dimensional commutative formal group $H_\Sigma$ over~$\Sigma$.
\end{thm}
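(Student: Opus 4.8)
The plan is to establish two things: first, that $G_\Sigma$ is the Cartier dual of \emph{some} formal group $H_\Sigma$ over $\Sigma$, and second, that this formal group is $1$-dimensional. Since Cartier duality for affine commutative group schemes that are flat over the base matches up with formal groups on the other side, the natural strategy is to work locally and unwind the explicit description of $G_\Sigma$. Recall $\Sigma = W_{\prim}/W^\times$, so it suffices (by faithfully flat descent, together with the fact that Cartier duality commutes with base change) to analyze the pullback $G_{W_{\prim}}$ of $G_\Sigma$ to $W_{\prim}$, keeping track of the $W^\times$-equivariance, and to show that $G_{W_{\prim}}$ is Cartier dual to a formal group over $W_{\prim}$ whose Lie algebra is a line bundle (hence $1$-dimensional).

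First I would compute $G_{W_{\prim}}$ explicitly. By \S\ref{sss:G'_W_prim}, $G'_{W_{\prim}} = W_{\prim}\times W$ with group law $(\xi, x_1, x_2)\mapsto (\xi, x_1+x_2+\xi x_1 x_2)$, and the map to $W^\times_{W_{\prim}}$ sends $(\xi, x)\mapsto (\xi, 1+\xi x)$. The subgroup $G_{W_{\prim}}$ is the preimage of the Teichm\"uller copy $(\BG_m)_{W_{\prim}}\subset W^\times_{W_{\prim}}$, i.e.\ the locus where $1 + \xi x$ is a Teichm\"uller representative. Concretely, writing $x = (x_0, x_1, x_2, \dots)$ in Witt components and $\xi$ topologically nilpotent (with $\xi \equiv [\,\text{unit}\,]\cdot p\ \text{or similar mod higher terms}$, per \S\ref{sss:W_prim}), the condition ``$1+\xi x$ is Teichm\"uller'' is a sequence of equations in the ghost/Witt components cutting out a closed flat subgroup scheme of $G'_{W_{\prim}}$, affine over $W_{\prim}$, whose coordinate ring I expect to be topologically of the form $\cO_{W_{\prim}}[[x_0]]\langle \text{something}\rangle$ or more precisely a flat $\cO_{W_{\prim}}$-algebra which is \emph{formally smooth of relative dimension... } — wait, that is the dual picture. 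The key computation is: the Cartier dual $H_{W_{\prim}} := \HHom(G_{W_{\prim}}, \BGm)$ should come out as $\Spf$ of a power series ring in one variable over $\cO_{W_{\prim}}$, with a formal group law. I would identify the coordinate on $H_{W_{\prim}}$ via the tautological character of $G_{W_{\prim}}$, namely restricting the composite $G_{W_{\prim}}\hookrightarrow G'_{W_{\prim}}\to W^\times_{W_{\prim}}\to (\BGm)_{W_{\prim}}$ (Teichm\"uller retraction) — this is precisely the $\delta$-homomorphism \eqref{e: G to G_m} — and showing it generates $\HHom(G_{W_{\prim}}, \BGm)$ topologically, with the formal group structure coming from the group structure on $G_{W_{\prim}}$.

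The main obstacle, I expect, is proving that $H_{W_{\prim}}$ is genuinely a \emph{formal group} — i.e.\ that its coordinate ring is a power series ring $\cO_{W_{\prim}}[[u]]$ (one-dimensional, formally smooth) rather than merely a complete local ring — and that it is flat. Equivalently, one must show $G_{W_{\prim}}$ is Cartier dual to a formal group at all, which amounts to checking that the Frobenius (or Verschiebung) on $G_{W_{\prim}}$ is ``topologically nilpotent'' in the appropriate sense, or directly that the relevant Hopf algebra is an ind-(finite flat) with the right connectivity. I would handle this by a deformation/reduction argument: modulo $p$ (and modulo the ideal $(p, x_0)$ defining $A$), the subscheme structure degenerates and $G_{W_{\prim}}$ becomes something recognizable — plausibly built out of $\ker(F)$ on $W$ or a $\mu_{p^\infty}$-type object — for which Cartier duality to a $1$-dimensional formal group is classical; then one lifts by flatness over $W_{\prim}$, using that $W_{\prim}$ has coordinate ring the $(p,x_0)$-adic completion of a polynomial ring, hence is a nice enough (Noetherian, $(p,x_0)$-complete) base for the deformation to be unobstructed. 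Finally, $W^\times$-equivariance of everything in sight (the action in \S\ref{sss:G'_Sigma} preserves the group law and the map to $W^\times$) lets $H_{W_{\prim}}$ descend to a $1$-dimensional formal group $H_\Sigma$ over $\Sigma$ with $G_\Sigma = \HHom(H_\Sigma, \BGm)$, and biduality (valid since $G_\Sigma$ is flat affine by Corollary~\ref{c:G_Sigma is flat} and $H_\Sigma$ is a flat formal group) gives $G_\Sigma$ as the Cartier dual of $H_\Sigma$, as claimed. The $1$-dimensionality will be confirmed a posteriori by the computation of $\Lie(H_\Sigma)$ as a line bundle in Theorem~\ref{t:2Hom (G_Sigma ,G_a)}.
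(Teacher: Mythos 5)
Your proposal is essentially the paper's proof: the paper also reduces modulo the (locally nilpotent) ideal of the Hodge--Tate divisor, identifies the resulting group over the cover $\eta:\Spf\BZ_p\to\Delta_0$ with $W^{(F)}=\Ker(F:W\to W)\cong\BG_a^\sharp$ (confirming your guess), notes this is Cartier dual to the $1$-dimensional formal group $\hat\BG_a$, and then lifts using exactly the deformation statement you flag as the main obstacle (Proposition~\ref{p:deforming formal groups}, whose proof is the Hopf-algebra argument lifting a topological generator of the dual and checking its powers tend to $0$). The only cosmetic difference is that you work over $W_{\prim}$ and descend, while the paper works over $\Sigma$ and pulls back along $\eta$, using that membership in $\fF^*$ is fpqc-local.
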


The proof is given in \S\ref{sss:dual to formal group proof}. 
The precise definition of a formal group over a scheme $\sX$ is given in \S\ref{ss:def of 1-dim formal group}; in the case that $\sX$ is a stack see \S\ref{sss:fpqc-locality}(ii). 
(According to these definitions, a formal group is locally on $\sX$ defined by a formal group law.)

\begin{cor}    \label{c:Hom (G_Sigma ,G_a)}
$\HHom (G_\Sigma ,(\BG_a)_\Sigma )$ is a line bundle over $\Sigma$.
\end{cor}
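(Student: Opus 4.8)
The plan is to deduce this from \thmref{t:dual to formal group}, by identifying $\HHom(G_\Sigma,(\BG_a)_\Sigma)$ with $\Lie(H_\Sigma)$ and observing that the Lie algebra of a $1$-dimensional formal group is a line bundle. Since being a line bundle is an fpqc-local notion and $\HHom(G_\Sigma,(\BG_a)_\Sigma)$ inherits an $\cO_\Sigma$-module structure from $(\BG_a)_\Sigma$, it suffices to work after pullback to an affine scheme $\sX=\Spec R$ over $\Sigma$ on which $H_\Sigma$ is given by a $1$-dimensional formal group law; by the definition of a formal group over a stack (\S\ref{sss:fpqc-locality}) the schemes $\sX$ of this kind cover $\Sigma$ for the fpqc topology.

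Over such an $\sX$, write $H_\sX=\Spf R[[t]]$, so that $\cO_{H_\sX}=R[[t]]$ is a complete Hopf $R$-algebra whose multiplication is the usual one ($t^it^j=t^{i+j}$) and whose comultiplication is given by the formal group law. Its Cartier dual $G_\sX$ is $\Spec B$, where $B$ is the continuous $R$-linear dual of $R[[t]]$; concretely, $B=\bigoplus_{n\ge 0}R\,\gamma_n$ is free over $R$ with $\gamma_n$ dual to $t^n$, and, dually to the multiplication of $R[[t]]$, one has $\Delta_B(\gamma_n)=\sum_{i+j=n}\gamma_i\otimes\gamma_j$, with unit $\gamma_0$ and counit $\epsilon_B(\gamma_n)=\delta_{n,0}$ (the multiplication of $B$, dual to the group law, is not needed). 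For any $R$-algebra $R'$, a homomorphism of $R'$-group schemes $G_{R'}\to(\BG_a)_{R'}$ is the same as an element $f\in B\otimes_RR'$ with $\epsilon_B(f)=0$ and $\Delta_B(f)=f\otimes 1+1\otimes f$; writing $f=\sum_{n\ge 1}c_n\gamma_n$ and expanding, primitivity is equivalent to $\sum_{i,j\ge 1}c_{i+j}\,\gamma_i\otimes\gamma_j=0$ in the free module $B\otimes_RB\otimes_RR'$, i.e.\ to $c_n=0$ for all $n\ge 2$. Hence $\HHom(G_\sX,(\BG_a)_\sX)$ is represented by the free rank-one $\cO_\sX$-module $\cO_\sX\cdot\gamma_1$, which is nothing but $\Lie(H_\sX)$.

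These local identifications are compatible with base change in $R$ (immediate from the description of $B$) and therefore glue to an identification of $\HHom(G_\Sigma,(\BG_a)_\Sigma)$ with a quasi-coherent $\cO_\Sigma$-module which is fpqc-locally free of rank one; by descent it is a line bundle, proving the corollary. The only real input is \thmref{t:dual to formal group} — the Cartier-duality picture — after which everything reduces to the elementary fact that the primitive elements of the dual Hopf algebra of $R[[t]]$ form a free rank-one module; the minor points requiring attention are the base-change compatibility and the globalization, both routine.
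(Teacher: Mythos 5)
Your proof is correct and follows the same route as the paper: both deduce the corollary from Theorem~\ref{t:dual to formal group} via the identification $\HHom(G_\Sigma,(\BG_a)_\Sigma)=\Lie(H_\Sigma)$, which the paper states in one line and you verify by the standard local computation of primitive elements in the dual Hopf algebra of $R[[t]]$. The extra details (base change, fpqc gluing) are accurate and routine.
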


\begin{proof}
By Theorem~\ref{t:dual to formal group},  $\HHom (G_\Sigma ,(\BG_a)_\Sigma )=\Lie (H_\Sigma)$.
\end{proof}

Our next goal is to formulate Theorem~\ref{t:2Hom (G_Sigma ,G_a)}, which says that $\HHom (G_\Sigma ,(\BG_a)_\Sigma )$ is canonically isomorphic to~$\cO_\Sigma \{ -1\}$, i.e., the inverse of the Breuil-Kisin-Tate module\footnote{For the definition of $\cO_\Sigma  \{ 1\}$, see  \cite[\S 4.9]{Prismatization} or \cite{BL}; one of the equivalent definitions is essentially recalled in~\S\ref{sss:Hom (G_Sigma ,G_a) proof}. Let us note that in \cite{BL} our
$\cO_\Sigma  \{ 1\}$ is called the Breuil-Kisin line bundle and denoted by $\cO_{\WCart}  \{ 1\}$.} $\cO_\Sigma  \{ 1\}$.
To explain the word ``canonically'',
we have to discuss $\rho_{\dR}^*G_\Sigma$, where $\rho_{\dR}:\Spf\BZ_p\to\Sigma$ is the ``de Rham point'' of $\Sigma$.

\subsubsection{The ``de Rham pullback'' of $G_\Sigma$}   \label{sss:2G_dR}
The element $p\in W(\BZ_p )$ defines a morphism 
$$\Spf\BZ_p\to W_{\prim}.$$ 
The corresponding morphism $\Spf\BZ_p\to\Sigma$ is called the \emph{de Rham point} of $\Sigma$ and denoted by $\rho_{\dR}:\Spf\BZ_p\to\Sigma$. 

Let  $G_{\dR}:=\rho_{\dR}^*G_\Sigma$. By the definition of $G_\Sigma\,$, for any $p$-nilpotent ring $A$ we have
\begin{equation}  \label{e:2def of G_dR}
G_{\dR}(A):=\{ x\in W(A)\,|\, 1+px\in A^\times\subset W(A)^\times\},
\end{equation}
where $A^\times\subset W(A)^\times$ is the image of the Teichm\"uller embedding, and the group operation on $G_{\dR}(A)$ is given by
$(x_1,x_2)\mapsto x_1+x_2+px_1x_2$.

We have a canonical homomorphism 
\begin{equation}   \label{e:G_dR to G_a}
G_{\dR}\to (\BG_a)_{\Spf\BZ_p}, \quad x\mapsto p^{-1}\log (1+px_0):=\sum_{n=1}^\infty \frac{(-p)^{n-1}}{n}x_0^n,
\end{equation}
where $x_0$ is the $0$-th component of the Witt vector $x$ (the formula makes sense because the numbers $ \frac{(-p)^{n-1}}{n}$ are in $\BZ_p$ and converge to $0$). 

\begin{prop}    \label{p:G_dR}
The homomorphism \eqref{e:G_dR to G_a} induces an isomorphism $G_{\dR}\iso (\BG_a^\sharp)_{\Spf\BZ_p}\,$, where $\BG_a^\sharp$ is the divided powers version of $\BG_a$.
\end{prop}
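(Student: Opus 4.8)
The plan is to identify $G_{\dR}$ with $\BG_a^\sharp$ by producing an explicit inverse to the homomorphism \eqref{e:G_dR to G_a} at the level of $p$-nilpotent test rings $A$, and checking it is a group isomorphism. Recall $\BG_a^\sharp=\Spf(\BZ_p\langle x, x^{[2]}, x^{[3]},\ldots\rangle)^{\wedge}_p$ is the divided-power hull of the origin in $\BG_a$; concretely, for a $p$-nilpotent ring $A$, $\BG_a^\sharp(A)$ is the set of topologically nilpotent PD-elements, equivalently $\BG_a^\sharp = \Spf(\varprojlim_n \BZ_p[x]^{\mathrm{DP}}/p^n)$, and for flat test objects it is the kernel of $\widehat{W}\to \widehat{W}/(V(W))^{\wedge}$ in an appropriate sense — but the cleanest route is via the exponential. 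The key point is that the map $x\mapsto \exp(p\cdot(-))$, i.e. $u\mapsto p^{-1}(\exp(pu)-1)=\sum_{n\ge 1}\frac{p^{n-1}}{n!}u^n$, converges on $\BG_a^\sharp$ because $v_p(p^{n-1}/n!)=n-1-v_p(n!)\to\infty$, and lands in $G_{\dR}$: indeed if $u\in\BG_a^\sharp(A)$ then $1+p\cdot p^{-1}(\exp(pu)-1)=\exp(pu)$ is a unit in $A^\times$, hence its Teichmüller lift gives a point of $G_{\dR}(A)$ once we arrange the Witt-vector components correctly.

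The steps, in order: (i) Describe $G_{\dR}$ precisely: by \eqref{e:2def of G_dR}, a point is a Witt vector $x\in W(A)$ with $1+px$ in the image of the Teichmüller section $A^\times\hookrightarrow W(A)^\times$; since the Teichmüller section is determined by its zeroth component, the Witt vector $x$ is completely determined by $x_0$ via the equation $[1+px_0]=1+p\cdot x$ in $W(A)$, so $G_{\dR}\hookrightarrow \widehat{W}$ is a closed embedding and $G_{\dR}(A)\cong\{a\in A : 1+pa\in A^\times\}=\{a\in A: a\text{ topologically nilpotent modulo the constraint that }1+pa\in A^\times\}$, with group law $a_1*a_2=a_1+a_2+pa_1a_2$, i.e. $G_{\dR}$ is the ``$p$-rescaled $\widehat{\BG_m}$'', the kernel of $\widehat{\BG_m}\xrightarrow{x\mapsto x} \ldots$ — more precisely $a\mapsto 1+pa$ identifies $G_{\dR}$ with a rescaled copy of a formal-multiplicative-type group. (ii) Write down the candidate isomorphism $\alpha: \BG_a^\sharp\to G_{\dR}$ on points by $u\mapsto p^{-1}(\exp(pu)-1)$, check convergence as above, and check it is a homomorphism: $\exp(p(u_1+u_2))=\exp(pu_1)\exp(pu_2)$ translates exactly into $1+p\,\alpha(u_1+u_2)=(1+p\alpha(u_1))(1+p\alpha(u_2))$, which is the group law $\alpha(u_1)*\alpha(u_2)$. (iii) Check that \eqref{e:G_dR to G_a}, namely $\beta: x\mapsto p^{-1}\log(1+px_0)$, composed with $\alpha$ is the identity on $\BG_a^\sharp$ (as $\log$ and $\exp$ are inverse power series, and both converge on the relevant PD-thickening) and that $\alpha\circ\beta=\id_{G_{\dR}}$ (same reason, using $1+px_0$ a unit so $\log$ converges $p$-adically on $1+pA$ once divided by $p$). (iv) Conclude $\alpha=\beta^{-1}$, so \eqref{e:G_dR to G_a} is an isomorphism.

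The main obstacle — and the step deserving genuine care rather than a one-line dismissal — is \textbf{convergence and the precise scheme-theoretic (not just pointwise) meaning of $\exp$ and $\log$}. On an arbitrary $p$-nilpotent ring $A$ the series $\log(1+pa)=\sum(-1)^{n-1}p^na^n/n$ has coefficients $p^n/n$ with $v_p(p^n/n)=n-v_p(n)\ge n-\log_p n\to\infty$, so after dividing by $p$ it still converges and the bound is uniform; dually $p^{-1}(\exp(pu)-1)$ has coefficients $p^{n-1}/n!$ which lie in $\BZ_p$ and tend to $0$, \emph{but} to say this defines a morphism of formal schemes $\BG_a^\sharp\to \widehat{W}$ one must observe that $\BG_a^\sharp$ is precisely the formal scheme on which ``$\exp(pu)$ makes sense'', i.e. $\cO(\BG_a^\sharp)=\BZ_p\{u\}^{\mathrm{DP}}$ contains $u^n/n!$, so $p^{-1}(\exp(pu)-1)\in \cO(\BG_a^\sharp)$ as a genuine element; this is exactly where the divided-power structure (rather than $\widehat{\BG_a}$) is forced. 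So the clean writeup should first recall $\BG_a^\sharp=\Spf\BZ_p\langle u; u^{[n]}\rangle^{\wedge}$ with $u^{[n]}\mapsto u^n/n!$, then exhibit $\alpha^\#:\cO(G_{\dR})\to\cO(\BG_a^\sharp)$ on coordinates, $\beta^\#$ likewise, and verify $\alpha^\#\beta^\#=\id$, $\beta^\#\alpha^\#=\id$ by the formal identities $\exp\log=\log\exp=\id$ of power series with divided-power-admissible coefficients. The homomorphism property and flatness/finiteness then follow formally. (Alternatively, one can cite Appendices~\ref{s:dual of hat G_m}--\ref{s:rescaled G_m and its Cartier dual}, where the rescaled $\widehat{\BG_m}$ and its Cartier dual are analyzed, and deduce the statement by duality; but the direct $\exp$/$\log$ argument is self-contained.)
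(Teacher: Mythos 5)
There is a genuine gap, and it is located exactly where you place the weight of the argument: step (i). You claim that a point $x\in G_{\dR}(A)$ is determined by its $0$-th Witt component $x_0$ via $1+px=[1+px_0]$, so that $G_{\dR}(A)\cong\{a\in A: 1+pa\in A^\times\}$ with the law $a_1+a_2+pa_1a_2$. This is false. Multiplication by $p$ on $W(A)$ is not injective when $A$ is $p$-nilpotent, so the equation $px=[1+px_0]-1$ does not determine $x$ from $x_0$; the higher Witt components of $x$ are genuinely extra data, and they are precisely what encode the divided powers. Moreover, for every $p$-nilpotent $A$ the element $pa$ is nilpotent, so $\{a\in A: 1+pa\in A^\times\}=A$: your model of $G_{\dR}$ is just $(\BG_a)_{\Spf\BZ_p}$ with a twisted group law (the $p$-rescaled $\BG_m$ of Appendix~\ref{s:Cartier dual of G_m^sharp}), which is not isomorphic to $(\BG_a^\sharp)_{\Spf\BZ_p}$. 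A concrete check: $G_{\dR}(\BF_p)=\{x\in\BZ_p: 1+px\in\mu_{p-1}\}=\{0\}=\BG_a^\sharp(\BF_p)$, whereas your candidate functor gives $\BF_p$. The same error reappears in step (ii): given $u\in\BG_a^\sharp(A)$, producing the Witt vector $x$ with $1+px=[\exp(pu)]$ --- what you call ``arranging the Witt-vector components correctly'' --- is not a formality; existence and uniqueness of such an $x$ is essentially the content of the proposition, and uniqueness in fact fails pointwise.

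The paper's proof works with the full Witt vector throughout. It applies the series $f(x)=p^{-1}\log(1+px)$ to $x\in W(A)$ itself (not to $x_0$), shows via a flatness argument that $f$ lands in $W^{F=p}:=\{y\in W: Fy=py\}$ (using that $1+px$ Teichm\"uller forces $F(1+px)=(1+px)^p$), exhibits the inverse $g(y)=(\exp(py)-1)/p$ again at the Witt-vector level, and checks $f$ is an isomorphism by reduction mod $p$ plus flatness. It then composes with the isomorphism $\id-V:W^{F=p}\iso W^{(F)}=(\BG_a^\sharp)_{\Spf\BZ_p}$. That last $\id-V$ step is invisible in your write-up but cannot be dispensed with: as the paper's warning in \S\ref{sss:somewhat unexpected} shows, the naive identification and the correct one differ by $\id-V$ modulo $p$, so a pure ``$\exp$ and $\log$ are inverse power series'' argument on $0$-th components cannot produce the isomorphism. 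Your closing alternative (dualizing the rescaled $\hat\BG_m$, or using $\BG_m^\sharp/\mu_p\iso\BG_a^\sharp$ from Appendix~\ref{s:Cartier dual of G_m^sharp} together with the identification $G_{\dR}\iso (W^\times)^{(F)}/\mu_p$ of \S\ref{sss:A way to think about G_dR}) is a viable route, but it is not developed in the proposal and is not what carries the argument as written.
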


The proposition is proved in \S\ref{ss:G_dR}.

\begin{cor}
The homomorphism \eqref{e:G_dR to G_a} induces an isomorphism 
\begin{equation}   \label{e:HHom (G_dR ,G_a)}
(\BG_a)_{\Spf\BZ_p}\iso\HHom (G_{\dR} ,(\BG_a)_{\Spf\BZ_p} ).
\end{equation}
\end{cor}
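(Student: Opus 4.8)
The plan is to deduce the corollary from Proposition~\ref{p:G_dR} by a purely formal Cartier-duality argument. By Proposition~\ref{p:G_dR}, the homomorphism \eqref{e:G_dR to G_a} identifies $G_{\dR}$ with $(\BG_a^\sharp)_{\Spf\BZ_p}$, the PD-hull of the origin in $\BG_a$ over $\Spf\BZ_p$. So it suffices to establish, over the base $\Spf\BZ_p$, the standard fact that the natural pairing $\BG_a\times\BG_a^\sharp\to\BG_m$ (arising from $\exp$, i.e.\ from $(t,x)\mapsto\exp(tx)=\sum_n t^nx^{[n]}$, which converges because $x$ is a PD-nilpotent element and the divided powers $x^{[n]}$ make sense) induces an isomorphism $(\BG_a)_{\Spf\BZ_p}\iso\HHom(G_{\dR},(\BG_a)_{\Spf\BZ_p})$. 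Concretely: first rewrite the source as $\HHom(\BG_a^\sharp,\BG_a)$ via Proposition~\ref{p:G_dR}; then show the evaluation-at-$\exp$ map $\BG_a\to\HHom(\BG_a^\sharp,\BG_a)$, $t\mapsto(x\mapsto tx_0)$ — where $x_0$ is the linear coordinate, matching the normalization in \eqref{e:G_dR to G_a} — is an isomorphism of group schemes over $\Spf\BZ_p$.

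The key computational step is to identify $\HHom(\BG_a^\sharp,\BG_a)$ explicitly. Over $\BZ_p$ one has $\BG_a^\sharp=\Spf$ of the $p$-adically completed divided power polynomial algebra $\BZ_p\langle x\rangle$ (generated by $x^{[n]}=x^n/n!$), with comultiplication $x^{[n]}\mapsto\sum_{i+j=n}x^{[i]}\otimes x^{[j]}$. A homomorphism $\BG_a^\sharp\to\BG_a$ over a $p$-nilpotent $A$ is a primitive element of this Hopf algebra over $A$; the primitives in the divided-power Hopf algebra are exactly the $A$-multiples of $x=x^{[1]}$ (this is the classical computation: in the completed divided power algebra the group-likes correspond to exponentials $\exp(tx)=\sum t^nx^{[n]}$ and the primitives are their logarithms $tx$). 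Hence $\HHom(\BG_a^\sharp,(\BG_a)) \cong\BG_a$ functorially, and tracing through \eqref{e:G_dR to G_a} shows the resulting map $(\BG_a)_{\Spf\BZ_p}\to\HHom(G_{\dR},(\BG_a)_{\Spf\BZ_p})$ is precisely the one in the statement. Both sides are flat (indeed smooth) over $\Spf\BZ_p$, so checking the isomorphism can be done on the generic fiber and on $\BF_p$, or simply directly from the description of primitives.

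I expect the only real subtlety to be bookkeeping around the $p$-adic completion and the normalization factor $p^{-1}$ appearing in \eqref{e:G_dR to G_a}: one must check that under the identification $G_{\dR}\iso(\BG_a^\sharp)_{\Spf\BZ_p}$ of Proposition~\ref{p:G_dR} the coordinate $p^{-1}\log(1+px_0)$ really does go to the canonical coordinate $x$ on $\BG_a^\sharp$ (equivalently, that the displayed series $\sum_{n\ge 1}\tfrac{(-p)^{n-1}}{n}x_0^n$ is the right primitive), and that the pairing $G_{\dR}\times\BG_a\to\BG_m$ it induces is $(x,t)\mapsto(1+px)^{t/p}:=\exp(t\cdot p^{-1}\log(1+px))$, which converges on $G_{\dR}$. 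Once this identification is pinned down, the assertion is immediate. An alternative, essentially equivalent route is to cite the fact (used elsewhere in the paper, e.g.\ in the discussion of the prismatic logarithm) that $\BG_a^\sharp$ is Cartier self-dual over $\BZ_p$ up to the evident twist, which gives $\HHom(\BG_a^\sharp,\BG_a)\cong\BG_a$ directly; I would include the short direct argument anyway since it makes the normalization transparent.
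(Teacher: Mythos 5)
Your proposal is correct and matches what the paper intends: the corollary is stated without proof precisely because it follows immediately from Proposition~\ref{p:G_dR} together with the standard computation that the primitives of the ($p$-completed) divided-power Hopf algebra of $\BG_a^\sharp$ are exactly the multiples of the linear coordinate, so that $\HHom((\BG_a^\sharp)_{\Spf\BZ_p},(\BG_a)_{\Spf\BZ_p})\cong(\BG_a)_{\Spf\BZ_p}$ (equivalently, via the Cartier duality $\BG_a^\sharp\leftrightarrow\hat\BG_a$ through $\exp(uv)$ that the paper invokes in Proposition~\ref{p:H_dR}). Your normalization check — that under \eqref{e:G_dR to G_a} the series $p^{-1}\log(1+px_0)$ becomes the canonical coordinate on $\BG_a^\sharp$ — is exactly the content of Proposition~\ref{p:G_dR}, so nothing further is needed.
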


\begin{thm}   \label{t:2Hom (G_Sigma ,G_a)}
There is a unique isomorphism 
\begin{equation}       \label{e:2Hom (G_Sigma ,G_a)}
\cO_\Sigma \{ -1\}\iso\HHom (G_\Sigma ,(\BG_a)_\Sigma )
\end{equation}
 whose $\rho_{\dR}$-pullback is the isomorphism \eqref{e:HHom (G_dR ,G_a)}.
\end{thm}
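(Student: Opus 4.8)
The statement to prove is Theorem~\ref{t:2Hom (G_Sigma ,G_a)}: there is a unique isomorphism $\cO_\Sigma\{-1\}\iso\HHom(G_\Sigma,(\BG_a)_\Sigma)$ restricting along $\rho_{\dR}$ to the isomorphism \eqref{e:HHom (G_dR ,G_a)}. By Corollary~\ref{c:Hom (G_Sigma ,G_a)} we already know that $\HHom(G_\Sigma,(\BG_a)_\Sigma)$ is a line bundle $\cL$ on $\Sigma$, so the problem is to identify this particular line bundle with $\cO_\Sigma\{-1\}$ in a way compatible with the given de Rham trivialization. The natural strategy is to produce a nonvanishing section of $\cL\otimes\cO_\Sigma\{1\}$, i.e.\ a trivialization of $\cL$ after twisting by the Breuil--Kisin--Tate module, and then pin down the scalar using the de Rham point.

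\textbf{First step: compute $\cL$ on the chart $W_{\prim}$.} Since $\Sigma=W_{\prim}/W^\times$, a line bundle on $\Sigma$ is a $W^\times$-equivariant line bundle on $W_{\prim}$. Over $W_{\prim}$ the group scheme is $G'_{W_{\prim}}=W_{\prim}\times W$ with the group law $(\xi,x_1,x_2)\mapsto(\xi,x_1+x_2+\xi x_1 x_2)$, and $G_{W_{\prim}}$ sits inside it as the locus where $1+\xi x$ lies in the Teichm\"uller copy of $\BG_m$. First I would compute $\HHom(G_{W_{\prim}},(\BG_a)_{W_{\prim}})$ explicitly: a homomorphism to $\BG_a$ is, after passing to Lie algebras and using that $G_{W_{\prim}}$ is (pro-)affine and flat, governed by the cotangent space at the identity, which one computes from the explicit equations. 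Concretely one expects the ``prismatic logarithm'' formula — the analogue of $x\mapsto p^{-1}\log(1+px_0)$ over $W_{\prim}$, namely something like $\xi^{-1}\log(1+\xi x_0)$ interpreted via the $\delta$-structure — to furnish a canonical generator of this $\cO_{W_{\prim}}$-module, so that $\cL$ is free of rank one on the chart. The equivariance of this generator under the $W^\times$-action $(u,\xi,x)\mapsto(u^{-1}\xi,ux)$ is exactly what will produce the twist by $\cO_\Sigma\{1\}$, because the action of $W^\times$ on $\xi$ by division is precisely (up to the usual conventions) the action defining $\cO_\Sigma\{1\}$ as recalled in \cite[\S4.9]{Prismatization}.

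\textbf{Second step: match the cocycle with that of $\cO_\Sigma\{-1\}$.} Having a generator $e$ of $\cL$ over $W_{\prim}$, I would compute how $W^\times$ transforms $e$: the result is $u\cdot e = c(u)\cdot e$ for a unit-valued cocycle $c$, and the claim is that $c$ is (the inverse of) the cocycle defining $\cO_\Sigma\{1\}$. This is a direct, if slightly delicate, computation with the explicit action; the key is that differentiating the group law and the homomorphism-condition at the identity turns the factor $u$ acting on $x$ into a factor $u$ (or its leading Teichm\"uller/Witt component, depending on conventions) acting on the generator. This identifies $\cL\cong\cO_\Sigma\{-1\}$ abstractly. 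The $\delta$-compatibility should come along for free since every object and morphism in sight is a $\delta$-morphism.

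\textbf{Third step: uniqueness and normalization via $\rho_{\dR}$.} Two isomorphisms $\cO_\Sigma\{-1\}\iso\cL$ differ by a global unit on $\Sigma$; since $\Sigma$ is (a quotient of a formal scheme whose reduction is connected, so) has $H^0(\Sigma,\cO_\Sigma^\times)=\BZ_p^\times$ — this needs a brief argument, using that $W_{\prim}$'s coordinate ring is a $p$-adically complete local-ish ring and the $W^\times$-invariants of its units are just $\BZ_p^\times$ — the isomorphism is unique up to $\BZ_p^\times$, and the condition that its $\rho_{\dR}$-pullback equal \eqref{e:HHom (G_dR ,G_a)} rigidifies it completely. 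Here I must check that $\rho_{\dR}^*$ of my chart generator $e$ really is the section $x\mapsto p^{-1}\log(1+px_0)$ used in the corollary to Proposition~\ref{p:G_dR}; this is immediate because $\rho_{\dR}$ corresponds to $\xi=p$, so the formula $\xi^{-1}\log(1+\xi x_0)$ specializes exactly to \eqref{e:G_dR to G_a}, and Proposition~\ref{p:G_dR} guarantees it is a generator of the de Rham stalk. Existence of the normalized isomorphism then follows because $\rho_{\dR}^*$ is surjective on the relevant scalars ($\BZ_p^\times\to\BZ_p^\times$ is onto), and uniqueness is the injectivity just discussed.

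\textbf{Main obstacle.} The substantive point is the second step: correctly computing the $W^\times$-equivariance of the canonical generator of $\HHom(G_{W_{\prim}},(\BG_a)_{W_{\prim}})$ and recognizing the resulting cocycle as that of $\cO_\Sigma\{-1\}$ rather than $\cO_\Sigma\{1\}$ or some other twist — in other words, getting the sign/duality and the Witt-vector bookkeeping exactly right. Everything else (flatness, the explicit Lie-algebra computation, the structure of units on $\Sigma$, and the de Rham normalization) is routine given the results already established, in particular Corollary~\ref{c:Hom (G_Sigma ,G_a)} and Proposition~\ref{p:G_dR}.
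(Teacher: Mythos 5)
There is a genuine gap, and it sits exactly where you flag the ``main obstacle'' --- but the problem is worse than a delicate sign check. Your first step posits a canonical generator of $\HHom (G_{W_{\prim}},(\BG_a)_{W_{\prim}})$ given by ``something like $\xi^{-1}\log (1+\xi x_0)$''. No such naive formula exists: at the de Rham point the series $\sum_n \frac{(-p)^{n-1}}{n}x_0^n$ converges only because the coefficients $\frac{(-p)^{n-1}}{n}$ lie in $\BZ_p$ and tend to $0$, which is a special feature of $\xi=p$; over $W_{\prim}$ the relevant component of $\xi$ is merely nilpotent and the coefficients $\frac{\xi^{n-1}}{n}$ are not defined. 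Producing an actual generator here is essentially the construction of the prismatic logarithm of Bhatt--Lurie (cf.\ Corollary~\ref{c:prismatic logarithm}, which in this paper is a \emph{consequence} of Theorem~\ref{t:2Hom (G_Sigma ,G_a)}, and the $q$-logarithm of \cite{ALB} discussed in \S\ref{ss:tilde G_Q}); it is a substantial piece of work that your proposal assumes rather than supplies. Your second step has a companion gap: identifying the resulting $W^\times$-cocycle ``as that of $\cO_\Sigma\{-1\}$'' presupposes an explicit cocycle description of the Breuil--Kisin twist, which is not how $\cO_\Sigma\{1\}$ is defined; the usable characterization is the one via a Frobenius structure and a de Rham normalization.

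The paper's route avoids both problems by never trivializing anything on the chart $W_{\prim}$. It writes $\HHom (G_\Sigma ,(\BG_a)_\Sigma )=\Lie (H_\Sigma)$ using Cartier duality (Theorem~\ref{t:dual to formal group}), then observes that Theorem~\ref{t:H_Sigma}(ii) gives a canonical isomorphism $F^*\Lie (H_\Sigma)\iso\Lie (H_\Sigma)(-\Delta_0)$ and Proposition~\ref{p:H_dR}(i) gives a trivialization of $\rho_{\dR}^*\Lie (H_\Sigma)$; these two pieces of data are exactly the defining property of $\cO_\Sigma\{-1\}$ from \cite[\S 4.9]{Prismatization}, so the identification is immediate. (Uniqueness is as you say, via $H^0(\Sigma,\cO_\Sigma)=\BZ_p$.) Note that the real content is thus displaced into Theorem~\ref{t:H_Sigma}(ii), which the paper proves by pulling back to the $q$-de Rham prism $Q$, where the section $\sigma$ makes everything explicit --- an ingredient entirely absent from your outline. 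To repair your argument you would either need to import the Bhatt--Lurie construction of $\log_{\prism}$ wholesale, or switch to the paper's strategy of exploiting the rescaling structure of the formal group $H_\Sigma$.
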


In the theorem and the next corollary we tacitly use that $\rho_{\dR}^*\cO_\Sigma \{ 1\}$ is canonically trivial, see  \cite[\S 4.9]{Prismatization}.
The existence of \eqref{e:2Hom (G_Sigma ,G_a)} is proved in \S\ref{sss:Hom (G_Sigma ,G_a) proof}; uniqueness follows from the equality 
\begin{equation}  \label{e:Fun(Sigma)}
H^0(\Sigma , \cO_\Sigma)=\BZ_p\,, 
\end{equation}
which is proved in \cite[Cor.~4.7.2]{Prismatization}. Combining Theorem~\ref{t:2Hom (G_Sigma ,G_a)} with \eqref{e:Fun(Sigma)}, we get

\begin{cor}   \label{c:prismatic logarithm}
There is a unique homomorphism $G_\Sigma \to\cO_\Sigma \{ 1\}$, whose $\rho_{\dR}$-pullback is the homomorphism
\eqref{e:G_dR to G_a}. \qed
\end{cor}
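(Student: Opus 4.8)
The plan is to derive the Corollary formally from Theorem~\ref{t:2Hom (G_Sigma ,G_a)} together with the equality $H^0(\Sigma,\cO_\Sigma)=\BZ_p$ of \eqref{e:Fun(Sigma)}. The first step is to identify the group of all homomorphisms $G_\Sigma\to\cO_\Sigma\{1\}$. Viewing the line bundle $\cO_\Sigma\{1\}$ as a commutative affine group scheme over $\Sigma$, namely its underlying additive group, one has an identification of quasi-coherent $\cO_\Sigma$-modules
\[
\HHom(G_\Sigma,\cO_\Sigma\{1\})\;\cong\;\HHom(G_\Sigma,(\BG_a)_\Sigma)\otimes_{\cO_\Sigma}\cO_\Sigma\{1\},
\]
valid because $\cO_\Sigma\{1\}$ is locally free of rank one (Zariski-locally on $\Sigma$ it reduces to $\HHom(G_\Sigma,(\BG_a)_\Sigma)$, glued via the transition cocycle of $\cO_\Sigma\{1\}$). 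By Theorem~\ref{t:2Hom (G_Sigma ,G_a)} the first factor is canonically $\cO_\Sigma\{-1\}$, so the right-hand side is canonically $\cO_\Sigma\{-1\}\otimes\cO_\Sigma\{1\}=\cO_\Sigma$. Taking global sections and invoking \eqref{e:Fun(Sigma)}, the homomorphisms $G_\Sigma\to\cO_\Sigma\{1\}$ form a copy of $\BZ_p$.

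I would then let $\ell\colon G_\Sigma\to\cO_\Sigma\{1\}$ be the homomorphism corresponding to $1\in\BZ_p=H^0(\Sigma,\cO_\Sigma)$ under this identification, and verify that its $\rho_{\dR}$-pullback is \eqref{e:G_dR to G_a}. For this I pull the whole picture back along $\rho_{\dR}\colon\Spf\BZ_p\to\Sigma$. Since $\rho_{\dR}^*\cO_\Sigma\{1\}$ is canonically trivial, $\rho_{\dR}^*\ell$ is genuinely a homomorphism $G_{\dR}\to(\BG_a)_{\Spf\BZ_p}$; and by the normalization clause of Theorem~\ref{t:2Hom (G_Sigma ,G_a)}, the $\rho_{\dR}$-pullback of $\cO_\Sigma\{-1\}\iso\HHom(G_\Sigma,(\BG_a)_\Sigma)$ is exactly the isomorphism \eqref{e:HHom (G_dR ,G_a)}, which is induced by \eqref{e:G_dR to G_a} and carries the unit section of $(\BG_a)_{\Spf\BZ_p}$ to \eqref{e:G_dR to G_a}. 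Unwinding the identifications, the section $1$ is carried to \eqref{e:G_dR to G_a}, so $\rho_{\dR}^*\ell$ is the homomorphism \eqref{e:G_dR to G_a}.

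Finally, uniqueness: two homomorphisms $G_\Sigma\to\cO_\Sigma\{1\}$ with the same $\rho_{\dR}$-pullback differ by a homomorphism whose $\rho_{\dR}$-pullback is $0$, and by the first step such a homomorphism is an element $a\in\BZ_p=H^0(\Sigma,\cO_\Sigma)$. Because $\rho_{\dR}$ is a section of the (unique) structure morphism $\Sigma\to\Spf\BZ_p$, the pullback $\rho_{\dR}^*\colon H^0(\Sigma,\cO_\Sigma)\to H^0(\Spf\BZ_p,\cO)$ is the identity map $\BZ_p\to\BZ_p$, so $a=0$ and the two homomorphisms coincide.

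All the substance sits in Theorem~\ref{t:2Hom (G_Sigma ,G_a)}; the corollary is essentially a reformulation of it, so I do not expect a real obstacle. The one point demanding care is the pullback computation in the middle step: one must make sure that the canonical trivialization of $\rho_{\dR}^*\cO_\Sigma\{1\}$ used to match \eqref{e:HHom (G_dR ,G_a)} with \eqref{e:G_dR to G_a} is precisely the trivialization implicit in the normalization of \eqref{e:2Hom (G_Sigma ,G_a)}, so that the canonical section $1$ pulls back to the prismatic logarithm on the nose rather than to a $\BZ_p^\times$-multiple of it. This compatibility is built into the statement of Theorem~\ref{t:2Hom (G_Sigma ,G_a)}, so the argument goes through.
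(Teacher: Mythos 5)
Your argument is correct and is exactly the route the paper takes: the corollary is stated as an immediate consequence of combining Theorem~\ref{t:2Hom (G_Sigma ,G_a)} with the equality $H^0(\Sigma,\cO_\Sigma)=\BZ_p$ from \eqref{e:Fun(Sigma)}, and your write-up just makes the twisting identification $\HHom(G_\Sigma,\cO_\Sigma\{1\})\cong\HHom(G_\Sigma,(\BG_a)_\Sigma)\otimes_{\cO_\Sigma}\cO_\Sigma\{1\}\cong\cO_\Sigma$ and the normalization/uniqueness steps explicit. No gaps.
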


A homomorphism $G_\Sigma \to\cO_\Sigma \{ 1\}$ with this property is explicitly constructed in \cite{BL} (see also \cite[\S 4]{BL2}); it is denoted there by $\log_\prism$ and called the \emph{prismatic logarithm}. The prismatic logarithm  is used in \cite{BL} to define the \emph{prismatic first Chern class}.

\subsubsection{Pullback of $G_\Sigma$ to the Hodge-Tate divisor}   \label{sss:G_Delta_0}
We have a homomorphism $W\to W_1=\BA^1$ (to a Witt vector it associates its $0$-th coordinate). It induces a morphism 
$$\Sigma=W_{\prim}/W^\times\to\BA^1/\BG_m.$$
Let $\Delta_0\subset\Sigma$ be the preimage of $\{ 0\}/\BG_m\subset\BA^1/\BG_m$. Then $\Delta_0$ is an effective Cartier divisor on $\Sigma$ 
(in the sense of \cite[\S 2.10-2.11]{Prismatization}).
It is called the \emph{Hodge-Tate divisor}. Let us note that in \cite{BL} this divisor is denoted by $\WCart^{\HT}$.

Let $G_{\Delta_0}$ be the pullback of $G_\Sigma$ to $\Delta_0$. Let $\sM$ be the conormal line bundle of $\Delta_0\subset\Sigma$. Let $\sM^\sharp$ be the divided powers version of $\sM$ (so $\sM^\sharp$ and $\sM$ are obtained from $\BG_a^\sharp$ and $\BG_a$ by twisting them with the same $\BG_m$-torsor on $\Delta_0$).

\begin{prop}   \label{p:G_Delta_0}
$G_{\Delta_0}$ is isomorphic to $\sM^\sharp$. Accordingly, the Cartier dual of $G_{\Delta_0}$ is isomorphic to the formal completion of the line bundle $\sM^*$ along its zero section.
\end{prop}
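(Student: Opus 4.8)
\medskip
\noindent\emph{Proof plan.}
The idea is to compute $G_{\Delta_0}$, with all its structure, on the smooth cover $W_{\prim}\to\Sigma$. Recall $\Delta_0=Z/W^\times$, where $Z\subset W_{\prim}$ is the closed formal subscheme $x_0=0$ (the vanishing locus of the $0$-th Witt component); one checks that $Z$ is $W^\times$-stable and equals $W_{\prim}\times_\Sigma\Delta_0$, so $G_{\Delta_0}$ is the descent, along the $W^\times$-torsor $Z\to\Delta_0$, of $G_Z:=G_{W_{\prim}}\times_{W_{\prim}}Z$ with its residual $W^\times$-equivariant structure. Likewise the conormal line bundle $\sM$ pulls back along $Z\to\Delta_0$ to the conormal bundle $(x_0)/(x_0)^2$ of $Z$ in $W_{\prim}$, free over $\cO_Z$ on the class of $x_0$, on which $W^\times$ acts through $u\mapsto u_0^{-1}$ (the action on $W_{\prim}$ being by division). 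So it suffices to give a $W^\times$-equivariant isomorphism $G_Z\iso(\BG_a^\sharp)_Z$ whose residual $W^\times$-action on the fibre matches the one just described, and then to compute the Cartier dual.

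First I would make $G_Z$ explicit. For a ($p$-nilpotent) test scheme $S$, an $S$-point of $G_{W_{\prim}}$ is a pair $(\xi,x)$ with $\xi\in W_{\prim}(S)$ and $x\in W(S)$ such that $1+\xi x$ lies in the Teichm\"uller image of $S^\times\subset W(S)^\times$. Over $Z$ one has $\xi_0=0$, so $1+\xi x$ has $0$-th component $1$, hence $1+\xi x=[1]=1$, i.e.\ $\xi x=0$; and the group law $(\xi,x_1,x_2)\mapsto(\xi,x_1+x_2+\xi x_1x_2)$ collapses to addition because $\xi x_1x_2=0$. Thus $G_Z$ is the kernel of multiplication by $\xi$ on $W_Z$, a subgroup scheme of $(W_Z,+)$. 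Writing $\xi=V(\eta)$ with $\eta:=(\xi_1,\xi_2,\dots)$ (so $\eta_0=\xi_1$ is invertible, hence $\eta\in W(S)^\times$ since $S$ is $p$-nilpotent), and using $\xi x=V(\eta)x=V(\eta\cdot Fx)$ together with the injectivity of $V$, we get $\xi x=0\iff Fx=0$. Therefore
\[
G_Z\;\cong\;Z\times_{\BZ_p}W[F],\qquad W[F]:=\ker\bigl(F\colon W\to W\bigr),
\]
with the additive structure on $W[F]$. The residual $W^\times$-action, restricted from $(u,\xi,x)\mapsto(u^{-1}\xi,ux)$, acts on the $W[F]$-factor by $x\mapsto u\cdot x$; since $V(b)\cdot x=V(b\cdot Fx)=0$ for $x\in W[F]$ and $u-[u_0]\in VW$, this equals $x\mapsto[u_0]\cdot x$ and in particular factors through $u\mapsto u_0$.

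Next I would use the isomorphism $W[F]\iso(\BG_a^\sharp)_{\BZ_p}$ given by $x\mapsto x_0$. (This is classical; concretely, over $\BZ_p[1/p]$ the equation $Fx=0$ forces $x_n$ to equal $x_0^{p^n}/(p^n)!$ up to a $\BZ_p^\times$-multiple plus terms of lower degree in $x_0$, so the $\BZ_p$-coordinate ring of $W[F]$ is generated by the divided powers of $x_0$, i.e.\ is $\cO(\BG_a^\sharp)$; in particular $W[F]$ is flat over $\BZ_p$.) Under this isomorphism the operator $x\mapsto[u_0]\cdot x$ is the standard scaling action of $\BG_m$ on $\BG_a^\sharp$ --- it is a $\BG_m$-action inducing multiplication by $u_0$ on the quotient $\BG_a$ (equivalently scaling by $u_0$ on the line $\Lie(\BG_a^\sharp)$), which pins it down. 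Hence $G_Z\cong Z\times\BG_a^\sharp$ with $W^\times$ acting on the $\BG_a^\sharp$-factor by scaling through $u\mapsto u_0$; descending along $Z\to\Delta_0$ and comparing the resulting $W^\times$-equivariant line with the one attached to $\sM$ above gives $G_{\Delta_0}\cong\sM^\sharp$. (This last comparison amounts to matching the weight conventions of \S\ref{ss:def of 1-dim formal group} for $\sL\mapsto\sL^\sharp$ and for the conormal bundle; the needed identity is forced by the observation that on $W_{\prim}$ the conormal generator $x_0$ and the fibre coordinate of $W[F]\subset W$ transform by opposite powers of $u_0$.)

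For the Cartier dual: over $\BZ_p$ the perfect Hopf pairing between the divided-power algebra $\cO(\BG_a^\sharp)$ and the power-series Hopf algebra $\cO(\widehat\BG_a)$ exhibits $\widehat\BG_a$ as the Cartier dual of $\BG_a^\sharp$, compatibly with base change and with twisting by a line-bundle torsor; hence the Cartier dual of $\sM^\sharp$ is the formal completion of the total space of $\sM^*$ along its zero section, and restricting to $\Delta_0$ gives the second assertion. I expect the main obstacle to be the bookkeeping of the $W^\times$-equivariant structures --- identifying $x\mapsto u\cdot x$ on $W[F]\cong\BG_a^\sharp$ with the scaling action and then matching it, with the correct sign, against the $W^\times$-linearization of $\sM$. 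A secondary point requiring care is that $W[F]\cong\BG_a^\sharp$ must be established integrally, with the correct divided-power structure, using that $W[F]$ is the honest affine group scheme $\ker(F\colon W\to W)$ (since $x$ ranges over all of $W(S)$ subject only to the closed condition $Fx=0$) rather than a completion.
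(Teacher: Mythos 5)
Your proof is correct and is essentially the paper's own argument: the paper likewise proceeds by descent to $\Delta_0$, identifying the pullback of $G_\Sigma$ over a cover with $\Ker(F\colon W\to W)\cong\BG_a^\sharp$ (via the computation $\xi x=0\Leftrightarrow Fx=0$ for $\xi=V(\text{unit})$, plus Lemma~\ref{l:W^(F)=G_a^sharp}) and then checking that the residual action matches the linearization of the conormal line through the $0$-th Teichm\"uller component, the only difference being that it descends from the point $\eta\colon\Spf\BZ_p\to\Delta_0$ along the $(W^\times)^{(F)}$-torsor of \S\ref{ss:Delta_0 as classifying stack} rather than from $Z=W_{\prim}\times_\Sigma\Delta_0$ along $W^\times$. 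Both equivariance checks come down to the same observation that $u$ acts on $W^{(F)}$ as multiplication by $[u_0]$.
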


The proposition will be proved in \S\ref{sss:G_Delta_0 proof}.

Let us note that Proposition~\ref{p:G_Delta_0} agrees with Theorem~\ref{t:2Hom (G_Sigma ,G_a)} because the pullback of $\cO_\Sigma \{ 1\}$ to $\Delta_0$ is known to be canonically isomorphic to $\sM$ (e.g., see \cite[Lemma~4.9.7(ii)]{Prismatization} and \cite[\S 4.9.1]{Prismatization}).

\subsubsection{Warning}
Let $\bar\rho_{\dR}:\Spec\BF_p\to\Sigma$ be the restriction of $\rho_{\dR}:\Spf\BZ_p\to\Sigma$. Then  $\bar\rho_{\dR}$ lands into $\Delta_0\subset\Sigma$. So
one can compute $\rho_{\dR}^*G_\Sigma$ using either Theorem~\ref{t:2Hom (G_Sigma ,G_a)} or Proposition~\ref{p:G_Delta_0}. Thus we get two isomorphisms 
$\rho_{\dR}^*G_\Sigma\iso (\BG_a^\sharp)_{\BF_p}$. In \S\ref{sss:somewhat unexpected} we will see that they differ by a \emph{non-linear} automorphism of 
$(\BG_a^\sharp)_{\BF_p}$ (there are plenty of such automorphisms because the Cartier dual of $(\BG_a^\sharp)_{\BF_p}$ is $(\hat\BG_a)_{\BF_p}$).

\subsection{A question about $G_\Sigma$}  \label{ss:dualizing the deformation of H_Sigma to its Lie algebra}
\subsubsection{}
By \S\ref{ss:Deformation to Lie algebra}, any formal group has a canonical ``degeneration'' into its Lie algebra. In particular, 
we have a canonical formal group over $\Sigma\times\BA^1$ whose restriction to $\Sigma\times \{ 1\}$ is $H_\Sigma$ and whose restriction to $\Sigma\times \{ 0\}$ is $\Lie (H_\Sigma )$. By Theorem~\ref{t:2Hom (G_Sigma ,G_a)}, $\Lie (H_\Sigma )=\cO_\Sigma \{ -1\}$.

\subsubsection{}   \label{sss:dualizing the deformation of H_Sigma to its Lie algebra}
Passing to the Cartier dual, we get a canonical affine group scheme over $\Sigma\times\BA^1$ whose restriction to $\Sigma\times \{ 1\}$ is $G_\Sigma$ and whose restriction to $\Sigma\times \{ 0\}$ is $(\cO_\Sigma\{ 1\})^\sharp$ (i.e., the divided powers version of the line bundle $\cO_\Sigma\{ 1\}$).

\begin{quest}    \label{q:dualizing the deformation of H_Sigma to its Lie algebra}
How to give a direct construction of the group scheme from \S\ref{sss:dualizing the deformation of H_Sigma to its Lie algebra}?
\end{quest}

\subsection{Results about $H_\Sigma$}   \label{ss:Results on H_Sigma}
\subsubsection{Pieces of structure on $H_\Sigma$}  \label{sss:Pieces of structure on H_Sigma}
The $\delta$-structure on $G_\Sigma$ is a group homomorphism 
$$G_\Sigma\to F^*G_\Sigma$$ 
whose restriction to $\Sigma\otimes\BF_p$ is the geometric Frobenius. Dualizing this, we get a group homomorphism 
\begin{equation} \label{e:varphi}
\varphi :F^*H_\Sigma\to H_\Sigma
\end{equation}
whose restriction to $\Sigma\otimes\BF_p$ is the Verschiebung.

The homomorphism \eqref{e: G to G_m} yields a section 
\begin{equation}  \label{e:s}
s:\Sigma\to H_\Sigma \, .
\end{equation}
Since \eqref{e: G to G_m} is a $\delta$-homomorphism, we have
\begin{equation} \label{e:s & varphi}
\varphi (F^*s)=ps
\end{equation}
(when writing $ps$ we are using the additive notation for the group operation in $H_\Sigma$).

\begin{thm}   \label{t:H_Sigma}
Let $s:\Sigma\to H_\Sigma$ and $\varphi :F^*H_\Sigma\to H_\Sigma$ be as in \S\ref{sss:Pieces of structure on H_Sigma}. Then

(i) $s^{-1}(0_\Sigma)=\Delta_0$, where $0_\Sigma\subset H_\Sigma$ is the zero section and $\Delta_0\subset\Sigma$ is the Hodge-Tate divisor (see \S\ref{sss:G_Delta_0});

(ii) $\varphi :F^*H_\Sigma\to H_\Sigma$ factors as $F^*H_\Sigma\iso H_\Sigma (-\Delta_0)\to H_\Sigma$.
\end{thm}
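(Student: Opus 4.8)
The plan is to Cartier-dualize the extra structure on $G_\Sigma$ and then work over $W_{\prim}$, where everything becomes explicit. Recall that \eqref{e: G to G_m} dualizes to $s$, that the $\delta$-structure (a homomorphism $\psi\colon G_\Sigma\to F^*G_\Sigma$ restricting to the geometric Frobenius modulo $p$) dualizes to $\varphi$, and that both assertions of Theorem~\ref{t:H_Sigma} are local on $\Sigma$, so it suffices to treat the pullbacks to $W_{\prim}$. Over $W_{\prim}$ one has $G'_{W_{\prim}}=W_{\prim}\times W$ and $G_{W_{\prim}}=\{(\xi,x)\in W_{\prim}\times W\mid 1+\xi x\in\BG_m\subset W^\times\}$; the homomorphism \eqref{e: G to G_m} is $(\xi,x)\mapsto 1+\xi_0x_0$ (the $0$-th Witt component of $1+\xi x$); and $\Delta_0=\{\xi_0=0\}\subset W_{\prim}$, with $\xi_0$ a non-zero-divisor in the ($p$-torsion-free) ring $\cO_{W_{\prim}}$.

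\textbf{Part (i).} By construction $s^{-1}(0_\Sigma)$ is the largest closed substack over which \eqref{e: G to G_m} becomes trivial, i.e.\ over which $G\hookrightarrow W^\times$ lands in the identity section. Over $\Delta_0$ one has $\xi=V(\eta)$ with $\eta\in W^\times$, and an elementary manipulation of the Teichm\"uller condition shows that $1+\xi x$ being a Teichm\"uller element forces $\xi x=0$; hence \eqref{e: G to G_m} is trivial over $\Delta_0$, so $\Delta_0\subseteq s^{-1}(0_\Sigma)$. For the reverse inclusion and the scheme structure, note that on $G_{W_{\prim}}$ both $\xi_0$ and $x_0$ are topologically nilpotent (for $x_0$: vanishing of the first Witt component of $1+\xi x$ expresses $x_0^p$ in terms of $p$ and $\xi_0$), so $\log(1+\xi_0x_0)$ is a well-defined homomorphism $G_{W_{\prim}}\to(\BG_a)_{W_{\prim}}$; since $\chi_0-1$ is topologically nilpotent, $s^{-1}(0_\Sigma)$ equals the vanishing locus of this element of $\HHom(G_{W_{\prim}},(\BG_a)_{W_{\prim}})=\cO_{W_{\prim}}\{-1\}$ (a free rank-one module, by Theorem~\ref{t:2Hom (G_Sigma ,G_a)}). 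This element is divisible by $\xi_0$ — factor $\xi_0$ out inside $\cO_{G_{W_{\prim}}}$ and divide back inside the homomorphism module, using flatness of $G_{W_{\prim}}$ over $W_{\prim}$ and that $\xi_0$ is a non-zero-divisor — and the quotient $\xi_0^{-1}\log(1+\xi_0x_0)$ is a \emph{generator}: its $\rho_{\dR}$-pullback is $p^{-1}\log(1+px_0)$, which generates $\HHom(G_{\dR},(\BG_a)_{\Spf\BZ_p})$ by Proposition~\ref{p:G_dR}, and being a generator of a line bundle on the $p$-adic formal stack $W_{\prim}$ can be checked on its reduced substack, which coincides with that of $\Delta_0$ and has ring of global functions a field (cf.\ \eqref{e:Fun(Sigma)}). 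Hence $\log(1+\xi_0x_0)$ cuts out exactly $\{\xi_0=0\}$, i.e.\ $s^{-1}(0_\Sigma)=\Delta_0$. The main obstacle here is precisely this generator check: one must upgrade ``is a section'' to ``is a generator'', which is where Proposition~\ref{p:G_dR} and the coincidence of reduced substacks of $W_{\prim}$ and $\Delta_0$ are used.

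\textbf{Part (ii).} Dualizing $\psi$ gives $\varphi\colon F^*H_\Sigma\to H_\Sigma$, and on Lie algebras $\Lie\varphi$ is the cotangent pullback $\psi^*\colon F^*\omega_{G_\Sigma}\to\omega_{G_\Sigma}$, i.e.\ $\cO_\Sigma\{-p\}\to\cO_\Sigma\{-1\}$ under the identification $\omega_{G_\Sigma}=\Lie H_\Sigma=\cO_\Sigma\{-1\}$ of Theorem~\ref{t:2Hom (G_Sigma ,G_a)}. Over $W_{\prim}$ the $\delta$-structure on $G_{W_{\prim}}\subset W_{\prim}\times W$ is $(F_{W_{\prim}},F_W)$ with $F_W$ the Witt-vector Frobenius, so on the cotangent space at the identity the relative Frobenius sends $dx_0$ to $p\,dx_1$; combining this with the relation $\xi^{(1)}dx_1=\xi_0\,dx_0$ obtained by differentiating $(1+\xi x)_1=0$ at the identity (and the analogous relations from higher Witt components), together with the description $\omega_{G_{W_{\prim}}}=\cO_{W_{\prim}}\{-1\}$, one computes that $\Lie\varphi$ is, up to a unit, multiplication by a local generator of the invertible ideal $\cO_{W_{\prim}}(-\Delta_0)$; that is, $\Lie\varphi$ vanishes exactly along $\Delta_0$. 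Via the canonical isomorphism $\cO_\Sigma(-\Delta_0)\cong\cO_\Sigma\{1-p\}$ (the standard relation between the Hodge--Tate divisor and the Breuil--Kisin twist; cf.\ \S\ref{sss:G_Delta_0}), this exhibits $F^*\Lie H_\Sigma\cong\Lie H_\Sigma\otimes\cO_\Sigma(-\Delta_0)$ sitting inside $\Lie H_\Sigma$. Finally, a homomorphism of $1$-dimensional formal groups whose Lie map is of this shape factors canonically as $F^*H_\Sigma\iso H_\Sigma(-\Delta_0)\to H_\Sigma$ by the formal-group constructions of \S\ref{s:formal groups}, which is the claim. (Consistency check: over $\rho_{\dR}$, where $F$ acts trivially and $G_{\dR}\cong\BG_a^\sharp$, the $\delta$-structure is $y\mapsto py$, so $\varphi|_{\rho_{\dR}}$ is multiplication by $p$ on $\hat\BG_a$, matching the factorization and \eqref{e:s & varphi}.) The main obstacle here is the Witt-vector computation of $\Lie\varphi$ and its matching with the canonical generator of $\cO_\Sigma(-\Delta_0)$ through the identification $\cO_\Sigma\{-1\}=\omega_{G_\Sigma}$; passing from the Lie-algebra statement to the factorization of $\varphi$ itself is then a formal consequence of the general theory of $1$-dimensional formal groups.
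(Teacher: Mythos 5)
Your route --- dualizing the extra structure on $G_\Sigma$ and computing directly over $W_{\prim}$ with Witt vectors --- is genuinely different from the paper's proof, which pulls everything back along the faithfully flat $\pi:Q\to\Sigma$ and deduces both parts from the section $\sigma:Q\to G_Q$ and divisor bookkeeping (Lemma~\ref{l:Ker sigma^*}, Theorem~\ref{t:H_Q & rescaled hat G_m}, formula~\eqref{e:F^{-1}(D)}); it is closer to the alternative sketched in the remark following \S\ref{ss:H_Sigma proof}. However, as written it has two genuine gaps and a circularity. The most serious gap is in part (ii): you assert that once $\Lie\varphi$ is shown to be multiplication by a generator of $\cO_\Sigma(-\Delta_0)$, the factorization $F^*H_\Sigma\iso H_\Sigma(-\Delta_0)\to H_\Sigma$ is ``a formal consequence of the general theory of $1$-dimensional formal groups''. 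It is not. By Lemma~\ref{l:univ property of sX(-D)} and Proposition~\ref{p:univ property of sX(-D)}, $\varphi$ factors through $H_\Sigma(-\Delta_0)$ only if the \emph{homomorphism} $\varphi$ restricts to zero over $\Delta_0$, and vanishing of $\Lie\varphi$ along $\Delta_0$ does not imply this: the Frobenius of $(\hat\BG_a)_{\BF_p}$ and the Verschiebung of $(\hat\BG_m)_{\BF_p}$ both have zero derivative but are nonzero. You need the separate input that the $\delta$-structure map $G_\Sigma\to F^*G_\Sigma$ is trivial over $\Delta_0$; this is exactly Proposition~\ref{p:G_eta}(iii) (over $\eta$ the map is $x\mapsto Fx$ while $G_\eta$ is cut out by $Fx=0$). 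Only after that does the universal property, combined with your Lie computation, upgrade to the claimed isomorphism.

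In part (i), the element $\log(1+\xi_0x_0)$ and its quotient $\xi_0^{-1}\log(1+\xi_0x_0)=\sum_{n\ge1}\frac{(-\xi_0)^{n-1}}{n}x_0^n$ are not obviously well defined: the coefficients $\xi_0^{n-1}/n$ lie in the coordinate ring when $\xi_0=p$ (which is why \eqref{e:G_dR to G_a} makes sense), but at a general point of $W_{\prim}$ the component $\xi_0$ is merely topologically nilpotent and $\xi_0^{p-1}/p$ need not exist. Constructing this homomorphism in general, and identifying $s^{-1}(0_\Sigma)$ with the divisor by which it differs from a generator of $\HHom(G,\BG_a)$, is essentially the construction of the prismatic logarithm in \cite{BL} --- a nontrivial piece of work that your sketch treats as routine; the paper sidesteps it entirely by passing to $Q$, where $\sigma^*:H_Q\to(\hat\BG_m)_Q$ is available. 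Finally, beware the circularity: Theorem~\ref{t:2Hom (G_Sigma ,G_a)}, the identification $\Lie H_\Sigma=\cO_\Sigma\{-1\}$, and any relation between $\cO_\Sigma(-\Delta_0)$ and Breuil--Kisin twists are all \emph{deduced} in the paper from Theorem~\ref{t:H_Sigma}(ii), so they may not be used here. What you are allowed to use is Corollary~\ref{c:Hom (G_Sigma ,G_a)} --- that $\HHom(G_\Sigma,(\BG_a)_\Sigma)$ is a line bundle --- which follows from Theorem~\ref{t:dual to formal group} alone and does suffice for the generator argument in (i). (Also, $F^*\cO_\Sigma\{-1\}$ is not $\cO_\Sigma\{-p\}$; the defining relation is $F^*\cO_\Sigma\{1\}\iso\cO_\Sigma\{1\}(\Delta_0)$, which is precisely what part (ii) is needed to establish for $\Lie H_\Sigma^{\,-1}$.)
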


Here $H_\Sigma (-\Delta_0)$ is the formal group obtained from $H_\Sigma$ by \emph{rescaling} via the invertible subsheaf $\cO_\Sigma (-\Delta_0)\subset\cO_\Sigma\,$, see \S\ref{ss:Rescaling}. If you wish, $H_\Sigma (-\Delta_0)$ is a formal group equipped with a homomorphism $H_\Sigma (-\Delta_0)\to H_\Sigma$ vanishing at $\Delta_0$ and universal with this property (see Lemma~\ref{l:univ property of sX(-D)} and Proposition~\ref{p:univ property of sX(-D)}).

A proof of Theorem~\ref{t:H_Sigma} is given in \S\ref{ss:H_Sigma proof}.

\begin{cor}  \label{c:zeros of p^ns}
The substack of zeros of the section $p^ns$ equals $\Delta_0+\ldots+\Delta_n$, where $\Delta_i:=(F^i)^{-1}(\Delta_0)$.
\end{cor}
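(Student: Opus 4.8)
The plan is to deduce Corollary~\ref{c:zeros of p^ns} from Theorem~\ref{t:H_Sigma} by an induction on $n$, exploiting the compatibility of the section $s$ with the Frobenius lift via the relation \eqref{e:s & varphi}. Recall that $\Delta_i:=(F^i)^{-1}(\Delta_0)$; since $F:\Sigma\to\Sigma$ is a Frobenius lift and $\Delta_0$ is an effective Cartier divisor, each $\Delta_i$ is again an effective Cartier divisor (one should note $F^*\cO_\Sigma(-\Delta_0)=\cO_\Sigma(-\Delta_1)$ and, more generally, $F^*\cO_\Sigma(-\Delta_i)=\cO_\Sigma(-\Delta_{i+1})$, so $\Delta_0+\dots+\Delta_n$ makes sense and is cut out by a section of $\cO_\Sigma(-(\Delta_0+\dots+\Delta_n))$). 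The base case $n=0$ is exactly Theorem~\ref{t:H_Sigma}(i): the zero locus of $s$ is $\Delta_0$.

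For the inductive step, the key is to translate ``the substack of zeros of $p^ns$'' into a statement about $\varphi$ and then apply Theorem~\ref{t:H_Sigma}(ii). First I would rewrite $p^ns$ using \eqref{e:s & varphi}: iterating $\varphi(F^*s)=ps$ gives $p^ns = \varphi^{(n)}((F^n)^*s)$, where $\varphi^{(n)}:(F^n)^*H_\Sigma\to H_\Sigma$ is the $n$-fold composite $\varphi\circ F^*\varphi\circ\dots\circ(F^{n-1})^*\varphi$. By Theorem~\ref{t:H_Sigma}(ii) applied on $\Sigma$ and pulled back along $F^i$, each factor $(F^i)^*\varphi:(F^{i+1})^*H_\Sigma\to(F^i)^*H_\Sigma$ factors as an isomorphism $(F^{i+1})^*H_\Sigma\iso(F^i)^*H_\Sigma(-\Delta_{i+1})$ followed by the canonical rescaling map $(F^i)^*H_\Sigma(-\Delta_{i+1})\to(F^i)^*H_\Sigma$ vanishing along $\Delta_{i+1}$ (here I use $F^*(H_\Sigma(-\Delta_0))=(F^*H_\Sigma)(-\Delta_1)$ and more generally the compatibility of rescaling with pullback from \S\ref{ss:Rescaling}). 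Composing, $\varphi^{(n)}:(F^n)^*H_\Sigma\to H_\Sigma$ factors as an isomorphism onto $H_\Sigma(-(\Delta_1+\dots+\Delta_n))$ followed by the rescaling map to $H_\Sigma$ that vanishes precisely along $\Delta_1+\dots+\Delta_n$.

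Now I would combine the two inputs. Since $\varphi^{(n)}$ is, up to isomorphism of source, the rescaling $H_\Sigma(-(\Delta_1+\dots+\Delta_n))\to H_\Sigma$, and $(F^n)^*s$ is the pullback of the section $s$, the zero locus of $p^ns=\varphi^{(n)}((F^n)^*s)$ is the union (sum, as effective Cartier divisors) of the zero locus of the rescaling map — namely $\Delta_1+\dots+\Delta_n$ — with the zero locus of $(F^n)^*s$ viewed as a section of the rescaled formal group; but the latter is just $(F^n)^{-1}$ of the zero locus of $s$, which by Theorem~\ref{t:H_Sigma}(i) is $(F^n)^{-1}(\Delta_0)=\Delta_n$. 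Wait — I must be careful here: $(F^n)^{-1}(\Delta_0)=\Delta_n$, not $\Delta_0$, so the contribution from the section itself after $n$-fold pullback is $\Delta_n$, while the contribution from the rescaling/Verschiebung factors is $\Delta_1+\dots+\Delta_n$. These overlap, so I should instead argue more cleanly by induction: the zeros of $p^ns=\varphi(F^*(p^{n-1}s))$ decompose, using Theorem~\ref{t:H_Sigma}(ii) for the outermost $\varphi$, as $\Delta_0$ (zero locus of the rescaling $H_\Sigma(-\Delta_0)\to H_\Sigma$) plus $F^{-1}$ of the zero locus of $p^{n-1}s$, which by the inductive hypothesis is $F^{-1}(\Delta_0+\dots+\Delta_{n-1})=\Delta_1+\dots+\Delta_n$; altogether $\Delta_0+\Delta_1+\dots+\Delta_n$, as claimed.

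The main obstacle is the bookkeeping of \emph{effective Cartier divisors versus set-theoretic zero loci}: I must verify that the zero locus of a composite $H\xrightarrow{\sim}H(-D)\xrightarrow{\text{rescale}}H$ applied to a section $t$ is genuinely the \emph{sum} $D + (\text{zero locus of }t)$ of effective Cartier divisors (with multiplicities adding), not merely their union with reduced structure — this requires knowing that rescaling by $\cO_\Sigma(-D)$ multiplies the ideal of the zero section by the ideal of $D$, which should follow from the explicit description of rescaling in \S\ref{ss:Rescaling}. A secondary point to check is that $F^*$ of the divisor $\Delta_0+\dots+\Delta_{n-1}$ is $\Delta_1+\dots+\Delta_n$ as effective Cartier divisors, i.e.\ that pullback along the Frobenius lift is compatible with the defining sections; this is immediate from $\Delta_{i+1}=F^{-1}(\Delta_i)$ and flatness considerations, but worth stating.
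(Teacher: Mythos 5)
Your proof is correct and follows essentially the same route as the paper, whose entire argument is ``combine Theorem~\ref{t:H_Sigma} with \eqref{e:s & varphi}'': you have simply made explicit the induction on $n$ that this combination entails, with the base case from Theorem~\ref{t:H_Sigma}(i) and the inductive step from Theorem~\ref{t:H_Sigma}(ii) applied to $p^ns=\varphi(F^*(p^{n-1}s))$. Your self-correction away from the $n$-fold-composite bookkeeping to the one-step induction is exactly the right move, and the two points you flag at the end (additivity of divisors under rescaling, and $F^*(\Delta_0+\dots+\Delta_{n-1})=\Delta_1+\dots+\Delta_n$) are the correct things to check and both hold.
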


\begin{proof}
Combine Theorem~\ref{t:H_Sigma} with \eqref{e:s & varphi}.
\end{proof}

\subsubsection{The ``de Rham pullback'' of $H_\Sigma$}
Let $H_{\dR}:=\rho_{\dR}^*H_\Sigma$, where $\rho_{\dR}:\Spf\BZ_p\to\Sigma$ is as in~\S\ref{sss:2G_dR}.  Then $H_{\dR}$ is a formal group over $\Spf\BZ_p$ equipped with the following pieces of structure. First, \eqref{e:s} induces 
$s_{\dR}:\Spf\BZ_p\to H_{\dR}$. Second, \eqref {e:varphi} induces  a homomorphism $\varphi_{\dR} :H_{\dR}\to H_{\dR}$ (here we use that $F\circ\rho_{\dR}=\rho_{\dR}$).

\begin{prop}   \label{p:H_dR}
(i) There exists a unique isomorphism 
\[
(H_{\dR}, s_{\dR})\iso ((\hat\BG_a)_{\Spf\BZ_p}\,  , p:\Spf\BZ_p\to (\hat\BG_a)_{\Spf\BZ_p}),
\]
where $(\hat\BG_a)_{\Spf\BZ_p}$ is the formal additive group over $\Spf\BZ_p\,$.

(ii) $\varphi_{\dR}$ equals $p\in\End H_{\dR}\,$.
\end{prop}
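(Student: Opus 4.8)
The plan is to reduce Proposition~\ref{p:H_dR} to Proposition~\ref{p:G_dR} together with the standard Cartier duality between the divided-power group $\BG_a^\sharp$ and the formal additive group $\hat\BG_a$. Since Cartier duality commutes with base change, $H_{\dR}=\rho_{\dR}^*H_\Sigma$ is the Cartier dual of $G_{\dR}=\rho_{\dR}^*G_\Sigma$. By Proposition~\ref{p:G_dR} we have $G_{\dR}\cong(\BG_a^\sharp)_{\Spf\BZ_p}$, and the Cartier dual of $\BG_a^\sharp$ over $\Spf\BZ_p$ is $(\hat\BG_a)_{\Spf\BZ_p}$; the duality is realized by the perfect pairing $\hat\BG_a\times\BG_a^\sharp\to\hat\BG_m\subset\BG_m$ sending $(t,y)$ to $\sum_{n\ge0}t^ny^{[n]}$, where $y^{[n]}$ are the divided powers of the coordinate $y$ on $\BG_a^\sharp$. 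This already yields an abstract isomorphism $H_{\dR}\cong(\hat\BG_a)_{\Spf\BZ_p}$; what remains in~(i) is to pin down the section $s_{\dR}$.

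For that I would use that, by construction, $s$ corresponds under the identification $H_\Sigma=\HHom(G_\Sigma,(\BG_m)_\Sigma)$ to the homomorphism \eqref{e: G to G_m}; hence $s_{\dR}$ corresponds to its $\rho_{\dR}$-pullback $G_{\dR}\to(\BG_m)_{\Spf\BZ_p}$, which by \eqref{e:2def of G_dR} sends a Witt vector $x$ (with $1+px$ in the image of the Teichm\"uller embedding) to the $0$-th Witt component $1+px_0\in A^\times$. Transporting this along the isomorphism of Proposition~\ref{p:G_dR}, under which $1+px_0=\exp(py)=\sum_{n\ge0}p^ny^{[n]}$ in the coordinate $y$ on $\BG_a^\sharp$, the homomorphism becomes $y\mapsto\sum_{n\ge0}p^ny^{[n]}$; under the pairing above this is precisely the point $t=p$ of $\hat\BG_a$. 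Thus $(H_{\dR},s_{\dR})\cong((\hat\BG_a)_{\Spf\BZ_p},p)$, which is the existence statement in~(i).

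Uniqueness in (i) holds because two such isomorphisms differ by an automorphism of the pair $((\hat\BG_a)_{\Spf\BZ_p},p)$; since $\BZ_p$ is $p$-torsion-free, every additive power series over it is linear, so $\Aut((\hat\BG_a)_{\Spf\BZ_p})=\BZ_p^\times$ acts by scaling and only the identity fixes the point $p$. For (ii), the same linearity gives $\End(H_{\dR})=\BZ_p$, so under the identification $H_{\dR}\cong(\hat\BG_a)_{\Spf\BZ_p}$ the endomorphism $\varphi_{\dR}$ is multiplication by some $c\in\BZ_p$; pulling \eqref{e:s & varphi} back along $\rho_{\dR}$ — legitimate because $F\circ\rho_{\dR}=\rho_{\dR}$, so that $F^*H_{\dR}=H_{\dR}$ and $F^*s$ restricts to $s_{\dR}$ — gives $\varphi_{\dR}(s_{\dR})=p\,s_{\dR}$, i.e.\ $c\cdot p=p^2$ in $\hat\BG_a(\BZ_p)$, so $c=p$ by torsion-freeness, and hence $\varphi_{\dR}=p$.

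I do not anticipate a genuine obstacle: the substantive inputs — Proposition~\ref{p:G_dR} and the $\BG_a^\sharp$/$\hat\BG_a$ duality — are available, and the rest is bookkeeping. The points that need care are: tracking the section $s_{\dR}$ through the successive identifications with \eqref{e: G to G_m}, with $x\mapsto 1+px_0$, with $y\mapsto\exp(py)$, and finally with the point $p\in\hat\BG_a$; checking that a homomorphism out of the ``connected'' group $\BG_a^\sharp$ automatically factors through $\hat\BG_m$, so that the pairing is as described; and being careful that the linearity of additive power series — hence $\End((\hat\BG_a)_{\Spf\BZ_p})=\BZ_p$ — genuinely uses $p$-torsion-freeness of $\BZ_p$ and fails over $\BF_p$, which is precisely why the non-linear discrepancy recorded in the ``Warning'' of \S\ref{sss:somewhat unexpected} is possible.
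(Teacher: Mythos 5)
Your proposal is correct and follows essentially the same route as the paper: the paper's proof is exactly "existence in (i) follows from Proposition~\ref{p:G_dR} via the pairing $(u,v)\mapsto\exp(uv)$ between $\hat\BG_a$ and $\BG_a^\sharp$, uniqueness is obvious, and (ii) follows from (i) together with \eqref{e:s & varphi}." You have merely written out the bookkeeping (tracking $s_{\dR}$ through $x\mapsto 1+px_0=\exp(py)$ to the point $p$, and the linearity of additive power series over $\BZ_p$) that the paper leaves implicit, and all of it checks out.
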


\begin{proof}
Uniqueness in (i) is obvious. Existence in (i) follows from Proposition~\ref{p:G_dR} because $\hat\BG_a$ is Cartier dual to $\BG_a^\sharp$ via the pairing
\[
\hat\BG_a\times\BG_a^\sharp\to\BG_m\,, \quad (u,v)\mapsto \exp(uv).
\]
Statement (ii) follows from (i) because $\varphi ( s_{\dR})=ps_{\dR}$ by \eqref{e:s & varphi}.
\end{proof}

\subsubsection{Proof of Theorem~\ref{t:2Hom (G_Sigma ,G_a)}}  \label{sss:Hom (G_Sigma ,G_a) proof}
$\HHom (G_\Sigma ,(\BG_a)_\Sigma )=\Lie (H_\Sigma)$ because $H_\Sigma$ is dual to $G_\Sigma\,$.
By Theorem~\ref{t:H_Sigma}(ii) and Proposition~\ref{p:H_dR}(i), $\Lie (H_\Sigma)$ is a line bundle on $\Sigma$ equipped with an isomorphism
$F^*\Lie (H_\Sigma)\iso \Lie (H_\Sigma)(-\Delta_0)$ and a trivialization of $\rho_{\dR}^*\Lie (H_\Sigma)$. So one has a canonical isomorphism
$\Lie (H_\Sigma)\iso\cO_\Sigma \{ -1\}$, see \cite[\S 4.9]{Prismatization}. The corresponding isomorphism 
$\HHom (G_\Sigma ,(\BG_a)_\Sigma )\iso\cO_\Sigma \{ -1\}$ has the desired property.
\qed

\subsection{The pullback of $H_\Sigma$ to the $q$-de Rham prism $Q$}   \label{ss:H_Q}
\subsubsection{Recollections on $Q$}  \label{sss:Recollections on Q}
Let $Q:=\Spf\BZ_p[[q-1]]$, where $\BZ_p[[q-1]]$ is equipped with the $(p,q-1)$-adic topology.  Define $F:Q\to Q$ by $q\mapsto q^p$. Then $(Q,F)$ is
a formal $\delta$-scheme. 
More abstractly, $Q$ is the formal $\delta$-scheme underlying the formal group $\delta$-scheme $(\hat\BG_m)_{\Spf\BZ_p}$ over $\Spf\BZ_p$,
and the $\delta$-structure on $Q$ comes from the $\delta$-structure on $\BG_m$ introduced in \S\ref{sss:examples of group delta-schemes}.

Let $\Phi_p$ denote the cyclotomic polynomial. The element 
\[
\Phi_p ([q])=1+[q]+\ldots [q^{p-1}]\in W(\BZ_p[[q-1]])
\]
defines a morphism $Q\to W$ and, in fact, a morphism $Q\to W_{\prim}$. This is a $\delta$-morphism because 
$F(\Phi_p ([q])=\Phi_p (q^p)$. Let $\pi:Q\to\Sigma$ be the composite morphism $Q\to W_{\prim}\to\Sigma$. 
It is known that $\pi$ is faithfully flat. For us, the \emph{$q$-de Rham prism} is the pair $(Q,\pi )$.

Set $(\Delta_0)_Q:=\Delta_0\times_\Sigma Q\subset Q$; by the definition of $\pi$, we have
\begin{equation}          \label{e:preimage of Delta_0 in Q}
(\Delta_0)_Q=\Spf \BZ_p[[q-1]]/(\Phi_p(q))\subset\Spf \BZ_p[[q-1]]=Q.
\end{equation}

More details about $(Q,\pi )$ can be found in \cite{BL} and \cite[Appendix B]{Prismatization}.

\subsubsection{Pieces of structure on $H_Q$}   \label{sss:structures on H_Q}
Let $G_Q:=\pi^*G_\Sigma$, $H_Q:=\pi^*H_\Sigma$. 

By definition, a section $Q\to G_Q$ is the same as an element $x\in W(\BZ_p[[q-1]])$ such that $1+x\Phi_p ([q])$ is Teichm\"uller.
We will use the section $\sigma :Q\to G_Q$ corresponding to $x=[q]-1$ (then $1+x\Phi_p ([q])=[q^p]$).  It is easy to see that $\sigma :Q\to G_Q$ is a $\delta$-morphism. The section~$\sigma$ is a key advantage of $Q$ over $\Sigma$.

Since $G_Q$ is dual to $H_Q$, the section $\sigma :Q\to G_Q$ defines a homomorphism 
\begin{equation}   \label{e:sigma^*}
\sigma^*:H_Q\to (\hat\BG_m)_Q \,  .
\end{equation}

On the other hand, base-changing the pieces of structure on $H_\Sigma$ described in \S\ref{sss:Pieces of structure on H_Sigma}, we get similar pieces of structure on $H_Q$.
Namely, we get a group homomorphism 
\begin{equation} \label{e:varphi_Q}
\varphi_Q :F^*H_Q\to H_Q
\end{equation}
whose restriction to $Q\otimes\BF_p$ is the Verschiebung and a section 
\begin{equation}  \label{e:s_Q}
s_Q:Q\to H_Q \, .
\end{equation}
such that
\begin{equation} \label{e:s_Q & varphi_Q}
\varphi_Q (F^*s_Q)=ps_Q
\end{equation}
(when writing $ps_Q$ we are using the additive notation for the group operation in $H_Q$).

\eqref{e:sigma^*} interacts with \eqref{e:varphi_Q}-\eqref{e:s_Q} as follows.

\begin{lem}   \label{l:sigma^* &other strictures}
(i) The following diagram commutes:
\[
\xymatrix{
F^*H_Q\ar[r]^{\varphi_Q} \ar[d]_{F^*(\sigma_Q^*)} & H_Q\ar[d]^{\sigma_Q^*}\\
(\hat\BG_m)_Q\ar[r]^{\id}& (\hat\BG_m)_Q\
}
\]

(ii) $\sigma^*\circ s_Q=q^p\in\hat\BG_m (Q)$.
\end{lem}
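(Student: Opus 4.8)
**Proof proposal for Lemma~\ref{l:sigma^* &other strictures}.**

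The plan is to reduce both statements to concrete computations in $W(\BZ_p[[q-1]])$ via Cartier duality, using the explicit description of $G_Q\subset G'_Q$ and of the section $\sigma$ together with the fact that Cartier duality turns the $\delta$-structure on $G_Q$ into $\varphi_Q$ and the homomorphism $G_Q\to(\BG_m)_Q$ (restricted from \eqref{e: G to G_m}) into the section $s_Q$. Concretely, $\sigma^*\colon H_Q\to(\hat\BG_m)_Q$ is, by definition of the duality pairing $G_Q\times H_Q\to\BG_m$, the map that sends a point $h$ of $H_Q$ to the value at $h$ of the character of $H_Q$ corresponding to $\sigma\in G_Q(Q)$; so everything is governed by how $\sigma$ transforms under the extra structure on $G_Q$.

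For part~(i): the $\delta$-structure on $G_\Sigma$ is a homomorphism $G_\Sigma\to F^*G_\Sigma$ lifting Frobenius, and dualizing it gives $\varphi_Q\colon F^*H_Q\to H_Q$; so $\sigma_Q^*\circ\varphi_Q$ is the character of $F^*H_Q$ obtained by pulling back the character $\sigma$ along $G_Q\to F^*G_Q$, which is exactly $F^*$ of the character of $H_Q$ attached to the preimage of $\sigma$ — in other words $\sigma_Q^*\circ\varphi_Q = F^*(\tau^*)$ where $\tau\in (F^*G_Q)(Q)=G_Q(Q)$ is the image of $\sigma$ under the $\delta$-structure map. So the diagram commutes (with the bottom map being $\id$) precisely when the $\delta$-structure map $G_Q\to F^*G_Q$ sends $\sigma$ to $F^*\sigma$, i.e.\ when $\sigma$ is a $\delta$-section. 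But this is already observed in \S\ref{sss:structures on H_Q}: $\sigma\colon Q\to G_Q$ is a $\delta$-morphism (it corresponds to $x=[q]-1$, for which $F(x)=[q^p]-1=[q]^p-1$ matches what the $\delta$-structure on $G'_{W_{\prim}}$ produces, using that $F$ on $W$ is the Witt-vector Frobenius and $F(\Phi_p([q]))=\Phi_p(q^p)$). Feeding this back through duality yields the commuting square with identity along the bottom.

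For part~(ii): $s_Q\colon Q\to H_Q$ is the section dual to the homomorphism $G_Q\to(\BG_m)_Q$ coming from \eqref{e: G to G_m}; concretely, under the duality pairing, $\sigma^*(s_Q)\in\hat\BG_m(Q)$ is the image of $\sigma\in G_Q(Q)$ under that homomorphism $G_Q\to(\BG_m)_Q$ (this is just unwinding "evaluate the character $\sigma$ at the distinguished point $s_Q$"). Now $\sigma$ corresponds to the point $x=[q]-1$ of $G'_Q=Q\times W$, the homomorphism \eqref{e:G' to W^times} sends it to $1+x\Phi_p([q])=1+([q]-1)\Phi_p([q])=[q]\cdot\Phi_p([q])-\Phi_p([q])+1$; using $([q]-1)\Phi_p([q])=[q]^p-1=[q^p]-1$ one gets $1+x\Phi_p([q])=[q^p]$, which is the Teichm\"uller representative of $q^p$, and its image under the retraction $W^\times\to\BG_m$ (the $0$-th component) is $q^p$. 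Hence $\sigma^*\circ s_Q=q^p\in\hat\BG_m(Q)$, as claimed.

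The main obstacle I expect is bookkeeping rather than conceptual: one has to be careful that "dual of the $\delta$-structure" and "dual of $G\to\BG_m$" really unwind to "pull back the character $\sigma$" in the two respective ways asserted above, and in particular to check the Frobenius-compatibility in part~(i) at the level of $W_{\prim}$ (where the $\delta$-structure on $G'_{W_{\prim}}$ is given explicitly) rather than only abstractly on $\Sigma$. Once one fixes the normalization of the pairing $G_Q\times H_Q\to\BG_m$ and tracks it through the descent $W_{\prim}\to\Sigma$ and the base change $\pi\colon Q\to\Sigma$, both (i) and (ii) become the two short computations $F([q]-1)=[q^p]-1$ and $1+([q]-1)\Phi_p([q])=[q^p]$ in $W(\BZ_p[[q-1]])$.
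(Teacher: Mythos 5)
Your proposal is correct and takes essentially the same route as the paper: part (i) is exactly the observation that the square commutes because $\sigma$ is a $\delta$-morphism, and part (ii) is the computation that composing $\sigma$ with the homomorphism $G_Q\to(\BG_m)_Q$ gives $1+([q]-1)\Phi_p([q])=[q^p]$, i.e.\ $q^p\in\BG_m(Q)$. The paper states this in two lines; your extra unwinding of the Cartier-duality bookkeeping is accurate but not needed beyond what you wrote.
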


\begin{proof}
Statement (i) follows from $\sigma$ being a $\delta$-morphism.

Composing $\sigma_Q:Q\to G_Q$ with the  homomorphism $G_Q\to (\BG_m)_Q$ that comes from \eqref{e: G to G_m}, we get
$1+(q-1)\cdot\Phi_p(q)=q^p\in\BG_m (Q)$. Statement (ii) follows.
\end{proof}

\begin{thm}  \label{t:H_Q & rescaled hat G_m}   
The homomorphism $\sigma^*:H_Q\to (\hat\BG_m)_Q$ induces an isomorphism
\[
H_Q\iso (\hat\BG_m)_Q(-D)\, , 
\]
where $D\subset Q$ is the divisor $q=1$. 
\end{thm}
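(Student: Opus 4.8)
The plan is to show directly that the homomorphism $\sigma^*$ of \S\ref{sss:structures on H_Q} realizes the asserted isomorphism, in two stages: that $\sigma^*$ factors through the subgroup $(\hat\BG_m)_Q(-D)$, and that the resulting map is an isomorphism. For the first stage I would note that, via $\pi$, the divisor $D=V(q-1)\subset Q$ is identified with the de Rham point $\rho_{\dR}\colon\Spf\BZ_p\to\Sigma$: at $q=1$ the Witt vector $\Phi_p([q])$ becomes $1+1+\dots+1=p\in W(\BZ_p)$, which is precisely the element defining $\rho_{\dR}$ in \S\ref{sss:2G_dR}. Hence $G_D\cong G_{\dR}$ and $H_D\cong H_{\dR}$, and under this identification $\sigma|_D$ is the section given by the Witt vector $([q]-1)|_{q=1}=0$, i.e.\ the zero section of $G_{\dR}$; dualizing, $\sigma^*|_D\colon H_{\dR}\to(\hat\BG_m)_{\Spf\BZ_p}$ is the trivial homomorphism. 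So $\sigma^*$ vanishes along $D$, and by the universal property of rescaling (\propref{p:univ property of sX(-D)}) it factors uniquely as $H_Q\xrightarrow{\tilde\sigma^*}(\hat\BG_m)_Q(-D)\to(\hat\BG_m)_Q$.

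Next I would reduce the claim that $\tilde\sigma^*$ is an isomorphism to a statement on Lie algebras. Since $Q=\Spf\BZ_p[[q-1]]$ has local coordinate ring, the line bundles $\Lie H_Q$ and $\Lie\big((\hat\BG_m)_Q(-D)\big)$ are free, so $H_Q$ and $(\hat\BG_m)_Q(-D)$ are given by formal group laws over $Q$, and a homomorphism of such is an isomorphism iff its differential is invertible. As $Q$ is local it suffices to check this at the closed point $\Spec\BF_p\subset Q$, which factors through $D$; so it is enough to prove that $d(\tilde\sigma^*|_D)$ is an isomorphism of free rank-one $\BZ_p$-modules.

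The substance is this last point, which I would treat over the first infinitesimal neighbourhood $D^{(1)}:=V\big((q-1)^2\big)\subset Q$, writing $\epsilon=q-1$ (so $\epsilon^2=0$). Because $\sigma^*$ is trivial along $D$, its restriction to $D^{(1)}$ has the form $\sigma^*_{D^{(1)}}(h)=1+\epsilon\cdot\lambda(\bar h)$, where $\bar h$ is the reduction of $h$ mod $\epsilon$ and $\lambda\colon H_{\dR}\to(\BG_a)_{\Spf\BZ_p}$ is a homomorphism (additivity of $\lambda$ comes from $\sigma^*$ being a homomorphism, the cross terms being killed by $\epsilon^2=0$); moreover, after trivializing $\cO_Q(-D)|_D$ by $\epsilon$, the map $d(\tilde\sigma^*|_D)$ is exactly $d\lambda$. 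Evaluating the displayed identity at $h=s_Q|_{D^{(1)}}$, whose reduction is $s_{\dR}$ = the point $p$ of $H_{\dR}=\hat\BG_a$ by \propref{p:H_dR}(i), and using that homomorphisms $\hat\BG_a\to\BG_a$ over $\BZ_p$ are linear, gives $\sigma^*_{D^{(1)}}(s_Q|_{D^{(1)}})=1+\epsilon\cdot(d\lambda)\cdot p$. On the other hand $\sigma^*\circ s_Q=q^p$ by \lemref{l:sigma^* &other strictures}(ii), and $q^p|_{D^{(1)}}=(1+\epsilon)^p=1+p\epsilon$. Comparing, $(d\lambda)\cdot p=p$, so $d\lambda=1$; hence $d(\tilde\sigma^*|_D)$ is an isomorphism, and the proof is complete.

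The one genuinely delicate step is the bookkeeping inside the last paragraph: one must correctly match the ``normal derivative'' $d(\tilde\sigma^*|_D)$ of the rescaled map with the slope $d\lambda$ of the first-order homomorphism $\lambda$, which means keeping track of the factor $q-1$ built into the Lie generator of $(\hat\BG_m)_Q(-D)$ and of the fact that $\Lie H_Q=\pi^*(\cO_\Sigma\{-1\})$ is a twist, not $\cO_Q$. Everything else is formal, and the argument is uniform in $p$ (no separate treatment of $p=2$ is needed). As a by-product, comparison with \thmref{t:2Hom (G_Sigma ,G_a)} re-derives the identification of $\Lie H_Q$, but this is not used above.
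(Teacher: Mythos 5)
Your proof is correct, and it takes a genuinely different route from the paper's. Both arguments start from the same two facts --- $\sigma$ vanishes on $D$ (so $\sigma^*$ factors through $(\hat\BG_m)_Q(-D)$) and $\sigma^*\circ s_Q=q^p$ --- but exploit the second one differently. The paper turns it into an exact divisor identity $s_Q^{-1}(\Ker\sigma^*)=D+(\Delta_0)_Q$, writes $\Ker\sigma^*=H_D+0_Q+\fD$ with $\fD\ge 0$, and uses Corollary~\ref{c:s vanishes on Delta_0} (i.e.\ the computation of $G_\eta$ at the Hodge--Tate point) to force $\fD=0$; this yields $f=1+(q-1)g$ with $g\in x\cdot\BZ_p[[q-1,x]]^\times$, whence the isomorphism. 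You instead extract only the first-order information of $q^p$ along $D$ (the coefficient $p$ of $q-1$) and compare it with $\lambda(s_{\dR})=(d\lambda)\cdot p$ coming from Proposition~\ref{p:H_dR}(i), i.e.\ from the de Rham point rather than the Hodge--Tate point. Your reduction to invertibility of the differential is legitimate (a homomorphism of one-dimensional formal group laws over the local ring $\BZ_p[[q-1]]$ is an isomorphism iff its linear coefficient is a unit, which can be tested in the residue field), the identification $\pi|_D=\rho_{\dR}$ via $\Phi_p([1])=p$ is correct, and the first-order bookkeeping over $D^{(1)}$ --- including the linearity of homomorphisms $\hat\BG_a\to\BG_a$ over the torsion-free ring $\BZ_p$ --- checks out. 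What the paper's route buys in addition is Lemma~\ref{l:Ker sigma^*}(ii), $s_Q^{-1}(0_Q)=(\Delta_0)_Q$, which is needed later for Theorem~\ref{t:H_Sigma}(i); your argument does not establish that en route, though it follows a posteriori from the theorem. Also, your passing mention of $\Lie H_Q=\pi^*\cO_\Sigma\{-1\}$ must indeed remain unused, as you note: Theorem~\ref{t:2Hom (G_Sigma ,G_a)} is proved downstream of the present theorem.
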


The proof is given in \S\ref{sss: H_Q & hat G_m proof}.

\subsubsection{$H_Q$ as a formal scheme}  \label{sss:H_Q as formal scheme}
By Theorem~\ref{t:H_Q & rescaled hat G_m}, $H_Q$ identifies with $(\hat\BG_m)_Q(-D)$. So the formal scheme $H_Q$ can be obtained as follows: first, blow up
the formal scheme 
$$(\hat\BG_m)_Q=Q\times\hat\BG_m=\Spf\BZ_p[[q-1,q'-1]]$$ 
along the subscheme $q=q'=1$, then $H_Q$ is the formal completion of the blow-up along the strict preimage of the unit section of $(\hat\BG_m)_Q$. In other words, 
\begin{equation}    \label{e:coordinate on H_Q}
H_Q=\Spf\BZ_p[[q-1,z]], \quad \mbox{where } z=\frac{q'-1}{q-1}.
\end{equation}

\subsubsection{The formal group $H_Q$ in explicit terms}   \label{sss:H_Q explicitly}
In terms of the coordinate $z$ from \eqref{e:coordinate on H_Q}, $H_Q$~corresponds to the following formal group law over $\BZ_p[[q-1]]$:
\begin{equation}  \label{e:group law for H_Q}
z_1*z_2=\frac{(1+(q-1)z_1)(1+(q-1)z_2)-1}{q-1}=z_1+z_2+(q-1)z_1z_2\, .
\end{equation}

Let us describe in these terms the pieces of structure on $H_Q$ defined in \S\ref{sss:structures on H_Q}.
The homomorphism $\sigma^*:H_Q\to (\hat\BG_m)_Q$ is just the map $(q,z)\mapsto (q, 1+(q-1)z)$.
By Lemma~\ref{l:sigma^* &other strictures}(ii), the section $s_Q: Q\to H_Q$ is given by $z=\frac{q^p-1}{q-1}=\Phi_p(q)$.
It remains to describe the homomorphism $\varphi_Q :F^*H_Q\to H_Q$. The formal group $F^*H_Q$ corresponds to the group law
\begin{equation}  \label{e:group law for F^*H_Q}
y_1*y_2=y_1+y_2+(q^p-1)y_1y_2,
\end{equation}
which is the $F$-pullback of \eqref{e:group law for H_Q}. By Lemma~\ref{l:sigma^* &other strictures}(i), the homomorphism $\varphi_Q$ is the homomorphism from
\eqref{e:group law for F^*H_Q} to \eqref{e:group law for H_Q} given by $z=\Phi_p(q)\cdot y$.

\subsubsection{The group scheme $H_Q^{\alg}$}   \label{sss:H_Q^alg}
Let $H_Q^{\alg}:=\Spf A$, where $A$ is the completion of $\BZ_p[q,z]$ for the $(p,q-1)$-adic topology. The morphism $H_Q^{\alg}\to\Spf\BZ_p[[q-1]]=Q$ is affine (in particular, schematic). The r.h.s. of \eqref{e:group law for H_Q} is a polynomial, so it gives a morphism $$H_Q^{\alg}\times_Q H_Q^{\alg}\to H_Q^{\alg}.$$ This morphism makes  $H_Q^{\alg}$ into a smooth affine group scheme over $Q$. The formal completion of $H_Q^{\alg}$ along its unit identifies with $H_Q$ (if you wish, $H_Q^{\alg}$ is an \emph{algebraization} of the formal group $H_Q$ in the sense of \S\ref{sss:Algebraizations}). The homomorphism $\varphi_Q :F^*H_Q\to H_Q$ comes from a homomorphism $F^*H_Q^{\alg}\to H_Q^{\alg}$. The group $H_Q^{\alg}$ has a remarkable section $\tilde s_Q:Q\to H_Q^{\alg}$ given by $z=1$; one has a commutative diagram
\[
\xymatrix{
Q\ar[r]^{\tilde s_Q} \ar[d]_{s_Q} & H_Q^{\alg}\ar[d]^p\\
H_Q\ar@{^{(}->}[r] & H_Q^{\alg}
}
\]
($s_Q$ was defined in \S\ref{e:s_Q} and described in \S\ref{sss:H_Q explicitly}).

\subsubsection{Restriction of $H_Q^{\alg}$ to $(\Delta_0)_Q$}   \label{sss:restriction of H_Q^alg to Delta_0_Q}
To get a feel of $H_Q^{\alg}$ let us discuss its restriction to~$(\Delta_0)_Q$. 

Recall that $(\Delta_0)_Q:=\Delta_0\times_\Sigma Q$, where $\Delta_0\subset\Sigma$ is the Hodge-Tate divisor; explicitly, $(\Delta_0)_Q=\Spf\BZ_p[[q-1]]/(\Phi_p(q))\subset\Spf\BZ_p[[q-1]]=Q$. Let $\zeta\in\BZ_p[[q-1]]/(\Phi_p(q))$ be the image of $q$; then $\zeta$ is a primitive $p$-th root of $1$.

Let $H_{(\Delta_0)_Q}^{\alg}$  be the restriction of $H_Q^{\alg}$ to $(\Delta_0)_Q$. It is easy to check that one has an exact sequence
\begin{equation}  \label{e:restriction of H_Q^alg to Delta_0_Q}
0\to (\BZ/p\BZ)_{(\Delta_0)_Q}\overset{i}\longrightarrow H_{(\Delta_0)_Q}^{\alg}\overset{\lambda}\longrightarrow (\BG_a)_{(\Delta_0)_Q}\to 0;
\end{equation}
here $i$ takes $1\in\BZ/p\BZ$ to the section $\tilde s_{(\Delta_0)_Q}:(\Delta_0)_Q\to H_{(\Delta_0)_Q}$ given by $z=1$ (then $1+(\zeta -1)z=\zeta$ is a $p$-th root of unity), and $\lambda$ is given by $(\zeta -1)^{-1}\cdot\log (1+(\zeta-1)z)$ (which is a power series in $z$ whose coefficients are in $\BZ_p[\zeta ]=\BZ_p[[q-1]]/(\Phi_p(q))$ and converge to $0$).

The exact sequence \eqref{e:restriction of H_Q^alg to Delta_0_Q} shows that $H_{(\Delta_0)_Q}$ is isomorphic to $(\hat\BG_a)_{(\Delta_0)_Q}$, but
$H_{(\Delta_0)_Q}^{\alg}$ is not isomorphic to $(\BG_a)_{(\Delta_0)_Q}$ (because $\Hom ((\BZ/p\BZ)_{(\Delta_0)_Q}, (\BG_a)_{(\Delta_0)_Q})=0$).

\subsection{The action of $\BZ_p^\times$ on $H_Q$}   \label{ss:Z_p^times-action on H_Q}
We keep the notation of \S\ref{ss:H_Q}.

\subsubsection{The action of $\BZ_p^\times$ on $Q$}   \label{sss:Z_p^times-action on Q}
The pro-finite (and therefore pro-algebraic) group $\BZ_p^\times$ acts on the formal $\delta$-scheme $Q=\Spf\BZ_p[[q-1]]$: the automorphism of $Q$ corresponding to 
$n\in\BZ_p^\times$ is
$$q\mapsto q^n=\sum\limits_{i=0}^{\infty}\frac{n(n-1)\ldots (n-i+1)}{i!}(q-1)^i .$$
In terms of the identification $Q=(\hat\BG_m)_{\Spf\BZ_p}$, this action comes from the isomorphism $$\BZ_p\iso\End ((\hat\BG_m)_{\Spf\BZ_p}).$$

\subsubsection{The action of $\BZ_p^\times$ on $H_Q$}   \label{sss:Z_p^times-action on H_Q}
It is easy to show that the morphism $\pi: Q\to\Sigma$ from \S\ref{sss:Recollections on Q} factors through the quotient stack $Q/\BZ_p^\times$ (see \cite{BL} or 
\cite[Appendix B]{Prismatization}). Therefore the formal group scheme $H_Q$ is $\BZ_p^\times$-equivariant. 

The morphisms $\varphi_Q :F^*H_Q\to H_Q$ and 
$s_Q:Q\to H_Q $ are $\BZ_p^\times$-equivariant because they are $\pi$-pullbacks of $\varphi:F^*H_\Sigma\to H_\Sigma$ and 
$s:\Sigma\to H_\Sigma\,$.

\begin{prop}   \label{p:sigma^* is equivariant}
The morphism $\sigma^*:H_Q\to (\hat\BG_m)_Q:=(\hat\BG_m)_{\Spf\BZ_p}\times Q$ is $\BZ_p^\times$-equivariant assuming that $(\hat\BG_m)_{\Spf\BZ_p}$ is equipped with the following $\BZ_p^\times$-action\footnote{This $\BZ_p^\times$-action is the same as the one from \S\ref{sss:Z_p^times-action on Q} (recall that $Q$ is just the formal scheme underlying the formal group $(\hat\BG_m)_{\Spf\BZ_p}$).}:  
$n\in\BZ_p^\times$ acts as raising to the power of $n$.  
\end{prop}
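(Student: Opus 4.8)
The plan is to reduce the statement to an equivariance property of the section $\sigma:Q\to G_Q$ used to define $\sigma^*$, and then to verify that equivariance by the explicit formula for $\sigma$. First I would recall that $\sigma^*:H_Q\to(\hat\BG_m)_Q$ is, by definition, the Cartier-dual incarnation of the section $\sigma:Q\to G_Q$ corresponding to the Witt vector $x=[q]-1\in W(\BZ_p[[q-1]])$ (so that $1+x\Phi_p([q])=[q^p]$). Concretely, a section of $G_Q$ is the same as a homomorphism $H_Q\to(\hat\BG_m)_Q$ via the Cartier duality pairing between $G_Q$ and $H_Q$; so the asserted $\BZ_p^\times$-equivariance of $\sigma^*$ is equivalent to the statement that the section $\sigma:Q\to G_Q$ is equivariant, where $\BZ_p^\times$ acts on $G_Q$ through its action on $Q$ together with the $n$-th power map on the Cartier dual side — i.e.\ dually to the action on $H_Q$ by which $n\in\BZ_p^\times$ acts as $[n]\in\End(H_Q)$ pulled back from the tautological action on $(\hat\BG_m)_{\Spf\BZ_p}$. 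The key bookkeeping point is that Cartier duality is contravariant, so ``$n$ acts on $H_Q$'' dualizes to ``$n$ acts on $G_Q$''; hence equivariance of $\sigma^*$ with the $n$-th power action on $(\hat\BG_m)_Q$ matches equivariance of $\sigma$ with the same $\BZ_p^\times$-action on $G_Q$ (the power-$n$ action on $\hat\BG_m$ being its own dual up to the obvious identification).

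Next I would unwind what the $\BZ_p^\times$-action on $G_Q=\pi^*G_\Sigma$ is in explicit terms. By \S\ref{sss:Z_p^times-action on H_Q} the action on $G_Q$ comes from the fact that $\pi:Q\to\Sigma$ factors through $Q/\BZ_p^\times$, i.e.\ from a canonical $W^\times$-equivariant structure; chasing through the definition of $\pi$ via $\Phi_p([q])\in W(\BZ_p[[q-1]])$ and of $G'_{W_{\prim}}=W_{\prim}\times W$ with its $W^\times$-action $(u,\xi,x)\mapsto(u^{-1}\xi,ux)$, the upshot is that for $n\in\BZ_p^\times$ the induced automorphism of $Q\times W$ (the total space of $G_Q$ over $Q$) sends the $Q$-coordinate $q$ to $q^n$ and rescales the $W$-coordinate by the Witt-vector unit $\Phi_p([q])/\Phi_p([q^n])$ (the ratio which conjugates the primitive element $\Phi_p([q])$ to $\Phi_p([q^n])$, both of which present the same point of $\Sigma$). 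Then I would simply check that $\sigma$, given by $x=[q]-1$, is carried to the analogous section for $q^n$: one needs $\bigl(\Phi_p([q])/\Phi_p([q^n])\bigr)\cdot([q]-1)=[q^n]-1$ after substituting $q\mapsto q^n$ — equivalently $([q]-1)\Phi_p([q])=[q^p]-1$, which is exactly the identity $1+x\Phi_p([q])=[q^p]$ recalled in \S\ref{sss:structures on H_Q}. This identity is $\BZ_p^\times$-``homogeneous'' in the appropriate sense precisely because $[q^p]=[q]^p$ and the power map commutes with everything, so the check goes through after translating along the action on $Q$.

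Finally, on the dual side I would translate this back: equivariance of $\sigma$ as above is precisely equivariance of $\sigma^*:H_Q\to(\hat\BG_m)_Q$ where $(\hat\BG_m)_{\Spf\BZ_p}$ carries the $n$-th-power action. A clean way to package the last step is to use the section $s_Q$ and Lemma~\ref{l:sigma^* &other strictures}(ii): since $\sigma^*\circ s_Q=q^p$ and both $s_Q$ and the formal-group structure of $H_Q$ are already known to be $\BZ_p^\times$-equivariant (being $\pi$-pullbacks from $\Sigma$), while the $\BZ_p^\times$-action on $(\hat\BG_m)_Q$ by $n$-th powers sends $q^p$ to $q^{pn}=(q^n)^p$ — which is $\sigma^*\circ s_Q$ evaluated after the action on $Q$ — one gets equivariance of $\sigma^*$ on the cyclic subgroup generated by $s_Q$; since $H_Q$ is topologically generated over $Q$ by $s_Q$ under $\varphi_Q$ (using $\varphi_Q(F^*s_Q)=ps_Q$ and the explicit description in \S\ref{sss:H_Q explicitly}, where $z=\Phi_p(q)$ is a generator), and $\varphi_Q$, being a $\pi$-pullback, is itself equivariant, the equivariance of $\sigma^*$ propagates to all of $H_Q$. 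I expect the main obstacle to be purely organizational: getting the variance of Cartier duality and the precise form of the $W^\times$-twisting in the definition of the $\BZ_p^\times$-action on $Q\times W$ exactly right, so that the bare identity $([q]-1)\Phi_p([q])=[q^p]-1$ really does encode the desired equivariance rather than some twisted version of it; once the dictionary is set up correctly, no genuine computation remains.
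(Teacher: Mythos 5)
Your final paragraph is, in substance, the paper's own proof: the paper first proves Lemma~\ref{l:s_Q generates H_Q}, stating that the only closed group subscheme of $H_Q$ through which $s_Q$ factors is $H_Q$ itself, and then checks that $s_Q(Q)$ lands in $\Ker(\alpha(\sigma^*)-\sigma^*)$ using exactly the two facts you cite: $s_Q$ is equivariant because it is pulled back from $s:\Sigma\to H_\Sigma$, and $\sigma^*\circ s_Q=q^p$ is equivariant because $(q^n)^p=(q^p)^n$. One caveat here: your justification of the generation statement (``$z=\Phi_p(q)$ is a generator'') is too thin. Since $\Phi_p(q)$ specializes to $p$ at $q=1$, the section $s_Q$ is not a formal coordinate on $H_Q$, and one really needs the argument of Lemma~\ref{l:s_Q generates H_Q}: a nonzero $f=\sum_i a_i(z)(q-1)^i$ vanishing on all the sections $n\cdot s_Q$, given by $z=\frac{q^{pn}-1}{q-1}$, would force $a_0(pn)=0$ for all $n\in\BZ$, which is impossible for $0\ne a_0\in\BZ_p[[z]]$.

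The direct verification proposed in your middle paragraph, however, is wrong. The identity $\bigl(\Phi_p([q])/\Phi_p([q^n])\bigr)\cdot([q]-1)=[q^n]-1$ is equivalent (multiply both sides by $\Phi_p([q^n])$) to $[q^p]-1=[q^{np}]-1$, not to $([q]-1)\Phi_p([q])=[q^p]-1$, and it is false for $n\ne 1$. This is not a fixable bookkeeping slip: the section $\sigma$ is genuinely \emph{not} fixed by the $\BZ_p^\times$-action on $G_Q$; the action carries $\sigma$ to (essentially) its $n$-th group-law multiple, and it is precisely this discrepancy that the $n$-th power twist on $(\hat\BG_m)_{\Spf\BZ_p}$ in the statement absorbs. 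Your first paragraph loses this twist when it asserts that equivariance of $\sigma^*$ for the twisted action ``matches'' equivariance of $\sigma$ for the untwisted one. So the claim that ``no genuine computation remains'' on that route does not hold; the argument of your last paragraph — reduction to $s_Q$ via the generation lemma — is the one that works, and it is the paper's.
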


The proof is given in \S\ref{ss:equivariance of sigma^*}. Proposition~\ref{p:sigma^* is equivariant} means that if we think of  $H_Q$ as an affine blow-up of $(\hat\BG_m)_{\Spf\BZ_p}\times Q$ (see~\S\ref{sss:H_Q as formal scheme}) then the action of $\BZ_p^\times$ on 
$H_Q$ is the most natural one.

\begin{cor}   \label{c:Z_p^times-action on H_Q}
In terms of \S\ref{sss:H_Q explicitly}, the action of $n\in\BZ_p^\times$ on $H_Q$ is given by
\[
(q,z)\mapsto (q^n,\frac{h_n(z,q)}{h_n(1,q)}),
\]
\[
\mbox{ where }\quad h_n(z,q)=\frac{(1+(q-1)z)^n-1}{q-1}=\sum\limits_{i=1}^\infty \frac{n(n-1)\ldots (n-i+1)}{i!}z^i (q-1)^{i-1}
\]
(so $h_n(1,q)=\frac{q^n-1}{q-1}$). \qed
\end{cor}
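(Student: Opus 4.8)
The plan is to derive Corollary~\ref{c:Z_p^times-action on H_Q} as a direct computation unwinding Proposition~\ref{p:sigma^* is equivariant} in the explicit coordinates of \S\ref{sss:H_Q explicitly}. Recall from \eqref{e:coordinate on H_Q} and \S\ref{sss:H_Q explicitly} that $H_Q=\Spf\BZ_p[[q-1,z]]$ and the homomorphism $\sigma^*:H_Q\to(\hat\BG_m)_Q$ is $(q,z)\mapsto(q,1+(q-1)z)$, while the $\BZ_p^\times$-action on the base $Q$ sends $q\mapsto q^n$ (\S\ref{sss:Z_p^times-action on Q}). By Theorem~\ref{t:H_Q & rescaled hat G_m} the map $\sigma^*$ is a closed immersion onto $(\hat\BG_m)_Q(-D)$, hence it is a \emph{monomorphism} of formal schemes over $Q$; therefore the action of $n\in\BZ_p^\times$ on $H_Q$ is completely determined by the requirement that $\sigma^*$ be equivariant, i.e. by the commutativity of the square expressing $\sigma^*\circ(n\cdot-)_{H_Q}=(n\cdot-)_{(\hat\BG_m)_Q}\circ\sigma^*$.

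First I would write out both sides of that square. On $(\hat\BG_m)_Q$ the action is: on the base $q\mapsto q^n$, and on the fiber coordinate $u:=1+(q-1)z$ raising to the $n$-th power, $u\mapsto u^n$ (Proposition~\ref{p:sigma^* is equivariant}). So if $n$ sends $(q,z)$ to $(q^n,z')$ for some power series $z'\in\BZ_p[[q-1,z]]$, equivariance forces $1+(q^n-1)z' = (1+(q-1)z)^n$ as elements of $\BZ_p[[q-1,z]]$ — here I use that the fiber coordinate of $(\hat\BG_m)_{Q}(-D)$ over the point $q^n$ is $1+(q^n-1)(-)$, matching the description of $\sigma^*$ after the base automorphism. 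Solving for $z'$ gives
\[
z'=\frac{(1+(q-1)z)^n-1}{q^n-1}=\frac{h_n(z,q)}{h_n(1,q)},
\]
where $h_n(z,q):=\dfrac{(1+(q-1)z)^n-1}{q-1}$, since $h_n(1,q)=\dfrac{q^n-1}{q-1}$. Expanding $(1+(q-1)z)^n$ by the binomial series (valid for $n\in\BZ_p$ since the binomial coefficients lie in $\BZ_p$ and, after dividing by $q-1$, the terms converge $(p,q-1)$-adically) yields the stated formula
\[
h_n(z,q)=\sum_{i=1}^\infty\frac{n(n-1)\cdots(n-i+1)}{i!}\,z^i(q-1)^{i-1}.
\]

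The only points needing a word of care are: (a) checking that $z'$ as written is genuinely an element of $\BZ_p[[q-1,z]]$, which follows because $(1+(q-1)z)^n-1$ is divisible by $q-1$ in that ring (the constant term in $z$ of $(1+(q-1)z)^n$ is $1$) and because $h_n(1,q)=\Phi$-type unit $\tfrac{q^n-1}{q-1}$ is a unit when $n\in\BZ_p^\times$ (its value at $q=1$ is $n$, a unit in $\BZ_p$); and (b) that the resulting $(q,z)\mapsto(q^n,z')$ really is an action, which is automatic since it is transported from the $\BZ_p^\times$-action on $(\hat\BG_m)_Q$ through the monomorphism $\sigma^*$. I expect step (a), namely verifying the convergence and integrality of the coefficients of $h_n$ and that $h_n(1,q)$ is invertible in $\BZ_p[[q-1]]$, to be the only mildly technical part; everything else is a formal consequence of Proposition~\ref{p:sigma^* is equivariant} together with $\sigma^*$ being a monomorphism over $Q$.
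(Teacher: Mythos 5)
Your derivation is essentially the paper's intended argument: the corollary is stated with an immediate \qed precisely because it follows by unwinding Proposition~\ref{p:sigma^* is equivariant} in the coordinates of \S\ref{sss:H_Q explicitly}, exactly as you do, and your formula for $z'$ together with the integrality of the binomial coefficients and the invertibility of $h_n(1,q)$ (constant term $n\in\BZ_p^\times$) is correct. One small correction: $\sigma^*$ is \emph{not} a closed immersion (it is the affine modification $u-1\mapsto(q-1)z$, injective but not surjective on coordinate rings), and it is not a monomorphism of formal schemes on arbitrary test rings (take $q-1$ a zero-divisor); the uniqueness of $z'$ that you need follows instead from the fact that $\BZ_p[[q-1,z]]$ is an integral domain and $q^n-1\neq 0$ there, so the equivariance relation $1+(q^n-1)z'=(1+(q-1)z)^n$ still determines $z'$ uniquely.
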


\begin{rem}
Corollary~\ref{c:Z_p^times-action on H_Q} combined with \S\ref{sss:H_Q explicitly} gives a complete  description of the image of the formal group $H_\Sigma$ under the pullback functor
\begin{equation} \label{e:pullback functor}
\{\mbox{Formal groups over }\Sigma\}\to\{\BZ_p^\times\mbox{-equivariant formal groups over }Q\}.
\end{equation}
If $p>2$ this functor is fully faithful by \cite[Thm.~3.8.3]{BL}, so our description of the image of $H_\Sigma$ under \eqref{e:pullback functor} could be considered as a (not very good) description of $H_\Sigma$ itself.
\end{rem}

\begin{rem}
Let $H_Q^{\alg}$ be as in \S\ref{sss:H_Q^alg}.
The action of $\BZ_p^\times$ on $H_Q$ comes from an action of $\BZ_p^\times$ on $H_Q^{\alg}$; the latter is given by the formula from 
Corollary~\ref{c:Z_p^times-action on H_Q} (this formula makes sense in the context of $H_Q^{\alg}$ because the reduction of $h_n(z,q)$ modulo any power of $q-1$ is a polynomial in $z$).
\end{rem}

\subsection{A conjectural algebraization of $H_\Sigma$}   \label{ss:H_Sigma^alg}
\subsubsection{Algebraizations of formal groups}   \label{sss:Algebraizations}
Let $H$ be a formal group over a stack $\sX$. By an \emph{algebraization} of $H$ we mean an isomorphism class of pairs consisting of a smooth affine group scheme $G$ over $\sX$ with connected fibers and an isomorphism $H\iso\hat G$, where $\hat G$ is the formal completion of $G$ along its unit. 
Let $\Alg (H)$ denote the set of algebraizations of~$H$.

\subsubsection{The sheaf property of $\Alg$}  \label{sss:sheaf property of Alg}
Suppose that in addition to $\sX$ and $H$, we are given a morphism of stacks $\sX'\to\sX$ such that the corresponding morphism of fpqc-sheaves of sets is surjective (in other words, for every scheme $S$ and every morphism $S\to\sX$, the morphism $\sX'\times_{\sX}S\to S$ has a section fpqc-locally on $S$). Then we have an exact sequence of sets
\[
\Alg (H)\to\Alg(H')\rightrightarrows\Alg(H''),
\]
where $H'$ and $H''$ are the pullbacks of $H$ to $\sX'$ and $\sX'\times_{\sX}\sX'$, respectively. In particular, the map $\Alg (H)\to\Alg(H')$ is injective.

\subsubsection{Good news}   \label{sss:Good news} 
(i) By Theorem~\ref{t:H_Sigma}(ii), $F^*H_\Sigma =H_\Sigma (-\Delta_0)$. On the other hand, $H_\Sigma (-\Delta_0)$ has a canonical algebraization constructed in \S\ref{sss:refined rescaling} ``by pure thought''. Thus we get a canonical element $\alpha\in\Alg (F^*H_\Sigma)$.

(ii) The morphism $F:\Sigma\to\Sigma$ satisfies the condition of \S\ref{sss:sheaf property of Alg} (because $F:W\to W$ is faithfully flat).
So the canonical map $\Alg (H_\Sigma )\to\Alg (F^*H_\Sigma )$ is injective.
Thus $\alpha$ comes from at most one algebraization of $H_\Sigma$.

\begin{conj}    \label{c:H_Sigma^alg}
Such an algebraization of $H_\Sigma$ exists.
\end{conj}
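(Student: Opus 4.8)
The plan is to \emph{descend} the totally explicit algebraization $H_Q^{\alg}$ of \S\ref{sss:H_Q^alg} from the $q$-de Rham prism down to $\Sigma$ along $\pi\colon Q\to\Sigma$, using the sheaf property of $\Alg$ recorded in \S\ref{sss:sheaf property of Alg}. Recall the ingredients: $\pi$ is faithfully flat (\S\ref{sss:Recollections on Q}) and factors through $Q/\BZ_p^\times$; the scheme $H_Q^{\alg}=\Spec_{\cO_Q}(\cO_Q[z])$, with the polynomial group law $z_1*z_2=z_1+z_2+(q-1)z_1z_2$, is a smooth affine group scheme over $Q$ with connected fibres whose formal completion is $H_Q=\pi^*H_\Sigma$; and by the remark following Corollary~\ref{c:Z_p^times-action on H_Q} the $\BZ_p^\times$-action on $H_Q$ (coming from $\BZ_p^\times$-equivariance of $\pi$) lifts to $H_Q^{\alg}$. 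Thus $H_Q^{\alg}$ is a $\BZ_p^\times$-equivariant element of $\Alg(H_Q)$, i.e.\ it defines an element of $\Alg(H_{Q/\BZ_p^\times})$, where $H_{Q/\BZ_p^\times}$ is the descent of $H_Q$ (equivalently the pullback of $H_\Sigma$).

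By \S\ref{sss:sheaf property of Alg} applied to the faithfully flat morphism $Q/\BZ_p^\times\to\Sigma$, the set $\Alg(H_\Sigma)$ is the equalizer of the two pullback maps $\Alg(H_{Q/\BZ_p^\times})\rightrightarrows\Alg(H_{(Q/\BZ_p^\times)\times_\Sigma(Q/\BZ_p^\times)})$. So it suffices to show that the two pullbacks of $H_Q^{\alg}$ agree in $\Alg$ over the double fibre product; equivalently, that the canonical descent datum for $H_Q=\pi^*H_\Sigma$ relative to $\pi$ lifts to a descent datum for $H_Q^{\alg}$. Such a lift, if it exists, is automatically unique (an automorphism of a smooth affine group scheme with connected fibres which is trivial on the formal completion is trivial) and automatically satisfies the cocycle condition, so only its \emph{existence} must be checked. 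Granting this, fpqc descent produces a smooth affine group scheme $H_\Sigma^{\alg}$ over $\Sigma$ with connected fibres and $\widehat{H_\Sigma^{\alg}}=H_\Sigma$, i.e.\ an element of $\Alg(H_\Sigma)$ with $\pi^*H_\Sigma^{\alg}=H_Q^{\alg}$. It then remains to identify $F^*H_\Sigma^{\alg}$ with the distinguished element $\alpha\in\Alg(F^*H_\Sigma)$ of \S\ref{sss:Good news}: since $\pi$ is a $\delta$-morphism, $\pi^*(F^*H_\Sigma^{\alg})=F^*H_Q^{\alg}$, and by the computation of \S\ref{sss:H_Q explicitly} the group law of $F^*H_Q^{\alg}$ is obtained from $z_1+z_2+(q-1)z_1z_2$ by rescaling $z\mapsto z/\Phi_p(q)$ (using $(q-1)\Phi_p(q)=q^p-1$), which is precisely the canonical algebraization of $H_Q(-(\Delta_0)_Q)=\pi^*\bigl(H_\Sigma(-\Delta_0)\bigr)$, hence equals $\pi^*\alpha$; as $\Alg(F^*H_\Sigma)\hookrightarrow\Alg(F^*H_Q)$ is injective, $F^*H_\Sigma^{\alg}=\alpha$, which is the assertion of the conjecture.

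The main obstacle is the existence of the lifted descent datum, i.e.\ the equality of the two pullbacks of $H_Q^{\alg}$ in $\Alg(H_{Q\times_\Sigma Q})$. The $\BZ_p^\times$-equivariance of $H_Q^{\alg}$ disposes of the part of $Q\times_\Sigma Q$ visible through the action groupoid $Q\times\BZ_p^\times\to Q\times_\Sigma Q$, but $\pi$ is not a $\BZ_p^\times$-torsor, and one must also control the remaining (infinitesimal) directions of $Q\times_\Sigma Q$, along which the two divisors $\mathrm{pr}_1^*\Delta_0,\ \mathrm{pr}_2^*\Delta_0$ and the two formal groups $\mathrm{pr}_i^*H_\Sigma$ genuinely differ; the content is that the two corresponding affine blow-ups of $(\hat\BG_m)_{Q\times_\Sigma Q}$ nevertheless coincide as group schemes. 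I expect this to follow from the explicit description of $Q\times_\Sigma Q$ in \cite{BL} (or \cite[Appendix B]{Prismatization}) — equivalently, via Theorem~\ref{t:H_Sigma}(ii), of $\Sigma\times_{F,\Sigma,F}\Sigma$, since one may alternatively organize the whole argument as descent along $F\colon\Sigma\to\Sigma$ using \S\ref{sss:Good news}(ii) — combined with the rigidity of algebraizations, but verifying it is the substantive step.
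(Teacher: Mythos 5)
First, note that the statement you are addressing is stated in the paper as a \emph{conjecture}: the paper offers no proof, only the evidence of \S\ref{sss:evidence} (the unconditional algebraizations of $H_Q$ and of $H_{\Delta_0}$), so there is no argument in the paper for your proposal to be measured against. Your proposal is a descent strategy rather than a proof, and you say so yourself: everything hinges on the existence of a descent datum for $H_Q^{\alg}$ relative to $\pi:Q\to\Sigma$, i.e.\ on the two pullbacks of $H_Q^{\alg}$ agreeing in $\Alg(H_{Q\times_\Sigma Q})$, and you leave this unverified (``I expect this to follow\dots but verifying it is the substantive step''). By the injectivity part of \S\ref{sss:sheaf property of Alg}, that existence statement is in fact \emph{equivalent} to Conjecture~\ref{c:H_Sigma^alg}, so the reduction has no content beyond reformulating the conjecture; the genuinely new input would have to be a computation over $Q\times_\Sigma Q$ (or over $\Sigma\times_{F,\Sigma,F}\Sigma$, if one instead descends along $F$ as in \S\ref{sss:Good news}(ii)), and none is supplied. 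The $\BZ_p^\times$-equivariance of $H_Q^{\alg}$ and the identification of $F^*H_Q^{\alg}$ with $\pi^*\alpha$ are correct, but they are precisely the paper's evidence~(i); they say nothing about the infinitesimal directions of $Q\times_\Sigma Q$, which is exactly where the difficulty lives. Lemmas~\ref{l:G_T & H_T} and \ref{l:H_{T'}} should be taken as a warning here: on nilpotent thickenings inside $Q$ the formal group $H$ and its algebraizations already behave in an unexpected, non-constant way, so the agreement of the two affine blow-ups over $Q\times_\Sigma Q$ is not something one can wave through.

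A secondary caveat: your rigidity claim --- that an automorphism of a smooth affine group scheme with connected fibres which restricts to the identity on the formal completion along the unit is trivial --- is plausible for the group schemes at hand (fibrewise they are isomorphic to $\BA^1$ as pointed schemes, by the last clause of Lemma~\ref{l:refined rescaling}), but it is asserted rather than proved, and it carries real weight in your argument: it is what makes the descent datum unique and the cocycle condition automatic, reducing everything to the single existence statement. In summary, the architecture is reasonable and consistent with how the paper frames the problem, but the conjecture remains unproved because the one substantive step is missing.
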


The conjectural algebraization of $H_\Sigma$ will be denoted by $H_\Sigma^{\alg}$. 

\subsubsection{Evidence in favor of Conjecture~\ref{c:H_Sigma^alg}}   \label{sss:evidence}
Even though $H_\Sigma^{\alg}$ is conjectural, the corresponding algebraizations of $H_Q$ and $H_{\Delta_0}$ are \emph{unconditional,} as explained below.

(i) Let $\alpha$ be as in \S\ref{sss:Good news}(i). Then the image of $\alpha$ in $\Alg (F^*H_Q)$ comes from a (unique) element $\beta\in\Alg (H_Q)$, namely the one described in \S\ref{sss:H_Q^alg}.

(ii) Let $\beta_0\in \Alg (H_{(\Delta_0)_Q})$ be the image of $\beta$. Using the explicit description of $\beta_0$ from  \S\ref{sss:restriction of H_Q^alg to Delta_0_Q}, one can check that $\beta_0$ comes from a (unique) algebraization $H_{\Delta_0}^{\alg}$ of $H_{\Delta_0}$. Namely, while $H_{\Delta_0}$ is the formal completion of a certain line bundle $\sM^*$ over $\Delta_0$ (see Proposition~\ref{p:G_Delta_0}), $H_{\Delta_0}^{\alg}$ is a $(\BZ/p\BZ)$-covering of $\sM^*$.

\section{Generalities on formal groups and their Cartier duals}  \label{s:formal groups}
\subsection{The notion of based formal $S$-polydisk}  \label{ss:def of Polyd}
\subsubsection{Notation}
If $S$ is a scheme then the formal completion of $\BA^n_S:=\BA^n\times S$ along its zero section will be denoted by $\hat\BA^n_S$.

\subsubsection{Definition}
Let $S$ be a scheme. Let $X$ be a formal $S$-scheme and $\sigma :S\to X$ a section. We say that $(X,\sigma)$ is a \emph{based formal $S$-polydisk} if Zariski-locally on $S$ there exists an $S$-isomorphism $(X,\sigma)\iso (\hat\BA^n_S ,0)$ for some $n\in\BZ_+$; here $0:S\to \hat\BA^n_S$ is the zero section.

\subsubsection{Notation}
The category of based formal $S$-polydisks will be denoted by $\Polyd (S)$. For fixed  $n\in\BZ_+$, let $\Polyd_n (S)\subset \Polyd (S)$ be the full subcategory of based formal $S$-polydisks of dimension~$n$ (i.e., locally isomorphic to  $(\hat\BA^n_S ,0)$).

\subsubsection{Automorphisms of $(\hat\BA^n_S ,0)$}
The functor that to a scheme $S$ associates the group of $S$-automorphisms of $(\hat\BA^n_S ,0)$ is representable by an affine group scheme $\sD_n$ over $\BZ$.

\begin{lem}
The underlying groupoid of $\Polyd_n (S)$ is canonically equivalent to that of $\sD_n$-torsors on $S$.
\end{lem}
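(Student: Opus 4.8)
The plan is to construct mutually quasi-inverse functors between the underlying groupoid of $\Polyd_n(S)$ and the groupoid of $(\sD_n)_S$-torsors on $S$, following the standard twisting yoga for the automorphism group of a fixed object. First I would go from polydisks to torsors: given $(X,\sigma)\in\Polyd_n(S)$, set $T_X:=\underline{\Isom}_S\big((\hat\BA^n_S,0),(X,\sigma)\big)$, the sheaf whose value on $S'\to S$ is the set of $S'$-isomorphisms $(\hat\BA^n_{S'},0)\iso(X_{S'},\sigma_{S'})$; this is a sheaf because $\hat\BA^n$ and $X$ are (pro-)affine formal schemes. Let $(\sD_n)_S$ act on $T_X$ by precomposition. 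By the very definition of $\sD_n$ this action is free with a single orbit on fibres, and $T_X$ is nonempty Zariski-locally on $S$ by the definition of a based formal polydisk; hence $T_X$ is a $(\sD_n)_S$-torsor. A morphism in $\Polyd_n(S)$ induces, by postcomposition, a $(\sD_n)_S$-equivariant map of torsors, necessarily an isomorphism, and this is functorial.

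In the reverse direction, to a $(\sD_n)_S$-torsor $T$ I would associate the \emph{twist} $T\times^{\sD_n}(\hat\BA^n_S,0)$: choosing a Zariski cover $\{U_i\}$ of $S$ trivializing $T$, with transition cocycle $g_{ij}\in\sD_n(U_{ij})$, one glues the copies $(\hat\BA^n_{U_i},0)$ along the automorphisms $g_{ij}$ of $(\hat\BA^n_{U_{ij}},0)$. Since elements of $\sD_n$ fix the zero section, the result carries a canonical section and is Zariski-locally isomorphic to $(\hat\BA^n_S,0)$, hence lies in $\Polyd_n(S)$; independence of the chosen cover and cocycle is the usual check (equivalently, one invokes fpqc descent for affine formal schemes, valid since $\hat\BA^n_S$ is the $\Spf$ of the completed polynomial ring, an inverse limit of finite free $\cO_S$-modules). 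This is functorial in $T$.

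Third, I would check the two functors are mutually quasi-inverse, canonically. The assignment $t\mapsto\big(x\mapsto[t,x]\big)$ gives a $(\sD_n)_S$-equivariant isomorphism $T\iso T_{T\times^{\sD_n}(\hat\BA^n_S,0)}$, as one sees after trivializing $T$; and evaluation $[\varphi,x]\mapsto\varphi(x)$ gives an isomorphism $T_X\times^{\sD_n}(\hat\BA^n_S,0)\iso X$ of based formal polydisks, as one sees Zariski-locally on $S$. Neither isomorphism involves any choice, which yields the asserted canonical equivalence of groupoids.

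I expect the only genuinely non-formal point to be the construction of the twist $T\times^{\sD_n}(\hat\BA^n_S,0)$ as an honest based formal $S$-scheme: unlike affine \emph{schemes}, affine \emph{formal} schemes take a word of justification to descend, and the section must be tracked through the descent. Once that is settled — and it is, since $\hat\BA^n_S$ is pro-(finite free affine) over $S$ — everything else is routine torsor bookkeeping.
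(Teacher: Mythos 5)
Your twisting dictionary between based formal polydisks and $\sD_n$-torsors is the routine part of this lemma, and the paper takes it entirely for granted: its proof consists of the single sentence ``It suffices to show that any $\sD_n$-torsor on $S$ is Zariski-locally trivial,'' followed by an argument for that claim. That claim is exactly what your proposal skips. Here ``torsor'' must be read as a torsor for the fpqc topology (that is the ambient topology fixed in the conventions at the start of \S\ref{s:main results}, and it is the only reading under which the lemma yields Corollary~\ref{c:fpqc-stack}). In your reverse direction you ``choose a Zariski cover $\{U_i\}$ of $S$ trivializing $T$'' --- for a general fpqc $\sD_n$-torsor no such cover is available a priori. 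If instead you run the twist over an fpqc cover, as your parenthetical about descent suggests, the resulting formal scheme is only fpqc-locally isomorphic to $(\hat\BA^n_S,0)$, whereas membership in $\Polyd_n(S)$ demands a \emph{Zariski}-local isomorphism; so your functor does not visibly land in $\Polyd_n(S)$. Either way, the same nontrivial statement is missing, and without it the two functors are not mutually quasi-inverse on all torsors.

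The missing ingredient is supplied in the paper by the structure of $\sD_n$: it is a projective limit of a tower $\ldots\to G_2\to G_1\to G_0=GL(n)$ in which the transition maps are faithfully flat and each kernel $\Ker(G_{i+1}\to G_i)$ is a power of $\BG_a$. A $GL(n)$-torsor for the fpqc topology is a rank-$n$ locally free sheaf, hence Zariski-locally trivial; and torsors under a power of $\BG_a$ over an affine scheme are trivial, since fpqc $H^1$ with coefficients in $\BG_a$ is quasi-coherent cohomology and vanishes there. One therefore trivializes a $\sD_n$-torsor Zariski-locally level by level up the tower and passes to the limit. You correctly sensed that descent of affine formal schemes deserves a word, but that is not where the content of the lemma lies; the content is this cohomological vanishing, which your proposal does not address.
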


\begin{proof}
It suffices to show that any $\sD_n$-torsor on $S$ is Zariski-locally trivial. Indeed, $\sD_n$ can be represented as a projective limit of a diagram of group schemes
\[
\ldots \to G_2\to G_1\to G_0=GL(n)
\]
in which all morphisms are faithfully flat and for each $n$ the group scheme $\Ker (G_{n+1}\to G_n)$ is isomorphic to a power of $\BG_a$.
\end{proof}

\begin{cor}  \label{c:fpqc-stack}
The assignment $S\mapsto \Polyd (S)$ is a stack for the fpqc topology (not merely the Zariski topology). \qed
\end{cor}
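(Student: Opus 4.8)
The plan is to deduce this formally from the preceding Lemma together with ordinary faithfully-flat descent for torsors. Recall that for us ``stack'' means ``stack of groupoids'', so what has to be checked is that the underlying groupoid of $S\mapsto\Polyd(S)$ satisfies fpqc descent of objects and of isomorphisms. I would organize this in two steps: first reduce from $\Polyd$ to the individual $\Polyd_n$, and then identify $\Polyd_n$ with the classifying stack of $\sD_n$-torsors via the Lemma and invoke descent for torsors.

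For the first step, the dimension of a based formal $S$-polydisk is a locally constant $\BZ_+$-valued function on $S$, equivalently a morphism from $S$ to the constant scheme associated with the set $\BZ_+$. Since every scheme is an fpqc sheaf (Grothendieck's faithfully flat descent), this dimension function descends along any fpqc cover; hence an object of $\Polyd(S)$ is the same datum as an open-and-closed decomposition $S=\coprod_{n\in\BZ_+}S_n$ together with an object of $\Polyd_n(S_n)$ for each $n$. A coproduct, over an arbitrary index set, of fpqc stacks is again an fpqc stack, so it suffices to prove that each $\Polyd_n$ is an fpqc stack.

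By the Lemma, the underlying groupoid of $\Polyd_n(S)$ is equivalent to the groupoid of $\sD_n$-torsors on $S$; this equivalence is given by $(X,\sigma)\mapsto\underline{\Isom}_S\bigl((\hat\BA^n_S,0),(X,\sigma)\bigr)$ and so is visibly functorial in $S$, i.e. it is an equivalence of prestacks over $\Sch$. Now $\sD_n$ is representable by an affine group scheme over $\BZ$, hence is an fpqc sheaf of groups; and for any fpqc sheaf of groups $G$ the prestack $S\mapsto\{G\text{-torsors on }S\}$ is an fpqc stack: isomorphism descent holds because $\underline{\Isom}$ of two $G$-torsors is a pseudo-torsor under $G$ and hence an fpqc sheaf, while object descent holds because a $G$-torsor is, by definition, an fpqc-locally trivial sheaf with $G$-action, so descent data are automatically effective. (There is no ambiguity in the phrase ``$\sD_n$-torsor'': by the proof of the Lemma every fpqc-locally trivial $\sD_n$-torsor is already Zariski-locally trivial.) Thus $\Polyd_n$, and therefore $\Polyd$, is an fpqc stack. \qed

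There is no genuine obstacle here; the mathematical content sits entirely in the Lemma, and the argument above is purely formal. The only points that merit a second of attention are that the equivalence of the Lemma be natural in $S$ — clear from the $\underline{\Isom}$ formula — and the handling of the dimension function, which is the one piece of the datum ``object of $\Polyd(S)$'' not captured by a fixed $\sD_n$-torsor groupoid; its descent is immediate once one notes that it is a morphism to a scheme.
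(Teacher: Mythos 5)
Your proof is correct and follows exactly the route the paper intends: the corollary is stated with an immediate \qed because it is meant to follow from the preceding Lemma (whose proof shows every fpqc-locally trivial $\sD_n$-torsor is already Zariski-locally trivial) together with standard descent for torsors under an affine group scheme. Your write-up simply makes explicit the two routine points the paper leaves tacit — descent of the locally constant dimension function and the naturality in $S$ of the equivalence $\Polyd_n(S)\simeq\{\sD_n\text{-torsors on }S\}$ — and both are handled correctly.
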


\subsection{Formal groups}   \label{ss:def of 1-dim formal group}
Let $\fF (S)$ (resp.~$\fF_n (S)$) be the category of group objects in $\Polyd (S)$ (resp.~in $\Polyd_n (S)$).
Objects of $\fF (S)$ will be called \emph{formal groups} over $S$; in other words,  by a formal group over $S$ we mean a group object $H$ in the category of formal $S$-schemes 
such that the pair $(H,e:S\to H)$ is a based formal $S$-polydisk. Objects of $\fF_n (S)$ are called $n$-dimensional formal groups.

\subsection{Cartier duals of commutative formal groups}
\begin{lem}
Let $S$ be a scheme and $H\in\fF^{\com} (S)$. Then the Cartier dual $H^*$ exists as a flat affine group scheme over $S$. Moreover, $H^*=\Spec\cA$, where the quasi-coherent $\cO_S$-algebra $\cA$ is locally free as an $\cO_S$-module.
\end{lem}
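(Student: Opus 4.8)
The plan is to construct $\cA$ locally and glue. Since the statement is Zariski-local on $S$ by Corollary~\ref{c:fpqc-stack} (and since Cartier duality is compatible with base change), I may assume $H$ is given by a commutative formal group law, i.e.\ $H\cong\hat\BA^n_S$ with coordinate ring $R[[t_1,\dots,t_n]]$ for $R=H^0(S,\cO_S)$, and the comultiplication is given by a formal group law $\mu\colon t_i\mapsto F_i(t\otimes1,1\otimes t)\in R[[t,t']]$. First I would write down the candidate: $\cA$ should be the ``continuous dual'' Hopf algebra, i.e.\ $\cA=\varinjlim_m \Hom_R\big(R[[t]]/(t_1,\dots,t_n)^m,\ R\big)$, equivalently the $R$-submodule of $\Hom_R(R[[t]],R)$ consisting of functionals that kill some power of the augmentation ideal. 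The multiplication on $\cA$ is dual to the comultiplication of $R[[t]]$ coming from $\mu$; the unit is dual to the augmentation; the comultiplication on $\cA$ is dual to the multiplication of $R[[t]]$. One checks $H^*:=\Spec\cA$ then represents the functor $T\mapsto\Hom_{\text{formal grp}/T}(H_T,(\hat\BG_m)_T)$, which is the definition of the Cartier dual.

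The key steps, in order: (1) Show $\cA$ is a free $R$-module. Concretely, $R[[t_1,\dots,t_n]]/(t)^m$ is free of finite rank over $R$ (monomial basis), so its $R$-dual is free of the same rank, and $\cA$ is the filtered union of these duals along the (split injective, since everything is free) transition maps $\Hom_R(R[[t]]/(t)^m,R)\hookrightarrow\Hom_R(R[[t]]/(t)^{m+1},R)$; a filtered colimit of free modules along split injections with free cokernels is free (pick compatible bases level by level). This also shows $\cA$ is $R$-flat, hence $H^*$ is flat and affine over $S$. (2) Verify the Hopf-algebra structure maps on $\cA$ are well defined: the only subtle point is that the dual of the completed comultiplication $R[[t]]\to R[[t,t']]$ lands, when restricted to $\cA\otimes_R\cA$, in $\cA$ — this holds because a product of two functionals each killing $(t)^a$ resp.\ $(t)^b$ kills $(t)^{a+b}$. (3) Check the universal property: an $R$-algebra map $\cA\to T$ is the same as a grouplike-type datum, and unwinding shows it corresponds precisely to a homomorphism of formal groups $H_T\to(\hat\BG_m)_T$ (the point being that $\Hom(R[[t]],T)$ of continuous augmented algebra maps that is compatible with $\mu$ and $\widehat{\otimes}$ is exactly a homomorphism to $\hat\BG_m$; finiteness conditions match up because $T$-points of $H$ are nilpotent). (4) Glue: by Zariski-locality the local constructions of $\cA$ patch to a quasi-coherent $\cO_S$-algebra, locally free over $\cO_S$, and $H^*:=\Spec_{\cO_S}\cA$ is the desired flat affine group scheme; its formation commutes with base change in $S$, so it indeed represents the Cartier dual functor globally.

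The main obstacle I expect is step (1) combined with the care needed in step (3): proving $\cA$ is \emph{genuinely free} (not merely flat or locally free) over $R$ requires choosing bases of the finite-rank duals $\Hom_R(R[[t]]/(t)^m,R)$ compatibly as $m$ grows, which is straightforward for a polynomial-ring filtration (lift a monomial basis) but must be done honestly; and the representability statement in step (3) requires being precise about what ``homomorphism of formal groups'' means $T$-point-wise — here one uses that for any $R$-algebra $T$, a $T$-point of $H$ factors through $R[[t]]/(t)^m$ for some $m$ (nilpotence), which is exactly what makes the pairing with $\cA$ land in the right place and makes the bijection $\Hom_{R\text{-alg}}(\cA,T)=\Hom(H_T,(\hat\BG_m)_T)$ work. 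Everything else is the standard Cartier-duality bookkeeping and I would not spell it out in full.
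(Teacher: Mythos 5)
Your proposal is correct and follows essentially the same route as the paper: the paper also reduces to the affine case with $(H,e)\cong(\hat\BA^n_S,0)$ and observes that the coordinate ring $A[[t_1,\dots,t_n]]$ is the topological dual of a free $A$-module (with dual basis the monomials), so that the Cartier dual is $\Spec$ of that free module with the dual Hopf-algebra structure. You have simply written out the standard details (the colimit description of the continuous dual, the well-definedness of the structure maps, and the universal property) that the paper leaves implicit.
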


\begin{proof}
We can assume that $S$ is affine and that the based formal polydisk $(H,e:S\to H)$ is isomorphic to $(\hat\BA^n_S,0)$. Let $A:=H^0(S,\cO_S)$. 
 Then the coordinate ring of $H$ (viewed as a topological $A$-module)  is the dual of a free $A$-module. The lemma follows.
\end{proof}

\subsubsection{Notation}
Let $\fF^*(S)$ be the full subcategory of the category of group $S$-schemes formed by Cartier duals of commutative formal groups over $S$.

\subsubsection{Remarks} \label{sss:fpqc-locality}
(i) The assignments $S\mapsto \fF (S)$ and  $S\mapsto \fF^*(S)$ are stacks for the fpqc topology (not merely the Zariski topology). This follows from Corollary~\ref{c:fpqc-stack}.

(ii) By Corollary~\ref{c:fpqc-stack} and the previous remark, if $S$ is a fpqc-stack rather than a scheme one can still talk about $\Polyd (S )$, $\fF (S )$, and  $\fF^*(S )$.

\begin{prop}   \label{p:deforming formal groups}
Let $S$ be a scheme and $S_0\subset S$ a closed subscheme whose ideal is nilpotent. Let $G$ be a flat commutative group scheme over $S$ such that $G\times_SS_0\in\fF^*(S_0)$. Then
$G\in \fF^*(S)$.
\end{prop}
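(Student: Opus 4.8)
The plan is to dualize. By the lemma just proved, for a flat commutative group scheme $G$ over $S$, membership in $\fF^*(S)$ is equivalent to $G$ being the Cartier dual of a flat affine group scheme $H=\Spec\cB$ with $\cB$ locally free as an $\cO_S$-module; conversely, over an affine base, $H^*$ for $H\in\fF^{\com}(S)$ has coordinate ring the continuous dual of a power series ring. So I want to produce the would-be Cartier dual $H$ of $G$ directly and then check it is a formal group, i.e.\ that $(H,e)$ is Zariski-locally isomorphic to $(\hat\BA^n_S,0)$.

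First I would reduce to the case $S=\Spec A$ affine, with $I\subset A$ the nilpotent ideal defining $S_0$, and set $A_0=A/I$. Since $G$ is flat and commutative over $S$, $G=\Spec\cC$ with $\cC$ a flat (hence, being an algebra of finite presentation only after further work, so one should be slightly careful — but one can at least argue locally) commutative Hopf $A$-algebra; the hypothesis $G_0:=G\times_SS_0\in\fF^*(S_0)$ says $\cC_0:=\cC\otimes_AA_0$ is the continuous dual of $A_0[[t_1,\ldots,t_n]]$ with its standard Hopf structure, and in particular $\cC_0$ is a free $A_0$-module with a distinguished topological basis dual to the monomial basis. The key step is to lift this: using that $I$ is nilpotent, $\cC$ is $I$-adically complete trivially, and by flatness $\cC/I^k\cC$ interpolates between $\cC_0$ and $\cC$; I would lift the $A_0$-basis of $\cC_0$ to a family of elements of $\cC$ by successive approximation over the finite filtration $I\supset I^2\supset\cdots$, checking at each stage that the lifted elements still form a topological $A$-basis (here flatness of $\cC$ is what makes $\cC\otimes_A\gr^k_I A\iso \gr^k$ of the filtration, so no new relations or generators appear). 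This exhibits $\cC$ as the continuous dual of a free, topologically finitely generated, complete $A$-module $\cB$, and the Hopf structure on $\cC$ dualizes to a coalgebra-free (i.e.\ formal-scheme) Hopf structure on $\cB$; concretely, $H:=\Spf\cB$ is a formal $S$-scheme with a section, and I must see it is a based formal polydisk.

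To see $(H,e)\iso(\hat\BA^n_S,0)$ Zariski-locally on $S$: choose, Zariski-locally on $S$, coordinates $\bar t_i$ on the based formal polydisk $G_0^*=H_0$, lift them to elements $t_i$ of the augmentation ideal of $\cB$; then $\cB$ is complete with respect to the ideal generated by the $t_i$ together with $I$, hence with respect to $(t_1,\ldots,t_n)$ since $I$ is nilpotent, and by Nakayama over the filtration by powers of $I$ (combined with the $A_0$-case, where $\cB_0=A_0[[\bar t]]$) the continuous $A$-algebra map $A[[t_1,\ldots,t_n]]\to\cB$ is an isomorphism. This gives $H\in\fF^{\com}(S)$ with $H^*\cong G$ as group schemes (the dualities are inverse on the nose because Cartier duality is a duality of the relevant categories of flat affine and formal affine commutative group schemes over $S$), so $G\in\fF^*(S)$.

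I expect the main obstacle to be the lifting/approximation step — showing that a topological $\cO_S$-basis of $\cC_0$ lifts to one of $\cC$, i.e.\ controlling completeness and freeness of the Hopf algebra $\cC$ across the nilpotent thickening using only flatness of $G$ over $S$ (one does not a priori know $G$ is of finite type, only flat and affine, so ``locally free of finite rank'' is false and one genuinely works with pro-(finite free) or dual-of-power-series structures). A clean way around part of this is to phrase everything in terms of the stack $\fF^*$ and use that $\fF^{\com}$ is a nice stack: one could instead lift the \emph{formal group} $H_0$ (not $G_0$) along the nilpotent thickening, invoking the smoothness/unobstructedness of deformations of a formal group law — the obstruction groups for deforming an $n$-dimensional formal group law over a square-zero extension lie in an $H^2$ of a (truncated, polynomial) complex which vanishes because the relevant modules are, locally, free and one can solve the cocycle equations explicitly by the usual Lazard-type argument — and then dualize. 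Either route, the heart is the same: deformations of $n$-dimensional formal group laws over nilpotent thickenings are unobstructed, which is what upgrades the hypothesis $G_0\in\fF^*(S_0)$ to $G\in\fF^*(S)$ once one knows $G$ itself is flat (so that its dual has no ``extra'' part beyond the lifted formal group).
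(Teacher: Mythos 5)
Your overall strategy --- reduce to an affine base with nilpotent ideal, use flatness to see that the coordinate ring $\cC$ of $G$ is free, lift coordinates from the dual of $G_0$, and identify the continuous dual $\cB=\Hom_A(\cC,A)$ with a power series ring --- is the same as the paper's, but there is a genuine gap at the one step that carries the real content. Having lifted $\bar t_i$ to elements $t_i$ of the augmentation ideal of $\cB$, you assert that ``$\cB$ is complete with respect to the ideal generated by the $t_i$ together with $I$,'' and from there that the continuous $A$-algebra map $A[[t_1,\dots,t_n]]\to\cB$ exists and is an isomorphism. But the \emph{existence} of that map is exactly the issue: one must show the $t_i$ are topologically nilpotent for the weak topology on $\cB$, i.e.\ that for each $c\in\cC$ the functionals $t^\alpha$ kill $c$ for $|\alpha|\gg 0$. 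This does not follow from ``$t_i$ is topologically nilpotent mod $I$ and $I$ is nilpotent,'' because multiplication in $\cB$ is convolution through the coproduct, $(fg)(c)=(f\otimes g)(\Delta(c))$, so controlling high powers of $t_i$ pointwise requires controlling $\Delta(c)$. The paper's proof reduces to a square-zero extension ($I^2=0$, not merely $I$ nilpotent) and then uses the Hopf structure essentially: choose a finitely generated $F\subset\cC$ with $\Delta(c)\in\im(F\otimes_AF\to\cC\otimes_A\cC)$; since the reduction of $t$ satisfies $\bar t^{\,n}\to 0$, one has $t^n(F)\subset I$ for $n\ge m$; hence $t^n(c)=(t^m\otimes t^{n-m})(\Delta(c))\in I\cdot I=0$ for $n\ge 2m$. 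This convolution argument is what your ``successive approximation'' and ``completeness'' assertions are standing in for, and it is the heart of the proof.

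Your alternative route --- lift the formal group $H_0$ using unobstructedness of deformations of formal group laws, then dualize --- has a different gap: Lazard-type smoothness produces \emph{some} $H\in\fF(S)$ with $H^*\times_SS_0\cong G_0$, but you must still identify the \emph{given} flat deformation $G$ of $G_0$ with $H^*$, i.e.\ show that every flat affine deformation of $G_0$ arises as the dual of a deformation of $H_0$. That is essentially the proposition itself, so as written this route is circular unless you separately compare the two deformation problems.
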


As pointed out by the reviewer, the above proposition appears as Lemma 1.1.21 in J.~Lurie's work \cite{Lu}. Moreover,  the Cartier duals of  commutative formal groups play an important role in \cite{Lu}

\begin{proof}
We can assume that $S=\Spec A$ and $S_0=\Spec A_0$, where $A_0=A/I$ and $I^2=0$. We can also assume the existence of an isomorphism of based formal $S_0$-polydisks
\[
(G_0^*,e)\iso (\hat\BA^n_S,0),
\]
where $G_0^*$ is the Cartier dual of $G_0$. To simplify notation, we will assume that $n=1$.

$G_0$ is affine because $G_0\in\fF^*(S_0)$. So $G$ is affine. Let $B$ be the coordinate ring of $G$ and $B_0:=B\otimes_AA_0$. Then $G=\Spec B$ and $G_0=\Spec B_0$.

Let $B^*:=\Hom_A(B,A)$ and $B_0^*:=\Hom_{A_0}(B_0,A_0)$ be the dual modules. We equip them with the weak topology. The coproduct in $B$ and $B_0$ yields a topological  algebra structure on $B^*$ and~$B_0^*$.

By assumption, we have an isomorphism of based formal $S_0$-disks $(G_0^*,e)\iso (\hat\BA^1_S,0)$. It induces an isomorphism of topological algebras $f_0:A_0[[x]]\iso B_0^*$ such that $l_0(1)=0$, where $l_0:=f_0(x)\in B_0^*$ and $1\in B_0$ is the unit. We will lift it to an isomorphism $f:A[[x]]\iso B$ such that $l(1)=0$, where $l=f(x)\in B^*$. This will show that $\Spf B^*$ is a formal group over $S=\Spec A$, whose Cartier dual is $G$.

The $A_0$-module $B_0$ is free because $f_0^*$ identifies $B_0$  with the topological dual $(A_0[[x]])^*$, which is a free $A_0$-module. By assumption, $B$ is flat over $\Spec A$. So $B$ is a free $A$-module. Therefore we can lift $l_0$ to an element $l\in B^*$. Moreover, adding to $l$ a multiple of the counit of $B$, we can achieve the equality $l(1)=0$.

Let us prove that $l^n\to 0$. The problem is to show that for every $b\in B$ we have $l^n(b)=0$ for big enough $n$. Let $F\subset B$ be a finitely generated $A$-submodule such that the coproduct $\Delta :B\to B\otimes_AB$ takes $b$ to $\im (F\otimes_AF\to B\otimes_AB)$. Since $l_0^n\to 0$, there exists $m\in\BN$ such that for $n\ge m$ one has $l^n(F)\subset I:=\Ker (B\to B_0)$. Then for $n\ge 2m$ one has $l^n(b)=(l^m\otimes l^{n-m})(\Delta (b))\in I^2=0$.

Since $l^n\to 0$, there is a homomorphism of topological $A$-algebras $f:A[[x]]\to B$ such that $f(x)=l$. The dual map $f^*:B^*\to (A[[x]])^*$ is a homomorphism of free $A$-modules inducing an isomorphism modulo $I$. Therefore $f^*$ is an isomorphism, and so is $f$.
\end{proof}

\subsection{Rescaling formal groups}  \label{ss:Rescaling}
\subsubsection{The monoidal category $\sM (S)$}   \label{sss:sM (S)}
Given  a scheme $S$, let $\sM (S)$ be the category of pairs $(\sL, \alpha:\sL\to\cO_S)$, where $\sL$ is an invertible $\cO_S$-module; this is a monoidal category with respect to tensor product.

Let $\sM_{\inj} (S)\subset\sM (S)$ be the full monoidal subcategory of pairs $(\sL,\alpha)$ such that $\Ker\alpha=0$. In fact, the category $\sM_{\inj} (S)$ is an ordered set, which identifies  with the set $\Div_+(S)$ of effective Cartier divisors on $S$ equipped with the ordering opposite to the usual one: the invertible subsheaf of $\sL\subset\cO_S$ corresponding to $\Div_+(S)$ is  $\cO_S(-D)$. Moreover, the tensor product in $\sM_{\inj}(S)$ corresponds to addition in $\Div_+(S)$. For this reason, objects of $\sM (S)$ are called \emph{generalized Cartier divisors} in \cite{BL}.

Let $\sM_{\nilp} (S)\subset\sM (S)$ be the full subcategory of pairs $(\sL,\alpha)$ such that $\alpha$ vanishes on~$S_{\red}\,$. One has 
$\sM_{\nilp} (S)\cap\sM_{\inj} (S)=\emptyset$.

The assignment $S\mapsto\sM (S)$ is an fpqc-stack of monoidal categories\footnote{The same stack is introduced in \cite{Prismatization}, where it is denoted by 
$(\BA^1/\BG_m)_-\,$.}. For any $f:S'\to S$ one has $f^*(\sM_{\nilp} (S))\subset\sM_{\nilp} (S')$; if $f$ is flat then $f^*(\sM_{\inj} (S))\subset\sM_{\inj} (S')$.

\subsubsection{Remark}   \label{sss:1 is final}
The unit object of $\sM (S)$ is a final object.

\subsubsection{Goal}
We have the stack of monoidal categories $\sM$ from \S\ref{sss:sM (S)}. In \S\ref{sss:Action of sM on Polyd and sF} we will define an action of $\sM$ on $\Polyd$ and on $\fF$, where $\Polyd$ is the stack of based formal polydisks (see~\S\ref{ss:def of Polyd}) and $\fF$ is the stack of formal groups (see ~\S\ref{ss:def of 1-dim formal group}). 

\subsubsection{The prestacks $\Polyd_{\pre}$ and $\sM_{\pre}$} \label{sss:The prestacks}
Let $\Polyd_{\pre}(S)\subset \Polyd (S)$ be the full subcategory formed by formal schemes $\hat\BA^n_S\,$. Then $\Polyd_{\pre}$ is a prestack of categories such that the associated fpqc-stack is $\Polyd$.

Let $\sM_{\pre}(S)\subset\sM (S)$ be the full subcategory of pairs $(\sL ,\alpha )$ with $\sL=\cO_S$.  Then $\sM_{\pre}$ is a prestack of monoidal categories such that the associated fpqc-stack is $\sM$. Explicitly, $\Ob\sM_{\pre}(S)=H^0(S,\cO_S)$, a morphism from $\alpha\in H^0(S,\cO_S)$ to $\alpha'\in H^0(S,\cO_S)$ is a presentation of $\alpha$ as $\alpha'\alpha''$, one has $\alpha_1\otimes\alpha_2=\alpha_1\alpha_2$, and so on. In other words, $\sM_{\pre}(S)$ is obtained as follows: start with the multiplicative monoid $H^0(S,\cO_S)$ viewed as a discrete monoidal category, then add morphisms $\psi_\alpha :\alpha\to 1$, subject to the relations $\psi_{\alpha_1\alpha_2}=\psi_{\alpha_1}\otimes\psi_{\alpha_2}\,$.

\subsubsection{Action of $\sM_{\pre}$ on $\Polyd_{\pre}$}  \label{sss:Action of H^0(S,cO_S)}
(i) First, let us define a strict action of the multiplicative monoid $H^0(S,\cO_S)$ on the category $\Polyd_{\pre}(S)$, which is trivial at the level of objects of $\Polyd_{\pre}(S)$. To this end, note that a morphism $\hat\BA^m_S\to\hat\BA^n_S$ is just a  collection
\[
(f_1,\ldots , f_n), \quad f_i\in H^0(S,\cO_S)[[x_1,\ldots x_m]], \quad f_i(0)=0.
\]
Definition: $\alpha\in H^0(S,\cO_S)$ takes $(f_1,\ldots , f_n)$ to $(\tilde f_1,\ldots , \tilde f_n)$, where
\begin{equation}  \label{e:action of alpha on morphisms}
\tilde f_i(x_1,\ldots x_m):=\alpha^{-1} f_i(\alpha x_1,\ldots \alpha x_m).
\end{equation}
The r.h.s. of \eqref{e:action of alpha on morphisms} makes sense (even though $\alpha^{-1}$ is not assumed to exist) because $f_i(0)=0$.

(ii) Let $\Phi_\alpha :\Polyd_{\pre} (S)\to\Polyd_{\pre} (S)$ be the functor corresponding to $\alpha\in H^0(S,\cO_S)$. 
The explicit description of $\sM_{\pre}(S)$ (see \S\ref{sss:The prestacks}) shows that extending the above action of $H^0(S,\cO_S)$ on $\Polyd_{\pre}(S)$ to an action of
$\sM_{\pre} (S)$ amounts to specifying natural transformations $\psi_\alpha :\Phi_\alpha\to\Id$ so that 
$\psi_{\alpha_1\alpha_2}=\psi_{\alpha_1}\circ\Phi_{\alpha_1}(\psi_{\alpha_2})$. We define the morphism 
$\hat\BA^n_S=\Phi_\alpha (\hat\BA^n_S)\overset{\psi_\alpha}\longrightarrow\hat\BA^n_S$ to be multiplication by $\alpha$.

\subsubsection{Action of $\sM$ on $\Polyd$ and $\fF$}  \label{sss:Action of sM on Polyd and sF}
(i) In \S\ref{sss:Action of H^0(S,cO_S)} we defined an action of $\sM_{\pre}$ on $\Polyd_{\pre}\,$. It
induces an action of $\sM$ on $\Polyd$.

(ii) The endofunctor of $\Polyd (S)$ corresponding to each object of $\sM (S)$ preserves finite products. So $\Polyd (S)$ acts on the category of group objects in $\Polyd (S)$, i.e., on
$\fF (S)$.

\begin{lem}    \label{l:refined rescaling}
Let $\sM_{\nilp} (S)$ be as in \S\ref{sss:sM (S)} and $(\sL,\alpha)\in\sM_{\nilp} (S)$. 
Then the rescaling functor $\Phi_{\sL,\alpha}:\fF (S)\to\fF (S)$ canonically factors as
\begin{equation}  \label{e:refined rescaling}
\fF (S)\to \Aff (S)\to\fF (S),
\end{equation}
where $\Aff (S)$ is the category of smooth affine group $S$-schemes with connected fibers and the second arrow in \eqref{e:refined rescaling} is the functor of formal completion along the unit. Moreover, if $G$ is in the essential image of the functor  $\fF (S)\to \Aff (S)$ then Zariski-locally on $S$, the pointed $S$-scheme $(G,0)$ is isomorphic to $(\BA^m_S,0)$ for some $m$.
\end{lem}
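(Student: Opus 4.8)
\textbf{Proof plan for Lemma~\ref{l:refined rescaling}.} The plan is to construct the functor $\fF(S)\to\Aff(S)$ directly at the level of the prestacks $\sM_{\pre}$ and $\Polyd_{\pre}$ introduced in \S\ref{sss:The prestacks}, and then descend. Working Zariski-locally we may assume $S=\Spec R$ and $\sL=\cO_S$, so that the datum is an element $\alpha\in R$ with $\alpha$ nilpotent on $S_{\red}$, i.e.\ $\alpha$ is nilpotent (since $R$ need not be reduced one should say: $\alpha$ lies in the nilradical, hence $\alpha^N=0$ for some $N$). Given an $n$-dimensional formal group $H$ over $S$, choose coordinates so that the group law is given by power series $F_i(x_1,\dots,x_n;y_1,\dots,y_n)\in R[[x,y]]$ with $F_i(x;0)=x_i$, $F_i(0;y)=y_i$. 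The rescaled formal group $\Phi_{\cO_S,\alpha}(H)$ has group law $\tilde F_i(x;y):=\alpha^{-1}F_i(\alpha x;\alpha y)$, which by \S\ref{sss:Action of H^0(S,cO_S)} makes sense because $F_i$ has no constant term. The first key point is that \emph{because $\alpha$ is nilpotent}, these $\tilde F_i$ are actually \emph{polynomials}: each monomial $c_\mu x^{a}y^{b}$ of $F_i$ of total degree $d$ contributes $c_\mu\alpha^{d-1}x^a y^b$ to $\tilde F_i$, and all terms with $d-1\ge N$ vanish. So $\tilde F=(\tilde F_1,\dots,\tilde F_n)$ defines a morphism $\BA^n_S\times_S\BA^n_S\to\BA^n_S$; the associativity, unit, and inverse identities hold for $\tilde F$ as an identity of \emph{power series} (being the $\alpha$-rescaling of the corresponding identities for $F$), hence hold as polynomial identities since the $\tilde F_i$ are polynomials. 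This exhibits $G_{H,\alpha}:=\BA^n_S$ with multiplication $\tilde F$ as a smooth affine group scheme over $S$ with connected ($\cong\BA^n$) fibers, whose formal completion along the unit section $0$ recovers $\Phi_{\cO_S,\alpha}(H)$. This simultaneously proves the final sentence of the lemma: for $G$ in the essential image, Zariski-locally $(G,0)\cong(\BA^n_S,0)$ by construction.

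The second step is to make this canonical and functorial in the pair $((\sL,\alpha),H)$, and then descend from $\Polyd_{\pre}$ to $\Polyd$, i.e.\ check the construction is compatible with the fpqc-descent data so that it globalizes to arbitrary $S$ and arbitrary line bundle $\sL$. Concretely: a morphism of formal groups $H\to H'$ is a collection of power series $g_i$ with $g_i(0)=0$, and the $\alpha$-rescaled collection $\tilde g_i(x):=\alpha^{-1}g_i(\alpha x)$ is again a polynomial when $\alpha$ is nilpotent, giving a morphism of the algebraic models $G_{H,\alpha}\to G_{H',\alpha}$; and a morphism $(\sL,\alpha)\to(\sL',\alpha')$ in $\sM_{\pre}(S)$, i.e.\ a factorization $\alpha=\alpha'\alpha''$, induces via $\psi$ the comparison $G_{H,\alpha}\to G_{H,\alpha'}$ (multiplication by $\alpha''$ on $\BA^n_S$). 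One checks these assignments respect composition and the monoidal structure exactly as in \S\ref{sss:Action of H^0(S,cO_S)}(ii), using that in all the relevant identities one is only rescaling power-series identities that were already valid. Then one invokes the fpqc-stack property of $\Polyd$ (Corollary~\ref{c:fpqc-stack}), of $\fF$ (Remark~\ref{sss:fpqc-locality}(i)), and of $\Aff$ (smooth affine group schemes with connected fibers glue fpqc-locally, since $\Aff$ is a full subcategory of group schemes cut out by fpqc-local conditions) to transport the prestack-level functor to a functor $\fF(S)\to\Aff(S)$ for general $S$; compatibility with base change $\sM_{\nilp}(S)\to\sM_{\nilp}(S')$ from \S\ref{sss:sM (S)} ensures it patches correctly over an fpqc cover trivializing $\sL$.

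Finally one records the factorization: composing $\fF(S)\to\Aff(S)$ with formal completion along the unit, $\Aff(S)\to\fF(S)$, gives back $\Phi_{\sL,\alpha}$ by the very definition of $G_{H,\alpha}$ (its completion at $0$ is $(\hat\BA^n_S,0)$ with the power-series group law $\tilde F$, which is precisely $\Phi_{\cO_S,\alpha}(H)$), and this isomorphism is natural. The step I expect to be the main obstacle is \textbf{not} the polynomiality observation (which is the crux but is elementary once stated) but rather bookkeeping the \emph{canonicity/functoriality with respect to $\sM$}: one must verify that the chosen algebraic model $G_{H,\alpha}$ depends on $(\sL,\alpha)$ only through the monoidal action in a way compatible with the coherence isomorphisms $\psi_{\alpha_1\alpha_2}=\psi_{\alpha_1}\circ\Phi_{\alpha_1}(\psi_{\alpha_2})$, and that the whole package descends — i.e.\ that no choices made in the local coordinate presentation survive in the glued object. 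This is routine but must be done carefully; the geometric content is entirely in the nilpotence of $\alpha$ forcing the rescaled group laws to be polynomial.
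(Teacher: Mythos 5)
Your proof is correct and rests on exactly the same observation as the paper's (one-sentence) proof: when $\alpha$ is nilpotent, the rescaled series $\alpha^{-1}f(\alpha x)$ of \eqref{e:action of alpha on morphisms} is a polynomial, so the rescaled group law lives on $\BA^n_S$ rather than only on $\hat\BA^n_S$. The functoriality and descent bookkeeping you flag as the ``main obstacle'' is already built into the construction of the $\sM$-action on $\Polyd$ and $\fF$ in \S\ref{sss:Action of H^0(S,cO_S)}--\ref{sss:Action of sM on Polyd and sF}, which is why the paper can omit it.
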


\begin{proof}
If in the situation of \S\ref{sss:Action of H^0(S,cO_S)} the function $\alpha\in H^0(S,\cO_S)$ is nilpotent then the formal series \eqref{e:action of alpha on morphisms} is a polynomial.
\end{proof}

\subsubsection{Notation}
Recall that $\sM (S)\supset \sM_{\inj}(S)=\Div_+(S)$ (see \S\ref{sss:sM (S)}).
If $D\in\Div_+(S)$ then the  action of $D$ on $\Polyd (S)$ or $\fF (S)$ will be denoted by $\sX\mapsto \sX (-D)$.
By \S\ref{sss:1 is final}, we have a canonical morphism $\sX (-D)\to\sX$.

\begin{lem}  \label{l:univ property of sX(-D)}
Let $S$ be a scheme and $D\overset{i}\mono S$ an effective Cartier divisor.

(i) For any $\sX,\sX'\in\Polyd (S)$, the map $\Mor (\sX' ,\sX (-D))\to \Mor (\sX' ,\sX)$ is injective. Its image is equal to the preimage of the distinguished element\,\footnote{This element is due to the fact that we are dealing with \emph{based} formal $S$-polydisks.} of 
 $\Mor (i^*\sX' ,i^*\sX)$.
 
(ii) The same is true if $\sX,\sX'$ are formal groups over $S$. \qed
 \end{lem}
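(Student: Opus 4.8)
The plan is to reduce everything to an explicit local computation, using that $\Polyd$ is an fpqc-stack (Corollary~\ref{c:fpqc-stack}) and that $i^*$ behaves well under the $\sM$-action. First I would observe that the two statements are really the same computation: a formal group over $S$ is in particular an object of $\Polyd(S)$, and the morphism $\sX(-D)\to\sX$ coming from the distinguished map $\psi_\alpha\colon\Phi_\alpha\to\Id$ (see \S\ref{sss:Action of H^0(S,cO_S)}) is a morphism in $\Polyd(S)$, so if a morphism $\sX'\to\sX$ factors through $\sX(-D)$ in $\Polyd(S)$ and the objects and the factoring morphism happen to be group objects, then the factorization is automatically a group homomorphism (a section of a monomorphism of group objects through which a homomorphism factors is itself a homomorphism). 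Hence it suffices to prove (i).

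For (i), both sides are fpqc-local on $S$ by Corollary~\ref{c:fpqc-stack}, and an effective Cartier divisor is fpqc-locally principal, so I may assume $S=\Spec A$ with $D$ cut out by a nonzerodivisor $\alpha\in A$, and $\sX=\hat\BA^n_S$, $\sX'=\hat\BA^m_S$ in the preferred trivializations, i.e. we are in $\Polyd_{\pre}(S)$. Then, as recalled in \S\ref{sss:Action of H^0(S,cO_S)}, a morphism $\sX'\to\sX$ is a tuple $(f_1,\dots,f_n)$ with $f_i\in A[[x_1,\dots,x_m]]$, $f_i(0)=0$, and—unwinding the definition of $\sX(-D)=\Phi_\alpha(\sX)$ together with the morphism $\psi_\alpha$ being multiplication by $\alpha$—a morphism $\sX'\to\sX(-D)$ composed down to $\sX$ is a tuple of the form $\bigl(\alpha\tilde f_1(x),\dots,\alpha\tilde f_n(x)\bigr)$ where $\tilde f_i(x)=\alpha^{-1}f_i(\alpha x_1,\dots,\alpha x_m)$ must itself lie in $A[[x]]$. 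So the claim becomes: the tuple $(f_1,\dots,f_n)$ lifts (uniquely) to $\sX(-D)$ if and only if, for each $i$, the power series $f_i(\alpha x_1,\dots,\alpha x_m)$ is divisible by $\alpha$ in $A[[x]]$, coefficient by coefficient.

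It remains to identify this divisibility condition with "the restriction to $D$ is the distinguished element". The distinguished element of $\Mor(i^*\sX',i^*\sX)$ is the zero tuple (this is exactly the basepoint structure: $\psi_\alpha$ restricted to $D$ kills everything, since $\alpha\equiv 0$ on $D$). Restriction to $D$ sends $(f_1,\dots,f_n)$ to $(f_1\bmod\alpha,\dots,f_n\bmod\alpha)$, so the condition "$i^*(f)=$ distinguished element" is "$f_i\in\alpha A[[x]]$ for all $i$", i.e. every coefficient of $f_i$ is divisible by $\alpha$. Since $\alpha$ is a nonzerodivisor, writing $f_i=\alpha g_i$ with $g_i\in A[[x]]$, one checks directly that $f_i(\alpha x)=\alpha\,g_i(\alpha x)$ is divisible by $\alpha$, giving the lift, and conversely divisibility of $f_i(\alpha x)$ by $\alpha$ forces divisibility of the degree-$d$ part of $f_i$ by $\alpha$ after cancelling $\alpha^d$ (again using that $\alpha$ is a nonzerodivisor), so $f_i\in\alpha A[[x]]$. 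Uniqueness of the lift, hence injectivity of $\Mor(\sX',\sX(-D))\to\Mor(\sX',\sX)$, is immediate since $g_i$ is determined by $f_i=\alpha g_i$ and $\alpha$ is a nonzerodivisor. The only mildly delicate point—the step I'd expect to take the most care—is checking that these local identifications are compatible with the fpqc descent data so that the statement globalizes, but this is routine because $i^*$, the $\sM$-action, and the distinguished elements are all part of the stack structure and hence descend; I would phrase the local argument functorially enough (in terms of $\psi_\alpha$ rather than chosen coordinates) that descent is automatic.
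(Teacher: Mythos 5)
The paper offers no proof of this lemma (it is stated with a \qed), so the only benchmark is the intended one-line local computation. Your overall strategy is certainly the right one: Zariski-localize so that $S=\Spec A$, $D=\{\alpha=0\}$ with $\alpha$ a nonzerodivisor, and $\sX=\hat\BA^n_S$, $\sX'=\hat\BA^m_S$, then compute with power series; and your reduction of (ii) to (i) is fine, provided you note that the monomorphism property you invoke is exactly part (i) applied with $\sX'$ replaced by $\sX'\times_S\sX'$, which is again a based polydisk.

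However, the core computation in (i) contains a genuine error. Since $\Phi_\alpha$ does not change objects and $\sX'$ is \emph{not} being rescaled, an element of $\Mor(\sX',\sX(-D))$ is simply a tuple $h=(h_1,\dots,h_n)$ with $h_i\in A[[x_1,\dots,x_m]]$, $h_i(0)=0$, and the map to $\Mor(\sX',\sX)$ is post-composition with $\psi_{\alpha,\sX}$, i.e.\ $h\mapsto\alpha h$. Hence $f$ lifts if and only if $f_i\in\alpha A[[x]]$ for every $i$, and the lift $h=\alpha^{-1}f$ is unique because $\alpha$ is a nonzerodivisor; this set visibly equals the preimage of the distinguished (zero) element of $\Mor(i^*\sX',i^*\sX)$. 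You instead describe the composite as $f(\alpha x_1,\dots,\alpha x_m)$ via the formula $\tilde f=\alpha^{-1}f(\alpha x)$; but that formula is the action of $\Phi_\alpha$ \emph{on morphisms}, i.e.\ it computes the map $\Mor(\sX',\sX)\to\Mor(\sX'(-D),\sX(-D))$, which is not what is needed here. The resulting lifting criterion ``$\alpha$ divides $f_i(\alpha x)$'' is vacuous: every coefficient of $f_i(\alpha x)$ has the form $c_d\alpha^{|d|}$ with $|d|\ge 1$ and so is divisible by $\alpha$ for \emph{any} $f_i$; and your converse step, cancelling $\alpha^{|d|}$ from $\alpha\mid c_d\alpha^{|d|}$ to conclude $\alpha\mid c_d$, is false. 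Read literally, your argument would show that every morphism lifts, which contradicts the statement being proved. Replacing that paragraph by the direct computation $h\mapsto\alpha h$ repairs the proof.
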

 
 \subsubsection{Remark} \label{sss:generalization of the lemma}
 Lemma~\ref{l:univ property of sX(-D)}(i) can be generalized as follows. Let $(\sL ,\alpha)\in\sM (S)$. 
 Let $D\overset{i}\mono S$ be the closed subscheme corresponding to the ideal $\im\alpha\subset\cO_S$ ; let $S'\overset{\nu}\mono S$ be the closed subscheme corresponding to
 the ideal $\Ker (\alpha^*:\cO_S\to\sL^*)$. Let $\sX,\sX'\in\Polyd (S)$, and
 let $\tilde\sX\in\sM (S)$ be obtained by acting on $\sX$ by $(\sL ,\alpha)$. By \S\ref{sss:1 is final}, we have a canonical morphism $\tilde\sX\to\sX$ and therefore a morphism
$f:\MMor (\sX', \tilde\sX )\to \MMor (\sX' ,\sX)$,
 where $\MMor$ denotes the sheaf on $S$ formed by morphisms. Then the sequence
 \[
 \MMor (\sX', \tilde\sX )\overset{f}\longrightarrow \MMor (\sX' ,\sX)\to i_*\MMor (i^*\sX' ,i^*\sX)
 \]
 is exact in the following sense: the sections of $\im f$ are precisely those sections of $\MMor (\sX' ,\sX)$ which map to the distinguished section of $ i_*\MMor (i^*\sX' ,i^*\sX)$.
 Moreover, 
  $$\im f=\nu_*\MMor (\nu^*\sX', \nu^*\tilde\sX )$$ 
 (the two sheaves are equal as quotients of $\MMor (\sX', \tilde\sX )$).

\subsubsection{What if $S$ is a stack?}  \label{sss:What if S is a stack?}
It is straightforward to generalize the material of \S\ref{sss:sM (S)}-\ref{sss:generalization of the lemma} to the situation where $S$ is an algebraic stack\footnote{Let us note that the definition of algebraic stack from \cite[\S 2.4]{Prismatization} involves no finiteness conditions.} of groupoids in 
the sense of \cite[\S 2.4]{Prismatization}. But algebraic stacks are not enough for us: the stack $\Sigma$ and the $q$-de Rham prism $Q$ are \emph{formal} stacks rather than agebraic ones.

If $S$ is any fpqc-stack we still have the monoidal category $\sM (S)$ and its action on $\Polyd (S)$ and $\fF (S)$. 
For a reasonable class of stacks $S$ (which includes all formal stacks, e.g., $\Sigma$, $Q$, and $Q\times_\Sigma Q$) one also has a good notion of effective Cartier divisor on $S$ and an analog of Lemma~\ref{l:univ property of sX(-D)}, see \S\ref{ss:What if S is a stack} below.

\subsection{Deformation of a formal group to the formal completion of its Lie algebra} \label{ss:Deformation to Lie algebra}

In this subsection we briefly discuss a formal version of a particular case of the Fulton-MacPherson construction of \emph{deformation to the normal cone}, see 
 \cite[Ch.~5]{F}, \cite[\S 2]{V}, and also \S 10 of the article \cite{R} (where some generalizations of the original construction are discussed).

\subsubsection{}  \label{sss:deformation to normal cone}
Let $\sX\in\Polyd (S)$, $\sX= (X,\sigma :S\to X)$. Let $\sN$ be the $\sigma$-pullback of the tangent bundle of $X$ relative to $S$ (or equivalently, 
the normal bundle of $\sigma (S)\subset X$). Let $\pi:\BA^1_S\to S$ be the projection and $i_0:S\to\BA^1_S$ the zero section. Let $D:=i_0(S)\subset\BA^1_S\,$. Let 
$$\tilde\sX:=(\pi^*\sX)(-D)\in\Polyd (\BA^1_S).$$
One checks that $i_0^*\tilde\sX$ canonically identifies with the formal completion of the vector bundle $\sN$ along its zero section.

\subsubsection{}
Now let $\sX\in\sF (S)$.  Just as in \S\ref{sss:deformation to normal cone}, let $\tilde\sX:=(\pi^*\sX)(-D)\in\sF (\BA^1_S)$.
Then the formal group $i_0^*\tilde\sX$ canonically identifies with the formal completion of the vector bundle $\Lie (\sX )$ along its zero section.

\subsection{An analog of Lemma~\ref{l:univ property of sX(-D)} if $S$ is a stack}  \label{ss:What if S is a stack}
\subsubsection{A class of stacks}  \label{sss:A class of stacks}

Let $S$ be an  fpqc-stack of groupoids which can be represented as
\begin{equation}  \label{e:S as colimit}
S= \underset{\longrightarrow}{\lim}(S_1\mono S_2\mono\ldots ),
\end{equation}
where each $S_i$ is an algebraic stack in the sense of \cite[\S 2.4]{Prismatization} and the morphisms $S_i\to S_{i+1}$ are closed immersions.
 Such $S$ is \emph{pre-algebraic} in the sense of \cite[\S 2.3]{Prismatization}.
 
\subsubsection{The notion of effective Cartier divisor}
We will use the notion of effective Cartier divisor on  a pre-algebraic stack introduced in  \cite[\S 2.10-2,11]{Prismatization}.
If $S$ admits a presentation~\eqref{e:S as colimit} the definition from  \cite{Prismatization} is equivalent to the following one: an \emph{effective Cartier divisor} on $S$ is a closed substack $D\subset S$ such that

(i) the ideal $\cI_n$ of the closed substack $D\cap S_n\subset S_n$ is an invertible sheaf on some closed substack $S'_n\subset S_n$;

(ii) the inductive limit\footnote{It is easy to check that $S'_n\subset S'_{n+1}$, so the stacks $S'_n$ form an inductive system.} of the stacks $S'_n$ equal $S$; 
equivalently, for every quasi-compact scheme $\tilde S$ every morphism $f:\tilde S\to S$ factors through some $S'_n$.

In this situation one can define the  line bundle $\cO_S (-D )$:  its pullback to $S'_n$ equals $\cI_n\,$. Therefore we have $\sX (-D)$ for $\sX\in\Polyd (S)$ or for $\sX\in\fF (S)$.

\begin{prop}   \label{p:univ property of sX(-D)} 
Lemma~\ref{l:univ property of sX(-D)} remains valid for any stack $S$ which admits a presentation~\eqref{e:S as colimit}.
\end{prop}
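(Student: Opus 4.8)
The plan is to reduce the statement for a general stack $S$ admitting a presentation \eqref{e:S as colimit} to the case of algebraic stacks, and from there to the case of schemes, where Lemma~\ref{l:univ property of sX(-D)} has already been established. The point is that both the source and target of the map in question are sheaves on $S$, and $S$ is built up from algebraic stacks, each of which is in turn covered (fpqc- or smooth-locally) by schemes; so the whole statement is of a local nature and descends along the tower and along atlases.

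\textbf{Step 1: reduce to each $S_n'$.} Fix $\sX, \sX' \in \Polyd(S)$ (or $\fF(S)$). For a quasi-compact scheme $\tilde S$ with a morphism $f\colon \tilde S \to S$, part (ii) of the definition of effective Cartier divisor says $f$ factors through some $S_n'$. Hence a section of $\MMor(\sX', \sX)$, $\MMor(\sX', \sX(-D))$, etc., over $\tilde S$ is the same as such a section over $S_n'$ pulled back along $\tilde S \to S_n'$; and $\cO_S(-D)$ restricts to $\cI_n$ on $S_n'$, so $\sX(-D)|_{S_n'} = (\sX|_{S_n'})(-\,(D\cap S_n))$ with $D\cap S_n$ an effective Cartier divisor on $S_n'$ in the sense of \cite[\S 2.10-2.11]{Prismatization}. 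Therefore it suffices to prove the analogue of Lemma~\ref{l:univ property of sX(-D)} for each algebraic stack $S_n'$ and the effective Cartier divisor $D\cap S_n$ on it.

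\textbf{Step 2: reduce algebraic stacks to schemes.} Let now $S$ be an algebraic stack in the sense of \cite[\S 2.4]{Prismatization} and $D \mono S$ an effective Cartier divisor. Choose a faithfully flat (e.g.\ smooth) atlas $p\colon U \to S$ with $U$ a scheme; form $U' := U\times_S U$. Both $\MMor(\sX', \sX)$ and $\MMor(\sX', \sX(-D))$ are fpqc-sheaves on $S$ — this uses Remark~\ref{sss:fpqc-locality}(i) and the fact, recorded in \S\ref{sss:What if S is a stack?}, that $\sM$, $\Polyd$, $\fF$ and the rescaling operation all make sense fpqc-locally. Pulling the divisor $D$ back to $U$ gives an effective Cartier divisor $D_U \mono U$ (pullback of an effective Cartier divisor along a flat map is again one), and $p^*(\sX(-D)) = (p^*\sX)(-D_U)$ because the action of $\sM$ commutes with pullback. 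Applying Lemma~\ref{l:univ property of sX(-D)} over the scheme $U$, and over $U'$, and using that the distinguished section of $\MMor(i^*\sX', i^*\sX)$ is compatible with base change (it is ``canonical'' precisely because it comes from the basepoints), we get: a section of $\MMor(\sX',\sX)$ over $S$ lies in the image of $\MMor(\sX', \sX(-D))$ iff its restriction to $U$ does iff its restriction to $U$ maps to the distinguished section of $i_U^*$-morphisms iff — by sheaf descent for $i_*\MMor(i^*\sX', i^*\sX)$ along $D_U \to D$ — the original section over $S$ maps to the distinguished section over $D$. Injectivity is inherited the same way. This gives the statement over $S$ algebraic, hence over each $S_n'$, hence over $S$ by Step 1.

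\textbf{Main obstacle.} The routine part is the sheaf-theoretic bookkeeping; the one place requiring care is checking that the notion of effective Cartier divisor used in \cite[\S 2.10-2.11]{Prismatization} behaves well under the two operations invoked — restriction along the closed immersions $S_n' \mono S$ (so that $\cO_S(-D)|_{S_n'} = \cI_n$, essentially by definition) and pullback along the flat atlas $U \to S$ (so that $D_U$ is again an effective Cartier divisor and $\cO_S(-D)$ pulls back to $\cO_U(-D_U)$). Granting these compatibilities — which are straightforward from the definitions in \cite{Prismatization} — the proof is just descent plus Lemma~\ref{l:univ property of sX(-D)}, and there is no genuinely new content beyond the scheme case.
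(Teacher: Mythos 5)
Your Step 2 (descent from an algebraic stack to a scheme along a flat atlas) is fine, but Step 1 contains a genuine gap, and it is located exactly at the point you dismiss as a "straightforward compatibility." You claim that $\cO_S(-D)$ restricts on $S'_n$ to an effective Cartier divisor, so that the scheme/algebraic-stack case of Lemma~\ref{l:univ property of sX(-D)} applies on each $S'_n$. This is false in general. Take $S=\Spf\BZ_p=\underset{\longrightarrow}{\lim}\,\Spec\BZ/p^n$ and $D=\{p=0\}$. Then $\cI_n=(p)/(p^n)$ is invertible only on $S'_n=\Spec\BZ/p^{n-1}$, and the natural map $\cO_S(-D)|_{S'_n}\to\cO_{S'_n}$ sends the generator to $p\in\BZ/p^{n-1}$, which is a zero-divisor: the pair lies in $\sM(S'_n)$ but not in $\sM_{\inj}(S'_n)$, i.e.\ $D$ does \emph{not} restrict to an effective Cartier divisor on $S'_n$ (nor on $S_n$). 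So Lemma~\ref{l:univ property of sX(-D)}, whose hypothesis is injectivity of $\alpha$, cannot be invoked on any layer of the tower, and your reduction collapses precisely where the lift $\sX'\to\sX(-D)$ has to be produced.

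This is why the paper's proof does not go through the restricted divisors at all: it applies the generalization of \S\ref{sss:generalization of the lemma} to the pullback of $\cO_S(-D)$ to $S_n$, viewed as a possibly non-injective object of $\sM(S_n)$. That statement does two things your argument needs: it produces the lift over $S_n$ of any morphism $\sX'\to\sX$ vanishing on the closed substack cut out by $\im\alpha$, and, via the identification $\im f=\nu_*\MMor(\nu^*\sX',\nu^*\tilde\sX)$ with $\nu:S'_n\mono S_n$, it shows the lift is determined by its restriction to $S'_n$. Since by part (ii) of the definition of the divisor the $S'_n$ exhaust $S$, these lifts glue to a morphism over $S$, and injectivity follows the same way. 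Your proof can be repaired by replacing the appeal to Lemma~\ref{l:univ property of sX(-D)} on $S'_n$ with an appeal to (the algebraic-stack version of) \S\ref{sss:generalization of the lemma} on $S_n$; without that non-injective version the argument does not close.
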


\begin{proof}
It suffices to prove the analog of Lemma~\ref{l:univ property of sX(-D)}(i) for the stack $S$. To this end,
for each~$n$ apply the analog of \S\ref{sss:generalization of the lemma} for algebraic stacks to the pullback of $\cO_S (-D)$ to $S_n\,$.
\end{proof}

The author expects that using \S\ref{sss:generalization of the lemma}
one can prove  that Lemma~\ref{l:univ property of sX(-D)} remains valid for any \emph{pre-algebraic} stack in the sense of \cite[\S 2.3]{Prismatization}.

\subsubsection{A corollary of Lemma~\ref{l:refined rescaling}}   \label{sss:refined rescaling}
Let $S$ be as in \S\ref{sss:A class of stacks} and $H\in\fF (S)$. Let $D\subset S$ be an effective Cartier divisor such that for every scheme $T$ and every morphism
$T\to S$ one has $T\times_SD\supset T_{\red}$. Lemma~\ref{l:refined rescaling} implies that in this situation the formal group $H(-D)$ can be canonically represented as a formal completion of a smooth affine group $S$-scheme with connected fibers. We denote this group scheme by $H(-D)^{\alg}$.

In particular, we have the group scheme $H_{\Sigma}(-\Delta_0)^{\alg}$ over $\Sigma$.

\section{Proofs of the statements from \S\ref{s:main results}} \label{s:Proofs}
\subsection{Recollections on the Hodge-Tate divisor $\Delta_0\subset\Sigma$}  \label{ss:Delta_0 as classifying stack}
By definition, 
$\Delta_0\subset\Sigma:=W_{\prim}/W^\times$ is the preimage of  $\{ 0\}/\BG_m\subset\BA^1/\BG_m$ under the morphism
$W_{\prim}/W^\times\to\BA^1/\BG_m$.

The element $V(1)\in W(\BZ_p)$ defines a morphism 
\begin{equation} \label{e:eta}
\eta :\Spf\BZ_p\to\Delta_0.
\end{equation}
$\eta$ is faithfully flat, and it identifies $\Delta_0$ with the classifying stack $(\Spf\BZ_p)/(W^\times)^{(F)}$, where $(W^\times)^{(F)}:=\Ker (F:W^\times\to W^\times)$; the proof of this fact is straightforward (see \cite{BL} or Lemma~4.5.2 of \cite{Prismatization}).

\subsection{Proof of Proposition~\ref{p:G' to W^times/G_m faithfully flat}}  \label{ss:G' to W^times/G_m faithfully flat}
We have to show that the composite morphism
\begin{equation}  \label{e: 2 G' to W^times/G_m}
G'_{\Sigma}\to W^\times_\Sigma\to(W^\times/\BG_m)_\Sigma
\end{equation}
is faithfully flat.

\subsubsection{Reductions}   \label{sss:Reductions}
Both $G'_{\Sigma}$ and $(W^\times/\BG_m)_\Sigma$ are flat over $\Sigma$. For any morphism from a quasi-compact scheme $S$ to $\Sigma$, the ideal of
the closed subscheme $S\times_\Sigma\Delta_0\subset S$ is nilpotent. So it suffices to check faithful flatness of \eqref{e: 2 G' to W^times/G_m} after base change to $\Delta_0$ and even after further pullback via the faithfully flat morphism \eqref{e:eta}.

\subsubsection{Pullback via $\eta: \Spf\BZ_p\to\Sigma$}    \label{sss:eta-pullback}
Let $G'_\eta$ be the pullback of $G'_{\Sigma}$ via $\eta: \Spf\BZ_p\to\Delta_0\subset\Sigma$. By \S\ref{sss:G'_W_prim},
$G'_\eta=W_{\Spf\BZ_p}$ (disregarding the group operation), and the $\eta$-pullback of \eqref{e: 2 G' to W^times/G_m} is the map
\begin{equation}  \label{e:G'_eta to G_m}
W_{\Spf\BZ_p}\to (W^\times/\BG_m)_{\Spf\BZ_p}=\Ker (W^\times\epi\BG_m)_{\Spf\BZ_p}, \quad x\mapsto 1+V(1)\cdot x=1+V(Fx).
\end{equation}
This map is faithfully flat because $F:W\to W$ is a Frobenius lift. \qed

\subsection{The group schemes $G_\eta,G_{\Delta_0}$ and the proof of Theorem~\ref{t:dual to formal group}}   \label{ss:G_eta}
Let $G_{\Delta_0}$ be the pullback of $G_{\Sigma}$ to $\Delta_0$. 
Let $G_\eta$ be the pullback of $G_{\Sigma}$ via $\eta: \Spf\BZ_p\to\Delta_0\subset\Sigma$; this is a group scheme over $\Spf\BZ_p$. 
\begin{prop}    \label{p:G_eta}
(i) There is a canonical isomorphism of group schemes
\begin{equation}    \label{e:G_eta=W^{F}}
G_\eta\iso (W^{(F)})_{\Spf\BZ_p},
\end{equation}
where $W^{(F)}:=\Ker (F:W\to W)$.

(ii) The homomorphism $G_\eta\to (\BG_m)_{\Spf\BZ_p}$ induced by \eqref{e: G to G_m} is trivial.

(iii) The $\eta$-pullback of the morphism $G_\Sigma\to F^*G_\Sigma$ from \S\ref{sss:Pieces of structure on H_Sigma} is trivial.
\end{prop}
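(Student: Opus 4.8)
The plan is to read off all three $\eta$-pullbacks directly from the explicit model of $G'_{W_{\prim}}$ given in \S\ref{sss:G'_W_prim}, just as in the computation \S\ref{sss:eta-pullback}. Since $\eta$ is the composite $\Spf\BZ_p\to W_{\prim}\to\Sigma$ attached to $V(1)\in W(\BZ_p)$, the $\eta$-pullback of $G'_{W_{\prim}}=W_{\prim}\times W$ is the group scheme $G'_\eta=W_{\Spf\BZ_p}$ with group law $x_1*x_2=x_1+x_2+V(1)\cdot x_1x_2$ and with homomorphism to $W^\times_{\Spf\BZ_p}$ given by $x\mapsto 1+V(1)\cdot x=1+V(Fx)$ (using the projection formula $V(1)\cdot x=V(Fx)$), as already recorded in \eqref{e:G'_eta to G_m}. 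I will also use two further features of this model: the $\delta$-structure of $G'_{W_{\prim}}$ is the product of the $\delta$-structures on $W_{\prim}$ and $W$, so after $\eta$-pullback the associated endomorphism acts on the $W$-coordinate by the Witt-vector Frobenius; and $F\circ\eta$ is the map attached to $FV(1)=p$, i.e.\ $F\circ\eta=\rho_{\dR}$.

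For (i), the Witt vector $1+V(Fx)$ has $0$-th component $1$, and the Verschiebung $V$ is injective; hence $1+V(Fx)$ lies in the Teichm\"uller image $(\BG_m)_{\Spf\BZ_p}\subset W^\times_{\Spf\BZ_p}$ if and only if it equals $[1]=1$, i.e.\ $Fx=0$. Thus $G_\eta$, being the preimage of $(\BG_m)_{\Spf\BZ_p}$ under $G'_\eta\to W^\times_{\Spf\BZ_p}$, is the closed subscheme $(W^{(F)})_{\Spf\BZ_p}\subset W_{\Spf\BZ_p}$. Moreover on $W^{(F)}$ one has $V(1)\cdot x_1x_2=V(F(x_1x_2))=V(Fx_1\cdot Fx_2)=0$, so the group law $*$ restricts there to ordinary Witt addition. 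Together these give the isomorphism of group schemes \eqref{e:G_eta=W^{F}}.

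Statement (ii) is then immediate, since for $x\in W^{(F)}$ the image of $x$ under the $\eta$-pullback of \eqref{e: G to G_m} is $1+V(Fx)=1$, the identity of $\BG_m$. For (iii), the $\eta$-pullback of $G_\Sigma\to F^*G_\Sigma$ is a homomorphism $G_\eta\to\eta^*F^*G_\Sigma=(F\circ\eta)^*G_\Sigma=\rho_{\dR}^*G_\Sigma=G_{\dR}$ which, in the coordinates above, is $x\mapsto Fx$ (the $\delta$-structure acting on the $W$-coordinate). Since $G_\eta=(W^{(F)})_{\Spf\BZ_p}$ and $F$ vanishes identically on $W^{(F)}$, this homomorphism is zero, i.e.\ trivial.

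These computations are short; the only thing requiring care is checking that the $W_{\prim}$-level identities above are compatible with descent along $W_{\prim}\to\Sigma$ and with the identifications $\eta^*F^*(-)=(F\circ\eta)^*(-)=\rho_{\dR}^*(-)$. Since $G'_{W_{\prim}}$, its homomorphism to $W^\times_{W_{\prim}}$, and its $\delta$-structure are all $W^\times$-equivariant by the construction in \S\ref{sss:G'_Sigma}, I expect this bookkeeping to be routine rather than a genuine obstacle.
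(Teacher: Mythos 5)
Your proposal is correct and follows essentially the same route as the paper: pull back the explicit model $G'_{W_{\prim}}=W_{\prim}\times W$ along $\eta$, observe that the condition ``$1+V(1)\cdot x=1+V(Fx)$ is Teichm\"uller'' forces $V(Fx)=0$ and hence $Fx=0$ by injectivity of $V$, so that $G_\eta=W^{(F)}_{\Spf\BZ_p}$ with the group law degenerating to Witt addition, after which (ii) and (iii) are immediate because both maps are given by expressions vanishing on $W^{(F)}$. The extra observations you add (the projection formula making the degeneration of the group law explicit, and $F\circ\eta=\rho_{\dR}$) are correct but not needed.
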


\begin{proof}
Let us prove (i). By \S\ref{sss:G'_W_prim}, $G'_\eta$ is $W_{\Spf\BZ_p}$ equipped with the group 
operation 
$$(x_1,x_2)\mapsto x_1+x_2+V(1)\cdot x_1x_2\, .$$
By \eqref{e:G'_eta to G_m}, the subgroup $G_\eta\subset G'_\eta$ is defined by the equation $V(1)\cdot x=0$ or equivalently, $Fx=0$.

Statement (ii) is clear because the  homomorphism $G_\eta\to (\BG_m)_{\Spf\BZ_p}$ is the restriction of the map $G'_\eta =W_{\Spf\BZ_p}\to W^{\times}_{\Spf\BZ_p}$ given by $x\mapsto 1+V(1)\cdot x$.

To prove (iii), note that the morphism in question is $x\mapsto Fx$, but we already know that $G_\eta\subset G'_\eta$ is defined by the equation $Fx=0$.
\end{proof}

\begin{lem}   \label{l:W^(F)=G_a^sharp}
The canonical homomorphism $W\epi W_1=\BG_a$ induces an isomorphism 
$$W^{(F)}_{\Spec\BZ_{(p)}}\iso (\BG_a^\sharp)_{\Spec\BZ_{(p)}}\, ,$$
where $\BG_m^\sharp$ is the divided powers additive group and $\BZ_{(p)}$ is the localization of $\BZ$ at $p$.
\end{lem}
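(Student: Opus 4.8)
We must show the projection $\varpi:W\to W_1=\BG_a$, $x\mapsto x_0$, restricts on $W^{(F)}$ over $\BZ_{(p)}$ to a homomorphism that factors as an isomorphism onto $(\BG_a^\sharp)_{\BZ_{(p)}}$ followed by the canonical map $\BG_a^\sharp\to\BG_a$; here $(\BG_a^\sharp)_{\BZ_{(p)}}=\Spec\BZ_{(p)}\langle t\rangle$, with $\BZ_{(p)}\langle t\rangle\subset\BQ[t]$ the divided-power polynomial ring. Since $\varpi$ is the $0$-th ghost component $w_0$, hence a ring homomorphism, $\varpi|_{W^{(F)}}$ is already a homomorphism of commutative affine $\BZ_{(p)}$-group schemes; the plan is to construct its inverse by hand, using three standard facts: $\BZ_{(p)}\langle t\rangle$ is generated as a $\BZ_{(p)}$-algebra by the $\gamma_{p^k}(t)=t^{p^k}/(p^k)!$, $k\ge0$, with $\gamma_{p^k}(t)^p$ a unit of $\BZ_{(p)}$ times $p\,\gamma_{p^{k+1}}(t)$; each $\gamma_m(t)$ is a unit times a monomial in the $\gamma_{p^j}(t)$ with $p^j\le m$; and the homomorphism $\BG_a^\sharp\to\BG_a$ dual to $\BZ_{(p)}[t]\hookrightarrow\BZ_{(p)}\langle t\rangle$ is a homomorphism to the additive group, since $t=\gamma_1(t)$ is primitive.

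First I would set up a recursion. Over $\BZ_{(p)}$ the ghost maps $w_n:W\to\BG_a$ satisfy $w_n\circ F=w_{n+1}$, and $W(R)\to\prod_{n\ge0}R$ is injective for $p$-torsion-free $R$; define $\phi_n\in\BQ[t]$ by $\phi_0=t$ and $\sum_{i=0}^np^i\phi_i^{p^{n-i}}=0$ for $n\ge1$, so that over $p$-torsion-free $\BZ_{(p)}$-algebras $(\phi_0(a),\phi_1(a),\ldots)$ is the Witt vector with ghost vector $(a,0,0,\ldots)$. The crucial claim is that $\phi_n=u_n\gamma_{p^n}(t)+\psi_n$, where $u_n$ is a unit of $\BZ_{(p)}$ and $\psi_n$ is a $\BZ_{(p)}$-linear combination of the $\gamma_m(t)$ with $m<p^n$; in particular $\phi_n\in\BZ_{(p)}\langle t\rangle$. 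This is proved by a $p$-adic valuation count on $p^n\phi_n=-\sum_{i<n}p^i\phi_i^{p^{n-i}}$: the summand $i=n-1$, namely $-p^{-1}\phi_{n-1}^p$, has the unique largest denominator, of $p$-valuation $v_p((p^n)!)=1+p+\cdots+p^{n-1}$, and by induction (using that $\gamma_{p^{n-1}}(t)^p$ is a unit times $p\,\gamma_{p^n}(t)$) its leading part is a unit times $\gamma_{p^n}(t)$, while the remaining summands and lower-order terms contribute only to $\psi_n$.

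Granting the claim, let $B$ be the coordinate ring of $W^{(F)}_{\BZ_{(p)}}$ — the quotient of $\BZ_{(p)}[x_0,x_1,\ldots]$ by the ideal generated by the Witt components of $F(x_0,x_1,\ldots)$ — and define $j:B\to\BZ_{(p)}\langle t\rangle$ by $x_n\mapsto\phi_n(t)$. This is well defined because $\BZ_{(p)}\langle t\rangle$ is $p$-torsion-free, so that the vanishing (immediate from the recursion) of all ghost components of $F(\phi_0,\phi_1,\ldots)$ forces the vanishing of all its Witt components. The associated morphism $\iota:(\BG_a^\sharp)_{\BZ_{(p)}}\to W^{(F)}_{\BZ_{(p)}}$ is a group homomorphism — on $p$-torsion-free test objects ghost components add and $t$ is primitive — and $\varpi\circ\iota$ is the canonical map $\BG_a^\sharp\to\BG_a$. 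It remains to show $j$ is bijective. For surjectivity, the generation statements above together with $\phi_n=u_n\gamma_{p^n}(t)+\psi_n$ give, by induction on $k$, that $\gamma_{p^k}(t)\in\BZ_{(p)}[\phi_0,\ldots,\phi_k]$, so $j$ is onto. For injectivity, $F:W\to W$ is faithfully flat (as used in \S\ref{sss:Good news}), hence the fibre $W^{(F)}=W\times_{F,W,0}\Spec\BZ$ is flat over $\Spec\BZ$ and $B$ is $p$-torsion-free; moreover after inverting $p$ the ghost map identifies $W^{(F)}_{\BQ}$, compatibly with $\varpi$, with $\BG_{a,\BQ}=(\BG_a^\sharp)_{\BQ}$, so $j[1/p]$ is an isomorphism. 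A surjection of $\BZ_{(p)}$-algebras with $p$-torsion-free source that becomes an isomorphism after inverting $p$ is itself an isomorphism; thus $j$, and hence $\iota$, is an isomorphism, and $\iota^{-1}$ followed by the canonical map $\BG_a^\sharp\to\BG_a$ is $\varpi$, which is the assertion.

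The step I expect to be the main obstacle is the valuation bookkeeping behind the claim $\phi_n=u_n\gamma_{p^n}(t)+\psi_n$. One is forced to use the genuine kernel of Frobenius, i.e.\ the ideal generated by the Witt components of $F(x)$, which differs integrally from the ghost ideal $(w_1,w_2,\ldots)$ — the quotient by the latter has $p$-torsion — so one cannot simply solve and eliminate variables over $\BZ_{(p)}$; the substance of the lemma is precisely that the recursive solution $x_n=\phi_n(t)$ of the equations $w_n=0$ ($n\ge1$) has, integrally, the shape of a system of divided powers of $t$.
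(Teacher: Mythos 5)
Your argument is correct. Note first that the paper does not actually prove this lemma: it simply points to \cite{BL} and to Lemma~3.2.6 of \cite{Prismatization}, so there is no in-text proof to match against. Your construction is sound: the recursion $\sum_{i=0}^n p^i\phi_i^{p^{n-i}}=0$ does produce $\phi_n=c_nt^{p^n}$ with $v_p(c_n)=-(p^n-1)/(p-1)=-v_p((p^n)!)$ (in fact each $\phi_n$ is a single monomial, so your correction terms $\psi_n$ vanish identically and the valuation count on the $i=n-1$ summand is exactly the needed input), the passage from vanishing of ghost components to vanishing of Witt components over the $p$-torsion-free ring $\BZ_{(p)}\langle t\rangle$ is legitimate, and the final "surjective, torsion-free source, isomorphism after inverting $p$" step closes the argument. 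The one comparison worth making: the route taken in the cited source, and the one this paper itself uses for the closely analogous scheme $W^F$ in \S\ref{sss:proof of flatness of W^F}, is via Joyal's presentation of $H^0(W,\cO_W)$ as the free $\delta$-ring on $x_0$ with Buium--Joyal coordinates $x_n=\delta^n(x_0)$. In those coordinates the ideal of $W^{(F)}=\Ker F$ is generated by $\phi(x_n)=x_n^p+px_{n+1}$, and the quotient $\BZ_{(p)}[x_0,x_1,\dots]/(x_n^p+px_{n+1})$ is visibly the divided-power algebra on $x_0$ (matching $x_n$ with a unit multiple of $\gamma_{p^n}(x_0)$, using your own observation that $\gamma_{p^n}(t)^p$ is a unit times $p\,\gamma_{p^{n+1}}(t)$); this eliminates the ghost-component valuation bookkeeping entirely. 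What your version buys in exchange is independence from Joyal's theorem: it uses only the classical Witt-vector formalism and an elementary $p$-adic estimate.
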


For a proof of the lemma, see \cite{BL} or \cite[Lemma~3.2.6]{Prismatization}.

\begin{cor}   \label{c:dual to formal group}
$G_\eta=(\BG_a^\sharp)_{\Spf\BZ_p}$, so $G_\eta$ is Cartier dual to the formal group~$(\hat\BG_a)_{\Spf\BZ_p}\,$.
\end{cor}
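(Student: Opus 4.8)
The plan is to chain together the two results immediately preceding the corollary and then invoke the classical Cartier duality between the divided-power additive group and the formal additive group. First, Proposition~\ref{p:G_eta}(i) gives a canonical isomorphism $G_\eta\iso (W^{(F)})_{\Spf\BZ_p}$, where $W^{(F)}=\Ker(F:W\to W)$. Next, Lemma~\ref{l:W^(F)=G_a^sharp} identifies $W^{(F)}$ with $\BG_a^\sharp$ over $\Spec\BZ_{(p)}$, the identification being induced by the canonical projection $W\epi W_1=\BG_a$. Since every $p$-nilpotent ring is a $\BZ_{(p)}$-algebra, the structure morphism $\Spf\BZ_p\to\Spec\BZ$ factors through $\Spec\BZ_{(p)}$; base-changing the isomorphism of Lemma~\ref{l:W^(F)=G_a^sharp} along this morphism therefore yields $(W^{(F)})_{\Spf\BZ_p}\iso(\BG_a^\sharp)_{\Spf\BZ_p}$, and composing with Proposition~\ref{p:G_eta}(i) gives $G_\eta=(\BG_a^\sharp)_{\Spf\BZ_p}$, which is the first assertion.

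For the second assertion I would recall that $\BG_a^\sharp$ is Cartier dual to the formal additive group $\hat\BG_a$ over an arbitrary base, via the pairing
\[
\hat\BG_a\times\BG_a^\sharp\to\BG_m,\qquad (u,v)\mapsto\exp(uv)=\sum_{n\ge 0}u^n\,v^{[n]}.
\]
This is well defined (the divided powers $v^{[n]}$ are honest elements of the coordinate ring of $\BG_a^\sharp$, and the series converges because $u$ is topologically nilpotent on $\hat\BG_a$) and is a homomorphism of group schemes in each variable separately; on coordinate rings it carries the topological basis of functions on $\hat\BG_a$ to the basis of the linear dual of the divided-power polynomial ring dual to $\{v^{[n]}\}_{n\ge 0}$, so it is perfect and exhibits $\hat\BG_a$ and $\BG_a^\sharp$ as mutual Cartier duals. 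Specializing over $\Spf\BZ_p$ gives that $G_\eta=(\BG_a^\sharp)_{\Spf\BZ_p}$ is Cartier dual to $(\hat\BG_a)_{\Spf\BZ_p}$, as claimed; this is the same duality used later in the proof of Proposition~\ref{p:H_dR}.

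The corollary is essentially bookkeeping on top of the two cited results, so I do not expect a real obstacle; the only point that requires a line of care is the perfectness of the exponential pairing over a base on which $p$ need not be invertible, which one verifies rank by rank against the basis $\{v^{[n]}\}$ and its topological dual. In the larger picture this corollary supplies the ``value over the de Rham point $\eta$'': together with Proposition~\ref{p:deforming formal groups} (applied after pulling $\Sigma$ back to the nilpotent thickening given by $\Delta_0\subset\Sigma$ and then along the faithfully flat $\eta$) and the fact that $\fF^*$ is an fpqc-stack, it yields Theorem~\ref{t:dual to formal group}.
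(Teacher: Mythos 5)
Your proof is correct and follows the same route as the paper, which simply combines Proposition~\ref{p:G_eta}(i) with Lemma~\ref{l:W^(F)=G_a^sharp} and then invokes the standard Cartier duality between $\BG_a^\sharp$ and $\hat\BG_a$ via the exponential pairing (the same pairing the paper writes out in the proof of Proposition~\ref{p:H_dR}). The extra care you take with base change through $\Spec\BZ_{(p)}$ and with perfectness of the pairing is fine but not needed beyond what the paper treats as known.
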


\begin{proof}
Follows from Proposition~\ref{p:G_eta} and Lemma~\ref{l:W^(F)=G_a^sharp}.
\end{proof}

\subsubsection{Proof of Theorem~\ref{t:dual to formal group}}  \label{sss:dual to formal group proof}
Corollary~\ref{c:dual to formal group} and Proposition~\ref{p:deforming formal groups} imply (similarly to \S\ref{sss:Reductions}) that $G_\Sigma$ is the Cartier dual of some 1-dimensional formal group over~$\Sigma$ (which is denoted by $H_\Sigma$). \qed

\subsubsection{Proof of Proposition~\ref{p:G_Delta_0}}  \label{sss:G_Delta_0 proof}
We have to construct an isomorphism $G_{\Delta_0}\iso\sM^\sharp$, where $\sM$ is the conormal bundle of $\Delta_0\subset\Sigma$.
Corollary~\ref{c:dual to formal group} provides an isomorphism $f:G_\eta\iso\eta^*\sM^\sharp$. By \S\ref{ss:Delta_0 as classifying stack}, $(W^\times)^{(F)}$ acts on $G_\eta$ and $\eta^*\sM^\sharp$, and the  problem is to check that $f$ is $(W^\times)^{(F)}$-equivariant. Indeed, $u\in (W^\times)^{(F)}$ acts on $G_\eta=W^{(F)}$ as multiplication by $u$, and it acts on $\eta^*\sM^\sharp=\BG_a^\sharp$ as multiplication by the $0$-th component of the Witt vector~$u$. \qed

\medskip

We can now prove the following weaker version of Theorem~\ref{t:H_Sigma}(i).

\begin{cor}   \label{c:s vanishes on Delta_0}
The section $s:\Sigma\to H_\Sigma$ vanishes on $\Delta_0$.
\end{cor}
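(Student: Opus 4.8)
The plan is to deduce Corollary~\ref{c:s vanishes on Delta_0} directly from the description of $G_{\Delta_0}$ obtained in Proposition~\ref{p:G_Delta_0} together with the definition of the section $s$. Recall that $s:\Sigma\to H_\Sigma$ is, by \S\ref{sss:Pieces of structure on H_Sigma}, the section Cartier-dual to the homomorphism \eqref{e: G to G_m}, i.e.\ $s$ corresponds under Cartier duality to the homomorphism $G_\Sigma\to(\BG_m)_\Sigma$. Pulling everything back to $\Delta_0$, the section $s_{\Delta_0}:=s|_{\Delta_0}$ corresponds to the homomorphism $G_{\Delta_0}\to(\BG_m)_{\Delta_0}$ induced by \eqref{e: G to G_m}. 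So the claim ``$s$ vanishes on $\Delta_0$'' is equivalent to the assertion that this homomorphism $G_{\Delta_0}\to(\BG_m)_{\Delta_0}$ is trivial.

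First I would reduce to checking triviality after pullback along the faithfully flat morphism $\eta:\Spf\BZ_p\to\Delta_0$ of \eqref{e:eta}; this is legitimate since $\Hom(G_{\Delta_0},(\BG_m)_{\Delta_0})\to\Hom(G_\eta,(\BG_m)_{\Spf\BZ_p})$ is injective by faithful flatness (a homomorphism of group schemes that vanishes after faithfully flat base change vanishes). Then I would invoke Proposition~\ref{p:G_eta}(ii), which states precisely that the homomorphism $G_\eta\to(\BG_m)_{\Spf\BZ_p}$ induced by \eqref{e: G to G_m} is trivial. Hence the $\eta$-pullback of $s_{\Delta_0}$ is the zero section, and therefore $s_{\Delta_0}$ itself is the zero section, i.e.\ $s$ vanishes on $\Delta_0$.

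One small point to be careful about: I should phrase the reduction in terms of morphisms of formal schemes rather than abstract group homomorphisms, since $H_\Sigma$ is only a formal group and its sections over a stack $\sX$ form a set on which the faithfully flat descent for the stack $\fF^*$ (Remark~\ref{sss:fpqc-locality}(i)) applies; equivalently, work on the Cartier-dual side with the honest affine group schemes $G_{\Delta_0}$ and $G_\eta$, where faithfully flat descent of homomorphisms into $\BG_m$ is standard. Either way the argument is the same. I expect no real obstacle here: the corollary is essentially a repackaging of Proposition~\ref{p:G_eta}(ii), and the only thing to get right is the bookkeeping of Cartier duality — that the section dual to a homomorphism vanishes exactly when that homomorphism is trivial, which is immediate from the definition of the duality pairing (a homomorphism $G\to\BG_m$ is the same as a point of $G^\ast$, and it is the unit point iff the homomorphism is trivial).

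\begin{proof}
By \S\ref{sss:Pieces of structure on H_Sigma}, the section $s:\Sigma\to H_\Sigma$ is Cartier-dual to the homomorphism \eqref{e: G to G_m}, so its restriction $s_{\Delta_0}:=s|_{\Delta_0}:\Delta_0\to H_{\Delta_0}$ is Cartier-dual to the homomorphism $G_{\Delta_0}\to(\BG_m)_{\Delta_0}$ induced by \eqref{e: G to G_m}. Thus $s$ vanishes on $\Delta_0$ if and only if this homomorphism is trivial. Since $\eta:\Spf\BZ_p\to\Delta_0$ is faithfully flat, it suffices to check triviality after pullback along $\eta$, i.e.\ triviality of the homomorphism $G_\eta\to(\BG_m)_{\Spf\BZ_p}$ induced by \eqref{e: G to G_m}. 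This is exactly the content of Proposition~\ref{p:G_eta}(ii).
\end{proof}
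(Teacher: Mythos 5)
Your argument is correct and is essentially identical to the paper's own proof: both reduce, via faithful flatness of $\eta:\Spf\BZ_p\to\Delta_0$, to the triviality of the homomorphism $G_\eta\to(\BG_m)_{\Spf\BZ_p}$ established in Proposition~\ref{p:G_eta}(ii), and then translate this through Cartier duality into the vanishing of $s$ on $\Delta_0$. Your extra care about where the descent is applied (on the dual, affine side) is fine but not a point the paper dwells on.
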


\begin{proof}
As already mentioned in \S\ref{ss:Delta_0 as classifying stack}, $\eta :\Spf\BZ_p\to\Delta_0$ is faithfully flat. So Proposition~\ref{p:G_eta}(ii) implies that the canonical homomorphism 
$G_\Sigma\to (\BG_m)_\Sigma$ vanishes on $\Delta_0$. By the definition of $s$ (see \S\ref{sss:structures on H_Q}), this means that $s:\Sigma\to H_\Sigma$ vanishes on $\Delta_0$.
\end{proof}

\subsection{The ``de Rham pullback'' of $G_\Sigma$ and the proof of Proposition~\ref{p:G_dR}}   \label{ss:G_dR}
\subsubsection{Recollections}  \label{sss:G_dR}
Recall that $G_{\dR}:=\rho_{\dR}^*G_\Sigma$, where $\rho_{\dR}:\Spf\BZ_p\to\Sigma$ comes from the element $p\in W(\BZ_p )$.  
For any $p$-nilpotent ring $A$ we have
\begin{equation}  \label{e:def of G_dR}
G_{\dR}(A):=\{ x\in W(A)\,|\, 1+px\in A^\times\subset W(A)^\times\},
\end{equation}
where $A^\times\subset W(A)^\times$ is the image of the Teichm\"uller embedding, and the group operation on $G_{\dR}(A)$ is given by
\begin{equation}  \label{e:p-rescaled G_m}
(x_1,x_2)\mapsto x_1+x_2+px_1x_2.
\end{equation}

\subsubsection{The homomorphisms $f:G_{\dR}\to W_{\Spf\BZ_p}$ and $f_0:G_{\dR}\to (\BG_a)_{\Spf\BZ_p}\,$} \label{sss:f&f_0}
The coefficients of the formal series
\begin{equation}   \label{e:the series f}
f(x):=p^{-1}\log (1+px)=\sum_{n=1}^\infty \frac{(-p)^{n-1}}{n}x^n \, .
\end{equation}
belong to $\BZ_p$ and converge to $0$. One has
\[
f(x_1+x_2+px_1x_2)=f(x_1)+f(x_2).
\]
So the series \eqref{e:the series f} defines a group homomorphism $f:G_{\dR}\to W_{\Spf\BZ_p}\,$. Composing it with the canonical homomorphism 
$W_{\Spf\BZ_p}\epi (W_1)_{\Spf\BZ_p}=(\BG_a)_{\Spf\BZ_p}\,$, we get a homomorphism 
\begin{equation}    \label{e:f_0}
f_0:G_{\dR}\to (\BG_a)_{\Spf\BZ_p}\,; 
\end{equation}
equivalently, $f_0(x)=p^{-1}\log (1+px_0)$, where $x_0$ is the $0$-th component of the Witt vector $x$.

By Lemma~\ref{l:W^(F)=G_a^sharp}, $(\BG_a^\sharp)_{\Spf\BZ_p}=W^{(F)}_{\Spf\BZ_p}$, where $W^{(F)}:=\Ker (F:W\to W)$. So Proposition~\ref{p:G_dR} is equivalent to the following one.

\begin{prop}  \label{p:2G_dR}
There exists an isomorphism $G_{\dR}\iso W^{(F)}_{\Spf\BZ_p}$ whose composition with the canonical homomorphism 
$W^{(F)}_{\Spf\BZ_p}\to (\BG_a)_{\Spf\BZ_p}$ equals \eqref{e:f_0}.
\end{prop}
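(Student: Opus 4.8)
The plan is to upgrade the homomorphism $f\colon G_{\dR}\to W_{\Spf\BZ_p}$ of \eqref{e:the series f} to an isomorphism $G_{\dR}\iso W^{(F)}_{\Spf\BZ_p}$ by a small correction, and then to prove bijectivity by reduction modulo $p$, where \propref{p:G_eta} takes over.

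\smallskip

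\emph{Construction of the map.} The first observation is that $f$ does not land in $\Ker F$, but in $\{y\in W\mid Fy=py\}$: for $x\in G_{\dR}(A)$, writing $1+px=[a]$ with $a\in A^\times$, one has $F(1+px)=[a^p]=(1+px)^p$, and therefore, $F$ being a continuous ring homomorphism,
\[
F(f(x))=p^{-1}\log\bigl(F(1+px)\bigr)=p^{-1}\log\bigl((1+px)^p\bigr)=\log(1+px)=p\cdot f(x).
\]
To fix this, set $g(x):=f(x)-V(f(x))$. Then $g$ is a homomorphism (as $f$ is a homomorphism to $(W,+)$ and $V$ is additive), and $F(g(x))=F(f(x))-FV(f(x))=p\cdot f(x)-p\cdot f(x)=0$ by $FV=p$; so $g$ is a homomorphism $G_{\dR}\to W^{(F)}_{\Spf\BZ_p}$, the additive group structure on the target being the one furnished by \propref{p:G_eta}(i). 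Since the $0$-th component of $V(y)$ is $0$ and the $0$-th component map is a ring homomorphism, the composite of $g$ with the canonical homomorphism $W^{(F)}_{\Spf\BZ_p}\to(\BG_a)_{\Spf\BZ_p}$ sends $x$ to $\sum_{n\ge1}\tfrac{(-p)^{n-1}}{n}x_0^n$, which is exactly $f_0$ from \eqref{e:f_0}.

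\smallskip

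\emph{Bijectivity of $g$.} Both $G_{\dR}$ (flat by \corref{c:G_Sigma is flat}) and $W^{(F)}_{\Spf\BZ_p}$ (flat by \lemref{l:W^(F)=G_a^sharp}) are flat affine commutative group schemes over $\Spf\BZ_p$; their coordinate rings are $p$-adically complete and $p$-torsion-free, so $g$ is an isomorphism provided $g\otimes\BF_p$ is. Over $\BF_p$ one has $p\cdot1=V(1)$ in $W$, hence $\rho_{\dR}$ and $\eta$ have the same reduction $\Spec\BF_p\to\Sigma$; consequently $G_{\dR}\otimes\BF_p$ is canonically identified with $G_\eta\otimes\BF_p$, and via \propref{p:G_eta}(i) and \lemref{l:W^(F)=G_a^sharp} the source and target of $g\otimes\BF_p$ are both identified with $(\BG_a^\sharp)_{\BF_p}$. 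Now $(\BG_a^\sharp)_{\BF_p}$ is Cartier dual to the formal additive group over $\BF_p$, whose endomorphisms are the additive power series $\sum_i a_it^{p^i}$; such an endomorphism is invertible iff $a_0\ne0$, i.e.\ iff it is invertible on the Lie algebra, and the same therefore holds for $(\BG_a^\sharp)_{\BF_p}$. It thus suffices to see that $g\otimes\BF_p$ is invertible on $\Lie$, and this is read off from the identity recorded above: $g$ composed with the $0$-th component map (which is an isomorphism on Lie algebras by \lemref{l:W^(F)=G_a^sharp}) equals $f_0$, whose linear coefficient is $1$. Hence $g\otimes\BF_p$, and with it $g$, is an isomorphism, which proves the proposition.

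\smallskip

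The genuine content is the last paragraph, and the two delicate inputs there are: (i) that $\rho_{\dR}$ and $\eta$ agree modulo $p$, which is what lets us transport the explicit description of $G_\eta$ from \propref{p:G_eta}; and (ii) the rigidity fact that an endomorphism of $(\BG_a^\sharp)_{\BF_p}$ is automatically an automorphism once it is one on the Lie algebra, which reduces the whole matter to the visibly invertible linear term of $f_0$. Everything else — convergence of the series defining $f$ (and hence $g$), the relation $F\circ f=p\cdot f$, and the identification of $0$-th components — is routine.
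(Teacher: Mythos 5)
Your map is exactly the paper's: $g=(\id-V)\circ f$ is the composite of \eqref{e:G_dR & Fy=py} with \eqref{e:1-V}, and your reduction of bijectivity to the special fibre via flatness is \S\ref{sss:coda}. The only genuine divergence is in how the mod-$p$ fibre is handled: the paper writes down the explicit inverse series $\frac{\exp(py)-1}{p}$ and computes (via Lemma~\ref{l:W^F in characteristic p}) that $f\otimes\BF_p$ is literally the identity, whereas you invoke a soft rigidity statement for $(\BG_a^\sharp)_{\BF_p}$. That substitution is legitimate, though your phrasing needs care: over $\BF_p$ the tangent space $\fm/\fm^2$-dual of $\BG_a^\sharp$ is infinite-dimensional and the $0$-th component map is \emph{not} an isomorphism on it; the argument works if ``Lie algebra'' is read as the dual of $e^*\Omega^1$ (equivalently, if one passes to the dual endomorphism $\sum a_iF^i$ of $(\hat\BG_a)_{\BF_p}$ and checks $a_0=1$).

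The real gap is in the step you call routine, the identity $F(f(x))=pf(x)$. The symbol $p^{-1}\log(1+py)$ denotes the power series $f(y)=\sum_n\frac{(-p)^{n-1}}{n}y^n$; it is a function of $y$, \emph{not} of the element $1+py$. From $F(1+px)=[a^p]=(1+px)^p$ you obtain $1+pFx=1+ph(x)$ with $h(x)=\sum_{i=1}^p\binom{p}{i}p^{i-1}x^i$, i.e.\ only $p\,(Fx-h(x))=0$; since $W(A)$ for a $p$-nilpotent ring $A$ has abundant $p$-torsion, this does not give $Fx=h(x)$. Your chain of equalities silently replaces $f(Fx)$ by $f(h(x))$, which already at the linear term presupposes $Fx=h(x)$; what the manipulation actually yields is only $p\bigl(F(f(x))-pf(x)\bigr)=0$. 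This is precisely the point the paper spends \S\ref{sss:f lands where needed} on: one passes to the universal case $x\in W(B)$ with $B$ the coordinate ring of $G_{\dR}$, notes that $B$ is flat over $\BZ_p$ by Corollary~\ref{c:G_Sigma is flat}, hence $W(B)$ is $p$-torsion-free, and only then divides by $p$ to get $Fx=h(x)$ and $F(f(x))=f(h(x))=pf(x)$. (Alternatively one can use the $\delta$-ring structure on $W(A)$, cf.\ \S\ref{sss:using delta-ring structure}.) You do have the flatness input in hand --- you use it later for bijectivity --- so the gap is reparable, but as written the key identity is obtained by an illegitimate manipulation.
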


Note that the isomorphism in question is unique (to see this, identify $W^{(F)}_{\Spf\BZ_p}$ with $(\BG_a^\sharp)_{\Spf\BZ_p}$).
In \S\ref{sss:G_dR=W^(F)} we will deduce Proposition~\ref{p:2G_dR} from the following lemma, which will be proved in \S\ref{ss:G_dR=W^{F=p}}.
 
\begin{lem}   \label{l:G_dR=W^{F=p}}
The homomorphism $f:G_{\dR}\to W_{\Spf\BZ_p}$ from  \S\ref{sss:f&f_0} induces an isomorphism
\begin{equation}   \label{e:G_dR & Fy=py}
G_{\dR}\iso W_{\Spf\BZ_p}^{F=p}, \quad \mbox{where \;}W_{\Spf\BZ_p}^{F=p}:=\{y\in W_{\Spf\BZ_p}\,|\, Fy=py\}.
\end{equation}
The inverse isomorphism is given by $y\mapsto g(y)$, where $g$ is the formal power series
\begin{equation}  \label{e: the power series g}
g(y):=\frac{\exp (py)-1}{p}=\sum_{n=1}^\infty \frac{p^{n-1}}{n!}y^n \, .
\end{equation}
\end{lem}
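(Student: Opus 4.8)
The plan is to verify everything ``coordinate-free'' using the two mutually inverse power series $f$ and $g$, which are the classical $p$-rescaled logarithm and exponential. First I would record the elementary identity $g(f(x))=x$ and $f(g(y))=y$ as formal power series over $\BZ_p$, together with the additivity $f(x_1+x_2+px_1x_2)=f(x_1)+f(x_2)$ already noted in \S\ref{sss:f&f_0}; dually, $g$ carries Witt-vector addition to the group law \eqref{e:p-rescaled G_m}. The coefficients of $f$ lie in $\BZ_p$ and tend to $0$; the coefficients $p^{n-1}/n!$ of $g$ also lie in $\BZ_p$ and tend to $0$, since $v_p(n!)\le (n-1)/(p-1)\le n-1$. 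Hence for any $p$-nilpotent $A$ both $f$ and $g$ define honest maps on the relevant sets of Witt vectors, and they are continuous, so the two formal identities above give genuine inverse bijections at the level of $A$-points.

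Next I would pin down the image. The key computation is the behaviour of $f$ under Frobenius. Working in $W(A)$ for a $p$-nilpotent ring $A$, one has $\phi(1+px)=(1+px)^p\cdot(\text{a }1\text{-unit})$; more precisely, for a \emph{Teichm\"uller} element $[a]=1+px$ (i.e.\ $x\in G_{\dR}(A)$, so $1+px=[a]$ for some $a\in A^\times$) we get $\phi([a])=[a^p]=[a]^p=(1+px)^p$. Applying $p^{-1}\log$ (which intertwines $\phi$ with $F$ on $W$, because $F$ is the Witt-vector Frobenius and $\log$, $\exp$ are functorial ring-scheme maps compatible with $F$) yields $F(f(x))=p^{-1}\log((1+px)^p)=p\cdot f(x)$. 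Thus $f$ lands in $W_{\Spf\BZ_p}^{F=p}$. Conversely, if $Fy=py$ then $\phi(g(y))=g(Fy)=g(py)$; one checks from the series that $g(py)=(1+p\,g(y))^p-1$ over $\BZ_p$ (equivalently $\exp(p\cdot py)=\exp(py)^p$), so $1+p\,g(y)$ satisfies $\phi(1+p\,g(y))=(1+p\,g(y))^p$, and over a $p$-nilpotent ring an element $u\in W(A)^\times$ with $\phi(u)=u^p$ lying in $1+pW(A)$ is Teichm\"uller (its ghost components are all equal since $\phi$ acts as the identity on the ghost-component tower modulo the appropriate constraints, or directly: the map $a\mapsto[a]$ and $u\mapsto u_0$ are inverse on $\{u:\phi(u)=u^p\}$ by the standard characterization of Teichm\"uller elements). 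Hence $g(y)\in G_{\dR}(A)$, so $g$ maps $W^{F=p}$ back into $G_{\dR}$.

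Finally I would assemble: $f$ and $g$ are mutually inverse group homomorphisms between $G_{\dR}$ and $W_{\Spf\BZ_p}^{F=p}$, functorial in $A$, hence give the claimed isomorphism of group schemes (formal schemes) over $\Spf\BZ_p$, and \eqref{e: the power series g} is exactly the stated inverse. I expect the main obstacle to be the Teichm\"uller characterization used in the converse direction---namely that, over a $p$-nilpotent base, a principal unit $u\in 1+pW(A)$ with $\phi(u)=u^p$ is automatically Teichm\"uller---and the precise verification that $\log$ and $\exp$ (as maps of $p$-adic formal group schemes) intertwine $\phi$ on $W$ with multiplication by $p$ after the $p$-rescaling; both are standard but need care with convergence and with the fact that $1+px$ being Teichm\"uller is exactly the defining condition \eqref{e:def of G_dR} of $G_{\dR}$. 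Everything else is formal manipulation of the two power series over $\BZ_p$.
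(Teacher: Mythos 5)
Your overall strategy (the mutually inverse series $f$ and $g$, compatibility with Frobenius, the Teichm\"uller criterion) coincides with the paper's, but the two steps you yourself flag as ``standard but needing care'' are precisely where the real work lies, and your justifications do not hold over an arbitrary $p$-nilpotent ring $A$, because $W(A)$ has $p$-torsion. Concretely: from $F(1+px)=[a^p]=(1+px)^p$ you only obtain $p\bigl(Fx-h(x)\bigr)=0$, where $h(x):=\bigl((1+px)^p-1\bigr)/p$ is the integral polynomial, and you cannot divide by $p$ in $W(A)$; so the manipulation $F(f(x))=f(Fx)=p^{-1}\log\bigl((1+px)^p\bigr)=pf(x)$ is not justified --- the expression $p^{-1}\log(u)$ is not a function of $u$ alone when $W(A)$ has $p$-torsion, and what you actually need is the equality $Fx=h(x)$ of Witt vectors. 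Likewise, your criterion that a unit $u$ with $Fu=u^p$ is Teichm\"uller cannot be proved by a ghost-component argument over a $p$-nilpotent base, since the ghost map is far from injective there; indeed, without your extra hypothesis $u\in 1+pW(A)$ the criterion is simply false (over $A=\BF_p[\epsilon]/(\epsilon^2)$ the unit $u=1+V([\epsilon])$ satisfies $Fu=u^p=1$ but is not Teichm\"uller), and with it your sketched justification still proves nothing. The paper resolves both points by passing to the universal case: the coordinate rings of $G_{\dR}$ and of $W^{F=p}_{\Spf\BZ_p}$ are flat over $\BZ_p$ (the former by Corollary~\ref{c:G_Sigma is flat}, the latter via \eqref{e:1-V} and Lemma~\ref{l:W^(F)=G_a^sharp}), hence their rings of Witt vectors are $p$-torsion-free, where the division by $p$ and the ghost-component characterization of Teichm\"uller elements are legitimate; the statements for arbitrary $A$ then follow by functoriality. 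This flatness input is the essential idea missing from your write-up.

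Two further points. First, your claim that the coefficients $p^{n-1}/n!$ of $g$ tend to $0$ is false for $p=2$: one has $v_2(2^{n-1}/n!)=s_2(n)-1$, where $s_2(n)$ is the sum of the binary digits of $n$, and this vanishes whenever $n$ is a power of $2$ (the paper explicitly warns about this). The correct reason $g(y)$ makes sense is that any $y$ with $Fy=py$ over a $p$-nilpotent base is topologically nilpotent, so \emph{every} power series over $\BZ_p$ may be evaluated at it. Second, your concluding step (verifying directly that $f$ and $g$ are mutually inverse via the composition identities of power series) is a legitimate alternative to the paper's, which instead shows $f\circ g=\id$ and deduces that $f$ is an isomorphism from flatness of source and target together with the computation that $f$ becomes an isomorphism modulo $p$; either route works once the gaps above are filled.
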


Note that if $A$ is a $p$-nilpotent ring and $y\in W(A)$ satisfies $Fy=py$ then $y$ is topologically nilpotent, so $h(y)$ makes sense for \emph{any} formal power series $h$ over $\BZ_p$. In particular, this is true for the power series \eqref{e: the power series g} (even though in the case $p=2$ its coefficients do not converge to $0$).

\subsubsection{Deducing Proposition~\ref{p:2G_dR} from Lemma~\ref{l:G_dR=W^{F=p}}}\label{sss:G_dR=W^(F)}
The equation $Fy=py$ from \eqref{e:G_dR & Fy=py} can be rewritten as $F(y-Vy)=0$. The operator $\id -V$ is invertible because $V$ is topologically nilpotent. So we get an isomorphism 
\begin{equation}   \label{e:1-V}
\id -V:W_{\Spf\BZ_p}^{F=p}\iso W^{(F)}_{\Spf\BZ_p}\, .
\end{equation}
Composing it with \eqref{e:G_dR & Fy=py}, we get an isomorphism
\begin{equation}   \label{e:G_dR=W^(F)}
G_{\dR}\iso W^{(F)}_{\Spf\BZ_p}\, ,
\end{equation}
which has the required property. \qed

\subsubsection{Warning about the base change of  \eqref{e:G_dR=W^(F)} to $\Spec\BF_p$}   \label{sss:somewhat unexpected}
In $W(\BF_p )$ we have 
$p=V(1)$. So by Proposition~\ref{p:G_eta}(i), the base change of $G_{\dR}$ to $\Spec\BF_p$ identifies with~$W^{(F)}_{\Spec\BF_p}$. The group scheme $W_{\Spec\BF_p}^{F=p}:=\{y\in W_{\Spec\BF_p}\,|\, Fy=py\}$ also equals $W^{(F)}_{\Spec\BF_p}$: indeed, if $A$ is an $\BF_p$-algebra and $y\in W(A)$ then
\[
Fy=py\Leftrightarrow (\id-V)Fy=0\Leftrightarrow Fy=0.
\]
We claim that the base change to $\Spec\BF_p$ of the isomorphism \eqref{e:G_dR & Fy=py} equals the identity (so the base change to $\Spec\BF_p$ of \eqref{e:G_dR=W^(F)} is $\id-V\ne\id$ !!). This follows from the next

\begin{lem}   \label{l:W^F in characteristic p}
Let $A$ be an $\BF_p$-algebra and $x\in W^{(F)}(A)$. Then $px=x^p=0$.
\end{lem}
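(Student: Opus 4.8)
\emph{Plan.} Both equalities will follow from a single elementary observation about Witt vectors killed by $F$ over an $\BF_p$-algebra, together with the canonical $\delta$-ring structure on $W(A)$ (the latter is needed to treat $x^p$, which is a \emph{product} in the ring $W(A)$, not a componentwise power).

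First I would record the basic fact that for an $\BF_p$-algebra $A$ the Witt vector Frobenius $F\colon W(A)\to W(A)$ is given on Witt components by $(a_0,a_1,\ldots)\mapsto(a_0^p,a_1^p,\ldots)$, so that for $y\in W(A)$ the condition $Fy=0$ is equivalent to $y_i^p=0$ for all $i$. Combining this with the standard identity $FV=p$ on $W$, I get, for any $y\in W^{(F)}(A)$,
\[
py=F(V(y))=F\bigl((0,y_0,y_1,\ldots)\bigr)=(0,y_0^p,y_1^p,\ldots)=0 .
\]
Taking $y=x$ already gives $px=0$.

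For $x^p$ I would use that $W(A)$ carries its canonical $\delta$-ring structure, whose associated Frobenius $\phi$ is exactly the Witt vector Frobenius $F$. Writing $\delta$ for this structure, the relation $\phi(x)=x^p+p\delta(x)$ gives $x^p=F(x)-p\delta(x)=-p\delta(x)$ since $Fx=0$. Now the standard $\delta$-ring identity $\delta(\phi(a))=\phi(\delta(a))$, together with $\delta(0)=0$, yields
\[
F(\delta(x))=\delta(F(x))=\delta(0)=0 ,
\]
so $\delta(x)\in W^{(F)}(A)$ as well. Hence the computation of the previous paragraph applies to $y=\delta(x)$, giving $p\delta(x)=0$ and therefore $x^p=0$.

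As for difficulty: there is no genuine obstacle here — the argument is an assembly of standard facts about $W$ and $\delta$-rings — and the only step requiring a moment's thought is the observation that $\delta(x)$ is again annihilated by $F$, which is precisely what lets one feed it back into the elementary identity $py=FVy$. (One could instead attempt a direct computation using the expansion $x=\sum_n V^n([x_n])$ and the multiplication rules for the $V^n([x_n])$ in $W(A)$, but keeping track of the cross terms and passing to the limit over the truncations $W_n(A)$ makes this noticeably more cumbersome than the $\delta$-ring route.)
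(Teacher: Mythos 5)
Your proof is correct, but the second half takes a genuinely different route from the paper. For $px=0$ both arguments are essentially the same: $px=F(Vx)$ plus the fact that over an $\BF_p$-algebra $F$ is the componentwise Frobenius, so $Fx=0$ forces all components of $x$ to have vanishing $p$-th power. For $x^p=0$, however, the paper argues by hand inside $W(A)$: it writes $x=[x_0]+Vy$, notes $x_0^p=0$ and $Fy=0$, deduces $py=VFy=0$ and $(Vy)^2=V(py^2)=0$, and then expands the $p$-th power (the cross terms die because they carry a factor $p\cdot Vy=0$). You instead invoke the canonical $\delta$-structure on $W(A)$: from $\phi=F$ and $Fx=0$ you get $x^p=-p\delta(x)$, and the identity $\delta\circ\phi=\phi\circ\delta$ (valid in any $\delta$-ring by reduction to the free, hence $p$-torsion-free, case) shows $\delta(x)\in W^{(F)}(A)$, so the first part finishes the job. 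Your route is shorter and avoids the binomial bookkeeping that the paper leaves implicit, at the cost of importing the $\delta$-ring formalism; it is very much in the spirit of the author's own Remark~\ref{sss:using delta-ring structure}, which points out that flatness arguments elsewhere in \S\ref{ss:G_dR} can likewise be replaced by appeals to the canonical $\delta$-ring structure on $W(A)$. The paper's computation is more elementary and self-contained, using only the standard identities $FV=p$, $V(a)b=V(aF(b))$, and $F[a]=[a^p]$.
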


\begin{proof}
We have $px=FVx=VFx=0$. Write $x=[x_0]+Vy$, where $x_0$ is the $0$-th coordinate of the Witt vector $x$. Then $x_0^p=0$ and $Fy=0$ (because in characteristic $p$ the Witt vector Frobenius equals the usual one). So $py=VFy=0$ and $(Vy)^2=V(py^2)=0$. Therefore $x^p=0$.
\end{proof}

\subsection{Proof of Lemma~\ref{l:G_dR=W^{F=p}}}   \label{ss:G_dR=W^{F=p}}
\subsubsection{} \label{sss:both are flat}
By Corollary~\ref{c:G_Sigma is flat}, $G_{\dR}$ is flat over $\Spf\BZ_p$. By \eqref{e:1-V} and Lemma~\ref{l:W^(F)=G_a^sharp}, $W_{\Spf\BZ_p}^{F=p}$ is also flat over 
$\Spf\BZ_p$.

\subsubsection{} \label{sss:f lands where needed}
Let us prove that the homomorphism $f:G_{\dR}\to W_{\Spf\BZ_p}$ from Lemma~\ref{l:G_dR=W^{F=p}} factors through $W_{\Spf\BZ_p}^{F=p}$. Since $f\circ F=F\circ f$, it suffices to show that for $x\in G_{\dR}(A)$ one has
\begin{equation}  \label{e:Fx=h(x)}
Fx=h(x),
\end{equation}
where $h:G_{\dR}\to G_{\dR}$ is raising to the power of $p$ in the sense of the operation \eqref{e:p-rescaled G_m}; explicitly,
\[
h(x)=\frac{(1+px)^p-1}{p}:=\sum_{i=1}^p \binom{p}{i}p^{i-1}x^i.
\]
Since $1+px\in A^\times\subset W(A)^\times$ we have $F(1+px)=(1+px)^p$, so
\begin{equation} \label{e:p(Fx-h(x))=0}
p(Fx-h(x))=0.
\end{equation}
But the coordinate ring $B$ of $G_{\dR}$ is flat over $\BZ_p$ (see \S\ref{sss:both are flat}), so $W(B)$ is also flat over $\BZ_p$. The elements $Fx-h(x)$ for all $p$-nilpotent rings $A$ and all $x\in G_{\dR}(A)$ define an element $u\in W(B)$, and by \eqref{e:p(Fx-h(x))=0} we have $pu=0$. So $u=0$, which proves \eqref{e:Fx=h(x)}.

\subsubsection{} \label{sss:g lands where needed}
The formal series \eqref{e: the power series g} defines a homomorphism $g:W_{\Spf\BZ_p}^{F=p}\to G'_{\dR}:=\rho_{\dR}^*G'_\Sigma$, where $G'_\Sigma$ is as in \S\ref{ss:G'_Sigma}. Let us prove that this homomorphism factors through $G_{\dR}\subset G'_{\dR}$.
The problem is to show that for any $p$-nilpotent ring $A$ and any $x\in W_{\Spf\BZ_p}^{F=p}(A)$ the Witt vector $1+pg(x)$ is Teichm\"uller. It is clear that
$$F(1+pg(x))=(1+pg(x))^p.$$
But  the coordinate ring $C$ of $W_{\Spf\BZ_p}^{F=p}$ is flat over $\BZ_p$  (see \S\ref{sss:both are flat}), so an element $u\in W(C)$ such that $Fu=u^p$ has to be Teichm\"uller. 

\subsubsection{} \label{sss:coda}
By \S\ref{sss:both are flat}, $G_{\dR}$ and $W_{\Spf\BZ_p}^{F=p}$ are flat over $\Spf\BZ_p$. 
The morphism $f:G_{\dR}\to W_{\Spf\BZ_p}^{F=p}$ becomes an isomorphism after base change to $\Spec\BF_p$, see \S\ref{sss:somewhat unexpected}. So $f$ itself is an isomorphism. Finally, it is easy to see that $f\circ g=\id$. \qed

\subsubsection{Remark}  \label{sss:using delta-ring structure}
In \S\ref{sss:f lands where needed}-\ref{sss:g lands where needed} we used a flatness argument. Instead, one could use the canonical $\delta$-ring structure on $W(A)$.

\subsection{Remarks related to \S\ref{ss:G_dR}}   \label{ss:discrepancies}
Let $(W^\times)^{(F)}:=\Ker (F:W^\times\to W^\times )$. In \S\ref{sss:A way to think about G_dR} we identify the group scheme $G_{\dR}$ from \S\ref{ss:G_dR} with 
$((W^\times)^{(F)}/\mu_p)_{\Spf\BZ_p}$. This allows us to think of \eqref{e:G_dR=W^(F)} as an isomorphism
\begin{equation}   \label{e:W^times^(F) and W^(F)}
((W^\times)^{(F)}/\mu_p)_{\Spf\BZ_p}\iso W^{(F)}_{\Spf\BZ_p}\, .
\end{equation}
In \S\ref{sss:unexpected reduction mod p} we show that the base change of \eqref{e:W^times^(F) and W^(F)} to $\Spec\BF_p$ is somewhat unexpected.
Related to this is Lemma~\ref{l:H_{T'}}, which says that the restriction of the formal group $H_Q$ to the subscheme $\Spf\BZ_p[[q-1]]/(q^p-1)\subset Q$ is somewhat unusual.

\subsubsection{}               \label{sss:A way to think about G_dR}
If $w\in (W^\times)^{(F)}(A)$ and $w_0\in A^\times$ is the $0$-th component of the Witt vector $w$ then $[w_0]/w=1+Vx$ for a unique $x\in W(A)$; moreover, $x\in G_{\dR}(A)$ because $$1+px=F([w_0]/w)=[w_0^p].$$ It is easy to check that one thus gets an isomorphism 
\begin{equation}    \label{e:G_dR=W^times/mu_p}
((W^\times)^{(F)}/\mu_p)_{\Spf\BZ_p}\iso G_{\dR}\, .
\end{equation}
Composing \eqref{e:G_dR=W^times/mu_p} and  \eqref{e:G_dR=W^(F)}, one gets an isomorphism \eqref{e:W^times^(F) and W^(F)}.

On the other hand, one has canonical isomorphisms
\[
(W^\times)^{(F)}_{\Spf\BZ_p}\iso (\BG_m^\sharp)_{\Spf\BZ_p},   \quad W^{(F)}_{\Spf\BZ_p}\iso (\BG_a^\sharp)_{\Spf\BZ_p},
\] 
where $\BG_m^\sharp$ and $\BG_a^\sharp$ are the divided powers versions of $\BG_m$ and $\BG_a$ (see \cite{BL} or Lemma~3.2.6 and \S 3.3.3 of 
\cite{Prismatization}). So one can think of \eqref{e:G_dR=W^times/mu_p} as an isomorphism $(\BG_m^\sharp/\mu_p)_{\Spf\BZ_p}\iso G_{\dR}\,$, and one can think of \eqref{e:W^times^(F) and W^(F)} as an isomorphism 
\begin{equation}  \label{e:G_m^sharp/mu_p=G_a^sharp}
(\BG_m^\sharp/\mu_p)_{\Spf\BZ_p}\iso (\BG_a^\sharp)_{\Spf\BZ_p}.
\end{equation}
It is easy to check that the isomorphism \eqref{e:G_m^sharp/mu_p=G_a^sharp} is equal to the isomorphism
$$\log: (\BG_m^\sharp/\mu_p)_{\Spf\BZ_p}\iso (\BG_a^\sharp)_{\Spf\BZ_p}$$ 
from Proposition~\ref{p:G_m^sharp sequence}(i) of Appendix~\ref{s:Cartier dual of G_m^sharp}.

\subsubsection{Warning}  \label{sss:unexpected reduction mod p}
In $W_{\BF_p}$ one has $VF=FV$. Using this, it is easy to check that the map $W_{\BF_p}\to W_{\BF_p}^\times$ defined by $x\mapsto 1+Vx$ induces an isomorphism
\[
f_{naive}:W^{(F)}_{\BF_p}\iso ((W^\times)^{(F)}/\mu_p)_{\BF_p}\, .
\]
On the other hand, let $f:W^{(F)}_{\BF_p}\iso ((W^\times)^{(F)}/\mu_p)_{\BF_p}$ be the base change of the inverse of \eqref{e:W^times^(F) and W^(F)} to $\Spec\BF_p$.
It turns out that $f\ne f_{naive}\,$; more precisely, using \S\ref{sss:somewhat unexpected} one gets
\begin{equation} \label{e:f_naive & f}
f(x)=f_{naive}(Vx-x).
\end{equation}

The remaining part of \S\ref{ss:discrepancies} is closely related to formula~\eqref{e:f_naive & f}.

\subsubsection{Notation}   \label{sss:T:=Spf B}
Let
\[
T:=\Spf B, \quad \mbox{where }B:=\{ (x,y)\in\BZ_p\times\BZ_p\,|\, x\equiv y\!\! \!  \mod p\}.
\]
The element $(p,V(1))\in W(\BZ_p)\times W(\BZ_p)=W(\BZ_p\times\BZ_p)$ belongs to $W(B)$. It defines a morphism $T\to W_{\prim}$ and therefore a morphism $T\to\Sigma$.
Let $G_T$ and $H_T$ be the pullbacks of $G_\Sigma$ and $H_\Sigma$ to $T$ (so $H_T$ is a formal group over $T$, and $G_T$ is the Cartier dual affine group scheme over $T$).

\begin{lem}  \label{l:G_T & H_T}
(i) The pullback of $G_T$ (resp.~$H_T$) via each of the two closed immersions $i_1,i_2:\Spf\BZ_p\mono T$ is isomorphic to~$W^{(F)}_{\Spf\BZ_p}$ (resp.~ to $(\hat\BG_a)_{\Spf\BZ_p}$).

(ii) $G_T$ is not isomorphic to $W^{(F)}_T\,$, and $H_T$ is not isomorphic to $(\hat\BG_a)_T\,$.
\end{lem}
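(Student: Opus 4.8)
The plan is to reduce both parts to facts already established in the paper: for (i) I would identify the two composites $\Spf\BZ_p\to T\to\Sigma$ with the points $\rho_{\dR}$ and $\eta$ already studied, and for (ii) I would use that $T$ is the pushout of $\Spf\BZ_p\leftarrow\Spec\BF_p\to\Spf\BZ_p$, so an isomorphism out of $T$ is controlled by its restrictions to the two branches together with their common restriction to the closed point; the actual arithmetic input for (ii) is then exactly the ``discrepancy'' computation of \S\ref{sss:somewhat unexpected}.

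\emph{Part (i).} The closed immersions $i_1,i_2\colon\Spf\BZ_p\mono T$ correspond to the two projections $B\twoheadrightarrow\BZ_p$, and under these the Witt vector $(p,V(1))\in W(B)$ maps to $p$ and to $V(1)$ respectively. Hence the composite $\Spf\BZ_p\xrightarrow{i_1}T\to W_{\prim}\to\Sigma$ is the de Rham point $\rho_{\dR}$, while the composite along $i_2$ is the morphism $\eta$ of \eqref{e:eta}. Therefore $i_1^*G_T=G_{\dR}$, which is isomorphic to $W^{(F)}_{\Spf\BZ_p}$ by \propref{p:G_dR} and \lemref{l:W^(F)=G_a^sharp}, and $i_2^*G_T=G_\eta$, which is isomorphic to $W^{(F)}_{\Spf\BZ_p}$ by \propref{p:G_eta}(i). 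Dualizing, and using that $W^{(F)}_{\Spf\BZ_p}=(\BG_a^\sharp)_{\Spf\BZ_p}$ is Cartier dual to $(\hat\BG_a)_{\Spf\BZ_p}$, one gets $i_1^*H_T=H_{\dR}\cong(\hat\BG_a)_{\Spf\BZ_p}$ (as in \propref{p:H_dR}) and likewise $i_2^*H_T\cong(\hat\BG_a)_{\Spf\BZ_p}$.

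\emph{Part (ii).} By Cartier duality it is enough to show $G_T\not\cong W^{(F)}_T$. The preliminary point is that $\Aut(W^{(F)}_{\Spf\BZ_p})=\BZ_p^\times$: indeed $W^{(F)}_{\Spf\BZ_p}$ is Cartier dual to $(\hat\BG_a)_{\Spf\BZ_p}$, and over the $p$-torsion-free ring $\BZ_p$ every additive power series is linear, so $\End((\hat\BG_a)_{\Spf\BZ_p})=\BZ_p$. Now suppose $\psi\colon G_T\iso W^{(F)}_T$ existed. Comparing $i_1^*\psi$ with the isomorphism $u_1\colon G_{\dR}\iso W^{(F)}_{\Spf\BZ_p}$ of \eqref{e:G_dR=W^(F)} gives $i_1^*\psi=a\cdot u_1$ for some $a\in\BZ_p^\times$, and comparing $i_2^*\psi$ with the tautological isomorphism $u_2\colon G_\eta=\{x\mid Fx=0\}=W^{(F)}_{\Spf\BZ_p}$ gives $i_2^*\psi=b\cdot u_2$ for some $b\in\BZ_p^\times$. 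Restricting to the closed point $\Spec\BF_p\mono T$ and using $\bar\rho_{\dR}=\bar\eta$ (both given by $V(1)=p$ in $W(\BF_p)$), the two restrictions of $\psi$ must coincide, so $\bar a\cdot\overline{u_1}=\bar b\cdot\overline{u_2}$ in $\Aut(W^{(F)}_{\BF_p})$. But $\overline{u_2}=\id$ while $\overline{u_1}=\id-V$ by \S\ref{sss:somewhat unexpected}, so $\id-V$ would be the scalar automorphism $\bar b/\bar a$. Composing with the nonzero homomorphism $W^{(F)}_{\BF_p}\to\BG_a$ sending a Witt vector to its $0$-th component, on which $V$ vanishes, forces $\bar b/\bar a=1$ and hence $V=0$ on $W^{(F)}_{\BF_p}$ --- which is false, since $V$ is nonzero already on the point $[\varepsilon]$ over $\BF_p[\varepsilon]/(\varepsilon^p)$, where $V[\varepsilon]=(0,\varepsilon,0,\ldots)$. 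This contradiction gives $G_T\not\cong W^{(F)}_T$, and dually $H_T\not\cong(\hat\BG_a)_T$.

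The arithmetic heart of (ii) is already contained in \S\ref{sss:somewhat unexpected}, so the main work here is purely organizational: keeping the identifications of $G_{\dR}$ and $G_\eta$ with $W^{(F)}$ over $\Spec\BF_p$ mutually consistent, and observing that $\id-V$ is not a scalar automorphism of $W^{(F)}_{\BF_p}$. The one auxiliary statement that should be verified with a little care is $\Aut(W^{(F)}_{\Spf\BZ_p})=\BZ_p^\times$ --- this is exactly where $p$-torsion-freeness of $\BZ_p$ is used, and everything else is bookkeeping.
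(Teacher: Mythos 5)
Your proof is correct and follows essentially the same route as the paper: part (i) by identifying $i_1,i_2$ with the points $\rho_{\dR}$ and $\eta$ and invoking the two isomorphisms with $W^{(F)}_{\Spf\BZ_p}$, and part (ii) by the gluing argument showing that the mod-$p$ discrepancy $\id-V$ between these two trivializations would have to lie in the image of $\Aut (W^{(F)}_{\Spf\BZ_p})=\BZ_p^\times$, which it does not. The only (immaterial) difference is that the paper verifies non-scalarity of $\id-V$ by passing to its Cartier dual $\id-\Fr\in\Aut (\hat\BG_a)_{\BF_p}$, whereas you check it directly on $W^{(F)}_{\BF_p}$ via the $0$-th component map and the nonvanishing of $V$.
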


\begin{proof}
We have the isomorphisms $i_1^*G_T\iso W^{(F)}_{\Spf\BZ_p}$ and $i_2^*G_T\iso W^{(F)}_{\Spf\BZ_p}$ given by \eqref{e:G_dR=W^(F)}  and~\eqref{e:G_eta=W^{F}}. Their pullbacks to $\Spec\BF_p=i_1(\Spf\BZ_p )\cap i_2(\Spf\BZ_p )$ are different: by \S\ref{sss:somewhat unexpected}, they differ by $\id -V\in\Aut W^{(F)}_{\BF_p}$.

It remains to show that the automorphism $\id -V\in\Aut W^{(F)}_{\BF_p}$ is \emph{not} in the image of $\Aut W^{(F)}_{\Spf\BZ_p}$. 
The Cartier duals of $W^{(F)}_{\BF_p}$ and $\id -V$ are $(\hat\BG_a)_{\BF_p}$ and $\id-\Fr\in\Aut (\hat\BG_a)_{\BF_p}\,$. It is clear that $\id-\Fr$ is not in the image of
$\Aut (\hat\BG_a)_{\Spf\BZ_p}=\BZ_p^\times$.
\end{proof}

\subsubsection{A subscheme $T'\subset Q$}
Let us formulate a variant of Lemma~\ref{l:G_T & H_T}. As usual, let $Q$ be the $q$-de~Rham prism, i.e., $Q:=\Spf\BZ_p [[q-1]]$ . Let $T'\subset Q$ be defined by the equation $q^p=1$.
Let $T$ be as in \S\ref{sss:T:=Spf B}. We have a commutative diagram
\[
\xymatrix{
T'\ar[r]\ar[d] & Q\ar[d]\\
T\ar[r] & \Sigma
}
\]
in which the morphism $Q\to\Sigma$ is as in \S\ref{sss:Recollections on Q} and the morphism $T'\to T$ comes from the ring homomorphism
\[
B\to\BZ_p [q]/(q^p-1), \quad (x,y)\mapsto y+\frac{x-y}{p}\cdot (1+q+\ldots +q^{p-1}),
\]
where $B$ is as in \S\ref{sss:T:=Spf B}.

\begin{lem}  \label{l:H_{T'}}
As before, let $T'\subset Q$ be defined by the equation $q^p=1$. 
Let $T'_1\subset T'$ (resp.~$T'_2\subset T'$) be defined by the equation $q=1$ (resp.~by $1+q+\ldots q^{p-1}=0$). Let $H_{T'}$, $H_{T'_1}$, $H_{T'_2}$ be the pullbacks of $H_Q$ to $T',T'_1, T'_2$. Then

(i) $H_{T'_1}\simeq (\hat\BG_a)_{T'_1}$ and $H_{T'_2}\simeq (\hat\BG_a)_{T'_2}$;

(ii) $H_{T'}$ is not isomorphic to $(\hat\BG_a)_{T'}$.
\end{lem}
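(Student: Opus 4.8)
The plan is to derive (i) from facts already established and to prove (ii) by a gluing argument over the closed point of $T'$, in exact parallel with the proof of Lemma~\ref{l:G_T & H_T}(ii).

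\emph{Part (i).} The closed subscheme $T'_1\subset Q$ cut out by $q=1$ is $\Spf\BZ_p$, and over it the formal group law \eqref{e:group law for H_Q} degenerates to $z_1*z_2=z_1+z_2$; hence $H_{T'_1}\simeq(\hat\BG_a)_{T'_1}$. (Equivalently, $T'_1\to\Sigma$ coincides with $\rho_{\dR}$, so this is a special case of Proposition~\ref{p:H_dR}(i).) The closed subscheme $T'_2\subset Q$ cut out by $1+q+\dots+q^{p-1}=0$ is $\Spf\BZ_p[[q-1]]/(\Phi_p(q))=(\Delta_0)_Q$, so the exact sequence \eqref{e:restriction of H_Q^alg to Delta_0_Q} of \S\ref{sss:restriction of H_Q^alg to Delta_0_Q} exhibits $H_{T'_2}=H_{(\Delta_0)_Q}$ as $\simeq(\hat\BG_a)_{T'_2}$, the isomorphism being $z\mapsto(\zeta-1)^{-1}\log(1+(\zeta-1)z)$, a series converging $(\zeta-1)$-adically.

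\emph{Part (ii).} Since $q^p-1=(q-1)\Phi_p(q)$ with $q-1$ and $\Phi_p(q)$ coprime in $\BZ_p[[q-1]]$, one has $T'=T'_1\cup T'_2$ as closed subschemes of $Q$; moreover $T'_1\cap T'_2=\Spf\BZ_p[[q-1]]/(q-1,\Phi_p(q))$, and since $\Phi_p(q)\equiv\Phi_p(1)=p$ modulo $q-1$ this equals $\Spec\BF_p$, the closed point of $T'$, which is at the same time the closed point of $T'_1\simeq\Spf\BZ_p$ and of $T'_2\simeq\Spf\BZ_p[\zeta]$ ($\BZ_p[\zeta]$ being local with residue field $\BF_p$). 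Using the commutative diagram preceding Lemma~\ref{l:H_{T'}} one identifies $H_{T'_1}$ and $H_{T'_2}$ with the pullbacks of $H_T$ along the two closed immersions $i_1,i_2:\Spf\BZ_p\mono T$ appearing in Lemma~\ref{l:G_T & H_T} (for $T'_2$ after a further base change to $\BZ_p[\zeta]$); so Lemma~\ref{l:G_T & H_T}(i) provides distinguished trivializations $\tau_j:H_{T'_j}\iso(\hat\BG_a)_{T'_j}$, and by \S\ref{sss:somewhat unexpected}, as used in the proof of Lemma~\ref{l:G_T & H_T}(ii), the restrictions of $\tau_1$ and $\tau_2$ to $\Spec\BF_p$ differ by the automorphism $\id-\Fr\in\Aut((\hat\BG_a)_{\BF_p})$, i.e. by $z\mapsto z-z^p$, which is \emph{not} linear. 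Now suppose, for contradiction, that $\phi:H_{T'}\iso(\hat\BG_a)_{T'}$ is an isomorphism; its restrictions $\phi_j$ to $T'_j$ are trivializations, so $\phi_j=u_j\circ\tau_j$ with $u_j\in\Aut((\hat\BG_a)_{T'_j})$. Since $\BZ_p$ and $\BZ_p[\zeta]$ are $p$-torsion-free, $\Aut((\hat\BG_a)_{T'_1})=\BZ_p^\times$ and $\Aut((\hat\BG_a)_{T'_2})=\BZ_p[\zeta]^\times$ consist of the linear automorphisms $z\mapsto uz$, each of which reduces over $\Spec\BF_p$ to multiplication by a scalar in $\BF_p^\times$. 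As $\phi_1$ and $\phi_2$ have the same restriction to $T'_1\cap T'_2=\Spec\BF_p$, this forces $\tau_2|_{\Spec\BF_p}\circ(\tau_1|_{\Spec\BF_p})^{-1}$ to be multiplication by a scalar, contradicting the fact that it equals the nonlinear $z\mapsto z-z^p$.

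\emph{Main obstacle and an alternative.} The only non-formal ingredient is the identification of the $\Spec\BF_p$-discrepancy of $\tau_1$ and $\tau_2$ with $z\mapsto z-z^p$; but this is exactly what \S\ref{sss:somewhat unexpected} and Lemma~\ref{l:G_T & H_T} record, so no fresh computation is needed, and (ii) is really a variant of Lemma~\ref{l:G_T & H_T}(ii). If one prefers a direct proof of (ii): the formal group law $z_1*z_2=z_1+z_2+(q-1)z_1z_2$ has logarithm $\sum_{n\ge1}\frac{(-1)^{n-1}(q-1)^{n-1}}{n}z^n$, so an isomorphism $H_{T'}\iso(\hat\BG_a)_{T'}$ would force a scalar multiple of this series to have all coefficients in $R:=\BZ_p[[q-1]]/(q^p-1)$; but under $R\hookrightarrow\BZ_p\times\BZ_p[\zeta]$ the coefficient of $z^p$ maps to $(0,\,\pm c\,(\zeta-1)^{p-1}/p)$, and since $(\zeta-1)^{p-1}/p$ is a unit of $\BZ_p[\zeta]$ this pair is not in the image of $R$ (its two components have different images in $\BF_p$).
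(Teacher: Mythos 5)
Your proof is correct and follows essentially the same route as the paper, which simply reduces both parts to Lemma~\ref{l:G_T & H_T} via the commutative diagram relating $T'$ and $T$ and the nonlinearity of $\id-\Fr$ over $\Spec\BF_p=T'_1\cap T'_2$; you have merely spelled out the details the paper leaves as ``proved similarly''. Your closing alternative for (ii) --- checking directly that no unit multiple of the logarithm $\sum_{n\ge1}\frac{(-1)^{n-1}(q-1)^{n-1}}{n}z^n$ has its $z^p$-coefficient in $\BZ_p\times_{\BF_p}\BZ_p[\zeta]$ --- is a valid, self-contained computation not in the paper, and is a nice sanity check.
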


\begin{proof}
Statement (i) follows from Lemma~\ref{l:G_T & H_T}(i). Statement (ii) is proved similarly to Lemma~\ref{l:G_T & H_T}(ii).
\end{proof}

\subsubsection{Remark} In connection with Lemma~\ref{l:H_{T'}}, let us note that $H_Q$ has a very explicit description, see \eqref{e:group law for H_Q}. This description was deduced from Theorem~\ref{t:H_Q & rescaled hat G_m}, which will be proved in the next subsection.

\subsection{Proof of Theorem~\ref{t:H_Q & rescaled hat G_m}}   \label{ss: H_Q & hat G_m proof}
\subsubsection{Recollections}
By \eqref{e:preimage of Delta_0 in Q}, the effective divisor $(\Delta_0)_Q:=\Delta_0\times_\Sigma Q\subset Q$ is defined by the equation $\Phi_p(q)=0$. 
Recall that $D\subset Q$ denotes the divisor $q=1$. Since $q^p-1=(q-1)\cdot\Phi_p(q)$, we get
\begin{equation}   \label{e:F^{-1}(D)}
F^{-1}(D)=D+(\Delta_0)_Q \, .
\end{equation}

We have a section $s_Q:Q\to H_Q$ and a homomorphism $\sigma^*:H_Q\to (\hat\BG_m)_Q$. By Lemma~\ref{l:sigma^* &other strictures}(ii),  $\sigma^*\circ s_Q$ is given by $q^p\in\hat\BG_m (Q)$, so $s_Q^{-1} (\Ker\sigma^* )$ is the divisor $q^p=1$. By \eqref{e:F^{-1}(D)}, we get
\begin{equation}    \label{e:s_Q-preimage of Ker sigma^*}
s_Q^{-1} (\Ker\sigma^* )=D+(\Delta_0)_Q.
\end{equation}

\begin{lem}     \label{l:Ker sigma^*}
(i) The closed subscheme $\Ker\sigma^*\subset H_Q$ is equal to the divisor $H_D+0_Q$, where $H_D\subset H_Q$ is the preimage of $D$ and $0_Q\subset H_Q$ is the zero section.

(ii) $s_Q^{-1}(0_Q)=(\Delta_0)_Q\,$.
\end{lem}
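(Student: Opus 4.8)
The plan is to make the homomorphism $\sigma^*$ completely explicit on $H_Q$ and read off its kernel. Since $Q=\Spf\BZ_p[[q-1]]$ is the formal spectrum of a regular local ring, $\Pic(Q)=0$, so the zero section $0_Q\subset H_Q$ is cut out by a single function: choose a coordinate $z\in\cO_{H_Q}$ with $H_Q=\Spf\BZ_p[[q-1,z]]$ and $0_Q=\{z=0\}$. (This $z$ is merely an auxiliary choice; the preferred coordinate of \eqref{e:coordinate on H_Q} is only available once the theorem has been proved.) Write $(\hat\BG_m)_Q=\Spf\BZ_p[[q-1,t]]$ with $t=u-1$ the coordinate vanishing at the identity, and put $\theta:=\sigma^*(t)\in\BZ_p[[q-1,z]]$, so that $\Ker\sigma^*=V(\theta)$ as a closed subscheme of $H_Q$. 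I would first record two divisibilities. Restricting the Witt vector $x=[q]-1$ that defines $\sigma$ to $D=\{q=1\}$ gives $x|_D=[1]-1=0$, so $\sigma|_D$ is the zero section of $G_D$ and hence $\sigma^*|_{H_D}\colon H_D\to(\hat\BG_m)_D$ is the trivial homomorphism; therefore $\theta|_{H_D}=0$, and since $\cO_{H_Q}=\BZ_p[[q-1,z]]$ is a domain with $\cO_{H_Q}/(q-1)=\cO_{H_D}$, this gives $(q-1)\mid\theta$. Likewise $\theta|_{0_Q}=0$ because $\sigma^*$ is a group homomorphism, so $z\mid\theta/(q-1)$. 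Thus $\theta=(q-1)\,z\,v$ for a unique $v\in\BZ_p[[q-1,z]]$.

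Everything now reduces to proving that $v$ is a unit in the local ring $\BZ_p[[q-1,z]]$. Granting this, $(\theta)=((q-1)z)$, so $\Ker\sigma^*=V(\theta)=H_D+0_Q$, which is (i); and pulling this equality back along $s_Q$ and using $s_Q^{-1}(H_D)=D$ together with $s_Q^{-1}(\Ker\sigma^*)=D+(\Delta_0)_Q$ from \eqref{e:s_Q-preimage of Ker sigma^*}, we cancel $D$ (both sides being effective Cartier divisors on the domain $\BZ_p[[q-1]]$) to get $s_Q^{-1}(0_Q)=(\Delta_0)_Q$, which is (ii). To see that $v$ is a unit, let $\zeta_0\in\BZ_p[[q-1]]$ be the image of $z$ under $s_Q$ (a topologically nilpotent element, so the substitution $z\mapsto\zeta_0$ is legitimate). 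Pulling $t$ back along $\sigma^*\circ s_Q$ gives on one hand $\theta(q,\zeta_0)$ and, by Lemma~\ref{l:sigma^* &other strictures}(ii), on the other hand $q^p-1=(q-1)\Phi_p(q)$; substituting $\theta=(q-1)zv$ and cancelling $q-1$ yields
\[
\zeta_0\cdot v(q,\zeta_0)=\Phi_p(q)\qquad\text{in }\BZ_p[[q-1]].
\]
By Corollary~\ref{c:s vanishes on Delta_0}, $s$ vanishes on $\Delta_0$, hence $s_Q$ vanishes on $(\Delta_0)_Q=\{\Phi_p(q)=0\}$, i.e.\ $\Phi_p(q)\mid\zeta_0$; writing $\zeta_0=\Phi_p(q)\,w$ and cancelling $\Phi_p(q)$ gives $w\cdot v(q,\zeta_0)=1$, so $w$ and $v(q,\zeta_0)$ are units in $\BZ_p[[q-1]]$ (incidentally re-proving (ii), since then $V(\zeta_0)=V(\Phi_p(q))$). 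Finally, since $\Phi_p(1)=p$ we have $\Phi_p(q)\in(p,q-1)$, hence $\zeta_0=\Phi_p(q)\,w\in(p,q-1)$ and therefore $v(q,\zeta_0)\equiv v\pmod{(p,q-1,z)}$; as $v(q,\zeta_0)$ is a unit of $\BZ_p[[q-1]]$ its image in $\BF_p$ is nonzero, so $v$ is a unit of $\BZ_p[[q-1,z]]$.

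I expect the one step that is not pure bookkeeping to be the last one, namely forcing $v$ to be invertible: this is where the two normalizations must cooperate --- the value $q^p$ of $\sigma^*\circ s_Q$ and the vanishing of $s$ on the Hodge-Tate divisor combine, through the identities $q^p-1=(q-1)\Phi_p(q)$ and $\Phi_p(1)=p$, to pin $v$ down up to a unit. Everything else is coordinate bookkeeping on the formal groups $H_Q$ and $(\hat\BG_m)_Q$.
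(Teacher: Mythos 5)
Your proof is correct and follows essentially the same route as the paper's: both establish the lower bound $\Ker\sigma^*\ge H_D+0_Q$ (from $\sigma$ vanishing on $D$ and $\sigma^*$ being a homomorphism) and then kill the excess by pulling back along $s_Q$, using $\sigma^*\circ s_Q=q^p$ together with Corollary~\ref{c:s vanishes on Delta_0}. The only difference is presentational: the paper phrases the excess as an effective divisor $\fD\ge 0$ with $s_Q^{-1}(\fD)=0$, while you realize it as the factor $v$ in $\theta=(q-1)zv$ and check it is a unit of the local ring $\BZ_p[[q-1,z]]$ — the same argument in explicit coordinates.
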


\begin{proof}
By \eqref{e:s_Q-preimage of Ker sigma^*}, $\Ker\sigma^*\ne H_Q$. Since $\Ker\sigma^*=(\sigma^*)^{-1}(0_Q)$ and $0_Q$ is a divisor in $H_Q$, we see that $\Ker\sigma^*$ is a divisor in $H_Q\,$.

From the definition of $\sigma$ (see \S\ref{sss:structures on H_Q}) it is clear that $\sigma :Q\to G_Q$ vanishes on $D$. So $\sigma^*:H_Q\to (\hat\BG_m)_Q$ vanishes over $D$. Therefore $\Ker\sigma^*\ge H_D+0_Q\,$. In other words, 
\[
\Ker\sigma^*=H_D+0_Q+\fD , \quad \mbox{ where } \fD\ge 0.
\]
Combining this with~\eqref{e:s_Q-preimage of Ker sigma^*}, we see that 
\[
s_Q^{-1}(0_Q)+s_Q^{-1}(\fD )=(\Delta_0)_Q\, .
\]
But $s_Q^{-1}(0_Q)\ge (\Delta_0)_Q$ by Corollary~\ref{c:s vanishes on Delta_0}. So $s_Q^{-1}(\fD )=0$. Therefore $\fD=0$.
\end{proof}

\subsubsection{Proof of Theorem~\ref{t:H_Q & rescaled hat G_m}}   \label{sss: H_Q & hat G_m proof}
We have to show that $\sigma^*:H_Q\to (\hat\BG_m)_Q$ induces an isomorphism $H_Q\iso (\hat\BG_m)_Q(-D)$.  
Choose an isomorphism  $H_Q\iso\Spf\BZ_p[[q-1,x]]$ of formal schemes over $Q$. Then $\sigma^*$ is given by a formal series $f\in\BZ_p[[q-1,x]]$ such that $f(x_1\star x_2)=f(x_1)f(x_2)$, where $\star$ is the group operation in $H_Q$. By Lemma~\ref{l:Ker sigma^*}(i), $f=1+(q-1)g$, where
\begin{equation}  \label{e:property of g}
g\in x\cdot \BZ_p[[q-1,x]]^\times .
\end{equation}
Then $g(x_1\star x_2)=g(x_1)+g(x_2)+(q-1)g(x_1)g(x_2)=g(x_1)*g(x_2)$, where $*$ is the group operation in $(\hat\BG_m)_Q(-D)$.  . Combining this with \eqref{e:property of g}, we see that $g$ defines an isomorphism of formal groups $H_Q\iso (\hat\BG_m)_Q(-D)$.  \qed

\subsection{Proof of Theorem~\ref{t:H_Sigma}}   \label{ss:H_Sigma proof}
As already mentioned in \S\ref{sss:Recollections on Q}, the morphism $\pi:Q\to\Sigma$ is faithfully flat.
So to prove Theorem~\ref{t:H_Sigma}, it suffices to check analogous statements about~$H_Q$. The analog of Theorem~\ref{t:H_Sigma}(i) has already been proved, see 
Lemma~\ref{l:Ker sigma^*}(ii). It remains to show that the morphism 
 $\varphi_Q :F^*H_Q\to H_Q$ factors as $F^*H_Q\iso H_Q ((-\Delta_0)_Q)\to H_\Sigma$.
This follows from Lemma~\ref{l:sigma^* &other strictures}(i), Theorem~\ref{t:H_Q & rescaled hat G_m} and formula~\eqref{e:F^{-1}(D)}. \qed

\subsubsection{Remark}
The interested reader can prove Theorem~\ref{t:H_Sigma} without using the $q$-de Rham prism. One can deduce it from Proposition~\ref{p:H_dR} and the description of $G_\eta$ given in the proof of Proposition~\ref{p:G_eta}(i).
(Proposition~\ref{p:H_dR} was deduced in \S\ref{s:main results} from Proposition~\ref{p:G_dR}, and the latter was proved in \S\ref{ss:G_dR}.)

\subsection{Proof of Proposition~\ref{p:sigma^* is equivariant} }\label{ss:equivariance of sigma^*}
\subsubsection{}   \label{sss:recollections on Q}
Recall that $Q=\Spf\BZ_p[[q-1]]$, $H_Q=\Spf\BZ_p[[q-1,z]]$, and the group operation on $H_Q$ is given by $z_1*z_2=z_1+z_2+(q-1)z_1z_2$.
We have a canonical section $s_Q:Q\to H_Q$; as explained in \S\ref{sss:H_Q explicitly}, it is given by $z=\frac{q^p-1}{q-1}$.

 \begin{lem}   \label{l:s_Q generates H_Q}
 Let $\cK\subset H_Q$ be a closed group subscheme such that $s_Q:Q\to H_Q$ factors through $\cK$. Then $\cK=H_Q$.
 \end{lem}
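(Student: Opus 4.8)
The plan is to work in explicit coordinates and to show that the ideal defining $\cK$ vanishes. Write $H_Q=\Spf R$ with $R=\BZ_p[[q-1,z]]$ and group law $z_1*z_2=z_1+z_2+(q-1)z_1z_2$ as in \S\ref{sss:recollections on Q}, so that $\cK=\Spf(R/I)$ for a closed ideal $I$, and $\cK=H_Q$ is equivalent to $I=0$. Since $\cK$ is a subgroup of $H_Q$ through which $s_Q$ factors and $\cK\hookrightarrow H_Q$ is a group homomorphism, every multiple $n\cdot s_Q$ ($n\ge 1$) also factors through $\cK$. First I would identify the $z$-coordinate $\gamma_n$ of $n\cdot s_Q$: from the explicit group law \eqref{e:group law for H_Q} (equivalently, from $\sigma^*$ of \S\ref{sss:H_Q explicitly} being a homomorphism sending $s_Q$ to $q^p$, cf.\ Lemma~\ref{l:sigma^* &other strictures}(ii)) one gets $1+(q-1)\gamma_n=(1+(q-1)\gamma_{n-1})\cdot q^p$, hence by induction
\[
\gamma_n=d_n(q):=\frac{q^{np}-1}{q-1}=1+q+\cdots+q^{np-1}.
\]
Since $n\cdot s_Q$ factors through $\cK$, this forces $I\subseteq(z-d_n(q))$ for every $n\ge 1$, i.e.\ $I\subseteq\bigcap_{n\ge 1}(z-d_n(q))$.

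It then remains to prove the commutative-algebra assertion $\bigcap_{n\ge 1}(z-d_n(q))=0$ in $R$. Let $f=\sum_{j\ge 0}a_j(q)z^j\in R$ (so $a_j\in\BZ_p[[q-1]]$ and $a_j\to 0$ in the $(p,q-1)$-adic topology) lie in every $(z-d_n(q))$; since $R/(z-d_n(q))\cong\BZ_p[[q-1]]$ via $z\mapsto d_n(q)$, this means $f(q,d_n(q))=0$ in $\BZ_p[[q-1]]$ for all $n$ (the substitution is legitimate because $d_n(q)$ is topologically nilpotent, as $d_n(1)=np$). Reducing modulo $q-1$ and using $d_n(q)\equiv np\pmod{q-1}$, I get $\bar f(np)=0$ for all $n\ge 1$, where $\bar f:=\sum_j a_j(1)z^j\in\BZ_p\langle z\rangle$ (a Tate algebra element, since $a_j(1)\to 0$ $p$-adically). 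A nonzero element of $\BZ_p\langle z\rangle$ has only finitely many zeros in the closed unit disk, whereas the $np$ are infinitely many distinct points there, so $\bar f=0$; thus $(q-1)\mid a_j$ for all $j$ and $f\in(q-1)R$. Writing $f=(q-1)f'$ and using that each $(z-d_n(q))$ is prime and does not contain $q-1$, one gets $f'\in(z-d_n(q))$ for all $n$ as well. Hence the ideal $\fb:=\bigcap_{n\ge1}(z-d_n(q))$ satisfies $\fb=(q-1)\fb$, so $\fb\subseteq\bigcap_k(q-1)^kR=0$ by Krull's intersection theorem. Therefore $I\subseteq\fb=0$ and $\cK=H_Q$.

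The step that needs the most thought is precisely this last point --- that the intersection of the horizontal divisors $z=d_n(q)$ is zero; everything before it is bookkeeping. The one genuine input is the observation that the multiples $n\cdot s_Q$ have $z$-coordinates specializing, modulo $q-1$, to the infinitely many distinct integers $np$, which no nonzero restricted power series in $z$ can annihilate; I do not anticipate a real obstacle beyond invoking the standard facts (substitution of the topologically nilpotent $d_n(q)$ identifies membership in $(z-d_n(q))$ with vanishing at $z=d_n(q)$; a nonzero element of $\BZ_p\langle z\rangle$ has finitely many zeros in the disk by Weierstrass preparation; and the $(q-1)$-divisibility/Krull argument).
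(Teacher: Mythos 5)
Your argument is essentially the paper's proof: both reduce to the fact that the $z$-coordinates $\frac{q^{np}-1}{q-1}$ of the multiples $n\cdot s_Q$ specialize at $q=1$ to the infinitely many distinct points $np$, which no nonzero $p$-adic power series in $z$ can annihilate; your $(q-1)$-divisibility/Krull step is exactly the paper's ``WLOG $a_0\ne 0$'' reduction, made explicit. One inaccuracy to fix: for $f=\sum_j a_j(q)z^j\in\BZ_p[[q-1,z]]$ the coefficients $a_j$ do \emph{not} tend to $0$ (the ring is a full power series ring in $z$, e.g.\ $\sum_j z^j$ lies in it), so $\bar f$ lives in $\BZ_p[[z]]$ rather than in the Tate algebra $\BZ_p\langle z\rangle$. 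This is harmless: after dividing out a power of $p$, Weierstrass preparation shows a nonzero element of $\BZ_p[[z]]$ has only finitely many zeros in the \emph{open} unit disk, and the points $np$ lie in $p\BZ_p$, so the conclusion $\bar f=0$ stands.
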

 
\begin{proof}
Assume the contrary. Then there exists a nonzero regular function $f$ on $H_Q$ which vanishes on the image of each composite morphism
\begin{equation}   \label{e:s_Q^n}
Q\overset{s_Q}\longrightarrow H_Q\overset{n}\longrightarrow H_Q\, , \quad n\in\BZ.
\end{equation}
Without loss of generality, we can assume that $f\in\BZ_p[[q-1,z]]$ has the form 
\[
f=\sum_{i=0}^\infty a_i(z)(q-1)^i, \quad \mbox{where } a_i\in\BZ_p[[z]], \; a_0\ne 0.
\]
By \S\ref{sss:recollections on Q}, the morphism \eqref{e:s_Q^n} is given by $z=\frac{q^{pn}-1}{q-1}$. The value of $\frac{q^{pn}-1}{q-1}$ at $q=1$ equals $pn$. So 
$a_0(pn)=0$ for all $n\in\BZ$, which contradicts the assumption $a_0\ne 0$.
\end{proof}

\subsubsection{Proof of Proposition~\ref{p:sigma^* is equivariant} }\label{sss:equivariance of sigma^*}
Recall that the formal groups $H_Q$ and $(\hat\BG_m)_Q$ are $\BZ_p^\times$-equivariant (the action of $\BZ_p^\times$ on $(\hat\BG_m)_Q$ is as in the formulation of 
Proposition~\ref{p:sigma^* is equivariant}). So $\BZ_p^\times$ acts on $\Hom (H_Q, (\hat\BG_m)_Q)$. 
We have an element $\sigma^*\in\Hom (H_Q, (\hat\BG_m)_Q)$, and the problem is to show that  $\alpha (\sigma^*)=\sigma^*$ for all $\alpha\in\BZ_p^\times$.
By Lemma~\ref{l:s_Q generates H_Q}, it suffices to check that for every $\alpha\in\BZ_p^\times$ one has
\begin{equation}    \label{e:inclusion to check}
s_Q (Q)\subset\cK_{\alpha}, \quad \mbox {where } \cK_{\alpha}:=\Ker (\alpha (\sigma^*)-\sigma^*)\subset H_Q.
\end{equation}

The section $s_Q:Q\to H_Q$ is $\BZ_p^\times$-equivariant because it comes from $s:\Sigma\to H_\Sigma$. 
By Lemma~\ref{l:sigma^* &other strictures}(ii), $\sigma^*\circ s_Q :Q\to (\hat\BG_m)_Q$ is also $\BZ_p^\times$-equivariant. So \eqref{e:inclusion to check} holds. \qed

\section{Several realizations of the group scheme $G_Q$}   \label{s:realizations of G_Q}
By definition, $G_Q$ is the pullback of $G_\Sigma$ to the $q$-de Rham prism $Q$. This immediately leads to the first realization of $G_Q$ and its coordinate ring, see 
\S\ref{ss:Recollections on G_Q}-\ref{ss:coordinate ring of G_Q}. 
In \S\ref{ss:tilde G_Q} we note that the coordinate ring of a certain extension of $G_Q$ by $(\mu_p)_Q$ appears in the theory of $q$-logarithm from \cite{ALB,BL}.

On the other hand, Theorem~\ref{t:H_Q & rescaled hat G_m} identifies $G_Q$ with the Cartier dual of a very explicit formal group $(\hat\BG_m)_Q(-D)$. This Cartier dual is denoted by $G_Q^!\,$. 
We explicitly describe $G_Q^!$ (see \S\ref{ss:G^!_Q}-\ref{ss:G_Q^!}) and the isomorphism $G_Q^!\iso G_Q$ (see \S\ref{ss:G_Q^!=G_Q}).

In  \S\ref{ss:G_Q^!?} we define group schemes $G_Q^{!?}, G_Q^{!!}$ and isomorphisms between them and $G_Q^!$.
Unlike $G_Q^!$ and similarly to $G_Q$, both $G_Q^{!?}$ and $G_Q$ are defined in terms of Witt vectors.

Let us note that in \S\ref{ss:G_Q^!}-\ref{ss:G_Q^!=G_Q} a key role is played by the expressions $(1+(q-1)z)^{\frac{t}{q-1}}$ and~$q^{\frac{pt}{q-1}}$. 
The closely related $q$-logarithm (in the sense of \cite{ALB,BL}) appears in formula \eqref{e:t=log_q(u)}.

\subsection{Recollections}   \label{ss:Recollections on G_Q}
\subsubsection{The formal $\delta$-scheme $Q$}
Let $Q:=\Spf\BZ_p[[q-1]]$, where $\BZ_p[[q-1]]$ is equipped with the $(p,q-1)$-adic topology.  Define $F:Q\to Q$ by $q\mapsto q^p$. Then $(Q,F)$ is
a formal $\delta$-scheme. 

\subsubsection{Pieces of structure on $G_Q$}    \label{sss:Pieces of structure on G_Q}
According to the definition from \S\ref{sss:structures on H_Q}, 
\[
G_Q:=G_\Sigma\times_\Sigma Q,
\]
where $G_\Sigma$ is as in \S\ref{ss:G_Sigma}.
$G_Q$ is a formal scheme over $Q$. The morphism $G_Q\to Q$ is schematic and affine; by Corollary~\ref{c:G_Sigma is flat}, it is flat.

Let us recall the pieces of structure on $G_Q\,$.
Most of them come from similar pieces of structure on $G_\Sigma$ (the only exception is (iii). 

(i$'$) $G_Q$ is a formal $\delta$-scheme over the formal $\delta$-scheme $Q$; in other words, $G_Q$ is equipped with a Frobenius lift $F:G_Q\to G_Q$, which is compatible with 
$F:Q\to Q$.

(i$''$)  $G_Q$ is a group scheme over $Q$. The group structure is compatible with $F:G_Q\to G_Q$, so $G_Q$ is a group $\delta$-scheme over $Q$.

(ii) One has a canonical map $G_Q\to (\BG_m)_Q$, which is a homomorphism of group $\delta$-schemes over $Q$. As usual, $(\BG_m)_Q:=\BG_m\times Q$, and the $\delta$-scheme structure on $\BG_m$ is given by raising to the power of $p$.

(iii) In \S\ref{sss:structures on H_Q} we defined a canonical section $\sigma :Q\to G_Q$, which is a $\delta$-morphism.

\subsubsection{Who is who}   \label{sss:Who is who}
For any $p$-nilpotent ring $A$ one has
\[
Q(A)=\{q\in A^\times\,|\, q-1 \mbox{ is nilpotent}\},
\]
\begin{equation}   \label{e:G_Q(A)}
G_Q(A)=\{ (q,x)\in Q(A)\times W(A)\,|\, 1+\Phi_p([q])x\in\BG_m (A)\},
\end{equation}
where $\Phi_p$ is the cyclotomic polynomial and $\BG_m$ is identified with a subgroup of $W^\times$ via the Teichm\" uller embedding. The morphism $F:G_Q\to G_Q$ is given by
\[
F(q,x)=(q^p,Fx).
\]
The group operation on $G_Q$ and the homomorphism $G_Q\to(\BG_m)_Q$ are given by the maps
\[
G_Q\times_Q G_Q\to G_Q\,  ,\quad (q,x_1,x_2)\mapsto (q,x_1+x_2+\Phi_p([q])x_1x_2),
\]
\[
G_Q\to\BG_m\, , \quad (q,x)\mapsto 1+\Phi_p([q])x.
\]
The section $\sigma :Q\to G_Q$ is given by
\begin{equation}  \label{e:sigma(q)}
\sigma (q):=(q,[q]-1).
\end{equation}

\subsection{The coordinate ring of $G_Q\,$}   \label{ss:coordinate ring of G_Q}
The coordinate ring $H^0(G_Q, \cO_{G_Q})$ is  a $(p,q-1)$-adically complete $\BZ_p[[q-1]]$-algebra. 
Since $G_Q$ is flat over $Q$, for any open ideal $I\subset\BZ_p[[q-1]]$ the tensor product $H^0(G_Q, \cO_{G_Q})\otimes_{\BZ_p[[q-1]]}(\BZ_p[[q-1]]/I)$ is flat over
$\BZ_p[[q-1]]/I$. In particular, $H^0(G_Q, \cO_{G_Q})$ is $p$-torsion-free, so $F:G_Q\to G_Q$ induces on $H^0(G_Q, \cO_{G_Q})$ a $\delta$-ring structure in the sense of \cite{JoyalDelta} and \cite[\S 2]{BS}. Let us describe $H^0(G_Q, \cO_{G_Q})$ as a $\delta$-algebra over $\BZ_p[[q-1]]$, where $\BZ_p[[q-1]]$ is considered as a $\delta$-ring with $\delta (q)=0$.

\begin{prop}   \label{p:coordinate ring of G_Q}
Let $R_0$ be the $\delta$-algebra over $\BZ [q]$ with a single generator $x_0$ and a single defining relation $\delta (1+\Phi_p(q)x_0)=0$. Let $R$ be the $(p,q-1)$-adic completion of $R_0$. Then there is a unique isomorphism $R_0\iso H^0(G_Q, \cO_{G_Q})$ of $\delta$-algebras over $\BZ_p [[q-1]]$ such that $x_0\in R$
goes to the following function on $G_Q$: the value of the function on a pair $(q,x)$ as in~\eqref{e:G_Q(A)} is the $0$-th component of the Witt vector $x$.
\end{prop}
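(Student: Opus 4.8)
\textbf{Proof proposal for Proposition~\ref{p:coordinate ring of G_Q}.}

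The plan is to exhibit an explicit presentation of $G_Q$ as a relative affine scheme over $Q$ starting from the description in~\eqref{e:G_Q(A)}, and then to recognize that presentation as (the $(p,q-1)$-adic completion of) the $\delta$-algebra $R_0$. First I would analyze the functor $A\mapsto G_Q(A)$: a point is a pair $(q,x)$ with $q\in Q(A)$ and $x\in W(A)$ such that $1+\Phi_p([q])x$ lies in the Teichm\"uller copy of $\BG_m(A)\subset W(A)^\times$. Writing $x$ in Witt coordinates $x=(x_0,x_1,\ldots)$, the Witt component functions $x_0,x_1,\ldots$ together with $q$ generate the coordinate ring of $G'_Q:=W_{\prim}$-pullback $= Q\times W$, which is the $(p,q-1)$-adic completion of $\BZ_p[q,x_0,x_1,\ldots]$; the subfunctor $G_Q$ is cut out by the condition that a certain Witt vector is Teichm\"uller. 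The key observation is that a Witt vector $w$ over a $p$-torsion-free ring is Teichm\"uller if and only if $\delta(w)=0$ (equivalently $\phi(w)=w^p$, equivalently all higher Witt components of $w$ vanish once $w_0$ is fixed, but the clean statement is the $\delta$-condition). Applying this to $w=1+\Phi_p([q])x$, and noting $[q]$ is Teichm\"uller so $\delta(\Phi_p([q]))$ is controlled, the defining condition becomes exactly $\delta(1+\Phi_p(q)x_0)=0$ after one checks that $x_0$ alone suffices as a generator modulo this relation (the higher Witt components $x_1,x_2,\ldots$ of $x$ become $\delta$-polynomial expressions in $x_0,q$ forced by the Teichm\"uller condition). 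This is what makes $R_0$ — the $\delta$-$\BZ[q]$-algebra freely generated by one element $x_0$ modulo the single relation $\delta(1+\Phi_p(q)x_0)=0$ — the right candidate.

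Concretely the steps would be: (1) Record that $H^0(G_Q,\cO_{G_Q})$ is $p$-torsion-free (already established in the text from flatness over $Q$ and Corollary~\ref{c:G_Sigma is flat}), hence carries a genuine $\delta$-ring structure over $\BZ_p[[q-1]]$, and that $\BZ_p[[q-1]]$ is a $\delta$-ring with $\delta(q)=0$ since $F(q)=q^p$. (2) Construct the map $R_0\to H^0(G_Q,\cO_{G_Q})$: the element $x_0$ must go to the function ``$0$-th Witt component of $x$'', and one must check that $1+\Phi_p(q)x_0$ maps to $1+\Phi_p([q])x$ \emph{after applying $\phi$ appropriately} — more precisely that the image of $x_0$ satisfies $\delta(1+\Phi_p(q)x_0)=0$ in $H^0(G_Q,\cO_{G_Q})$, which holds because $1+\Phi_p([q])x$ is Teichm\"uller by the very definition~\eqref{e:G_Q(A)} of $G_Q$ and the $0$-th component of a Teichm\"uller vector $[u]$ is $u$ while $\delta[u]=0$. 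By the universal property of $R_0$ this extends uniquely to a $\delta$-$\BZ_p[q]$-algebra homomorphism, and it extends uniquely to the completion $R\to H^0(G_Q,\cO_{G_Q})$. Uniqueness of the isomorphism in the statement is then immediate from the universal property once $x_0$'s image is pinned down. (3) Prove this map is an isomorphism. Here I would argue fppf/$(p,q-1)$-adically-locally, or better, reduce modulo $(p,q-1)$ and then deform: by $p$-torsion-freeness and completeness it suffices to check the map is an isomorphism after $-\otimes_{\BZ_p[[q-1]]}\BZ_p[[q-1]]/I$ for the fundamental open ideals $I$, and by a standard d\'evissage (both sides flat over $\BZ_p[[q-1]]/I$, nilpotent extensions) it suffices modulo $(p,q-1)$, i.e., over $\BF_p$ with $q=1$. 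Over $\BF_p$ with $\Phi_p(q)=\Phi_p(1)=p=0$, the relation $\delta(1+\Phi_p(q)x_0)=\delta(1)=0$ becomes vacuous(!), so $R_0\otimes\BF_p[[q-1]]/(p,q-1)$ needs care — but correspondingly $G_Q$ base-changed there is $\rho_{\dR}$-type and by Proposition~\ref{p:G_dR} equals $(\BG_a^\sharp)_{\BF_p}$, whose coordinate ring is the divided-power polynomial ring in $x_0$, and the free $\delta$-algebra on $x_0$ over $\BF_p$ modulo no relation, $(p,q-1)$-completed, is exactly that. So the two sides match in the closed fiber, and d\'evissage propagates it.

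The main obstacle I anticipate is step~(3), specifically making the d\'evissage rigorous across the transition where $\Phi_p(q)$ passes from a non-zero-divisor (generic behavior, where the Teichm\"uller condition genuinely constrains) to something divisible by $p$ and $q-1$ (the Hodge-Tate locus, where the $\delta$-relation degenerates and the higher divided powers appear). One must be sure that the free-$\delta$-algebra-modulo-one-relation $R_0$ really does produce the divided-power structure in the degenerate fiber and not something smaller or larger; this is exactly the phenomenon that $\delta^{[p]}$-type operations on a $\delta$-polynomial ring generate divided powers after reduction mod $p$, and it should follow from the analysis in \cite[\S 2]{BS} or \cite{Prismatization} of $W_{\prim}$ and $W^{(F)}$, together with Lemma~\ref{l:W^(F)=G_a^sharp}. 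A cleaner route that sidesteps the fiber-by-fiber check: identify $G_Q$ directly with a $W$-theoretic scheme (as in \S\ref{ss:G_eta} one realizes $G_\eta$ via $W^{(F)}$ and via $W^{F=p}$), observe both $G_Q$ and $\Spf R$ are flat over $Q$ with the same functor of points by the ``Teichm\"uller $\Leftrightarrow$ $\delta=0$'' dictionary, and conclude by Yoneda. I would write it the second way if the bookkeeping of Witt components in the first way becomes unwieldy.
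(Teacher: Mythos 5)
Your fallback ``cleaner route'' is essentially the paper's actual proof, and it is the one you should have written out in full. The paper un-completes the problem: it introduces the affine scheme $Y$ over $\BZ[q]$ with $Y(A)=\{x\in W(A)\mid 1+\Phi_p([q])x\in\tau(A)\}$, whose $(p,q-1)$-adic completion is $G_Q$, and observes that $Y$ is a closed subscheme of $W_{\BZ[q]}$. By Joyal's theorem (\S\ref{sss:Joyal}) the coordinate ring of $W_{\BZ[q]}$ is the \emph{free} $\delta$-algebra over $\BZ[q]$ on the $0$-th component $x_0$, and the Teichm\"uller embedding $\BA^1\mono W$ is a $\delta$-morphism whose image is cut out by the Buium--Joyal coordinates $\delta^n(x_0)$, $n\ge 1$. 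Since $x\mapsto 1+\Phi_p([q])x$ is a $\delta$-morphism, the ideal of $Y$ is generated by $\delta^n(1+\Phi_p(q)x_0)$, $n\ge 1$, which is exactly the presentation of $R_0$; one then completes. Note that the input here is the \emph{scheme-level} statement that the ideal of the Teichm\"uller locus is generated by the Buium--Joyal coordinates, valid over arbitrary rings --- not the pointwise dictionary ``Teichm\"uller $\Leftrightarrow$ $\delta(w)=0$'', which only holds over $p$-torsion-free rings and does not by itself identify the ideal. This argument needs no flatness, no d\'evissage, and no computation of fibers.

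Your primary route, by contrast, has a genuine error in step (3). The relation $\delta(1+\Phi_p(q)x_0)=0$ does \emph{not} become vacuous modulo $(p,q-1)$. Expanding with $\phi(q)=q^p$ one finds
\[
\delta(1+\Phi_p(q)x_0)=\Phi_p(q^p)\,\delta(x_0)-c(q)\,x_0^p-\sum_{i=1}^{p-1}\tfrac{1}{p}\tbinom{p}{i}\Phi_p(q)^i x_0^i,
\qquad p\,c(q)=\Phi_p(q)^p-\Phi_p(q^p),
\]
and modulo $(p,q-1)$ one has $\Phi_p(q^p)\mapsto 0$ and $c(q)\mapsto -1$, so the relation becomes $x_0^p=0$; the higher relations $\delta^n(1+\Phi_p(q)x_0)=0$ similarly force $\delta^{n-1}(x_0)^p$ to equal lower-order terms. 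This is exactly where the divided powers come from. Consequently your assertion that $R_0/(p,q-1)R_0$ is ``the free $\delta$-algebra on $x_0$ over $\BF_p$ modulo no relation'' cannot be correct: that ring is the reduced polynomial ring $\BF_p[x_0,\delta(x_0),\dots]$, whereas the coordinate ring of $(\BG_a^\sharp)_{\BF_p}$ is full of nilpotents. So the closed-fibre match you claim is between the wrong objects, and repairing the d\'evissage would require precisely the computation you skipped (plus a check of $(p,q-1)$-adic separatedness of $R$, which you never address). All of this is avoided by running the presentation argument directly.
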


\begin{proof}   
Let $Y$ be the affine scheme over $\BZ [q]$ such that for any $\BZ [q]$-algebra $A$ one has
\[
Y(A)=\{ x\in W(A)\,|\, 1+\Phi_p ([q])x\in\tau (A)\},
\]
where $\tau :A\to W(A)$ is the Teichm\"uller embedding. Then $G_Q$ is the $(p,q-1)$-adic formal completion of $Y$.

Let us construct an isomorphism $R_0\iso H^0(Y,\cO_Y)$. By definition, $Y$ is a closed subscheme of $W_{\BZ [q]}:=W\times\Spec\BZ [q]$.
By \S\ref{sss:Joyal} of Appendix~\ref{s:dual of hat G_m}, the coordinate ring of  $W_{\BZ [q]}$ is a free $\delta$-algebra over $\BZ [q]$ on a single generator $x_0$, where $x_0$ is the function that takes a Witt vector to its $0$-th component. Since the Teichm\"uller embedding $\BA^1\to W$ is a $\delta$-morphism, we see that
the ideal of $Y$ in $W_{\BZ [q]}$ is generated by $\delta^n(1+\Phi_p(q)x_0)$, $n>0$. So $H^0(Y,\cO_Y)=R_0$.
\end{proof}   

\subsection{$G_Q$ and the $q$-logarithm in the sense of  \cite{ALB,BL}}  \label{ss:tilde G_Q}
This subsection is a commentary on the notion of $q$-logarithm\footnote{As explained in \cite[Prop.~2.6.10]{BL}, the $q$-logarithm is closely related to the prismatic logarithm (i.e., to the homomorphism $G_\Sigma \to\cO_\Sigma \{ 1\}$ from our Corollary~\ref{c:prismatic logarithm}). We do not discuss this relation here.} from \cite[\S 4]{ALB} and \cite[\S 2.6]{BL}; the main point is that the $q$-logarithm is the unique group homomorphism 
$G_Q\to (\BG_a)_Q$ with a certain property (see the last sentence of \S\ref{sss:q-logarithm}). This material will be used in formula~\eqref{e:t=log_q(u)} and nowhere else.

\subsubsection{An extension of $G_Q$ by $(\mu_p)_Q$}
The definition of $q$-logarithm given in \cite[\S 2.6]{BL} following \cite[\S 4]{ALB} secretly uses the coordinate ring of a slight modification of~$G_Q$. Namely, for any $p$-nilpotent ring $A$ let
\begin{equation}  \label{e:tilde G_Q}
\tilde G_Q(A):=\{ (q,x,u)\in Q(A)\times W(A)\times A^\times\,|\, 1+\Phi_p([q])x=[u^p]\}
\end{equation}
(so $\tilde G_Q$ is an extension of  $G_Q$ by $(\mu_p)_Q$). The $\delta$-ring constructed in \cite[Prop.~2.6.5]{BL} is just the coordinate ring of $\tilde G_Q$; this easily follows from Proposition~\ref{p:coordinate ring of G_Q}. 

We have a section
\begin{equation}   \label{e:tilde sigma}
\tilde\sigma :Q\to \tilde G_Q, \quad \tilde\sigma (q):=(q, [q]-1, q),
\end{equation}
which lifts the section $\sigma :Q\to \tilde G_Q$ defined by \eqref{e:sigma(q)}.

\subsubsection{The $q$-logarithm}    \label{sss:q-logarithm}
On $\tilde G_Q$ we have an invertible regular function $u$, see formula \eqref{e:tilde G_Q}; note that $u^p$ is a regular function on $G_Q$ (unlike $u$).
The authors of \cite{ALB,BL} define another regular function on 
$\tilde G_Q$ denoted by $\log_q (u)$ and called the \emph{$q$-logarithm}\footnote{Warning: in the literature the word ``$q$-logarithm'' is used for many quite different functions, see the article~\cite{KVA}, especially its last section.}  of $u$. As explained below, $\log_q (u)$ is, in fact, a regular function on $G_Q$ itself.

Very informally, $\log_q (u)=\frac{q-1}{\log q}\cdot\log u$ (so $\log_q (u)$  is $q-1$ times the logarithm of $u$ with base $q$). From this informal description we see that $\log_q (u_1u_2)=\log_q (u_1)+\log_q(u_2)$ and $\log_q (q)=q-1$.

The precise definition of 
$\log_q (u)$ from \cite{ALB,BL} can be paraphrased as follows: $\log_q (u)$ is the unique group homomorphism $\tilde G_Q\to (\BG_a)_Q$ that takes the section \eqref{e:tilde sigma} to the section $q-1:Q\to  (\BG_a)_Q\,$ (the existence and uniqueness of such a homomorphism is proved in \cite[\S 4]{ALB}; see also \cite[Prop.~2.6.9]{BL}).

Note that the group $\Ker (\tilde G_Q\epi G_Q)=(\mu_p)_Q$ is killed by $\log_q (u)$ because 
$$\Hom ((\mu_p)_Q,  (\BG_a)_Q)=0.$$
So \emph{$\log_q (u)$ is a group homomorphism $G_Q\to (\BG_a)_Q\,$; it is the unique homomorphism that takes the section 
$\sigma :Q\to \tilde G_Q$ from \eqref{e:sigma(q)} to  the section $q-1:Q\to  (\BG_a)_Q\,$.}

\subsection{The group scheme $G_Q^!$}    \label{ss:G^!_Q}
\subsubsection{Definition of $H_Q^!$ and $G_Q^!$}
Let $D\subset Q$ be the effective divisor $q=1$.
Let $$H_Q^!:=(\hat\BG_m)_Q (-D),$$
i.e., $H_Q^!$ is the formal group over $Q$ obtained from $(\hat\BG_m)_Q$  by rescaling via the invertible subsheaf $\cO_Q(-D)\subset\cO_Q\,$, see \S\ref{ss:Rescaling}. 

Now define $G_Q^!$ to be the Cartier dual of $H_Q^!$. Then $G_Q^!$ is a flat affine group scheme over~$Q$.

Theorem~\ref{t:H_Q & rescaled hat G_m}  yields canonical isomorphisms $H_Q\iso H_Q^!$, $G_Q^!\iso G_Q$. But we will disregard these isomorphisms until \S\ref{ss:G_Q^!=G_Q}.

\subsubsection{$H_Q^!$ in explicit terms}   \label{sss:H_Q^! explicitly}
As a formal scheme, $H_Q^!=\Spf\BZ_p[[q-1,z]]$, and the group operation is
\begin{equation}   \label{e:z_1*z_2}
z_1*z_2=\frac{(1+(q-1)z_1)(1+(q-1)z_2)-1}{q-1}=z_1+z_2+(q-1)z_1z_2\, .
\end{equation}

Let $H^!$ be the formal group over $\BA^1=\Spec\BZ [q]$ defined by the group law \eqref{e:z_1*z_2}; then $H_Q^!=H^!\times_{\BA^1}Q$.

\subsubsection{Pieces of structure on $H_Q^!\,$}   \label{sss:Pieces of structure on H_Q^!}
$H_Q^!$ is a formal group over $Q$ equipped with a homomorphism 
\begin{equation}  \label{e:H_Q^! to G_m}
H_Q^!\to (\hat\BG_m)_Q.
\end{equation}
 In terms of the coordinate $z$ from \S\ref{sss:H_Q^! explicitly}, it is given by the function $1+(q-1)z$.
 
Since $F^{-1}(D)\supset D$, there is a unique homomorphism
$\varphi_Q :F^*H_Q^!\to H_Q^!$ such that the diagram
\begin{equation}   \label{e:diagram defining varphi_Q}
\xymatrix{
F^*H_Q^!\ar[r]^{\varphi_Q} \ar[d] & H_Q^!\ar[d]\\
F^*(\hat\BG_m)_Q\ar[r]^-\sim & (\hat\BG_m)_Q
}
\end{equation}
commutes; here the lower horizontal arrow comes from the fact that $(\hat\BG_m)_Q:=\BG_m\times Q$. Over $Q\otimes\BF_p$ the upper horizontal arrow of \eqref{e:diagram defining varphi_Q} becomes the Verschiebung (because the lower horizontal arrow does). 

Moreover, the map $Q\to\hat\BG_m$, $q\mapsto q^p$, defines a section $Q\to (\hat\BG_m)_Q\,$, which comes from a section 
\begin{equation}    \label{e:Q to  H_Q^!}
s_Q:Q\to  H_Q^!\,. 
\end{equation}
In terms of \S\ref{sss:H_Q^! explicitly}, $s_Q$ is given by $z=\Phi_p(q)$.

The following diagram commutes: 
\[
\xymatrix{
  F^*H_Q^!\ar[r]^-{\varphi_Q} &H_Q^!    \\
Q  \ar[u]^{F^*(s_Q)}\ar[r]^{s_Q}&H_Q^!  \ar[u]_{p}
}
\]

Note that \eqref{e:H_Q^! to G_m} and $\varphi_Q :F^*H_Q^!\to H_Q^!$ come from similar pieces of structure on the formal group $H^!$ from \S\ref{sss:H_Q^! explicitly}; on the other hand, \eqref{e:Q to  H_Q^!} does not have an analog for $H^!$.

\subsubsection{Pieces of structure on $G_Q^!$}  \label{sss:Pieces of structure on G_Q^!}
Dualizing \S\ref{sss:Pieces of structure on H_Q^!}, we get the following 
pieces of structure on $G_Q^!$, which are parallel to those from \S\ref{sss:Pieces of structure on G_Q}.

(i) The homomorphism $\varphi_Q :F^*H_Q^!\to H_Q^!$ yields a map $F:G_Q^!\to G_Q^!$, which makes $G_Q^!$ into a group $\delta$-scheme over $Q$.

(ii) The section \eqref{e:Q to  H_Q^!} yields a canonical map $G_Q^!\to (\BG_m)_Q$, which is a homomorphism of group $\delta$-schemes over $Q$. 

(iii) The homomorphism \eqref{e:H_Q^! to G_m}  yields a canonical section $\sigma^!: Q\to G_Q^!\,$, which is a $\delta$-mor\-phism.

An explicit description of $G_Q^!$ (together with the above pieces of structure on it) will be given in Proposition~\ref{p:G_Q^!=SpfB}.

\subsubsection{The group scheme $G^!$}  
Let $G^!$ be the Cartier dual of the formal group $H^!$ from \S\ref{sss:H_Q^! explicitly}. Then $G^!_Q=G^!\times_{\BA^1}Q$.

The pieces of structure from \S\ref{sss:Pieces of structure on G_Q^!}(i,iii) are pullbacks of similar pieces of structure on $G^!$. On the other hand, the piece of structure from 
\S\ref{sss:Pieces of structure on G_Q^!}(ii) does not have an analog for $G^!$.

The affine group scheme $G^!$ and its coordinate ring are described in Appendix~\ref{s:rescaled G_m and its Cartier dual}. 
We will use these results below.

\subsection{Explicit description of $G_Q^!$}    \label{ss:G_Q^!}
\subsubsection{The ring $B$}    \label{sss:B}

Let $B_0$ be the Hopf algebra over $\BZ [h]$ from Proposition~\ref{p:G=Spec B_0} (see also \S\ref{sss:B_0 as delta-ring} for a description of $B_0\otimes\BZ_{(p)}$).
Let $B$ be the $(p,h)$-adic completion of $B_0$. Then $B$ is a topological Hopf algebra over $\BZ_p[[q-1]]$, where $q=1+h$.
Elements of $B$ are infinite sums
\begin{equation}  \label{e:elements of B}
\sum_{n=0}^\infty a_n\cdot \frac{t(t-q+1)\ldots (t-(n-1)(q-1))}{n!}, \quad \mbox{ where } a_n\in\BZ_p[[q-1]], \; a_n\to 0.
\end{equation}
An element \eqref{e:elements of B} is in $B_0$ if and only if $a_n\in\BZ [q]$ for all $n$ and $a_n=0$ for $n\gg 0$.
Note that $B$ is torsion-free as a $\BZ_p[[q-1]]$-module.

\begin{prop}   \label{p:G_Q^!=SpfB}
(a) The group scheme $G_Q^!$ identifies with $\Spf B$
so that in terms of this identification the pairing $G_Q^!\times H_Q^!\to (\BG_m)_Q$ is given by the formal series
\begin{equation}   \label{e:almost exp(tx)}
(1+(q-1)z)^{\frac{t}{q-1}}:=\sum_{n=0}^\infty \frac{t(t-q+1)\ldots (t-(n-1)(q-1))}{n!}\cdot z^n\in B[[z]]^\times , 
\end{equation}
where $z$ is the coordinate on $H_Q^!$ from \S\ref{sss:H_Q^! explicitly}.

(a$'$) The regular function on $G_Q^!$ corresponding to $t\in B$ defines a group homomorphism 
\begin{equation}   \label{e:t is additive}
G_Q^!\to (\BG_a)_Q\,.
\end{equation}

(b) The homomorphism $\phi :B\to B$ corresponding to  the morphism $F: G_Q^!\to G_Q^!$ from \S\ref{sss:Pieces of structure on G_Q^!}(i) is given by 
$$\phi (q)=q^p,\; \phi (t)=\Phi_p(q)t.$$
Moreover, $\phi$ makes $B$ into a $\delta$-ring.

(c) The homomorphism $G_Q^!\to (\BG_m)_Q$ from \S\ref{sss:Pieces of structure on G_Q^!}(ii) is given by the element
\begin{equation}   \label{e:q'}
q^{\frac{pt}{q-1}}:=\sum_{n=0}^\infty \frac{t(t-q+1)\ldots (t-(n-1)(q-1))}{n!}\cdot \Phi_p(q)^n\in B^\times ,
\end{equation}
which is obtained from \eqref{e:almost exp(tx)} by setting $z=\Phi_p(q)$ (the sum converges because $\Phi_p(1)=p$).

(c$'$) One has
\[
q^{\frac{pt}{q-1}}=\sum_{n=0}^\infty \alpha_n\, ,\quad\mbox{ where } \alpha_n:= \frac{pt(pt-q+1)\ldots (pt-(n-1)(q-1))}{n!};
\]
more precisely, $\alpha_n\in B_0$ and the series $\sum_{n=0}\limits^\infty \alpha_n$ converges in $B$ to the element $q^{\frac{pt}{q-1}}$ defined by~\eqref{e:q'}.

(d) The section $\sigma^!: Q\to G_Q^!$ from \S\ref{sss:Pieces of structure on G_Q^!}(iii) corresponds to the algebra homomorphism $B\to\BZ_p[[q-1]]$ such that $t\mapsto q-1$.
\end{prop}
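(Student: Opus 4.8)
The plan is to deduce the whole statement from the appendix description of $G^!$ (Proposition~\ref{p:G=Spec B_0}) together with Cartier duality applied to the pieces of structure on $H_Q^!$ collected in \S\ref{sss:Pieces of structure on H_Q^!}. \emph{Parts (a) and (a$'$).} Recall $G_Q^! = G^!\times_{\BA^1}Q$, and Cartier duality is compatible with this base change because $H_Q^!$ is $\hat\BA^1_Q$ over $Q$, so its coordinate ring is, topologically, the continuous dual of a free module. Hence it suffices to quote from Appendix~\ref{s:rescaled G_m and its Cartier dual} that $G^! = \Spec B_0$ with pairing against $H^!$ given by $(1+hz)^{t/h}$ ($h = q-1$) and that $t\in B_0$ is a primitive element of the Hopf algebra, and then take the $(p,q-1)$-adic completion. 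If one prefers to see primitivity of $t$ directly, the relation $1+(q-1)(z_1*z_2) = (1+(q-1)z_1)(1+(q-1)z_2)$ coming from \eqref{e:z_1*z_2} makes \eqref{e:almost exp(tx)} a character of $H_Q^!$, and comparing the linear-in-$t$ terms gives the additive homomorphism \eqref{e:t is additive}.

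\emph{Parts (d) and (c).} Under the double-duality identification $H_Q^! = \underline{\Hom}_Q(G_Q^!,(\BG_m)_Q)$, a section of $H_Q^!$ with coordinate value $z = z_0$ is the homomorphism $G_Q^!\to(\BG_m)_Q$ obtained by substituting $z = z_0$ in \eqref{e:almost exp(tx)}; dually, a section of $G_Q^!$ is a homomorphism $H_Q^!\to(\BG_m)_Q$, and the corresponding map $B\to\BZ_p[[q-1]]$ sends $t$ to whatever value turns \eqref{e:almost exp(tx)} into that homomorphism. For (d), \eqref{e:H_Q^! to G_m} is $z\mapsto 1+(q-1)z$, i.e.\ \eqref{e:almost exp(tx)} at $t/(q-1)=1$, so $\sigma^!$ is $t\mapsto q-1$. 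For (c), the section \eqref{e:Q to  H_Q^!} is $z=\Phi_p(q)$, and substituting this in \eqref{e:almost exp(tx)} gives \eqref{e:q'}; the sum converges since $\Phi_p(q)\equiv p\pmod{q-1}$ lies in $(p,q-1)$. For (c$'$), writing $c_n := t(t-q+1)\cdots(t-(n-1)(q-1))/n!\in B_0$ one has $(1+(q-1)x)^{t/(q-1)} = \sum_n c_n x^n$, hence $(1+(q-1)x)^{pt/(q-1)} = \big(\sum_n c_n x^n\big)^p$; the coefficient of $x^n$ on the right is an integer polynomial in the $c_m$, hence lies in $B_0$, it is exactly $\alpha_n$, and setting $x=1$ recovers \eqref{e:q'} as $\sum_n\alpha_n$.

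\emph{Part (b) and the main obstacle.} The map $F:G_Q^!\to G_Q^!$ lies over $F:Q\to Q$, so $\phi(q)=q^p$; and it is Cartier dual to $\varphi_Q:F^*H_Q^!\to H_Q^!$, which by \S\ref{sss:Pieces of structure on H_Q^!} (cf.\ \S\ref{sss:H_Q explicitly}) sends the coordinate $y$ of $F^*H_Q^!$ to $z = \Phi_p(q)\,y$. Pairing through \eqref{e:almost exp(tx)} and its $F$-pullback $(1+(q^p-1)y)^{\tilde t/(q^p-1)}$ and comparing $\langle\psi(g),y\rangle$ with $\langle g,\varphi_Q(y)\rangle$, where $\psi:G_Q^!\to F^*G_Q^!$ is the dual of $\varphi_Q$, forces $\phi(t) = \Phi_p(q)\,t$, consistently with $\Phi_p(q)t\in B$ while $t/\Phi_p(q)\notin B$. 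Finally $B$ is $p$-torsion-free, so $\phi$ is a $\delta$-ring structure as soon as it is a Frobenius lift, which holds because $\varphi_Q$ reduces mod $p$ to the Verschiebung and the Cartier dual of the Verschiebung is the relative Frobenius. I expect the only genuinely delicate points to be the bookkeeping of the duality direction and the $F$-twist in part (b), and the integrality $\alpha_n\in B_0$ in (c$'$) — which the identity $(1+(q-1)x)^{pt/(q-1)} = (\sum_n c_n x^n)^p$ reduces to $B_0$ being closed under multiplication; everything else is a substitution into the single series \eqref{e:almost exp(tx)}.
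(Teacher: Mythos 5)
Parts (a), (a$'$), (c) and (d) of your argument are correct and coincide with the paper's proof: everything is read off from Proposition~\ref{p:G=Spec B_0} and the single series \eqref{e:almost exp(tx)} by the appropriate substitutions. For (b) you compute the dual of $\varphi_Q$ directly through the pairing instead of quoting Lemma~\ref{l:motivation of lambda-structure}(ii) and the $\lambda$-ring structure from Lemma~\ref{l:B_0 as lambda-ring}; that is a legitimate variant (essentially the proof of Lemma~\ref{l:motivation of lambda-structure} unwound), and deducing the Frobenius-lift property from ``the Cartier dual of Verschiebung is Frobenius'' together with $p$-torsion-freeness of $B$ is fine.

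The genuine gap is in (c$'$), at the words ``setting $x=1$''. The identity $\sum_n\alpha_nx^n=\bigl(\sum_nc_nx^n\bigr)^p$ holds in $B[[x]]$, but a formal power series cannot be evaluated at $x=1$ without further justification, and here the right-hand side actually diverges at $x=1$: by Proposition~\ref{p:G=Spec B_0}(ii) the $c_n$ form a topological basis of $B$ over $\BZ_p[[q-1]]$, so $c_n\not\to0$ and $\sum_nc_n$ is not an element of $B$. Consequently your argument establishes neither that $\sum_n\alpha_n$ converges --- which is precisely the content of the ``more precisely'' clause of (c$'$), and is not obvious since the factors of $\alpha_n$ lie in $I:=(p,q-1)$ but one divides by $n!$ --- nor that its value is \eqref{e:q'}. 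The paper supplies exactly this missing step via Lemma~\ref{l:simple lemma}(iii): writing $(1+(q-1)x)^p=1+(q-1)\,v(q,x)\,x$ with $v(q,x)=\sum_{i=1}^p\binom{p}{i}(q-1)^{i-1}x^{i-1}\in I[x]$, one gets $\sum_n\alpha_nx^n=\sum_nc_n\,v(q,x)^nx^n$, whose $n$-th summand is a \emph{polynomial} in $x$ with coefficients in $I^nB$; hence the series lies in the subring $\varprojlim_m\,(B/I^mB)[x]$ of $B[[x]]$, where evaluation at $x=1$ is legitimate. This simultaneously yields $\alpha_n\to0$ and $\sum_n\alpha_n=\sum_nc_n\Phi_p(q)^n=q^{pt/(q-1)}$. (Your observation that $\alpha_n\in B_0$ because it is an integral polynomial in the $c_m$ is correct and equivalent to Lemma~\ref{l:simple lemma}(i,ii); the delicate point of (c$'$) is the convergence, not the integrality.)
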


The ``true meaning'' of the homomorphism \eqref{e:t is additive} will be explained later, see formula \eqref{e:t=log_q(u)}.

\begin{proof}
Statement (a) follows from Proposition~\ref{p:G=Spec B_0} and formula \eqref{e:Euler series}. 

Statement (a$'$) is clear from \S\ref{ss:G in terms of Newton&Euler} or Proposition~\ref{p:G=Spec B_0}(iii). It also follows from \eqref{e:almost exp(tx)} combined with the formula $(1+(q-1)z)^{\frac{t_1+t_2}{q-1}}=(1+(q-1)z)^{\frac{t_1}{q-1}}\cdot (1+(q-1)z)^{\frac{t_2}{q-1}}$.

By Lemma~\ref{l:motivation of lambda-structure}(ii), $F: G_Q^!\to G_Q^!$ is the base change of the morphism $\Psi_p:G^!\to G^!$ from \S\ref{sss:Psi_n}, so $\phi :B\to B$ is the base change of the homomorphism $\psi^p:B_0\to B_0$ from Lemma~\ref{l:B_0 as lambda-ring}. Since $B$ is $p$-torsion-free, $\phi$ makes $B$ into a $\delta$-ring. This proves (b).

Statement (c) is clear because the homomorphism $G_Q^!\to (\BG_m)_Q$ comes from the section~\eqref{e:Q to  H_Q^!}, which is given by $z=\Phi_p(q)$.

To prove (d), recall that $\sigma^!$ comes from the homomorphism \eqref{e:H_Q^! to G_m}, which is given by the function $1+(q-1)z$; this function is
the result of substituting $t=q-1$ into \eqref{e:almost exp(tx)}.

Let us prove (c$'$). By Lemma~\ref{l:simple lemma}, $\alpha_n\in B_0$ and in the ring $B_0[[z]]$ one has
\begin{equation} \label{e:sum of alpha_n z^n}
\sum_{n=0}^\infty \alpha_nz^n=(1+(q-1)vz)^{\frac{t}{q-1}}:=\sum_{n=0}^\infty\beta_nv^nz^n,
\end{equation}
where $\beta_n:=\frac{t(t-q+1)\ldots (t-(n-1)(q-1))}{n!}$ and
\[
v=v(q,z):=\frac{(1+(q-1)z)^p-1}{(q-1)z}=\sum_{i=1}^p\binom{p}{i}(q-1)^{i-1}z^{i-1}\in\BZ [q,z].
\]
Note that $v\in I[z]$, where $I\subset\BZ[q]$ is the ideal $(p,q-1)$. So the r.h.s of \eqref{e:sum of alpha_n z^n} belongs to the subring 
$\underset{m}{\underset{\longleftarrow}{\lim}}\, (B/I^mB)[z]\subset B[[z]]$. Therefore we can set $z=1$ and get
\[
\sum_n\alpha_n=\sum_n\beta_n \cdot v(q,1)^n=\sum_n\beta_n\cdot\Phi_p(q)^n;
\]
in other words, $\sum_n\limits\alpha_n$ equals the r.h.s of \eqref{e:q'}.
\end{proof}

\subsection{The isomorphism $G_Q^!\iso G_Q$ in explicit terms}   \label{ss:G_Q^!=G_Q}
\subsubsection{The isomorphisms $H_Q\iso H_Q^!$ and $G_Q^!\iso G_Q$}  \label{sss:preserve pieces of structure}
Theorem~\ref{t:H_Q & rescaled hat G_m}  yields a canonical isomorphism $H_Q\iso H_Q^!$. It is compatible with the pieces of structure on $H_Q$ and $H_Q^!$
introduced in \S\ref{sss:structures on H_Q} and  \S\ref{sss:Pieces of structure on H_Q^!}. So the Cartier dual isomorphism $G_Q^!\iso G_Q$ transforms the pieces of structure on $G_Q^!$ from \S\ref{sss:Pieces of structure on G_Q^!} into the corresponding pieces of structure on $G_Q$ (see \S\ref{sss:Pieces of structure on G_Q}-\ref{sss:Who is who}).

\subsubsection{The isomorphism between the coordinate rings of $G_Q^!$ and $G_Q$}   \label{sss:R=B}
Recall that $G_Q=\Spf R$, $G_Q^!=\Spf B$, where $R:=\hat R_0$ and $B:=\hat B_0$ are the $(p,q-1)$-adic completions of the $\BZ [q]$-algebras $R_0$ and $B_0$ from Propositions~\ref{p:coordinate ring of G_Q} and \ref{p:G=Spec B_0}. So the canonical isomorphism $G_Q^!\iso G_Q$ induces an isomorphism $R\iso B$; using it, we identify $R$ and $B$. Then the element $x_0\in R_0$ from Proposition~\ref{p:coordinate ring of G_Q} and the element $t\in B_0$ from \S\ref{sss:B} live in the same ring $R=B$. Let us discuss the relation between them.

By Proposition~\ref{p:G_Q^!=SpfB}(c), we have
\begin{equation}   \label{e:1+Phi_p(q)x_0}
1+\Phi_p(q)x_0=q^{\frac{pt}{q-1}},
\end{equation}
\begin{equation}   \label{e:x_0}
x_0=\frac{q^{\frac{pt}{q-1}}-1}{\Phi_p(q)}=\sum\limits_{n=1}^\infty \frac{t(t-q+1)\ldots (t-(n-1)(q-1))}{n!}\cdot \Phi_p(q)^{n-1}.
\end{equation}

We claim that in terms of the $q$-logarithm (see \S\ref{sss:q-logarithm}) one has
\begin{equation} \label{e:t=log_q(u)}
t=\log_q(u), \quad\mbox{ where } u^p=1+\Phi_p(q)x_0\,, 
\end{equation}
which implies that $pt=\log_q(1+\Phi_p(q)x_0)$. This follows from Proposition~\ref{p:G_Q^!=SpfB}(a$'$,d) and the definition of $\log_q(u)$ at the end of  \S\ref{sss:q-logarithm}.

\subsubsection{Remark}    \label{sss:R_0 different from B_0}
Using \eqref{e:x_0}, it is easy to show that $R_0$ and $B_0$ are \emph{different} as subrings of~$R=B$; moreover, $R_0/(q-1)R_0$ and $B_0/(q-1)B_0$ are different as subrings of the ring $R/(q-1)R=B/(q-1)B$.

\subsubsection{Plan of what follows}
By Proposition~\ref{p:G_Q^!=SpfB}, $G_Q^!=\Spf B$. By \eqref{e:G_Q(A)}, the isomorphism
\begin{equation}  \label{e:Spf B=G_Q}
\Spf B=G_Q^!\iso G_Q
\end{equation}
 is given by an element 
$x\in W(B)$ such that $1+\Phi_p([q])x\in B^\times$, where $B^\times\subset W(B)^\times$ is the subgroup of Teichm\"uller elements.
In Proposition~\ref{p:formula for tilde x} we will write a formula for $x$. 

\subsubsection{The homomorphism $\psi:B\to W(B)$}  \label{e:psi}
According to A.~Joyal \cite{JoyalDelta}, the forgetful functor from the category of $\delta$-rings to that of rings has a right adjoint, which is nothing but the functor $W$. Our $B$ is a $\delta$-ring, so 
the unit of Joyal's adjunction yields a homomorphism of $\delta$-rings $\psi:B\to W(B)$. It is the unique homomorphism of $\delta$-rings $B\to W(B)$ whose composition with the
canonical epimorphism $W(B)\epi W_1(B)=B$ equals $\id_B\,$. For any $b\in B$ the $n$-th Buium-Joyal component (see \S\ref{sss:Joyal}) of the Witt vector $\psi (b)$ equals $\delta^n (b)$.

\begin{prop}    \label{p:formula for tilde x}
One has
\begin{equation}  \label{e:formula for tilde x}
x=\psi (\frac{q^{\frac{pt}{q-1}}-1}{\Phi_p(q)}), 
\end{equation}
\begin{equation}   \label{e:formula for 1+tilde x xi}
1+\Phi_p([q])x=[q^{\frac{pt}{q-1}}],
\end{equation}
where $\psi:B\to W(B)$ is as in \S\ref{e:psi} and $q^{\frac{pt}{q-1}}\in B$ is defined by \eqref{e:q'} (so $q^{\frac{pt}{q-1}}-1$ is divisible by $\Phi_p(q)$).
\end{prop}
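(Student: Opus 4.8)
The plan is to derive both formulas from the description of $B=H^0(G_Q,\cO_{G_Q})$ as a $\delta$-algebra (Proposition~\ref{p:coordinate ring of G_Q}) together with the universal property of Joyal's unit $\psi\colon B\to W(B)$ recalled in \S\ref{e:psi}. The starting point is already available from \S\ref{sss:R=B}: under the identification $\Spf B=G_Q^!\iso G_Q$, the $0$-th Witt component $x_0\in B$ of the universal Witt vector $x\in W(B)$ equals $\frac{q^{\frac{pt}{q-1}}-1}{\Phi_p(q)}$, and $1+\Phi_p(q)x_0=q^{\frac{pt}{q-1}}$; see \eqref{e:1+Phi_p(q)x_0}--\eqref{e:x_0}. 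In particular $q^{\frac{pt}{q-1}}-1$ is divisible by $\Phi_p(q)$ in $B$, as asserted.

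To prove \eqref{e:formula for tilde x}, I would unwind how the universal Witt vector $x\in W(B)$ is produced. By \eqref{e:G_Q(A)}, $G_Q$ is a closed formal subscheme of $W\times Q$, and the inclusion is a $\delta$-morphism because the Frobenius lift on $G_Q$ is $F(q,x)=(q^p,Fx)$ (see \S\ref{sss:Who is who}); composing with the projection $W\times Q\to W$ gives a $\delta$-morphism $G_Q\to W$, i.e.\ a homomorphism of $\delta$-rings $\cO(W)\to B$ which sends the free $\delta$-generator $x_0$ of $\cO(W)$ (see \S\ref{sss:Joyal}) to $x_0\in B$. Now use the characterization of $\psi$ from \S\ref{e:psi}: it is the unique homomorphism of $\delta$-rings $B\to W(B)$ whose composition with $W(B)\epi W_1(B)=B$ is $\id_B$, and the $n$-th Buium-Joyal component of $\psi(b)$ is $\delta^n(b)$. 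Since $\cO(W)$ is the free $\delta$-ring on $x_0$, a homomorphism of $\delta$-rings $\cO(W)\to B$ sends $\delta^n x_0\mapsto\delta^n(b)$ where $b$ is the image of $x_0$, so as an element of $W(B)=\Hom_{\mathrm{Ring}}(\cO(W),B)$ it has $n$-th Buium-Joyal component $\delta^n(b)$; hence it equals $\psi(b)$, a Witt vector being determined by its Buium-Joyal components. Applying this to our map, for which $b=x_0$ and the corresponding Witt vector is by construction the universal $x$, we get $x=\psi(x_0)$, which together with \eqref{e:x_0} is exactly \eqref{e:formula for tilde x}.

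For \eqref{e:formula for 1+tilde x xi} I would argue straight from the definition of $G_Q$: by \eqref{e:G_Q(A)} the element $1+\Phi_p([q])x\in W(B)$ is a Teichm\"uller element, and its $0$-th Witt component is $1+\Phi_p(q)x_0=q^{\frac{pt}{q-1}}$ by \eqref{e:1+Phi_p(q)x_0}; since a Teichm\"uller element is determined by its $0$-th component, this forces $1+\Phi_p([q])x=[q^{\frac{pt}{q-1}}]$. (Alternatively one can apply the ring homomorphism $\psi$ to \eqref{e:1+Phi_p(q)x_0} and use $\psi(q)=[q]$ and $\psi(q^{\frac{pt}{q-1}})=[q^{\frac{pt}{q-1}}]$, which hold because $q$ and $q^{\frac{pt}{q-1}}=1+\Phi_p(q)x_0$ are $\delta$-constants of $B$ --- for the latter this is the defining relation of $R_0=B$ in Proposition~\ref{p:coordinate ring of G_Q} --- and the Teichm\"uller embedding is a $\delta$-morphism.) The one genuinely substantive point, which I would spell out carefully, is the identity $x=\psi(x_0)$: that the universal Witt vector on a formal $\delta$-scheme whose coordinate ring is $\delta$-generated over the base by the $0$-th Witt coordinate coincides with $\psi$ of that coordinate. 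Everything else is bookkeeping with the formulas of \S\ref{sss:R=B} and Proposition~\ref{p:G_Q^!=SpfB}.
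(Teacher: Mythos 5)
Your proposal is correct and follows essentially the same route as the paper: the paper likewise deduces $x=\psi(x_0)$ from the fact that $x:\Spf B\to W$ is a $\delta$-morphism together with Joyal's description of $H^0(W,\cO_W)$ as the free $\delta$-ring on $x_0$, then combines this with \eqref{e:x_0}, and obtains \eqref{e:formula for 1+tilde x xi} from \eqref{e:1+Phi_p(q)x_0} by noting that $1+\Phi_p([q])x$ is Teichm\"uller. Your write-up merely spells out in more detail the (correct) identification of a $\delta$-homomorphism $H^0(W,\cO_W)\to B$ with an element $\psi(b)\in W(B)$.
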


\begin{proof}  
By \S\ref{sss:preserve pieces of structure}, the morphism $\Spf B=G_Q^!\iso G_Q$ is a $\delta$-morphism. So $x:\Spf B\to W$ is a $\delta$-mor\-phism. Therefore the corresponding map $H^0(W,\cO_W )\to B$ is a $\delta$-homomorphism. So the description of $H^0(W,\cO_W )$ from \S\ref{sss:Joyal} shows 
that $x=\psi (x_0)$, where $x_0$ is the $0$-th component of the Witt vector $x$. Combining this with \eqref{e:x_0}, we get \eqref{e:formula for tilde x}.

Formula \eqref{e:formula for 1+tilde x xi} follows from \eqref{e:1+Phi_p(q)x_0} because $1+\Phi_p([q])x$ is a Teichm\'uller element. 
\end{proof}  

\subsection{The group schemes $G_Q^{!?}$ and $G_Q^{!!}$}    \label{ss:G_Q^!?}
Using Witt vectors, we will define group $\delta$-schemes $G_Q^{!?}$ and $G_Q^{!!}$ over $Q$; each of them is canonically isomorphic to $G_Q^!$ and therefore to $G_Q$. The author is not sure that $G_Q^{!?}$ is really useful; this explains the question mark in the notation.

\subsubsection{Definition of $G_Q^{!?}$}
For any $p$-nilpotent ring $A$ let
\begin{equation}   \label{e:G_Q^{!?}}
G_Q^{!?}(A)=\{ (q,y)\in Q(A)\times W(A)\,|\, Fy=[q-1]^{p-1}\cdot y\}.
\end{equation}
Then $G_Q^{!?}\subset W_Q$ is a group subscheme. The map
\[
G_Q^{!?}\to G_Q^{!?}, \quad (q,y)\mapsto (q^p, [\Phi_p(q)]\cdot y)
\]
makes $G_Q^{!?}$ into a group $\delta$-scheme over the formal $\delta$-scheme $Q$.

\begin{prop}    \label{p:G_Q^! inside W_Q}
One has a canonical isomorphism 
\begin{equation}   \label{e:! to !?}
G_Q^!\iso G_Q^{!?}\, ;
\end{equation}
of group $\delta$-schemes over $Q$; it is induced by the map \eqref{e:G to W_big}.
\end{prop}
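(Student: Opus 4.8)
The plan is to \emph{deduce Proposition~\ref{p:G_Q^! inside W_Q} from the analogous statement over $\BA^1=\Spec\BZ[q]$ by base change along $q\mapsto q$}, i.e.\ along $Q=\Spf\BZ_p[[q-1]]\to\BA^1$. Recall (Appendix~\ref{s:rescaled G_m and its Cartier dual}, together with Appendix~\ref{s:dual of hat G_m}) that $G^!$ is the Cartier dual of the formal group $H^!$ over $\BA^1$ with group law $z_1*z_2=z_1+z_2+(q-1)z_1z_2$, and that the map \eqref{e:G to W_big} realizes $G^!$ as a closed subgroup scheme of the Witt scheme; more precisely, over $\BZ_{(p)}$ it identifies $G^!$ with $\{y\in W\mid Fy=[q-1]^{p-1}y\}$. (This is the rescaled analogue of the identification $W^{(F)}\cong(\hat\BG_a)^\vee$ underlying Lemma~\ref{l:W^(F)=G_a^sharp}, the exponent $p-1$ reflecting the fact that rescaling $\hat\BG_m$ by $q-1$ twists the Frobenius on the Cartier dual.) First I would recall this statement in the exact form established in the appendix, together with the description there of the endomorphism $\Psi_p$ of $G^!$ used in the proof of Proposition~\ref{p:G_Q^!=SpfB}(b): under \eqref{e:G to W_big} it acts by $(q,y)\mapsto(q^p,[\Phi_p(q)]\cdot y)$. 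I expect this first step to be the \textbf{main obstacle}: everything afterwards is base change and bookkeeping, but one must first extract from the appendix the precise Witt-vector characterization of $G^!$ (with the correct Teichm\"uller element $[q-1]^{p-1}$) and the precise action of $\Psi_p$ on the Witt coordinate. If \eqref{e:G to W_big} is stated with values in the big Witt vectors $W_{\bbig}$, one additionally composes with the $p$-typical projection $W_{\bbig}\to W$ and checks this remains a closed immersion on $G^!$, which is routine.

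Next I would pass to $Q$. Since $G^!_Q=G^!\times_{\BA^1}Q$ and Cartier duality for flat affine (resp.\ formal) commutative group schemes commutes with flat base change, the $(p,q-1)$-adic completion of the base change of \eqref{e:G to W_big} is a closed immersion $G^!_Q\hookrightarrow W_Q$. As $[q-1]^{p-1}\in W(\BZ[q])$ is already defined over $\BZ[q]$, forming it commutes with completion, so the image of this closed immersion is the $(p,q-1)$-adic formal completion of the closed subscheme of $W_{\BZ[q]}$ cut out by $Fy=[q-1]^{p-1}y$; by \eqref{e:G_Q^{!?}} this is exactly $G_Q^{!?}$. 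This yields the isomorphism \eqref{e:! to !?} of group schemes over $Q$; here I note that $G^!_Q=\Spf B$ in the notation of \S\ref{sss:B}, which is built into the definition of $B$ and consistent with Proposition~\ref{p:G_Q^!=SpfB}(a).

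Finally I would check $\delta$-compatibility and conclude. The Frobenius lift on the formal $\delta$-scheme $G^!_Q$ is $F=\Psi_p\times_{\BA^1}Q$, so under $G^!_Q\hookrightarrow W_Q$ it is given by $(q,y)\mapsto(q^p,[\Phi_p(q)]\cdot y)$, which is precisely the Frobenius lift declared on $G_Q^{!?}$ in \S\ref{ss:G_Q^!?}. (One also verifies that this map preserves the locus $Fy=[q-1]^{p-1}y$: if $Fy=[q-1]^{p-1}y$ then $F([\Phi_p(q)]\cdot y)=[\Phi_p(q)^p(q-1)^{p-1}]\cdot y=[q^p-1]^{p-1}\cdot[\Phi_p(q)]\cdot y$, using $q^p-1=(q-1)\Phi_p(q)$; so the new pair again lies in $G_Q^{!?}$.) Hence \eqref{e:! to !?} is an isomorphism of group $\delta$-schemes over $Q$, and composing it with the canonical isomorphism $G_Q^!\iso G_Q$ of \S\ref{ss:G_Q^!=G_Q} also gives a canonical isomorphism $G_Q^{!?}\iso G_Q$.
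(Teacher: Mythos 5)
Your proposal is correct and follows essentially the same route as the paper: the paper's proof is a one-line citation of Proposition~\ref{p:G in terms of Witt} (the isomorphism $G^!\iso G^{!?}$ over $\BA^1$ induced by \eqref{e:G to W_big}) together with the $p$-typical description \eqref{e:G^!? p-typically}, followed by the implicit base change/completion to $Q$ that you spell out. The step you flag as the ``main obstacle'' is exactly what the appendix already establishes, so your argument amounts to an expanded version of the paper's proof, including the (correct, if routine) verification that $\Psi_p$ preserves the locus $Fy=[q-1]^{p-1}y$.
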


\begin{proof}
Follows from Proposition~\ref{p:G in terms of Witt} and \eqref{e:G^!? p-typically}. 
\end{proof}

\subsubsection{Definition of $G_Q^{!!}$}
For any $p$-nilpotent ring $A$ let
\begin{equation}   
G_Q^{!!}(A)=\{ (q,y)\in Q(A)\times W(A)\,|\, Fy=\Phi_p([q])\cdot y\}.
\end{equation}
Then $G_Q^{!!}\subset W_Q$ is a group subscheme. Moreover, $G_Q^{!!}$ is a $\delta$-subscheme if $W_Q=W\times Q$ is equipped with the product of the standard $\delta$-structures on $W$ an $Q$.

\subsubsection{The isomorphism $G_Q^!\iso G_Q^{!!}$}  \label{sss:G_Q^!=G_Q^!!}
Let $t\in H^0(G_Q^{!!}, \cO_{G_Q^{!!}})$ be the function that takes $(q,y)\in G_Q^{!!}(A)$ to the $0$-th component of the Witt vector $y$.
Similarly to the proof of Proposition~\ref{p:coordinate ring of G_Q}, one shows that $H^0(G_Q^{!!}, \cO_{G_Q^{!!}})$ is the $(p,q-1)$-adic completion of the 
$\delta$-algebra over $\BZ [q]$ with a single generator $t$ and a single relation 
\[
t^p+p\delta (t)=\Phi_p(q)\cdot t.
\]
Combining this with \S\ref{sss:B_0 as delta-ring} and Proposition~\ref{p:G_Q^!=SpfB}(a,b), we get an isomorphism of $\delta$-rings 
$H^0(G_Q^{!!}, \cO_{G_Q^!})\iso H^0(G_Q, \cO_{G_Q^!})$. The corresponding isomorphism $G_Q^!\iso G_Q^{!!}$ is a group isomorphism by 
Proposition~\ref{p:G_Q^!=SpfB}(a$'$).

\subsubsection{Remark}   \label{sss:the q=1 case}
Combining Proposition~\ref{p:G_Q^! inside W_Q} and \S\ref{sss:G_Q^!=G_Q^!!} with the isomorphism $G_Q^!\iso G_Q$ from \S\ref{ss:G_Q^!=G_Q},  we get canonical isomorphisms between the group $\delta$-schemes $G_Q$, $G_Q^{!?}$, and $G_Q^{!!}$. These group $\delta$-schemes are defined in terms of $W$, but I do not know an explicit description of these isomorphisms in terms of the standard Witt vector formalism. However, \emph{after} the ``de Rham'' specialization $q=1$ the isomorphisms in question specialize to the explicit isomorphisms from~\S\ref{ss:G_dR} (note that $G_Q$, $G_Q^{!?}$, and  $G_Q^{!!}$ specialize to $G_{\dR}$,  
$W^{(F)}_{\Spf\BZ_p}$, and $W^{F=p}_{\Spf\BZ_p}$, respectively).

\appendix

\section{On the prismatic cohomology of $(\BA^1\setminus\{0\})_{\Spf\BZ_p}$}   \label{s:prismatic cohomology of G_m}
\subsection{The result}
Let $\BG_m^\prism$ be the prismatization of  $(\BA^1\setminus\{0\})_{\Spf\BZ_p}=(\BG_m)_{\Spf\BZ_p}$. The projection $(\BG_m)_{\Spf\BZ_p}\to\Spf\BZ_p$ induces a morphism 
$\pi:\BG_m^\prism\to (\Spf\BZ_p)^\prism=\Sigma$. The goal of this Appendix is to compute the higher derived images
$R^i\pi_*\cO_{\BG_m^\prism}$ using some results of \S\ref{s:main results}. Here is the answer; it is almost contained\footnote{In \cite[\S 16]{BS} the pullback of $R^i\pi_*\cO_{\BG_m^\prism}$ to the $q$-de Rham prism $Q$ was computed. Theorem~\ref{t:prismatic cohomology of G_m} can be easily deduced from this computation.} in \cite[\S 16]{BS}.
\begin{thm}  \label{t:prismatic cohomology of G_m}
(i) $R^i\pi_*\cO_{\BG_m^\prism}=0$ if $i\ne 0,1$.

(ii) $R^0\pi_*\cO_{\BG_m^\prism}=\cO_\Sigma$.

(iii) $R^1\pi_*\cO_{\BG_m^\prism}=\bigoplus\limits_{n\in\BZ}\cM_n\,$, where $\cM_0:=\cO_{\Sigma}\{ -1\}$ and if $n\ne 0$ and $m$ is the biggest number such that $p^m | n$ then
$\cM_n:=\cO_{\Sigma}(\Delta_0+\ldots +\Delta_m)/\cO_\Sigma\,$. Here $\Delta_0\subset\Sigma$ is the Hodge-Tate divisor and $\Delta_i:=(F^i)^{-1}(\Delta_0)$.
\end{thm}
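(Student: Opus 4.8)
The cohomology $R^i\pi_*\cO_{\BG_m^\prism}$ can be computed fpqc-locally on $\Sigma$, and since $\pi_Q\colon Q\to\Sigma$ of \S\ref{sss:Recollections on Q} is faithfully flat, it suffices to compute the pullback to $Q$ together with its descent data (i.e.\ the $\BZ_p^\times$-equivariant structure, using that $\pi$ factors through $Q/\BZ_p^\times$ as in \S\ref{sss:Z_p^times-action on H_Q}). So the first step is to reduce to a computation over $Q$. By Corollary~\ref{c:2 BG_m^prism as Cone}, $\BG_m^\prism=\Cone(G_\Sigma\to(\BG_m)_\Sigma)$, hence $\BG_m^\prism\times_\Sigma Q=\Cone(G_Q\to(\BG_m)_Q)$, and the derived pushforward of $\cO$ along a $\Cone$ of a faithfully flat homomorphism $u\colon G\to H$ of (affine-over-base) commutative group objects is computed by the two-term complex $[\,\cO(H)\to\cO(G)\,]$ in degrees $0,1$ — more precisely $R\pi_*\cO=R\Hom(\mathrm{Cone}(u)^\vee,\cO)$, and one identifies the answer with the cohomology of the complex of functions. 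Concretely, over $Q$ the group $(\BG_m)_Q$ contributes the grading by characters $n\in\BZ$ (functions on $\BG_m$ decompose as $\bigoplus_n \cO_Q\cdot q'^n$), and one gets $R^0=\cO_Q$ (the $n=0$ part of $\ker$) while $R^1=\bigoplus_{n\in\BZ}(\text{cokernel in weight }n)$.

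\textbf{The weight-$n$ computation.} Fix $n\in\BZ$. The map in weight $n$ is the map on weight-$n$ parts of coordinate rings induced by $G_Q\to(\BG_m)_Q$, i.e.\ by the character $\chi_n$ of $H_Q$ dual to $n\cdot\sigma\colon Q\to G_Q$. By Theorem~\ref{t:H_Q & rescaled hat G_m}, $H_Q\cong(\hat\BG_m)_Q(-D)$ with coordinate $z$, $D=\{q=1\}$, and $\sigma^*\colon H_Q\to(\hat\BG_m)_Q$ is $z\mapsto 1+(q-1)z$; the section $s_Q$ is $z=\Phi_p(q)$. The character $n\sigma$ of $G_Q$ corresponds to the power map $z\mapsto (1+(q-1)z)^n$ followed by projection, so the weight-$n$ component of $\cO(G_Q)$ is the $\cO_Q$-module generated by $(1+(q-1)z)^{n}$ inside $\cO(H_Q)^{\vee}$-language; dualizing, the relevant two-term complex in weight $n$ is isomorphic to
\begin{equation}  \label{e:weight-n complex}
\cO_Q \xrightarrow{\;h_n(1,q)\;} \cO_Q, \qquad h_n(1,q)=\frac{q^n-1}{q-1},
\end{equation}
where $h_n$ is as in Corollary~\ref{c:Z_p^times-action on H_Q} — the key point being that the composite $\sigma^*\circ(n\cdot\text{-})\circ s_Q$ evaluates the character at the section $s_Q$, giving $q^n$ (cf.\ Lemma~\ref{l:sigma^* &other strictures}(ii)), so the $\cO_Q$-linear map ``multiplication by the image of the weight-$n$ generator'' is multiplication by $h_n(1,q)=(q^n-1)/(q-1)$. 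Thus $R^0$ picks out $n=0$ (giving $\cO_\Sigma$ after descent) and for $n\ne 0$ the weight-$n$ contribution to $R^1$ is $\cO_Q/(h_n(1,q))$.

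\textbf{Identifying the cokernels and descending.} It remains to (a) identify $\cO_Q/(h_n(1,q))$ with the claimed $\cM_n$ after descent along $Q/\BZ_p^\times$, and (b) handle the Breuil--Kisin twist for $n=0$. For (b): the weight-$0$ part of $R^1$ is the cokernel of $\cO_Q\to\Lie$-dual data, which by the proof of Theorem~\ref{t:2Hom (G_Sigma ,G_a)} and Theorem~\ref{t:H_Sigma}(ii) is $\Lie(H_\Sigma)=\cO_\Sigma\{-1\}$; more precisely the degree-$1$ term of $[\cO(H_\Sigma)\to\cO]$ in weight $0$ is the conormal of the unit, which is $\Lie(H_\Sigma)^\vee$'s appropriate shift — I would extract this from the two-term complex directly and invoke Theorem~\ref{t:2Hom (G_Sigma ,G_a)}. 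For (a): write $n=p^m n'$ with $p\nmid n'$. Then $q^n-1=\prod$ of factors; the faithfully flat pullback of the ideal $\cO_\Sigma(-\Delta_0-\cdots-\Delta_m)\subset\cO_\Sigma$ to $Q$ is, by \eqref{e:preimage of Delta_0 in Q} and $\Delta_i=(F^i)^{-1}(\Delta_0)$, the ideal generated by $\Phi_p(q)\Phi_p(q^p)\cdots\Phi_p(q^{p^m})=\dfrac{q^{p^{m+1}}-1}{q-1}$, whereas $h_n(1,q)=(q^{p^m n'}-1)/(q-1)$ differs from $(q^{p^{m+1}}-1)/(q-1)$ by the unit-up-to-the-relevant-divisor factor coming from $q^{p^m}\mapsto q^{p^m n'}$ being an automorphism of the divisor $q^{p^{m+1}}=1$ (this is exactly where $\BZ_p^\times$-equivariance enters: multiplication by $n'\in\BZ_p^\times$). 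Hence $\cO_Q/(h_n(1,q))$ is $\BZ_p^\times$-equivariantly the pullback of $\cO_\Sigma(\Delta_0+\cdots+\Delta_m)/\cO_\Sigma$, and Corollary~\ref{c:zeros of p^ns} packages this cleanly. Descending gives $\cM_n$ as stated. The main obstacle I anticipate is step (a): matching the principal ideal $(h_n(1,q))$ on $Q$ with the non-principal ideal sheaf $\cO_\Sigma(-\Delta_0-\cdots-\Delta_m)$ on $\Sigma$, keeping careful track of the $\BZ_p^\times$-action so that the descended sheaf is exactly $\cO_\Sigma(\Delta_0+\cdots+\Delta_m)/\cO_\Sigma$ rather than a twist of it; everything else is either formal (the $\Cone$ computation, the grading) or already done in \S\ref{s:main results}.
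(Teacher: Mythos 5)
There is a genuine error in your weight-$n$ computation: the map in the two-term complex should be multiplication by $\frac{q^{pn}-1}{q-1}$, not by $h_n(1,q)=\frac{q^{n}-1}{q-1}$. The character $\chi_n$ of $(\BG_m)_Q$ restricted to $G_Q$ corresponds under Cartier duality to the point $n\cdot s_Q$ of $H_Q$, and by Lemma~\ref{l:sigma^* &other strictures}(ii) one has $\sigma^*\circ s_Q=q^{p}$ (not $q$): in the coordinate $z$ the section $s_Q$ is $z=\Phi_p(q)=\frac{q^p-1}{q-1}$, and $n\cdot s_Q$ is $z=\frac{q^{pn}-1}{q-1}$ (see the proof of Lemma~\ref{l:s_Q generates H_Q}). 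Your element $h_n(1,q)$ is instead the $z$-coordinate of $n\cdot\tilde s_Q$, where $\tilde s_Q$ (given by $z=1$) is the section of the algebraization $H_Q^{\alg}$ satisfying $p\,\tilde s_Q=s_Q$; it is not the point dual to $\chi_n|_{G_Q}$. The discrepancy is fatal: for $p\nmid n$ the element $\frac{q^{n}-1}{q-1}$ is a unit of $\BZ_p[[q-1]]$, so your complex would give $\cM_n=0$, whereas the theorem asserts $\cM_n=\cO_\Sigma(\Delta_0)/\cO_\Sigma\neq 0$; in general you would get $\Delta_0+\cdots+\Delta_{m-1}$ in place of $\Delta_0+\cdots+\Delta_m$. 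The error is then hidden in your step (a), where you assert that $(q^{p^{m}n'}-1)/(q-1)$ and $(q^{p^{m+1}}-1)/(q-1)$ agree up to a unit and the automorphism $q\mapsto q^{n'}$: this is false (their divisors of zeros differ by $(\Delta_m)_Q$); the correct statement is that $(q^{p^{m+1}n'}-1)/(q-1)$ and $(q^{p^{m+1}}-1)/(q-1)$ so agree.

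Once $q^{n}$ is replaced by $q^{pn}$, the identification of the weight-$n$ cokernel with $\cM_n$ is exactly Corollary~\ref{c:zeros of p^ns}, and in fact the paper runs the whole argument directly on $\Sigma$ without passing to $Q$: it writes $R^i\pi_*\cO_{\BG_m^\prism}=\bigoplus_n H^i(G_\Sigma,\cO_\Sigma\otimes\chi_n)=\bigoplus_n R^i{\bf 0}^!\cO_{D_n}$ with $D_n\subset H_\Sigma$ the image of $n\cdot s$, then uses Theorem~\ref{t:2Hom (G_Sigma ,G_a)} for $n=0$ and the equality $D_0\cap D_n=\Delta_0+\cdots+\Delta_m$ for $n\neq 0$. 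This bypasses your descent step, which as written is also shaky: descent data along $Q\to\Sigma$ is more than a $\BZ_p^\times$-equivariant structure (the paper only claims the functor \eqref{e:pullback functor} is fully faithful for $p>2$), so identifying the descended sheaf with $\cO_\Sigma(\Delta_0+\cdots+\Delta_m)/\cO_\Sigma$ from its $\BZ_p^\times$-equivariant pullback alone would require additional justification.
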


\subsection{Proof of Theorem~\ref{t:prismatic cohomology of G_m}}
By Corollary~\ref{c:2 BG_m^prism as Cone},
$\BG_m^\prism=\Cone (G_{\Sigma}\to (\BG_m)_\Sigma  )$. Thus
\[
R^i\pi_*\cO_{\BG_m^\prism}=H^i(G_{\Sigma}, \cO_\Sigma\otimes A),
\]
where $A$ is the regular representation of $\BG_m$ (so $\cO_\Sigma\otimes A$ is a $(\BG_m)_\Sigma$-module and therefore a $G_\Sigma$-module). Equivalently, 
\begin{equation}   \label{e:decomposition w.r.t. characters}
R^i\pi_*\cO_{\BG_m^\prism}=\bigoplus_{n\in\BZ}H^i(G_{\Sigma}, \cO_\Sigma\otimes \chi_n),
\end{equation}
where $\chi_n$ is the 1-dimensional $\BG_m$-module corresponding to the character $z\mapsto z^n$.

By Theorem~\ref{t:dual to formal group}, $G_\Sigma$ is the Cartier dual of a $1$-dimensional formal group $H_\Sigma$. Let $s:\Sigma\to H_\Sigma$ be the section corresponding to the canonical homomorphism $G_\Sigma\to (\BG_m)_\Sigma$. 
For $n\in\BZ$, let $D_n\subset H_\Sigma$ be the image of the composite morphism $\Sigma\overset{s}\longrightarrow H_\Sigma \overset{n}\longrightarrow H_\Sigma\,$; in particular, $D_0\subset H_\Sigma$ is the image of the zero section. Then
\begin{equation}  \label{e:after Fourier transform}
H^i(G_{\Sigma}, \cO_\Sigma\otimes \chi_n)=R^i {\bf 0}^! \cO_{D_n}\, ,
\end{equation}
where ${\bf 0}:\Sigma\to H_\Sigma$ is the zero section and $\cO_{D_n}$ is viewed as an $\cO_{H_\Sigma}$-module.

\begin{lem} \label{l:n=0}
$R^i {\bf 0}^! \cO_{D_0}=0$ for $i\ne 0,1$,  $R^0 {\bf 0}^! \cO_{D_0}=\cO_\Sigma$, and $R^1 {\bf 0}^! \cO_{D_0}=\cO_\Sigma \{ -1\}$.
\end{lem}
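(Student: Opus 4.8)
\textit{Proof proposal.} The plan is to reduce \lemref{l:n=0} to the standard local picture near the zero section of a one-dimensional formal group, and then to read off the answer using \thmref{t:2Hom (G_Sigma ,G_a)}.

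The first step is to identify $D_0$. Since $s:\Sigma\to H_\Sigma$ is a section of the structure morphism $H_\Sigma\to\Sigma$, the composite $\Sigma\overset{s}\longrightarrow H_\Sigma\overset{0}\longrightarrow H_\Sigma$ is precisely the zero section ${\bf 0}$; hence $D_0={\bf 0}(\Sigma)$ as a closed substack, and $\cO_{D_0}={\bf 0}_*\cO_\Sigma$. Next, working fpqc-locally on $\Sigma$ we may assume $H_\Sigma\simeq\hat\BA^1_\Sigma=\Spf\cO_\Sigma[[x]]$ with ${\bf 0}$ the zero section $x=0$; thus ${\bf 0}$ is an effective Cartier divisor on $H_\Sigma$, with the length-one resolution
\[
0\to\cO_{H_\Sigma}(-{\bf 0})\to\cO_{H_\Sigma}\to{\bf 0}_*\cO_\Sigma\to 0
\]
whose first map is multiplication by a local equation $x$ of ${\bf 0}$. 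Consequently $R^i{\bf 0}^!\cO_{D_0}=0$ for $i\notin\{0,1\}$, and — using the standard identity ${\bf 0}_*R{\bf 0}^!\cG\simeq R\HHom_{\cO_{H_\Sigma}}({\bf 0}_*\cO_\Sigma,\cG)$ — one has $R^i{\bf 0}^!\cO_{D_0}={\bf 0}^*\underline{\Ext}^i_{\cO_{H_\Sigma}}({\bf 0}_*\cO_\Sigma,{\bf 0}_*\cO_\Sigma)$.

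To finish, apply $\HHom_{\cO_{H_\Sigma}}(-,{\bf 0}_*\cO_\Sigma)$ to the above resolution. Since $x$ annihilates ${\bf 0}_*\cO_\Sigma$, the connecting maps vanish and one gets $\underline{\Ext}^0={\bf 0}_*\cO_\Sigma$ and $\underline{\Ext}^1=\HHom_{\cO_{H_\Sigma}}(\cO_{H_\Sigma}(-{\bf 0}),{\bf 0}_*\cO_\Sigma)={\bf 0}_*\bigl(\cO_{H_\Sigma}({\bf 0})|_{\bf 0}\bigr)={\bf 0}_*N$, where $N=\cO_{H_\Sigma}({\bf 0})|_{\bf 0}$ is the normal bundle of the zero section, i.e.\ $N=\Lie(H_\Sigma)$. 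Hence $R^0{\bf 0}^!\cO_{D_0}=\cO_\Sigma$ and $R^1{\bf 0}^!\cO_{D_0}=\Lie(H_\Sigma)$. Finally $\Lie(H_\Sigma)=\HHom(G_\Sigma,(\BG_a)_\Sigma)$ because $H_\Sigma$ is Cartier dual to $G_\Sigma$ (\corref{c:Hom (G_Sigma ,G_a)}), and by \thmref{t:2Hom (G_Sigma ,G_a)} this is canonically $\cO_\Sigma\{-1\}$; so $R^1{\bf 0}^!\cO_{D_0}=\cO_\Sigma\{-1\}$, as asserted.

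The computation is essentially routine; the only points I would be careful about are that the $\underline{\Ext}$-description of $R^i{\bf 0}^!$ and the divisor resolution used above are legitimate in the formal-stack setting (justified by fpqc descent and the explicit reduction to $\Spf\cO_\Sigma[[x]]$, where $R{\bf 0}^!$ is literally computed by $R\Hom_{\cO_\Sigma[[x]]}(\cO_\Sigma,-)$ via the Koszul complex), and that one keeps the cohomological degrees straight — so that the line bundle surviving in degree $1$ is the \emph{normal} bundle $\Lie(H_\Sigma)$, hence $\cO_\Sigma\{-1\}$, rather than its inverse $\cO_\Sigma\{1\}$.
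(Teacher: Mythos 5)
Your proposal is correct and is essentially the paper's argument: the paper's proof consists solely of citing Theorem~\ref{t:2Hom (G_Sigma ,G_a)} for $\Lie(H_\Sigma)=\cO_\Sigma\{-1\}$, leaving implicit exactly the local Koszul/$\underline{\Ext}$ computation along the zero section of a $1$-dimensional formal group that you write out. Your identification of the degree-$1$ term with the normal bundle $\Lie(H_\Sigma)$ (rather than its inverse) is the right sign convention, so nothing is missing.
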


\begin{proof}
By Theorem~\ref{t:2Hom (G_Sigma ,G_a)}, $\Lie (H_\Sigma)=\cO_\Sigma \{ -1\}$.
\end{proof}

The following lemma is a reformulation of Corollary~\ref{c:zeros of p^ns}.

\begin{lem}
Let $n=p^m n'$, where $(n',p)=1$. Then the projection $H_\Sigma\to\Sigma$ induces an isomorphism $D_0\cap D_n\iso\Delta_0+\ldots\Delta_m$,
where $\Delta_0\subset\Sigma$ is the Hodge-Tate divisor and $\Delta_i:=(F^i)^{-1}(\Delta_0)$. \qed
\end{lem}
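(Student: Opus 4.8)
The plan is to translate the intersection $D_0\cap D_n$ into the vanishing substack of a section of $H_\Sigma$ and then invoke Corollary~\ref{c:zeros of p^ns}. First I would note that $ns:=[n]\circ s\colon\Sigma\to H_\Sigma$ (whose image is, by definition, $D_n$) is a section of the structure morphism $\pi\colon H_\Sigma\to\Sigma$. Since $H_\Sigma$ is a formal group, the zero section $\mathbf 0_\Sigma\mono H_\Sigma$ is a closed immersion (locally it is $\Sigma\mono\hat\BA^1_\Sigma$), and any other section, being a translate of the zero section by an $\Sigma$-automorphism of $H_\Sigma$, is a closed immersion as well; hence $\pi$ restricts to an isomorphism $D_n\iso\Sigma$. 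Under this isomorphism the closed substack $D_0\cap D_n\subset D_n$ goes to $(ns)^{-1}(D_0)=(ns)^{-1}(\mathbf 0_\Sigma)\subset\Sigma$, i.e.\ to the vanishing substack of the section $ns$.

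Next I would strip off the prime-to-$p$ part of $n$. Write $n=p^mn'$ with $(n',p)=1$. Because $\Sigma$ is a stack over $\Spf\BZ_p$, the integer $n'$ is invertible in $\cO_\Sigma$, so $[n']$ acts invertibly on $\Lie (H_\Sigma)$; checking this fpqc-locally, where by \S\ref{ss:def of 1-dim formal group} the formal group $H_\Sigma$ is given by a formal group law, the formal inverse function theorem shows that $[n']\colon H_\Sigma\to H_\Sigma$ is an automorphism of formal groups over $\Sigma$. As $[n']$ is a group homomorphism it preserves the zero section, so $[n']^{-1}(\mathbf 0_\Sigma)=\mathbf 0_\Sigma$. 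Since multiplications by integers commute, $ns=[n']\circ (p^ms)$, whence
\[
(ns)^{-1}(\mathbf 0_\Sigma)=(p^ms)^{-1}\bigl([n']^{-1}(\mathbf 0_\Sigma)\bigr)=(p^ms)^{-1}(\mathbf 0_\Sigma);
\]
that is, the vanishing substack of $ns$ coincides with the vanishing substack of $p^ms$.

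Finally, Corollary~\ref{c:zeros of p^ns} (applied with $m$ in place of $n$) identifies the vanishing substack of $p^ms$ with $\Delta_0+\ldots+\Delta_m$, where $\Delta_i=(F^i)^{-1}(\Delta_0)$. Chaining the three steps yields the asserted isomorphism $D_0\cap D_n\iso\Delta_0+\ldots+\Delta_m$ induced by $\pi$. The only non-formal ingredient is Corollary~\ref{c:zeros of p^ns}; everything else is bookkeeping. The point that deserves the most care is the claim that $[n']$ is a formal automorphism over the \emph{formal}, non-affine base $\Sigma$ — this is where one uses that $\cO_\Sigma$ is a sheaf of $\BZ_p$-algebras together with the fpqc-local description of formal groups, so that the classical one-variable statement applies.
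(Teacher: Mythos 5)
Your argument is correct and is exactly the route the paper intends: the paper states this lemma with no proof beyond the remark that it is ``a reformulation of Corollary~\ref{c:zeros of p^ns}'', and your write-up (identifying $D_0\cap D_n$ with the vanishing locus of $ns$ via $\pi$, discarding the prime-to-$p$ factor $n'$ because $[n']$ is an automorphism over $\Spf\BZ_p$, and then quoting the corollary) is precisely the bookkeeping that reformulation requires.
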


\begin{cor} \label{c:n nonzero}
If $n\ne 0$ then $R^i {\bf 0}^! \cO_{D_n}=0$ for $i\ne 1$ and $R^1 {\bf 0}^! \cO_{D_n}=\cM_n$, where $\cM_n$ is as in Theorem~\ref{t:prismatic cohomology of G_m}(iii). \qed
\end{cor}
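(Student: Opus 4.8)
The preceding lemma carries out all the geometric work, so the plan is simply to unwind $R\mathbf{0}^!$ applied to the structure sheaf of the Cartier divisor $D_n$. First I would note that, since $H_\Sigma$ is a $1$-dimensional formal group over $\Sigma$, the zero section $D_0=\mathbf{0}(\Sigma)\subset H_\Sigma$ is an effective Cartier divisor, and likewise $D_n$ — the image of the composite of $s$ with multiplication by $n$ — is an effective Cartier divisor, isomorphic to $\Sigma$ via the projection $H_\Sigma\to\Sigma$ (its inverse being the given section). Working Zariski-locally on $\Sigma$, where $H_\Sigma\cong\hat\BA^1_\Sigma$, and using the Koszul resolution $0\to\cO_{H_\Sigma}(-D_0)\xrightarrow{s_0}\cO_{H_\Sigma}\to\cO_{D_0}\to 0$, with $s_0\in\Gamma(H_\Sigma,\cO_{H_\Sigma}(D_0))$ the canonical section cutting out $D_0$, one identifies $R\mathbf{0}^!\cO_{D_n}$ — as an object over $\Sigma=D_0$ — with the two-term complex $[\cO_{D_n}\to\cO_{D_n}\otimes_{\cO_{H_\Sigma}}\cO_{H_\Sigma}(D_0)]$ in cohomological degrees $0$ and $1$, the differential being multiplication by $s_0|_{D_n}$.

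Next I would transport this along the isomorphism $D_n\iso\Sigma$ induced by the projection. The target $\cO_{D_n}\otimes_{\cO_{H_\Sigma}}\cO_{H_\Sigma}(D_0)$ becomes a line bundle $\cL$ on $\Sigma$ carrying the section $s_0|_{D_n}$, whose scheme of zeros is $D_0\cap D_n$. By the preceding lemma this scheme is identified, via the same projection, with the effective Cartier divisor $\Delta_0+\cdots+\Delta_m$ on $\Sigma$ (where $n=p^m n'$ with $(n',p)=1$, as in the lemma). Hence the pair $(\cL,s_0|_{D_n})$ is canonically isomorphic to $\cO_\Sigma(\Delta_0+\cdots+\Delta_m)$ together with its tautological section, and the complex above is identified with the canonical inclusion $\cO_\Sigma\hookrightarrow\cO_\Sigma(\Delta_0+\cdots+\Delta_m)$.

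Since $\Delta_0+\cdots+\Delta_m$ is an effective Cartier divisor, this inclusion is injective; so the complex has vanishing $H^0$, i.e.\ $R^i\mathbf{0}^!\cO_{D_n}=0$ for $i\ne 1$, while $R^1\mathbf{0}^!\cO_{D_n}$ is its cokernel $\cO_\Sigma(\Delta_0+\cdots+\Delta_m)/\cO_\Sigma=\cM_n$, which is exactly the assertion. (By contrast, in the case $n=0$ treated in Lemma~\ref{l:n=0} one has $s_0|_{D_0}=0$, which is why the $H^0$ there does not vanish.) I do not expect a genuine obstacle. The two points that need a little care are that the identification of $R\mathbf{0}^!$ of the structure sheaf of an effective Cartier divisor with the above Koszul complex is legitimate over the formal stack $\Sigma$ — it is, the statement being Zariski-local on $\Sigma$ and reducing to a standard computation over the topological ring $\cO_\Sigma[[z]]$ — and the line-bundle bookkeeping, i.e.\ the standard correspondence between an effective Cartier divisor and the line bundle it defines with its tautological section. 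Alternatively, one could run the argument through the excess-intersection identification $R\mathbf{0}^!\cO_{D_n}\cong L\mathbf{0}^*\cO_{D_n}\otimes\Lie(H_\Sigma)[-1]$ combined with $\Lie(H_\Sigma)=\cO_\Sigma\{-1\}$ from Theorem~\ref{t:2Hom (G_Sigma ,G_a)}.
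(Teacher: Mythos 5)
Your argument is correct and is exactly the unwinding the paper leaves implicit (the corollary is stated with \qed as an immediate consequence of the preceding lemma): resolve $\cO_{D_0}$ by $\cO_{H_\Sigma}(-D_0)\to\cO_{H_\Sigma}$, identify $R\mathbf{0}^!\cO_{D_n}$ with the two-term complex given by multiplication by $s_0|_{D_n}$, and use that its zero scheme $D_0\cap D_n\iso\Delta_0+\cdots+\Delta_m$ is an effective Cartier divisor to conclude injectivity in degree $0$ and identify the cokernel with $\cM_n$. No gaps.
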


Combining Lemma~\ref{l:n=0}, Corollary~\ref{c:n nonzero}, and \eqref{e:decomposition w.r.t. characters}-\eqref{e:after Fourier transform}, we get Theorem~\ref{t:prismatic cohomology of G_m}.

\section{The Cartier dual of the divided powers version of $\BG_m$}  \label{s:Cartier dual of G_m^sharp}
\subsection{Plan}   \label{ss:plan of Cartier dual of G_m^sharp}
As usual, let $\BG_m=\Spec\BZ [x,x^{-1}]$ be the multiplicative group over $\BZ$. Let $\BM_m$ denote the scheme $\BA^1=\Spec\BZ [x]$ viewed as a multiplicative monoid over $\Spec\BZ$. Let $\BG_m^\sharp$ (resp.~$\BM_m^\sharp$) be the PD hull of the unit in $\BG_m$ (resp.~in $\BM_m$);
explicitly,
\begin{equation}  \label{e:coordinate rings of PD hulls}
\BM_m^\sharp=\Spec A,\; \;\BG_m^\sharp=\Spec A[1/x], \quad \mbox{ where } A:=\BZ [x, \frac{(x-1)^2}{2!},  \frac{(x-1)^3}{3!}, \ldots]\, .
\end{equation}
The monoid structure on $\BM_m$ and $\BG_m$ extends to a monoid structure on $\BM_m^\sharp$ and $\BG_m^\sharp\,$. Moreover, the monoid $\BG_m^\sharp$ is a group.

 Theorem~\ref{t:the dual of G_m^sharp} below describes the Cartier duals\footnote{By the Cartier dual of $\BM_m^\sharp$ we mean $\HHom (\BM_m^\sharp ,\BM_m)$; equivalently, the bialgebras corresponding to $\BM_m^\sharp$ and its Cartier dual are dual to each other.} of $\BG_m^\sharp$ and $\BM_m^\sharp$ (this description is likely to be known, but I was unable to find a reference). The description becomes even simpler after base change to $\Spf\BZ_p\,$, see \S\ref{ss:over Spf Z_p}.
 
 In \S\ref{ss:dualizing the exact sequence} we construct an exact sequence \eqref{e:G_m^sharp sequence} of group schemes over $\Spf\BZ_p\,$, which plays an important role in \cite{BL}.
 In \S\ref{ss:variant over Z_(p)} we discuss a variant of \eqref{e:G_m^sharp sequence} over $\Spec\BZ_{(p)}\,$, where $\BZ_{(p)}$ is the localization of $\BZ$ at $p$.

\subsection{Formulation of the theorem}
We will define ind-schemes $\Gamma$ and $\Gamma_+$ equipped with a monoid structure; moreover, $\Gamma$ is a group.
Then we will identify the Cartier duals of $\BG_m^\sharp$ and $\BM_m^\sharp$ with $\Gamma$ and $\Gamma_+\,$, respectively.

\subsubsection{Definition of $\Gamma$ and $\Gamma_+$}  \label{sss: def of Gamma}
Given integers $a\le b$, define a polynomial $f_{a,b}\in\BZ [u]$ by
\begin{equation}   \label{e:f_{a,b}}
f_{a,b}(u):=\prod_{i=a}^{b} (u-i).
\end{equation}
Define a closed subscheme $\Gamma^{[a,b]}\subset\BA^1=\Spec\BZ [u]$ by  
\[
\Gamma^{[a,b]}:=\Spec\BZ [u]/(f_{a,b}).
\]

The addition map $\BA^1\times\BA^1\to\BA^1$ induces a morphism $\Gamma^{[a,b]}\times\Gamma^{[c,d]}\to \Gamma^{[a+c,b+d]}$.
So the ind-schemes
\[
\Gamma:=\Gamma^{[-\infty,\infty]}:=\underset{N}{\underset{\longrightarrow}{\lim}}\, \Gamma^{[-N,N]}, \quad \Gamma^+:=\Gamma^{[0,\infty]}:= \underset{N}{\underset{\longrightarrow}{\lim}}\, \Gamma^{[0,N]}
\]
are monoids; moreover, $\Gamma$ is a group.

\subsubsection{The pairings}
We have a pairing
\begin{equation}  \label{e:BM_m times Gamma^+ to BM_m}
\BM_m^\sharp\times\Gamma^+\to\BM_m\, , \quad (x,u)\mapsto x^u:=\sum_{n=0}^\infty f_{0,n}(u)\cdot \frac{(x-1)^n}{n!},
\end{equation}
where $ f_{0,n}$ is defined by formula \eqref{e:f_{a,b}}. Since $\BG_m^\sharp$ is a group, the morphism \eqref{e:BM_m times Gamma^+ to BM_m} maps
$\BG_m^\sharp\times\Gamma^+$ to $\BG_m$. Define a pairing
\begin{equation}  \label{e:BG_m times Gamma to BG_m}
\BG_m^\sharp\times\Gamma\to\BG_m 
\end{equation}
as follows: for each integer $a\ge 0$ its restriction to $\BG_m^\sharp\times\Gamma^{[-a,\infty]}$ is given by
\[
(x,u)\mapsto x^{-a}\cdot x^{u+a}=x^{-a}\cdot\sum_{n=0}^\infty f_{0,n}(u+a)\cdot \frac{(x-1)^n}{n!}.
\]
\begin{thm}  \label{t:the dual of G_m^sharp}
(i) The pairings \eqref{e:BG_m times Gamma to BG_m} and \eqref{e:BM_m times Gamma^+ to BM_m} induce isomorphisms
\[
\BG_m^\sharp\iso\HHom (\Gamma ,\BG_m), \quad \BM_m^\sharp\iso\HHom (\Gamma^+ ,\BM_m).
\]

(ii) The coordinate ring of $\BG_m^\sharp$ is a free $\BZ$-module.\footnote{A similar statement for $\BM_m^\sharp$ is obvious, see formula \eqref{e:coordinate rings of PD hulls}.}
\end{thm}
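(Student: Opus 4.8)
My plan is to identify the coordinate bialgebras on both sides explicitly and recognize the pairing $x^{u}$ as a perfect pairing of bialgebras; statement (ii) will then drop out of the ind-presentation of $\Gamma$. First the setup. Write $A:=H^{0}(\BM_{m}^{\sharp},\cO)$, so $H^{0}(\BG_{m}^{\sharp},\cO)=A[1/x]$. By the standard description of the divided-power envelope of the principal ideal $(x-1)\subset\BZ[x]$, $A$ is a free $\BZ$-module on the PD-basis $\gamma_{n}:=(x-1)^{n}/n!$ ($n\ge 0$), with multiplication $\gamma_{i}\gamma_{j}=\binom{i+j}{i}\gamma_{i+j}$, $\gamma_{0}=1$, and with $x=\gamma_{0}+\gamma_{1}$ grouplike. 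On the other side, $H^{0}(\Gamma^{+},\cO)=\varprojlim_{N}\BZ[u]/(f_{0,N})$; in terms of the falling factorials $[u]_{n}:=u(u-1)\cdots(u-n+1)$, which vanish in $\BZ[u]/(f_{0,N})$ for $n>N$, this is $\prod_{n\ge 0}\BZ\cdot[u]_{n}$ as a topological $\BZ$-module, and the comonoid structure induced by addition on $\BA^{1}$ is the Chu--Vandermonde comultiplication $\Delta[u]_{k}=\sum_{j}\binom{k}{j}[u]_{j}\otimes[u]_{k-j}$ with counit $\varepsilon[u]_{k}=\delta_{k0}$. In these coordinates the pairing of the theorem is exactly $x^{u}=\sum_{n\ge 0}[u]_{n}\gamma_{n}\in A\,\hat\otimes\,H^{0}(\Gamma^{+},\cO)$, i.e.\ the tautological perfect pairing between $\bigoplus_{n}\BZ\gamma_{n}$ and $\prod_{n}\BZ[u]_{n}$.

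For the Cartier-duality statement, fix a ring $R$. A homomorphism of monoid ind-schemes $\Gamma^{+}_{R}\to(\BM_{m})_{R}$ is the same as a grouplike element $g=\sum_{n}r_{n}[u]_{n}$ of $H^{0}(\Gamma^{+},\cO)\,\hat\otimes\,R$; unwinding $\varepsilon(g)=1$ and $\Delta g=g\otimes g$ against the Chu--Vandermonde comultiplication turns these into $r_{0}=1$ and $r_{i}r_{j}=\binom{i+j}{i}r_{i+j}$, which is precisely the datum of a ring homomorphism $A\to R$, $\gamma_{n}\mapsto r_{n}$. This bijection is functorial in $R$ and matches the pairing $x^{u}$, so $\HHom(\Gamma^{+},\BM_{m})=\Spec A=\BM_{m}^{\sharp}$. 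For the group version, $\Gamma$ is the group generated by $\Gamma^{+}$, so restriction along $\Gamma^{+}\hookrightarrow\Gamma$ embeds $\HHom(\Gamma,\BG_{m})$ into $\HHom(\Gamma^{+},\BM_{m})=\Spec A$ as the subfunctor on which the universal grouplike element $x$ becomes invertible; that subfunctor is $\Spec A[1/x]=\BG_{m}^{\sharp}$, and the restricted pairing is the stated $(x,u)\mapsto x^{-a}x^{u+a}$ on $\BG_{m}^{\sharp}\times\Gamma^{[-a,\infty]}$. (The expressions $x^{u}$ for $u\in\Gamma$ make sense because $\gamma_{n}(x^{-1}-1)=(-x^{-1})^{n}\gamma_{n}(x-1)$ lies in $A[1/x]$, and one checks directly that $x^{-a}x^{u+a}$ is independent of $a$ and multiplicative.)

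For (ii): the case of $\BM_{m}^{\sharp}$ is the PD-basis just recalled. For $\BG_{m}^{\sharp}$, part (i) together with the perfect pairing identifies $H^{0}(\BG_{m}^{\sharp},\cO)$ with the restricted dual of $H^{0}(\Gamma,\cO)=\varprojlim_{N}\BZ[u]/(f_{-N,N})$, i.e.\ with $\varinjlim_{N}\bigl(\BZ[u]/(f_{-N,N})\bigr)^{\vee}$. Each $\BZ[u]/(f_{-N,N})$ is free of rank $2N+1$, and the transition maps $\BZ[u]/(f_{-(N+1),N+1})\twoheadrightarrow\BZ[u]/(f_{-N,N})$ are surjections onto free $\BZ$-modules, hence split; so the dual maps exhibit $H^{0}(\BG_{m}^{\sharp},\cO)$ as an increasing union of finite free $\BZ$-submodules each of which is a direct summand of the next, and such a colimit is free.

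The part I expect to be fiddly rather than deep is the bookkeeping in the middle paragraph: making precise that a ``homomorphism of monoid ind-schemes $\Gamma^{+}\to\BM_{m}$'' means a compatible system $\phi_{N}\colon\Gamma^{[0,N]}\to\BM_{m}$ with $\phi_{N+M}(u_{1}+u_{2})=\phi_{N}(u_{1})\phi_{M}(u_{2})$ and $\phi_{0}(0)=1$, identifying this with the grouplike-element data, and keeping the completed tensor products $\varprojlim_{N}\bigl((\BZ[u]/f_{0,N})\otimes R\bigr)$ straight. By contrast, the only genuinely computational inputs — the divided-power multiplication rule $\gamma_{i}\gamma_{j}=\binom{i+j}{i}\gamma_{i+j}$ and the Chu--Vandermonde identity — are classical.
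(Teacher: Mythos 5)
Your proof is correct, and in substance it is the same duality computation as the paper's, just run from the opposite side of the Cartier duality. The paper introduces the distribution algebras $\Distr(\Gamma)$ and $\Distr(\Gamma^+)$ (the restricted duals of the topological coordinate rings of the ind-schemes), proves as its key lemma that $(\delta_1-\delta_0)^n\in n!\cdot\Distr(\Gamma^{[0,n]})$, that the divided powers $\frac{(\delta_1-1)^n}{n!}$ form a $\BZ$-basis of $\Distr(\Gamma^+)$, and that $\Distr(\Gamma)=\Distr(\Gamma^+)[\delta_1^{-1}]$; since the pairing sends $\delta_n\mapsto x^n$ and hence $\frac{(\delta_1-1)^n}{n!}\mapsto\frac{(x-1)^n}{n!}$, the induced bialgebra map carries a basis to the PD-basis and is an isomorphism. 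You stay on the function-algebra side instead: you write the universal pairing as $x^u=\sum_n [u]_n\gamma_n$ and unwind grouplikeness of $\sum_n r_n[u]_n$ via Chu--Vandermonde into the divided-power relations $r_ir_j=\binom{i+j}{i}r_{i+j}$, which is exactly the dual of the paper's basis lemma; your passage from $\Gamma^+$ to $\Gamma$ by inverting the grouplike element $x$ is the precise counterpart of $\Distr(\Gamma)=\Distr(\Gamma^+)[\delta_1^{-1}]$. The proofs of (ii) coincide: both exhibit the restricted dual of $\Fun(\Gamma)$ as an increasing union of finite free $\BZ$-modules, each a direct summand of the next. Neither route buys more than the other; the paper's is marginally shorter because it never has to spell out the grouplike condition or the completed tensor products, while yours makes the functor of points, and hence the representability, completely explicit.
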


\subsection{Proof of Theorem~\ref{t:the dual of G_m^sharp}}
\subsubsection{Distributions on $\Gamma$ and $\Gamma^+$}
Let $\Distr (\Gamma^{[a,b]})$ be the $\BZ$-module dual to the coordinate ring of $\Gamma^{[a,b]}$; equivalently, $\Distr (\Gamma^{[a,b]})$ is the $\BZ$-module of those linear functionals $\BZ [u]\to\BZ$ that are trivial on the ideal $(f_{a,b})\subset\BZ [u]$.
We think of elements of $\Distr (\Gamma^{[a,b]})$ as \emph{distributions} on $\Gamma^{[a,b]}$. Let 
\[
\Distr (\Gamma ):=\underset{N}{\underset{\longrightarrow}{\lim}}\, \Distr (\Gamma^{[-N,N]}), \quad 
\Distr (\Gamma^+):=\underset{N}{\underset{\longrightarrow}{\lim}}\, \Distr (\Gamma^{[0,N]}).
\]
Then $\Distr (\Gamma )$ and $\Distr (\Gamma^+ )$ are rings with repspect to convolution; moreover, they are bialgebras over $\BZ$.

For each $n\in\BZ$ we have the functional $\BZ [u]\to\BZ$ given by evaluation at $n$; it defines an element $\delta_n\in\Distr (\Gamma )$. If $n\ge 0$ then  $\delta_n\in\Distr^+(\Gamma )$. It is clear that $\delta_m\delta_n=\delta_{m+n}$ and $\delta_0$ is the unit of $\Distr (\Gamma )$. So $\delta_{-n}=\delta_n^{-1}$.

\begin{lem}   \label{l:the dual of G_m^sharp}
(i) For every $n\ge 0$ one has $(\delta_1-\delta_0)^n\in n!\cdot \Distr (\Gamma^{[0,n]})$;

(ii) the distributions $\frac{(\delta_1-\delta_0)^n}{n!}=\frac{(\delta_1-1)^n}{n!}$, $n\ge 0$, form a basis in $\Distr (\Gamma^+)$;

(iii) $\Distr (\Gamma )$ is equal to the localization $\Distr (\Gamma )[\delta_1^{-1}]=\Distr (\Gamma )[\delta_{-1}]$.
\end{lem}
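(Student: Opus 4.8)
The plan is to prove all three parts at once by choosing the ``right'' $\BZ$-basis of $\BZ[u]$, namely the falling factorials
$u^{(k)}:=u(u-1)\cdots(u-k+1)$ for $k\ge 0$, with $u^{(0)}:=1$. Since the transition between $\{u^k\}$ and $\{u^{(k)}\}$ is triangular with $1$'s on the diagonal and integer entries, $\{u^{(k)}\}_{k\ge 0}$ is indeed a $\BZ$-basis of $\BZ[u]$. In the notation of \S\ref{sss: def of Gamma}, the polynomial $f_{0,n}$ from \eqref{e:f_{a,b}} is exactly $u^{(n+1)}$, and every $u^{(m)}$ with $m\ge n+1$ is a multiple of $u^{(n+1)}$; hence $\BZ[u]/(f_{0,n})$ is free over $\BZ$ with basis $\bar u^{(0)},\dots,\bar u^{(n)}$. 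Let $\gamma_0,\dots,\gamma_n\in\Distr(\Gamma^{[0,n]})$ be the dual basis, $\gamma_k(\bar u^{(j)})=1$ if $j=k$ and $0$ otherwise.

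First I would compute $(\delta_1-\delta_0)^n$ as a functional. Expanding the convolution power and using $\delta_a\delta_b=\delta_{a+b}$ gives $(\delta_1-\delta_0)^n=\sum_{k=0}^n\binom{n}{k}(-1)^{n-k}\delta_k$, i.e.\ the functional $g\mapsto(\Delta^ng)(0)$, where $\Delta g(u):=g(u+1)-g(u)$. The one-line identity $\Delta u^{(k)}=k\,u^{(k-1)}$ iterates to $\Delta^j u^{(k)}=k(k-1)\cdots(k-j+1)\,u^{(k-j)}$ (zero if $j>k$), and evaluating at $u=0$ shows that $(\delta_1-\delta_0)^j$ sends $u^{(k)}$ to $j!$ if $k=j$ and to $0$ otherwise. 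Therefore, inside $\Distr(\Gamma^{[0,N]})$ for any $N\ge j$,
\[
(\delta_1-\delta_0)^j=j!\,\gamma_j .
\]
Taking $j=n$ and $N=n$ gives statement (i); it also shows that $\frac{(\delta_1-\delta_0)^n}{n!}=\gamma_n$ is a genuine element of $\Distr(\Gamma^{[0,n]})\subset\Distr(\Gamma^+)$, and $\delta_0=1$ identifies it with $\frac{(\delta_1-1)^n}{n!}$.

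For (ii) it remains to understand $\Distr(\Gamma^+)=\varinjlim_N\Distr(\Gamma^{[0,N]})$. The transition maps are the transposes of the surjections $\BZ[u]/(f_{0,N+1})\epi\BZ[u]/(f_{0,N})$, which send $\bar u^{(k)}\mapsto\bar u^{(k)}$ for $k\le N$ and $\bar u^{(N+1)}\mapsto 0$; hence these transposes are injective and carry $\gamma_k$ to $\gamma_k$. So $\Distr(\Gamma^+)$ is the free $\BZ$-module on $\{\gamma_k\}_{k\ge 0}=\{\tfrac{(\delta_1-1)^k}{k!}\}_{k\ge 0}$, which is (ii). Finally, for (iii): $\delta_1\delta_{-1}=\delta_0=1$ with $\delta_{-1}\in\Distr(\Gamma)$, so $\delta_1$ is already a unit and adjoining $\delta_1^{-1}=\delta_{-1}$ does nothing; the substantive point (and the one used afterwards) is that $\Distr(\Gamma)=\Distr(\Gamma^+)[\delta_1^{-1}]$, which one gets from the translation isomorphisms $\delta_{-N}\cdot(-):\Distr(\Gamma^{[0,2N]})\iso\Distr(\Gamma^{[-N,N]})$: passing to the colimit over $N$ exhibits $\Distr(\Gamma)$ as $\bigcup_N\delta_1^{-N}\cdot\Distr(\Gamma^+)$, and injectivity of $\Distr(\Gamma^+)[\delta_1^{-1}]\to\Distr(\Gamma)$ is automatic since $\delta_1$ is a non-zero-divisor on $\Distr(\Gamma^+)\hookrightarrow\Distr(\Gamma)$.

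I do not expect any real difficulty here: once the falling-factorial basis is in place the three parts are each a couple of lines. The only thing to watch is the bookkeeping of conventions --- that convolution of distributions corresponds to addition on $\BA^1$ (so $\delta_a\delta_b=\delta_{a+b}$ and $(\delta_1-\delta_0)^j$ is the $j$-th forward difference at $0$), and that $f_{0,n}=u^{(n+1)}$ so that the ideal $(f_{0,n})$ and the divisibility by $n!$ in (i) both become transparent in this basis. If one wished to avoid the finite-difference computation altogether, an alternative is to note directly that the Hopf algebra $\Distr(\Gamma^+)$ is the restricted dual of $\BZ[u]/(f_{0,\infty})$-type objects and to match generators with those of the coordinate ring $A$ of $\BM_m^\sharp$ from \eqref{e:coordinate rings of PD hulls}, but the falling-factorial argument above is shorter and self-contained.
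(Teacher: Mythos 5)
Your proof is correct and follows essentially the same route as the paper's: both rest on the observation that $(\delta_1-\delta_0)^n$ is the $n$-th forward difference at $0$, hence kills $1,u,\dots,u^{n-1}$ and takes the degree-$n$ falling factorial $f_{0,n-1}=u^{(n)}$ to $n!$, which yields (i) and (ii) at once; your version merely makes the dual falling-factorial basis explicit. Your reading of (iii) as the (evidently intended) statement $\Distr(\Gamma)=\Distr(\Gamma^+)[\delta_1^{-1}]$, proved via the translation isomorphisms $\delta_{-N}\cdot(-)$, is also the right one and is what the subsequent argument actually uses.
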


\begin{proof}  
$(\delta_1-\delta_0)^n$ is the unique element of $\Distr (\Gamma^{[0,n]})$ such that the corresponding functional  $\BZ [u]\to\BZ$ takes $u^n$ to $n!$ and $u^{n-1},\ldots, u,1$ to $0$. The value of this functional on the polynomial $f_{0,n-1}$ equals $n!$. This implies (i)-(ii). Statement (iii) follows from (ii).
 \end{proof}

\subsubsection{End of the proof}
The pairings \eqref{e:BG_m times Gamma to BG_m} and \eqref{e:BM_m times Gamma^+ to BM_m} induce bialgebra homomorphisms
\begin{equation}  \label{e:our bialgebra homomorphisms}
\Distr (\Gamma )\to \Fun (\BG_m^\sharp ) \quad \mbox{ and } \quad   \Distr (\Gamma^+ )\to \Fun (\BM_m^\sharp ),
\end{equation}
where $\Fun$ stands for the coordinate ring. The homomorphisms \eqref{e:our bialgebra homomorphisms} take $\delta_n$ to~$x^n$, where $x$ is the coordinate on $\BG_m$ or $\BM_m\,$. Lemma~\ref{l:the dual of G_m^sharp} implies that the maps \eqref{e:our bialgebra homomorphisms} are isomorphisms. Theorem~\ref{t:the dual of G_m^sharp}(i) follows.

It is easy to see that the $\BZ$-module $\Distr (\Gamma^{[-N-1,N+1]})/\Distr (\Gamma^{[-N,N]})$ is free. Therefore the $\BZ$-module $\Distr (\Gamma )$ is free. Theorem~\ref{t:the dual of G_m^sharp}(ii) follows.
\qed

\subsection{Base change to $\Spf\BZ_p$} \label{ss:over Spf Z_p}
Fix a prime $p$. Let $\Gamma$ be as in \S\ref{sss: def of Gamma}. Let
\[
\Gamma_{\BZ/p^n\BZ}:=\Gamma\times\Spec \BZ/p^n\BZ , \quad \Gamma_{\Spf\BZ_p}:=\Gamma\times\Spf\BZ_p\, .
\]
$\Gamma_{\BZ/p^n\BZ}$ is a group ind-scheme over $\BZ/p^n\BZ$, and $\Gamma_{\Spf\BZ_p}$ is a group ind-scheme over $\Spf\BZ_p\,$. The next lemma shows that these ind-schemes are \emph{formal schemes.}

\begin{lem}
$\Gamma_{\BZ/p^n\BZ}$ is the formal completion of $\BA^1_{\BZ/p^n\BZ}=\Spec (\BZ/p^n\BZ ) [u]$ along the closed subscheme of $\BA^1_{\BF_p}$ defined by the equation
$u(u-1)\ldots (u-p+1)=0$. \qed
\end{lem}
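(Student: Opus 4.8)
The plan is to realize both sides as sub-ind-schemes of $\BA^1_{\BZ/p^n\BZ}=\Spec(\BZ/p^n\BZ)[u]$ and to compare their functors of points on affine test schemes. Write $g(u):=u(u-1)\cdots(u-p+1)=f_{0,p-1}(u)\in\BZ[u]$; by Fermat's little theorem its reduction mod $p$ is $u^p-u=\prod_{a\in\BF_p}(u-a)$, and more generally for any $p$ consecutive integers $c,c+1,\dots,c+p-1$ one has $\prod_{j=c}^{c+p-1}(u-j)\equiv g(u)\pmod p$ since $\{c,\dots,c+p-1\}$ is a complete residue system mod $p$. Now $\Gamma_{\BZ/p^n\BZ}$ is the filtered union of the closed subschemes $V(f_{-N,N})\subset\BA^1_{\BZ/p^n\BZ}$, and the formal completion in question is the filtered union of the infinitesimal neighbourhoods $V\big((p,g(u))^m\big)$. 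So it is enough to prove that for every $\BZ/p^n\BZ$-algebra $A$ and every $u\in A$,
\[
\big(f_{-N,N}(u)=0\ \text{in}\ A\ \text{for some}\ N\big)\ \Longleftrightarrow\ \big(g(u)\ \text{is nilpotent in}\ A\big),
\]
the right-hand condition being equivalent to nilpotence of the ideal $(p,g(u))\subset A$ because $p^n=0$ in $A$.

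For ``$\Leftarrow$'' assume $(p,g(u))^L=0$ in $A$. Partition $\{-N,\dots,N\}$ into $\lfloor(2N+1)/p\rfloor$ blocks of $p$ consecutive integers (plus a shorter remainder). By the congruence above, the contribution $\prod_{j\in B}(u-j)$ of each block $B$ lies in the ideal $(p,g(u))$, and multiplying the block contributions together gives $f_{-N,N}(u)\in(p,g(u))^{\lfloor(2N+1)/p\rfloor}$, which vanishes once $N\ge(pL-1)/2$. For ``$\Rightarrow$'' assume $f_{-N,N}(u)=0$; enlarging $N$ (a smaller product divides a larger one in $\BZ[u]$) we may assume $N\ge p-1$, so every residue class mod $p$ occurs among $-N,\dots,N$ and hence $f_{-N,N}\equiv\prod_{a\in\BF_p}(u-a)^{c_a}\pmod p$ with every $c_a\ge1$. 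Setting $K:=\max_a c_a$, the polynomial $f_{-N,N}$ divides $g^{K}$ in $\BF_p[u]$, so lifting the quotient gives $g(u)^{K}=f_{-N,N}(u)\,Q(u)+p\,S(u)$ with $Q,S\in\BZ[u]$; evaluating in $A$ kills the first term, so $g(u)^{K}=p\,S(u)\in pA$, whence $g(u)^{\ell K}\in p^{\ell}A$ for all $\ell$ and in particular $g(u)^{nK}\in p^nA=0$. This proves the equivalence, and unwinding it shows that the two filtered families of closed subschemes of $\BA^1_{\BZ/p^n\BZ}$ are mutually cofinal; this is the asserted identification.

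The one point that needs care — and where the hypothesis $p^n=0$ is genuinely used — is the bootstrap at the end of ``$\Rightarrow$'': the mod-$p$ divisibility only yields $g(u)^K\in pA$, and one has to iterate $n$ times to reach $g(u)^{nK}=0$. (The hypothesis also enters in identifying the functor of points of the completion with the condition ``$g(u)$ nilpotent''.) Everything else is bookkeeping with the elementary identity $\prod_{a\in\BF_p}(u-a)=u^p-u$ and with blocks of $p$ consecutive integers. As an alternative one could first observe that, since the $(u-i)$ for $i$ in distinct residue classes mod $p$ generate the unit ideal of $(\BZ/p^n\BZ)[u]$, the Chinese Remainder Theorem decomposes $\Gamma_{\BZ/p^n\BZ}$ as $\coprod_{a=0}^{p-1}$ of the completion at $u=a$, each of which is readily seen to be $\Spf(\BZ/p^n\BZ)[[u-a]]$; I would present the direct cofinality argument above, which avoids the $p$-adic estimates on elementary symmetric functions that the second route requires.
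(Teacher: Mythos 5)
Your argument is correct and complete. Note that the paper itself offers no proof of this lemma (it is stated with a terminal \qed as an immediate consequence of the definitions), so there is no official argument to compare against; what you have written is a legitimate way to supply the missing details. Your two directions do exactly the right things: the block decomposition of $\{-N,\dots,N\}$ into complete residue systems shows $f_{-N,N}\in(p,g)^{\lfloor (2N+1)/p\rfloor}$, so each infinitesimal neighbourhood $V((p,g)^m)$ sits inside some $\Gamma^{[-N,N]}_{\BZ/p^n\BZ}$; and the divisibility $f_{-N,N}\mid g^K$ in $\BF_p[u]$ together with the bootstrap $g(u)^K\in pA\Rightarrow g(u)^{nK}\in p^nA=0$ gives the reverse cofinality. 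You are also right to flag that the hypothesis $p^n=0$ is what converts ``$g(u)^K\in pA$'' into actual nilpotence, and that on affine test rings the functor-of-points description of the completion (morphisms sending $S_{\red}$ into the zero locus of $(p,g)$) coincides with the colimit of the $V((p,g)^m)$ because the ideal is finitely generated. The alternative you sketch — splitting $\Gamma_{\BZ/p^n\BZ}$ by CRT into $p$ components, each the completion at $u=a$ — is presumably closer to what the author had in mind, since the very next displayed formula in the paper is the resulting extension $0\to(\hat\BG_a)_{\BZ/p^n\BZ}\to\Gamma_{\BZ/p^n\BZ}\to\BZ/p\BZ\to 0$; but your direct cofinality argument proves the stated lemma just as well.
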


The lemma yields canonical exact sequences
\begin{equation}  \label{e:Gamma_Z/p^n}
0\to (\hat\BG_a)_{\BZ/p^n\BZ}\to \Gamma_{\BZ/p^n\BZ}\to \BZ/p\BZ\to 0,
\end{equation}
\begin{equation}   \label{e:Gamma_Z_p}
0\to (\hat\BG_a)_{\Spf\BZ_p}\to \Gamma_{\Spf\BZ_p}\to \BZ/p\BZ\to 0.
\end{equation}

\begin{rem}
If $n=1$ the exact sequence \eqref{e:Gamma_Z/p^n} has a unique splitting. If $n>1$ then \eqref{e:Gamma_Z/p^n} has no splittings.
\end{rem}

\subsection{Dualizing the exact sequence \eqref{e:Gamma_Z_p}}    \label{ss:dualizing the exact sequence}
\subsubsection{The homomorphism $\log :(\BG_m^\sharp)_{\Spf\BZ_p}\to (\BG_a)_{\Spf\BZ_p}\,$}
We have the homomorphism 
\[
\log :(\BG_m^\sharp)_{\Spf\BZ_p}\to (\BG_a)_{\Spf\BZ_p}, \quad \log x:=\sum_{n=1}^\infty(-1)^{n-1}\cdot \frac{(x-1)^n}{n}=\sum_{n=1}^\infty(-1)^{n-1}(n-1)!\cdot \frac{(x-1)^n}{n!}.
\]

Let $\BG_a^\sharp$ be the divided powers additive group, i.e., the PD hull of $0$ in $\BG_a$; as a scheme, $$\BG_a^\sharp=\Spec \BZ [y,\frac{y^2}{2!}, \frac{y^3}{3!},\ldots ].$$

\begin{lem}  \label{l:factorization of log}
The homomorphism $\log :(\BG_m^\sharp)_{\Spf\BZ_p}\to (\BG_a)_{\Spf\BZ_p}$ factors through $(\BG_a^\sharp)_{\Spf\BZ_p}\,$, so we get a homomorphism
\begin{equation} \label{e:log sharp}
\log :(\BG_m^\sharp)_{\Spf\BZ_p}\to (\BG_a^\sharp)_{\Spf\BZ_p}\, .
\end{equation}
\end{lem}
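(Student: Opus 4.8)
The plan is to produce the factorization first on coordinate rings, using the divided-power (PD) structure carried by the coordinate ring of $\BG_m^\sharp$, and only afterwards check that the resulting morphism respects the group laws. Recall that $A=H^0(\BM_m^\sharp,\cO)$ is by construction the PD envelope of the augmentation ideal $(x-1)\subset\BZ[x]$, so it carries a canonical PD ideal $J\subset A$ with $\gamma_n(x-1)=(x-1)^n/n!$; inverting $x$, the ring $A[1/x]=H^0(\BG_m^\sharp,\cO)$ inherits the PD ideal $J[1/x]$. By Theorem~\ref{t:the dual of G_m^sharp}(ii) the $\BZ$-module $A[1/x]$ is free, hence $p$-torsion-free, so (the standard compatibility of PD structures with $p$-adic completion on $p$-torsion-free rings) the PD structure extends to the $p$-adic completion $B:=H^0((\BG_m^\sharp)_{\Spf\BZ_p},\cO)$, yielding a PD ideal $\widehat J\subset B$. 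Moreover, since $A\otimes\BF_p$ is a PD algebra over $\BF_p$, every element of the PD ideal $J\otimes\BF_p$ has vanishing $p$-th power; consequently $x-1$ is topologically nilpotent in $B$, the series $\log x=\sum_{n\ge 1}(-1)^{n-1}(x-1)^n/n=\sum_{n\ge 1}(-1)^{n-1}(n-1)!\,\gamma_n(x-1)$ converges in $B$, and it visibly belongs to $\widehat J$.

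Next I would invoke the universal property of the PD polynomial algebra on one generator. The coordinate ring of $(\BG_a^\sharp)_{\Spf\BZ_p}$ is the $p$-adic completion of $\BZ_p[y,y^2/2!,y^3/3!,\dots]$, and giving a continuous $\BZ_p$-algebra homomorphism from it to a $p$-complete ring $R$ together with a chosen PD ideal $I\subset R$ is the same as choosing an element of $I$ (the image of $y$), the images of the divided powers $y^n/n!$ being then forced by the PD structure of $I$. Applying this with $R=B$ and the element $\log x\in\widehat J$ produces a morphism $f\colon(\BG_m^\sharp)_{\Spf\BZ_p}\to(\BG_a^\sharp)_{\Spf\BZ_p}$; composing $f$ with the canonical morphism $(\BG_a^\sharp)_{\Spf\BZ_p}\to(\BG_a)_{\Spf\BZ_p}$ coming from the inclusion $\BZ_p[y]\hookrightarrow\BZ_p[y,y^2/2!,\dots]$ gives back the map $y\mapsto\log x$, i.e.\ exactly $\log$. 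This already realizes the desired factorization of underlying morphisms of formal schemes.

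It then remains to check that $f$ is a homomorphism of group schemes. Here I would exploit torsion-freeness: the rings $B$, $B\mathbin{\hat\otimes}B$, the coordinate ring of $(\BG_a^\sharp)_{\Spf\BZ_p}$ and of its square are all $p$-torsion-free, by Theorem~\ref{t:the dual of G_m^sharp}(ii) together with the evident $\BZ$-freeness of $\BZ[y,y^2/2!,\dots]$. Hence the Hopf-algebra identities expressing ``$f$ is a group homomorphism'' can be verified after inverting $p$. Over $\BQ_p$ the morphism $f$ becomes the classical $p$-adic logarithm between the corresponding completions of $(\BG_m)_{\BQ_p}$ and $(\BG_a)_{\BQ_p}$, which is well known to be a group homomorphism; therefore $f$ is one already over $\BZ_p$.

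I expect the only genuinely delicate step to be the first paragraph — ensuring that the PD structure on the localization $A[1/x]$ really propagates to its $p$-adic completion and that $\log x$ lands in the completed PD ideal $\widehat J$ (so that $(\log x)^n/n!\in B$ for all $n$). This is precisely where $p$-torsion-freeness (Theorem~\ref{t:the dual of G_m^sharp}(ii)) and the characteristic-$p$ nilpotence of elements of a PD ideal enter; once that is in place, the construction of $f$ and the verification that it is a group homomorphism are formal.
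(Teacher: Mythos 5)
Your argument is correct, but it follows a genuinely different route from the paper's. The paper's proof is a two-line induction: it reduces the lemma to the statement that $(\log x)^k$ is divisible by $k!$ in the coordinate ring of $(\BG_m^\sharp)_{\Spf\BZ_p}$, and obtains this from the identity $\tfrac{d}{dx}(\log x)^k = k\,x^{-1}(\log x)^{k-1}$ together with the fact that the divided-power algebra is stable under taking the primitive with vanishing constant term of a regular function. You instead exploit the fact that the coordinate ring of $\BG_m^\sharp$ is by construction a PD envelope: you check that $\log x$ lies in the ($p$-adically completed) PD ideal, so its divided powers exist for free, and then invoke the universal property of the PD polynomial algebra to produce the factorization; $p$-torsion-freeness (Theorem~\ref{t:the dual of G_m^sharp}(ii)) identifies $\gamma_k(\log x)$ with $(\log x)^k/k!$ and lets you verify the compatibility with the group laws after inverting $p$ (the paper handles that last point the same way, via the uniqueness remark following the lemma). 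Your approach is more conceptual and applies verbatim to any element of the completed PD ideal, at the cost of verifying the standard but not entirely free facts that PD structures pass to localizations and to $p$-adic completions of $p$-torsion-free rings, and that the series for $\log x$ converges into the completed ideal (which it does, since $(n-1)!\to 0$ $p$-adically); the paper's differentiation trick avoids all of this bookkeeping but is special to the logarithm.
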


\begin{rem}
In the lemma the factorization is unique since the map $\Fun (\BG_a)\to \Fun (\BG_a^\sharp)$ becomes an isomorphism after tensoring by $\BQ$ (here $\Fun$ stands for the ring of regular functions).
\end{rem}

\begin{proof}[Proof of Lemma~\ref{l:factorization of log}]
We have to show that  $(\log x)^k$ is divisible by $k!$ in the ring of regular functions on $(\BG_m^\sharp)_{\Spf\BZ_p}$ for any $k>0$. Since $\frac{d}{dx} (\log x)^k=kx^{-1}(\log x)^{k-1}$, this follows by induction on~$k$.
\end{proof}

\begin{lem}   \label{l:mu_p in G_m^sharp}
The embedding $(\mu_p)_{\Spf\BZ_p}\mono (\BG_m)_{\Spf\BZ_p}$ comes from a unique homomorphism
\begin{equation}  \label{e:mu_p in G_m^sharp}
(\mu_p)_{\Spf\BZ_p}\mono (\BG_m^\sharp)_{\Spf\BZ_p}\, .
\end{equation}
\end{lem}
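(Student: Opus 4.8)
The plan is to produce \eqref{e:mu_p in G_m^sharp} by a direct computation on coordinate rings, extracting uniqueness from torsion-freeness. Recall $\Fun_{\Spf\BZ_p}(\mu_p)=R:=\BZ_p[x]/(x^p-1)$ is a free $\BZ_p$-module of rank $p$, hence $p$-torsion-free, so multiplication by every $n!$ is injective on $R$; also recall from \eqref{e:coordinate rings of PD hulls} that $\Fun(\BG_m^\sharp)=A[1/x]$ with $A=\BZ[x,\,(x-1)^n/n! : n\ge 2]$ is generated over $\BZ[x,x^{-1}]$ by the divided powers $\gamma_n:=(x-1)^n/n!$. Consequently any $\BZ_p$-algebra homomorphism $\widehat{A[1/x]}\to R$ lifting $x\mapsto x$ is determined by the images of the $\gamma_n$, and each of those is forced, being the unique element whose product with $n!$ equals $(x-1)^n$; this gives uniqueness of the lift of $\mu_p\hookrightarrow\BG_m$ to a morphism $\mu_p\to\BG_m^\sharp$ over $\Spf\BZ_p$. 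The same torsion-freeness applied to $\Fun_{\Spf\BZ_p}(\mu_p\times\mu_p)=R\otimes_{\BZ_p}R$ shows that once such a lift exists it is automatically a group homomorphism: both composites $\iota\circ m_{\mu_p}$ and $m_{\BG_m^\sharp}\circ(\iota\times\iota)$ push forward to the same morphism $\mu_p\times\mu_p\to\BG_m$, hence coincide.

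For existence it therefore suffices to show that $(x-1)^n$ is divisible by $n!$ in $R$ for all $n$. Granting this, the unique $y_n\in R$ with $n!\,y_n=(x-1)^n$ satisfy every polynomial relation over $\BZ_p[x,x^{-1}]$ that the $\gamma_n$ satisfy in $A[1/x]$ — clear denominators, then divide back, using that $R$ is torsion-free — so the assignment $\gamma_n\mapsto y_n$ defines the required homomorphism; it is $p$-adically continuous because $R$ is a finite $\BZ_p$-module, and it is a closed immersion because the resulting ring map $\widehat{A[1/x]}\to R$ is surjective (its image contains $\BZ_p$ and $x$). To prove the divisibility I will pass to $\BQ_p$. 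Since $x^p-1=(x-1)\Phi_p(x)$ with $\Phi_p$ irreducible over $\BQ_p$ (because $\Phi_p(x+1)$ is Eisenstein at $p$) and coprime to $x-1$ over $\BQ_p$ (as $\Phi_p(1)=p\in\BQ_p^\times$), the Chinese Remainder Theorem gives $R\otimes\BQ_p=\BQ_p\times\BQ_p(\zeta_p)$, and $R$ (being $\BZ_p$-flat) embeds into it. As $R=\BZ_p[x-1]/(\cdots)$ and $x-1\mapsto(0,\zeta_p-1)$, the image of $R$ is exactly the subring $\{(a,b):a\in\BZ_p,\ b\in\BZ_p[\zeta_p],\ b\equiv a\ \mathrm{mod}\ (\zeta_p-1)\}$. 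Now $v_p(\zeta_p-1)=1/(p-1)$ and $v_p(n!)=(n-s_p(n))/(p-1)$, so $(\zeta_p-1)^n/n!$ has valuation $s_p(n)/(p-1)\ge 1/(p-1)$ for $n\ge 1$, hence lies in the maximal ideal of $\BZ_p[\zeta_p]$; therefore $(x-1)^n/n!$, whose image is $(0,(\zeta_p-1)^n/n!)$ for $n\ge1$ and $(1,1)$ for $n=0$, lies in $R$, which is the asserted divisibility.

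The only genuine content is this valuation computation together with the identification of $R$ as a subring of $\BQ_p\times\BQ_p(\zeta_p)$; the rest (uniqueness, the passage from divisibility to the homomorphism, and closedness) is formal. The point to watch is that divisibility of $(x-1)^n$ by $n!$ merely in the maximal order $\BZ_p\times\BZ_p[\zeta_p]$ is not sufficient — one must verify the refined membership in the subring $R$, which is why the congruence $b\equiv a\ \mathrm{mod}\ (\zeta_p-1)$ is recorded and checked above (it holds because $(\zeta_p-1)^n/n!$ lies in the maximal ideal for $n\ge1$, while the first coordinate is $0$).
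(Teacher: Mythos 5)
Your proof is correct, but it takes a more computational route than the paper. The paper's own argument is a two-line reduction to the universal property of the PD hull: it shows that $(\mu_p)_{\Spec\BZ_p}$ is a PD-thickening of its unit section by checking the single divisibility $f^p\in pI$ for $f\in I=(x-1)\subset\BZ_p[x]/(x^p-1)$ (since $(x-1)^p\equiv x^p-1=0 \bmod p$ and $pA\cap I=pI$), invoking the standard fact that for $p$-torsion-free $\BZ_{(p)}$-algebras this one condition already forces all higher divided powers to exist; existence and uniqueness of \eqref{e:mu_p in G_m^sharp} then follow formally from the universal property of $\BG_m^\sharp$. You instead verify by hand that $(x-1)^n$ is divisible by $n!$ in $R=\BZ_p[x]/(x^p-1)$ for every $n$, via the CRT decomposition $R\otimes\BQ_p=\BQ_p\times\BQ_p(\zeta_p)$, the identification of $R$ with the fiber-product subring cut out by the congruence modulo $\zeta_p-1$, and Legendre's formula giving $v_p\bigl((\zeta_p-1)^n/n!\bigr)=s_p(n)/(p-1)>0$. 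This is longer but entirely self-contained: it avoids both the universal property of the PD hull and the general criterion for when divided powers exist, and your closing caveat — that membership in the maximal order $\BZ_p\times\BZ_p[\zeta_p]$ is not enough and the congruence condition must be checked — is exactly the point where a sloppier version of this computation would fail, so it is good that you addressed it. Your uniqueness and group-compatibility arguments via torsion-freeness of $R$ and $R\otimes_{\BZ_p}R$ are also fine and are implicitly what the paper's appeal to the PD-hull universal property encapsulates. One small gap worth noting: the assertion that the image of $R$ is \emph{exactly} the congruence subring deserves a sentence (e.g., given $(a,b)$ with $b\equiv a\bmod(\zeta_p-1)$, write $b-a=(\zeta_p-1)g(\zeta_p)$ and observe that $a+(x-1)g(x)$ maps to $(a,b)$), though only the easy containment of the divided powers in that subring is actually used.
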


\begin{proof}
It suffices to show that $(\mu_p)_{\Spec\BZ_p}$ is a PD-thickening of the unit of $(\mu_p)_{\Spec\BZ_p}$. 
We have $(\mu_p)_{\Spec\BZ_p}=\Spec A$, where $A=\BZ_p[x]/(x^p-1)$, and the unit corresponds to the ideal $I:=(x-1)\subset A$, so the problem is to show that $f^p\in pI$ for $f\in I$.
Indeed, the image of $(x-1)^p$ in $A/pA$ is zero, so for $f\in I$ one has $f^p\in pA\cap I=pI$.
\end{proof}

\begin{rem}  \label{r:zero composition}
The composition of \eqref{e:mu_p in G_m^sharp} and \eqref{e:log sharp} is zero because $$\Hom ((\mu_p)_{\Spf\BZ_p}, (\BG_a^\sharp)_{\Spf\BZ_p})=0.$$
\end{rem}

\begin{prop}   \label{p:G_m^sharp sequence}
(i) The sequence
\begin{equation} \label{e:G_m^sharp sequence}
0\to (\mu_p)_{\Spf\BZ_p}\to (\BG_m^\sharp)_{\Spf\BZ_p}\overset{\log}\longrightarrow (\BG_a^\sharp)_{\Spf\BZ_p}\to 0,
\end{equation}
whose morphisms are \eqref{e:mu_p in G_m^sharp} and \eqref{e:log sharp}, is exact.

(ii) The exact sequence \eqref{e:G_m^sharp sequence} is Cartier dual to \eqref{e:Gamma_Z_p}; the pairing between $(\BG_m^\sharp)_{\Spf\BZ_p}$ and $\Gamma_{\Spf\BZ_p}$ is given by \eqref{e:BG_m times Gamma to BG_m}, and the pairing $(\BG_a^\sharp)_{\Spf\BZ_p}\times (\hat\BG_a)_{\Spf\BZ_p}\to\BG_m$ is the exponent of the product.
\end{prop}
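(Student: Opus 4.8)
The plan is to obtain (ii) by applying $\HHom(-,\BG_m)$ to the short exact sequence~\eqref{e:Gamma_Z_p}, and then to deduce (i) from (ii) together with the exactness of~\eqref{e:Gamma_Z_p}. First I would record the three term-by-term Cartier dualities. By Theorem~\ref{t:the dual of G_m^sharp}(i), the pairing~\eqref{e:BG_m times Gamma to BG_m} identifies $\Gamma_{\Spf\BZ_p}$ with the Cartier dual of $(\BG_m^\sharp)_{\Spf\BZ_p}$; by Theorem~\ref{t:the dual of G_m^sharp}(ii) the coordinate ring of $(\BG_m^\sharp)_{\Spf\BZ_p}$ is free, so biduality holds and, conversely, $(\BG_m^\sharp)_{\Spf\BZ_p}$ is the Cartier dual of $\Gamma_{\Spf\BZ_p}$. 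Next, $(\mu_p)_{\Spf\BZ_p}$ is Cartier dual to $(\BZ/p\BZ)_{\Spf\BZ_p}$ via the tautological pairing $(\zeta,n)\mapsto\zeta^n$, and $(\BG_a^\sharp)_{\Spf\BZ_p}$ is Cartier dual to $(\hat\BG_a)_{\Spf\BZ_p}$ via $(v,w)\mapsto\exp(vw)=\sum_n v^{[n]}w^n$ (the pairing already used in the proof of Proposition~\ref{p:H_dR}).

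The key step is to check that, under these identifications, the inclusion~\eqref{e:mu_p in G_m^sharp} is Cartier dual to the surjection $\Gamma_{\Spf\BZ_p}\to\BZ/p\BZ$ of~\eqref{e:Gamma_Z_p}, and the homomorphism $\log$ of~\eqref{e:log sharp} is Cartier dual to the inclusion $(\hat\BG_a)_{\Spf\BZ_p}\hookrightarrow\Gamma_{\Spf\BZ_p}$. For the first, restrict~\eqref{e:BG_m times Gamma to BG_m} along $(\mu_p)_{\Spf\BZ_p}\hookrightarrow(\BG_m^\sharp)_{\Spf\BZ_p}$: since $x^p=1$ on $\mu_p$ and $\log x$ vanishes there (Remark~\ref{r:zero composition}), the value $x^u$ depends only on $u\bmod p$, so the restricted pairing factors through $(\mu_p)_{\Spf\BZ_p}\times\BZ/p\BZ$ as the tautological pairing; this says precisely that~\eqref{e:mu_p in G_m^sharp} is Cartier dual to $\Gamma_{\Spf\BZ_p}\to\BZ/p\BZ$, and uniqueness in Lemma~\ref{l:mu_p in G_m^sharp} identifies the map so obtained with~\eqref{e:mu_p in G_m^sharp}. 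For the second, use the description of $\Gamma_{\BZ/p^n\BZ}$ from the Lemma in~\S\ref{ss:over Spf Z_p}: the subgroup $(\hat\BG_a)_{\Spf\BZ_p}$ is the formal neighbourhood of $u=0$, with $u$ the additive coordinate, so the restriction of~\eqref{e:BG_m times Gamma to BG_m} to $(\BG_m^\sharp)_{\Spf\BZ_p}\times(\hat\BG_a)_{\Spf\BZ_p}$ is $(x,u)\mapsto x^u=\sum_n\binom{u}{n}(x-1)^n$. One then verifies the formal identity $x^u=\exp\bigl(u\log x\bigr)$, classical over $\BQ$ and over $\BZ_p$ reading $\sum_n(\log x)^{[n]}u^n$ by means of the divided powers of $\log x$ supplied by Lemma~\ref{l:factorization of log}; thus the restricted pairing is $(x,u)\mapsto\exp\bigl(\log(x)\cdot u\bigr)$, which by the definition of the $(\BG_a^\sharp,\hat\BG_a)$-pairing means that $\log$ is Cartier dual to $(\hat\BG_a)_{\Spf\BZ_p}\hookrightarrow\Gamma_{\Spf\BZ_p}$. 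Remark~\ref{r:zero composition} is the Cartier dual of the vanishing of $(\hat\BG_a)_{\Spf\BZ_p}\hookrightarrow\Gamma_{\Spf\BZ_p}\to\BZ/p\BZ$, so the two sequences form a commuting ladder; this proves~(ii).

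For~(i), apply $\HHom(-,\BG_m)$ to~\eqref{e:Gamma_Z_p}. The obstruction term $\underline{\Ext}^1_{\mathrm{fppf}}(\BZ/p\BZ,\BG_m)$ vanishes: from $0\to\BZ\xrightarrow{p}\BZ\to\BZ/p\BZ\to 0$ it is the fppf cokernel of the $p$-th power map on $\BG_m$, which is an fppf epimorphism. Hence $\HHom(-,\BG_m)$ sends the exact sequence~\eqref{e:Gamma_Z_p} to an exact sequence, which by~(ii) is~\eqref{e:G_m^sharp sequence}.

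\textbf{Expected main obstacle.} The only non-formal point is the pairing bookkeeping in the second paragraph: matching the divided-power structure implicit in~\eqref{e:BG_m times Gamma to BG_m} (through the binomial coefficients $\binom{u}{n}$) with the divided-power structure on $\BG_a^\sharp$ from Lemma~\ref{l:factorization of log}, i.e.\ making precise $p$-adic sense of $x^u=\exp(u\log x)$ on $(\BG_m^\sharp)_{\Spf\BZ_p}$, and checking that the extension class of $\Gamma_{\Spf\BZ_p}$ over $\BZ/p\BZ$ corresponds under duality to the extension class of $(\BG_m^\sharp)_{\Spf\BZ_p}$ by $\mu_p$, not merely that the sub/quotient objects correspond. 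Everything else is a formal consequence of Theorem~\ref{t:the dual of G_m^sharp} and the vanishing of that $\underline{\Ext}^1$.
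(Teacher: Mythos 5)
Your proof is correct and follows essentially the same route as the paper: the paper's own argument likewise reduces everything to showing that \eqref{e:mu_p in G_m^sharp} and \eqref{e:log sharp} are Cartier dual to the two morphisms of \eqref{e:Gamma_Z_p}, with the identity $x^u=\exp(u\cdot\log x)$ as the key computation. The only difference is that you make explicit, via the vanishing of $\underline{\Ext}^1_{\mathrm{fppf}}(\BZ/p\BZ,\BG_m)$, the exactness of the dualized sequence, which the paper leaves implicit there (and justifies independently in \S\ref{ss:variant over Z_(p)} by a direct flatness computation).
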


\begin{proof}
It suffices to prove that the morphisms \eqref{e:mu_p in G_m^sharp} and \eqref{e:log sharp} are dual to the corresponding morphisms in the exact sequence \eqref{e:Gamma_Z_p}.
This follows from the equality $x^u=\exp (u\cdot \log x)$.
\end{proof}

In the next subsection we describe another approach to the exact sequence \eqref{e:G_m^sharp sequence}.

\subsection{A variant of  \eqref{e:G_m^sharp sequence} over $\BZ_{(p)}$}   \label{ss:variant over Z_(p)}
Let $\BZ_{(p)}$ be the localization of $\BZ$ at $p$. Base-changing $\BG_m$ and $\mu_p$ to
$\BZ_{(p)}$, one gets group schemes $(\BG_m)_{\BZ_{(p)}}$ and $(\mu_p)_{\BZ_{(p)}}$ over $\BZ_{(p)}$.
Similarly to Lemma~\ref{l:mu_p in G_m^sharp}, one sees that the embedding $(\mu_p)_{\BZ_{(p)}}\mono (\BG_m)_{\BZ_{(p)}}$ comes from a unique homomorphism
\begin{equation}  \label{e:again mu_p in G_m^sharp}
(\mu_p)_{\BZ_{(p)}}\mono (\BG_m^\sharp)_{\BZ_{(p)}}\, .
\end{equation}
We are going to describe the cokernel of \eqref{e:again mu_p in G_m^sharp}, see Proposition~\ref{p:G_m^sharp/mu_p over Z_{(p)}}.
Then we will deduce exactness of \eqref{e:G_m^sharp sequence} from this description, see \S\ref{sss:nother proof of exactness}.

\subsubsection{The group schemes $G$ and $G^\sharp$}
Let $G$ be the group scheme over $\BZ$ whose group of $A$-points is the set $\{z\in A\,|\, 1+pz\in A^\times\}$ equipped with the operation 
$z_1*z_2:=z_1+z_2+pz_1z_2$; in other words, $G$ is the $p$-rescaled version of $\BG_m$. We have a canonical homomorphism 
\begin{equation} \label{e:G to G_m}
G\to\BG_m, \quad z\mapsto 1+pz. 
\end{equation}
As usual, let $G^\sharp$ be the divided powers version of $G$ (i.e., the PD hull of the unit in $G$).

\begin{lem}   \label{l:G_m^sharp to G^sharp}
There is a unique homomorphism
\begin{equation}  \label{e:G_m^sharp to G^sharp}
\BG_m^\sharp\to G^\sharp
\end{equation}
such that the diagram
\[
\xymatrix{
\BG_m^\sharp\ar[r] \ar[rd]_p &G^\sharp\ar[d] \\
& \BG_m^\sharp 
}
\]
commutes; here the vertical arrow comes from \eqref{e:G to G_m}.
\end{lem}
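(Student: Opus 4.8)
The plan is to write the desired homomorphism explicitly on coordinate rings and then verify the two required properties — commutativity of the triangle, and the homomorphism property — separately; I expect the homomorphism property to be the only point that is not completely formal, precisely because $G\to\BG_m$ is \emph{not} a monomorphism.

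\emph{Construction and commutativity of the triangle.} Write $\BG_m^\sharp=\Spec A$ with $A=\BZ[x,\tfrac{(x-1)^2}{2!},\tfrac{(x-1)^3}{3!},\ldots][x^{-1}]$, and let $\fa\subset A$ be the divided-power ideal of the unit section $x=1$ (generated by $x-1$ together with the $(x-1)^{[n]}:=\tfrac{(x-1)^n}{n!}$). Likewise $G=\Spec\BZ[z,(1+pz)^{-1}]$, and $G^\sharp$ is the divided-power envelope of $G$ along the ideal $(z)$ of its unit. The key point is that $w:=\tfrac{x^p-1}{p}$ lies in $A$, indeed in $\fa$: from $x^p-1=\sum_{k=1}^p\binom{p}{k}(x-1)^k$ one gets $w=\sum_{k=1}^p\tfrac1p\binom{p}{k}(x-1)^k$, where $\tfrac1p\binom{p}{k}\in\BZ$ for $1\le k<p$ and $\tfrac1p(x-1)^p=(p-1)!\,(x-1)^{[p]}$, so every summand lies in $\fa$. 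Since also $1+pw=x^p\in A^\times$, the assignment $z\mapsto w$ defines a morphism $\BG_m^\sharp\to G$ carrying the ideal of the unit into the PD-ideal $\fa$; by the universal property of the divided-power envelope this factors through a morphism $f:\BG_m^\sharp\to G^\sharp$. Composing $f$ with the divided-power version $\pi:G^\sharp\to\BG_m^\sharp$ of $z\mapsto 1+pz$ from \eqref{e:G to G_m} sends $x\mapsto 1+pw=x^p$, i.e. $\pi\circ f$ is the $p$-th power map (the diagonal arrow of the statement), so the triangle commutes.

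\emph{$f$ is a homomorphism.} The coordinate rings of $\BG_m^\sharp$ and $G^\sharp$ are torsion-free over $\BZ$ — for $\BG_m^\sharp$ this is Theorem~\ref{t:the dual of G_m^sharp}(ii), and for $G^\sharp$ it is immediate from the description of the PD envelope as a subring of $\BQ[z,(1+pz)^{-1}]$ — hence both embed into their base changes to $\BQ$, and $A\otimes_\BZ A$ is torsion-free as well. Over $\BQ$ the map $z\mapsto 1+pz$ is an isomorphism $G_\BQ\iso\BG_m\times\Spec\BQ$ (with inverse $z=(x-1)/p$), and $G^\sharp_\BQ=G_\BQ$, $\BG_m^\sharp\times\Spec\BQ=\BG_m\times\Spec\BQ$ since divided powers are trivial over $\BQ$-algebras; under these identifications $f_\BQ$ is the map $x\mapsto x^p$, i.e. the $p$-th power homomorphism of $\BG_m$ over $\BQ$. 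Therefore the two ring homomorphisms $H^0(G^\sharp,\cO_{G^\sharp})\to A\otimes_\BZ A$ expressing the homomorphism axiom for $f$ agree after $\otimes_\BZ\BQ$, and by torsion-freeness of $A\otimes_\BZ A$ they already agree over $\BZ$; thus $f$ is a homomorphism.

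\emph{Uniqueness and the main obstacle.} If $g:\BG_m^\sharp\to G^\sharp$ is any morphism with $\pi\circ g$ equal to the $p$-th power map, then base-changing to $\BQ$ and using that $\pi_\BQ$ is an isomorphism forces $g_\BQ$ to be the $p$-th power map; since the coordinate ring of $\BG_m^\sharp$ is torsion-free, $g$ is uniquely determined — so uniqueness holds even among all morphisms of schemes making the triangle commute, a fortiori among homomorphisms. The one genuinely non-formal step is the homomorphism property in the previous paragraph: because $G\to\BG_m$ is not a monomorphism (it reduces to the constant map to the identity modulo $p$), one cannot deduce it abstractly from $\pi\circ f$ being a homomorphism, and the reduction-to-$\BQ$ argument via $\BZ$-flatness of the coordinate rings is essential. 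Everything else is the routine PD-envelope bookkeeping sketched above.
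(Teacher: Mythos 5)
Your proof is correct and follows essentially the same route as the paper: the paper's proof also observes that over $\BQ$ the map must be $z=\frac{(1+t)^p-1}{p}$ (with $t=x-1$) and reduces everything to checking that this expression is an integral combination of divided powers $\gamma_i(t)$, which is exactly your computation with $\frac1p\binom{p}{k}\in\BZ$ for $k<p$ and $\frac{(x-1)^p}{p}=(p-1)!\,\gamma_p(x-1)$. The paper leaves the homomorphism property and uniqueness implicit, whereas you spell them out via torsion-freeness of the coordinate rings; that is a faithful expansion of the same argument, not a different one.
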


\begin{proof}
As above, let $z$ be the coordinate on $G$. Let $x$ be the usual coordinate on $\BG_m$ and $t:=x-1$. The homomorphism
$(\BG_m^\sharp)_\BQ\overset{p}\longrightarrow(\BG_m^\sharp)_\BQ=G^\sharp_\BQ$ is given by $z=\frac{(1+t)^p-1}{p}$. The problem is
to check that
 $\frac{(1+t)^p-1}{p}=\sum\limits_{i=1}^p m_i\gamma_i(t)$ for some $m_i\in\BZ$ (here $\gamma_i$ is the $i$-th divided power).
 This is clear.
\end{proof}

\begin{prop}   \label{p:G_m^sharp/mu_p over Z_{(p)}}
The homomorphism $(\BG_m^\sharp)_{\BZ_{(p)}}\to (G^\sharp)_{\BZ_{(p)}}$ corresponding to \eqref{e:G_m^sharp to G^sharp} induces an isomorphism
$(\BG_m^\sharp)_{\BZ_{(p)}}/(\mu_p)_{\BZ_{(p)}}\iso (G^\sharp)_{\BZ_{(p)}}\,$.
\end{prop}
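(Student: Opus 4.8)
The plan is to prove that the homomorphism $\bar f\colon (\BG_m^\sharp)_{\BZ_{(p)}}/(\mu_p)_{\BZ_{(p)}}\to (G^\sharp)_{\BZ_{(p)}}$ of the statement is an isomorphism by checking it fibrewise over the two points of $\Spec\BZ_{(p)}$. First, to see that $\bar f$ is well defined: by the proof of Lemma~\ref{l:G_m^sharp to G^sharp} the homomorphism $f\colon\BG_m^\sharp\to G^\sharp$ is, on coordinate rings, $z\mapsto\tfrac{(1+t)^p-1}{p}$ (with $t=x-1$), compatibly with $(1+pz)^{-1}\mapsto x^{-p}$; since $\tfrac{(1+t)^p-1}{p}$ and all of its divided powers map to $\tfrac{x^p-1}{p}=0$ in $\Fun((\mu_p)_{\BZ_{(p)}})=\BZ_{(p)}[x]/(x^p-1)$, the composite $(\mu_p)_{\BZ_{(p)}}\mono(\BG_m^\sharp)_{\BZ_{(p)}}\xrightarrow{\,f\,}(G^\sharp)_{\BZ_{(p)}}$ is trivial, and $f$ descends to $\bar f$. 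Both sides of $\bar f$ are flat over the discrete valuation ring $\BZ_{(p)}$: as $\mu_p$ is finite locally free and acts freely by translation on the affine scheme $\BG_m^\sharp$, the quotient $\BG_m^\sharp/\mu_p$ is affine with $\Fun(\BG_m^\sharp/\mu_p)=\Fun(\BG_m^\sharp)^{\mu_p}$ and its formation commutes with base change, so the left-hand coordinate ring is $\BZ_{(p)}$-torsion-free; and $\Fun((G^\sharp)_{\BZ_{(p)}})$ is the divided power algebra $\BZ_{(p)}\langle z\rangle$ localised at $1+pz$, visibly torsion-free. A map of $\BZ_{(p)}$-flat modules which becomes an isomorphism after $\otimes_{\BZ_{(p)}}\BQ$ and after $\otimes_{\BZ_{(p)}}\BF_p$ is an isomorphism (its kernel is torsion-free with vanishing rationalisation, hence $0$; its cokernel is then $\Tor_1^{\BZ_{(p)}}(-,\BF_p)$-acyclic, by the long exact sequence plus injectivity mod $p$, hence torsion-free with vanishing rationalisation, hence $0$). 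So it suffices to prove that $\bar f$ is an isomorphism over $\BQ$ and over $\BF_p$. Over $\BQ$ this is easy: there divided powers are ordinary powers, so $(\BG_m^\sharp)_\BQ=(\BG_m)_\BQ$ and $(G^\sharp)_\BQ=G_\BQ\iso(\BG_m)_\BQ$ via $z\mapsto1+pz$ (because $p$ is invertible), and under these identifications $f_\BQ$ is the map $x\mapsto x^p$ on $(\BG_m)_\BQ$; hence $\bar f_\BQ$ is the classical isomorphism $(\BG_m)_\BQ/(\mu_p)_\BQ\iso(\BG_m)_\BQ$.

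The fibre over $\BF_p$ is the real content. There the rescaling and the unit condition degenerate, so $(G^\sharp)_{\BF_p}=(\BG_a^\sharp)_{\BF_p}$, which is Cartier dual to $(\hat\BG_a)_{\BF_p}$. Writing $(-)^\vee$ for the Cartier dual $\HHom(-,\BG_m)$, I would dualise the exact sequence $0\to(\mu_p)_{\BF_p}\to(\BG_m^\sharp)_{\BF_p}\to(\BG_m^\sharp)_{\BF_p}/(\mu_p)_{\BF_p}\to0$, using $(\BG_m^\sharp)^\vee=\Gamma$ (Theorem~\ref{t:the dual of G_m^sharp}) and $(\mu_p)^\vee=\BZ/p$, to identify $\bigl((\BG_m^\sharp)_{\BF_p}/(\mu_p)_{\BF_p}\bigr)^\vee=\Ker\bigl(\Gamma_{\BF_p}\to(\BZ/p)_{\BF_p}\bigr)$, the latter map being Cartier dual to $(\mu_p)_{\BF_p}\mono(\BG_m^\sharp)_{\BF_p}$; dually $\bar f_{\BF_p}$ gives a homomorphism $v\colon(\hat\BG_a)_{\BF_p}\to\bigl((\BG_m^\sharp)_{\BF_p}/(\mu_p)_{\BF_p}\bigr)^\vee$. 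By \S\ref{ss:over Spf Z_p}, $\Gamma_{\BF_p}$ is the formal completion of $\BA^1_{\BF_p}$ along the subscheme $\{u^p=u\}$, so it sits in the canonical extension \eqref{e:Gamma_Z/p^n} $0\to(\hat\BG_a)_{\BF_p}\to\Gamma_{\BF_p}\to(\BZ/p)_{\BF_p}\to0$ whose subgroup is the formal disc at $0$; since $\Hom\bigl((\hat\BG_a)_{\BF_p},(\BZ/p)_{\BF_p}\bigr)=0$, the surjection onto $(\BZ/p)_{\BF_p}$ is unique up to a unit, so its kernel is $(\hat\BG_a)_{\BF_p}$ and $v$ is an endomorphism of the one-dimensional formal group $(\hat\BG_a)_{\BF_p}$. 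Such an endomorphism is an automorphism as soon as it is an isomorphism on the one-dimensional Lie algebra. Using $\Lie(H^\vee)=\Hom(H,\BG_a)$, the map $\Lie(v)$ is precomposition with $\bar f_{\BF_p}$, so it sends the canonical homomorphism $(\BG_a^\sharp)_{\BF_p}\to(\BG_a)_{\BF_p}$ (a generator of the source $\Lie((\hat\BG_a)_{\BF_p})$) to its composite with $\bar f_{\BF_p}$, i.e.\ to the homomorphism $(\BG_m^\sharp/\mu_p)_{\BF_p}\to(\BG_a)_{\BF_p}$ given by reducing $\tfrac{(1+t)^p-1}{p}$ modulo $p$, namely $\sum_{i=1}^{p-1}\tfrac{(-1)^{i-1}}{i}t^i-\gamma_p(t)$; this is a nonzero primitive function (its linear term in $t$ has coefficient $1$), hence a generator of the target. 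Thus $\Lie(v)$ is an isomorphism, $v$ is an automorphism, and $\bar f_{\BF_p}=v^\vee$ is an isomorphism. Combined with the $\BQ$-fibre, this proves the proposition.

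The only real obstacle I anticipate is the $\BF_p$-fibre, i.e.\ controlling $\bar f$ modulo $p$. The Cartier-duality route above reduces it to the elementary fact that $\tfrac{(1+t)^p-1}{p}\bmod p$ has a unit linear coefficient, so that the associated endomorphism of $\hat\BG_{a,\BF_p}$ is an automorphism; but it does require carefully matching up base change, Cartier duality, and the identification of $\Gamma_{\BF_p}$ with the formal completion of $\BA^1_{\BF_p}$ along its $\BF_p$-points.
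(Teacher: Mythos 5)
Your proof is correct, but it takes a genuinely different route from the paper's. The paper's primary argument is a direct coordinate-ring computation: it writes $z=\frac{(1+t)^p-1}{p}=\gamma(t)+\sum_{i=1}^{p-1}n_it^i$ and shows that $1,t,\ldots,t^{p-1}$ form a basis of $\Fun(\BG_m^\sharp)_{\BZ_{(p)}}$ as a module over $\Fun(G^\sharp)_{\BZ_{(p)}}$ (generation by an induction on ``lower terms'' using the explicit generators $\gamma^i(t)$ and $\gamma^i(z)$), so that $\BG_m^\sharp\to G^\sharp$ is finite locally free of rank $p$ with kernel $\mu_p$, whence the quotient statement. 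You instead reduce to the two fibres of $\Spec\BZ_{(p)}$ via flatness, dispose of the generic fibre by the classical $(\BG_m)_\BQ/\mu_p\iso(\BG_m)_\BQ$, and handle the special fibre by Cartier duality, reducing everything to the fact that $\frac{(1+t)^p-1}{p}\bmod p$ has unit linear coefficient; this is close in spirit to the alternative ``proof via Cartier duality'' that the paper only sketches (there the dual of $G^\sharp$ is identified with a rescaled $\Gamma_p$). The trade-off: the paper's computation is elementary and self-contained and yields an explicit basis; your argument avoids that computation but leans on two standard external inputs that the paper does not set up, namely (a) the existence, affineness and base-change compatibility of the fppf quotient $\BG_m^\sharp/\mu_p$ for a free action of a finite locally free group scheme, and (b) exact Cartier duality over $\BF_p$ for affine commutative group schemes versus formal groups that are not formal Lie groups (you need it for $\Gamma_{\BF_p}$, which has $p$ connected components, so the duality of \S\ref{s:formal groups} does not literally apply). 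Both are classical, so the proof stands; just be aware that the identification of $\ker(\Gamma_{\BF_p}\to(\BZ/p\BZ)_{\BF_p})$ with $(\hat\BG_a)_{\BF_p}$ and the double-duality step $\bar f_{\BF_p}=v^\vee$ are exactly where those inputs are used.
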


For a proof, see \S\ref{ss:G_m^sharp/mu_p over Z_{(p)}}.

\subsubsection{Passing to formal completions}
(i) Let $\hat G,\hat\BG_a$ be the formal completions of the group schemes $G,\BG_a$ along their units; these are formal groups over $\BZ$. Let 
$\hat G_{\BZ_{(p)}},(\hat\BG_a)_{\BZ_{(p)}}$ be the corresponding formal groups over $\BZ_{(p)}$. One has an isomorphism
\begin{equation}       \label{e:isomorphism of formal groups}
\hat G_{\BZ_{(p)}}\iso (\hat\BG_a)_{\BZ_{(p)}}, \quad z\mapsto\frac{\log (1+pz)}{p}:=\sum_{n=1}^\infty \frac{(-p)^{n-1}}{n}\cdot z^n.
\end{equation}

(ii) The isomorphism \eqref{e:isomorphism of formal groups} induces an isomorphism
\begin{equation}       \label{e:isomorphism of group schemes over Spf Z_p}
G^\sharp_{\Spf\BZ_p}\iso (\BG_a^\sharp)_{\Spf\BZ_p}\,  
\end{equation}
because one can think of $G^\sharp_{\Spf\BZ_p}$ (resp.~$(\BG_a^\sharp)_{\Spf\BZ_p}$) as the $p$-adically completed PD version of 
$\hat G_{\BZ_{(p)}}$ (resp.~ $(\hat\BG_a)_{\BZ_{(p)}}$).

Note that \eqref{e:isomorphism of formal groups} is an isomorphism of \emph{formal} groups over the \emph{scheme} $\Spec\BZ_{(p)}\,$, while \eqref{e:isomorphism of group schemes over Spf Z_p} is an isomorphism of group \emph{schemes} over the \emph{formal scheme} $\Spf\BZ_p\,$.

\subsubsection{A proof of exactness of \eqref{e:G_m^sharp sequence}}   \label{sss:nother proof of exactness}
Using that $\log (x^p)=p\cdot\log x$, one checks that the homomorphism $\log :(\BG_m^\sharp)_{\Spf\BZ_p}\to (\BG_a^\sharp)_{\Spf\BZ_p}$ from \eqref{e:G_m^sharp sequence} equals the composite map
\[
(\BG_m^\sharp)_{\Spf\BZ_p}\to G^\sharp_{\Spf\BZ_p}\iso (\BG_a^\sharp)_{\Spf\BZ_p},
\]
where the first arrow comes from \eqref{e:G_m^sharp to G^sharp} and the second one is \eqref{e:isomorphism of group schemes over Spf Z_p}. 
So exactness of \eqref{e:G_m^sharp sequence} follows from Proposition~\ref{p:G_m^sharp/mu_p over Z_{(p)}}.

\subsection{Proof of Proposition~\ref{p:G_m^sharp/mu_p over Z_{(p)}}}  \label{ss:G_m^sharp/mu_p over Z_{(p)}}

\subsubsection{Straightforward proof}
The kernel of the homomorphism $(\BG_m^\sharp)_{\BZ_{(p)}}\to (G^\sharp)_{\BZ_{(p)}}$ equals $(\mu_p)_{\BZ_{(p)}}$. The problem is to show that the homomorphism is faithfully flat.

We will use the coordinates $z$ and $t$ from the proof of Lemma~\ref{l:G_m^sharp to G^sharp}. We have
\begin{equation}  \label{e:z in terms fo t}
z=\frac{(1+t)^p-1}{p}=\gamma (t)+\sum_{i=1}^{p-1} n_it^i \quad\mbox{ for some } n_i\in\BZ,
\end{equation}
where $\gamma (t):=\frac{t^p}{p}$.

The coordinate ring of $(G^\sharp)_{\BZ_{(p)}}$ is $A[\frac{1}{1+pz}]$, where $A:=\BZ_{(p)}[z,\gamma (z),\gamma^2(z),\ldots ]\subset\BQ [z]$.
The coordinate ring of $(\BG_m^\sharp)_{\BZ_{(p)}}$ is $B[\frac{1}{1+t}]=B[\frac{1}{1+pz}]$, where $B:=\BZ_{(p)}[t,\gamma (t),\gamma^2(t),\ldots ]\subset\BQ [t]$.
It suffices to show that the homomorphism $A\to B$ given by \eqref{e:z in terms fo t} makes $B$ into a free $A$-module with basis $1, t,\ldots,t^{p-1}$. These elements form a basis of $B\otimes\BQ=\BQ [t]$ over $A\otimes\BQ=\BQ [z]$, so we only have to check that $1, t,\ldots,t^{p-1}$ generate $B$ as an $A$-module. Note that as a $\BZ_{(p)}$-module, $B$ is generated by elements
\[
\prod_{i=0}^\infty (\gamma^i(t))^{m_i}, \quad\mbox{ where } 0\le m_i<p \mbox{ and } m_i=0 \mbox{ for } i\gg 0.
\]
By \eqref{e:z in terms fo t}, $\prod\limits_{i=0}^\infty (\gamma^i(t))^{m_i}=t^{m_0}\cdot \prod\limits_{i>0} (\gamma^{i-1}(z))^{m_i}+\mbox{\{lower terms}\}$, so we can proceed by induc\-tion. \qed

\subsubsection{Proof via Cartier duality (sketch)}
One can also prove Proposition~\ref{p:G_m^sharp/mu_p over Z_{(p)}} by passing to the Cartier duals. 
Similarly to Theorem~\ref{t:the dual of G_m^sharp}, the Cartier dual of $G^\sharp$ identifies with the group ind-scheme $\Gamma_p$ whose definition is parallel to that of $\Gamma$ (see \S\ref{sss: def of Gamma}) but with the polynomial $\prod\limits_{i=a}^{b} (u-i)$ from formula \eqref{e:f_{a,b}} being replaced by $\prod\limits_{i=a}^{b} (u-pi)$. Details are left to the reader.

\section{The Cartier dual of $\hat\BG_m$}  \label{s:dual of hat G_m}
Let $\hat\BG_m$ denote the formal multiplicative group over $\BZ$. For any ring $A$ one has
\[
\hat\BG_m (A)=\{y\in A^\times\,|\, y-1 \mbox{ is nilpotent}\}.
\] 
In this section we give two descriptions of the Cartier dual of $\hat\BG_m$, see \S\ref{ss:R in terms of Int} and \S\ref{ss:R via W_big}.
They are probably well known: the description from  \S\ref{ss:R via W_big} is contained in \cite{MRT}, and the one from \S\ref{ss:R in terms of Int} was known to T.~Ekedahl (see Remark 4 on p.~197 of \cite{Ek}).

\subsection{The Cartier dual in terms of the ring of integer-valued polynomials}   \label{ss:R in terms of Int}
\subsubsection{The ring scheme $\sR$}
Let $\sR:=\HHom (\hat\BG_m ,\hat\BG_m)$. This is a unital ring scheme over~$\Spec\BZ$. The action of $\sR$ on $\Lie (\hat\BG_m )$ defines a homomorphism of ring schemes
\begin{equation} \label{e:R to G_a}
\sR\to\BG_a
\end{equation}
(the multiplication operation in $\BG_a$ is the usual one). The coordinate ring of $\BG_a$ equals $\BZ [u]$, so \eqref{e:R to G_a} induces a ring homomorphism 
\begin{equation}   \label{e:Z[u] to our ring}
\BZ [u]\to H^0(\sR ,\cO_{\sR}).
\end{equation}

As a group scheme, $\sR$ equals $\HHom (\hat\BG_m ,\BG_m)$, i.e., the Cartier dual of~$\hat\BG_m$. So $\sR$ is a flat affine scheme over $\Spec\BZ$.

By Lie theory, the homomorphism \eqref{e:R to G_a} induces an isomorphism
\begin{equation} \label{e:iso after censoring by Q}
\sR\otimes\BQ\iso\BG_a\otimes\BQ.
\end{equation}
The action of $\Spec\BQ [u]=\BG_a\otimes\BQ=\sR\otimes\BQ$ on $\hat\BG_m\otimes\BQ$ is given by Newton's binomial formula
\begin{equation}  \label{e:2binomial formula}
y^u=\sum_{n=0}^\infty \binom{u}{n}(y-1)^u, \quad \binom{u}{n}:=\frac{u(u-1)\ldots (u-n+1)}{n!}\in\BQ [u].
\end{equation}

The ring scheme $\sR$ is commutative by virtue of \eqref{e:iso after censoring by Q} and flatness of $\sR$ over $\Spec\BZ$.

The homomorphism \eqref{e:Z[u] to our ring} becomes an isomorphism after tensoring by $\BQ$. So
\[
\BZ [u]\subset H^0(\sR ,\cO_{\sR})\subset\BQ [u].
\]
The homomorphism $H^0(\sR ,\cO_{\sR})\to H^0(\sR ,\cO_{\sR})\otimes H^0(\sR ,\cO_{\sR})$ corresponding to addition (resp.~multiplication) in $\sR$ takes $u$ to $u\otimes 1+1\otimes u$ (resp.~ to $u\otimes u$). To finish the explicit description of $\sR$, it remains to describe the subring $H^0(\sR ,\cO_{\sR})\subset\BQ [u]$.

\begin{prop}   \label{p:Newton's description of sR}
$H^0(\sR ,\cO_{\sR})=\Int$, where $\Int\subset\BQ [u]$ is the subring generated by the polynomials  $\binom{u}{n}$, $n\ge0$.
\end{prop}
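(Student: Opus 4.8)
The plan is to establish the two inclusions $\Int\subseteq H^0(\sR,\cO_{\sR})$ and $H^0(\sR,\cO_{\sR})\subseteq\Int$ separately, using throughout the facts already recorded: $\sR$ is flat (hence torsion-free) over $\Spec\BZ$, one has $\BZ[u]\subseteq H^0(\sR,\cO_{\sR})\subseteq\BQ[u]$, and over $\BQ$ the tautological pairing $\hat\BG_m\times\sR\to\BG_m$ is given by \eqref{e:2binomial formula}. Write $R:=H^0(\sR,\cO_{\sR})$. I will also use the elementary fact that $\Int$ is a free $\BZ$-module with basis $\binom un$, $n\ge0$, and that a polynomial $f\in\BQ[u]$ lies in $\Int$ as soon as $f(m)\in\BZ$ for every $m\in\BZ$ (finite differences: $f=\sum_n (\Delta^n f)(0)\binom un$ with $(\Delta^n f)(0)\in\BZ$).

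For $\Int\subseteq R$ I would argue as follows. Since a homomorphism $\hat\BG_m\to\BG_m$ automatically lands in $\hat\BG_m$, one has $\sR=\HHom(\hat\BG_m,\hat\BG_m)=\HHom(\hat\BG_m,\BG_m)$ as schemes. Now a homomorphism $\hat\BG_{m,A}\to\BG_{m,A}$ is the same datum as a group-like element $g(y)=\sum_{n\ge0}c_n(y-1)^n\in A[[y-1]]^\times$ with $c_0=1$, so the tautological homomorphism over $R$ is encoded by a sequence of coefficients $c_n\in R$, and $c_1=u$ because $g$ acts on $\Lie(\hat\BG_m)$ by multiplication by $g'(1)=c_1$. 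Base-changing along the inclusion $R\hookrightarrow R\otimes\BQ=\BQ[u]$, this tautological homomorphism becomes the universal homomorphism over $\sR\otimes\BQ$, which by \eqref{e:2binomial formula} is $y\mapsto y^u=\sum_n\binom un(y-1)^n$; hence the image of $c_n$ in $\BQ[u]$ equals $\binom un$. As $R$ is torsion-free, $R\hookrightarrow\BQ[u]$, so $\binom un=c_n\in R$ for all $n$, giving $\Int\subseteq R$.

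For the reverse inclusion I would exploit the integer power maps. For each $m\in\BZ$ the homomorphism $[m]\colon\hat\BG_m\to\hat\BG_m$, $y\mapsto y^m$, is defined over $\BZ$ (the power series $(1+t)^m$ has integral coefficients), so it is a point $[m]\in\sR(\BZ)$; it acts on $\Lie(\hat\BG_m)$ by multiplication by $m$, so the coordinate $u$ from \eqref{e:Z[u] to our ring} takes the value $m$ at $[m]$. Evaluation at $[m]$ is a ring homomorphism $R\to\BZ$ which, along $R\subseteq\BQ[u]$, is the substitution $u\mapsto m$. Therefore every $f\in R$ satisfies $f(m)\in\BZ$ for all $m\in\BZ$, whence $f\in\Int$ by the remark in the first paragraph. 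Combining the two inclusions yields $R=\Int$.

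The only point that requires care is the first inclusion: one must make sure that the tautological homomorphism is really encoded by coefficients lying in the honest coordinate ring $R$ and not merely in some completion, which is why it is convenient to present a homomorphism $\hat\BG_m\to\BG_m$ as a group-like element $g(y)$ — a datum over the base ring — rather than as a map of formal schemes; after that, it is just a comparison with the known description over $\BQ$ together with torsion-freeness of $\sR$. Note that the ring-scheme structure on $\sR$ plays no role here; only its structure as a pointed $\Spec\BZ$-scheme, via $u$, is used.
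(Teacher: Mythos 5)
Your proof is correct. The first inclusion $\Int\subseteq H^0(\sR,\cO_{\sR})$ is essentially the paper's argument made explicit: the paper observes that $H^0(\sR,\cO_{\sR})$ is the smallest subring $A\subseteq\BQ[u]$ over which the action of $\sR\otimes\BQ$ on $\hat\BG_m$ extends, and concludes in one stroke that it is generated by the coefficients of the series \eqref{e:2binomial formula}; you instead extract from the tautological group-like element over $R$ the coefficients $c_n\in R$ and identify them with $\binom{u}{n}$ via torsion-freeness, which is the same mechanism. Where you genuinely diverge is the reverse inclusion: the paper gets $H^0(\sR,\cO_{\sR})\subseteq\Int$ for free from the minimality/universal property (any subring of $\BQ[u]$ containing the coefficients receives a map classifying the extended action, so $R$ is the subring they generate), whereas you evaluate at the integer points $[m]\in\sR(\BZ)$ and invoke the classical characterization of $\Int$ as the integer-valued polynomials. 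Your route costs you the finite-difference lemma (which the paper only records afterwards as well known, not as an ingredient of the proof), but it buys a concrete, self-contained verification that sidesteps having to justify the ``smallest subring'' assertion; both arguments rely on the same inputs already established in \S\ref{ss:R in terms of Int} (flatness of $\sR$ over $\BZ$ and the description \eqref{e:2binomial formula} over $\BQ$).
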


\begin{proof}
$H^0(\sR ,\cO_{\sR})$ is the smallest subring $A\subset\BQ [u]$ such that the action of $\Spec\BQ [u]=\sR\otimes\BQ$ on $\hat\BG_m$ extends to an action of $\Spec A$ on $\hat\BG_m$. So $H^0(\sR ,\cO_{\sR})$ is generated by the coefficients of the formal series \eqref{e:2binomial formula}.
\end{proof}

\subsubsection{On the ring $\Int$}
It is well known that 
\[
\Int=\{ f\in\BQ [u]\,|\, f(m)\in\BZ \mbox{ for all }m\in\BZ\}; 
\]
for this reason, $\Int$ is
known as the \emph{ring of integer-valued polynomials}. It is also well known that

(i) the polynomials  $\binom{u}{n}$ form a \emph{basis} of the $\BZ$-module $\Int$;

(ii) one has
\begin{equation}      \label{e:difference equation}
\Int =\{f\in \Fun (\BZ ,\BZ )\,|\,\Delta^m (f)=0 \mbox{ for some }m\},
\end{equation}
where $\Fun (\BZ ,\BZ )$ is the ring of all functions $\BZ\to\BZ$ and $\Delta :\Fun (\BZ ,\BZ )\to \Fun (\BZ ,\BZ )$ is
the \emph{difference operator} $\Delta :\Fun (\BZ ,\BZ )\to \Fun (\BZ ,\BZ )$ defined by
\[
(\Delta f)(u)=f(u+1)-f(u).
\]

More details about the ring $\Int$ and some references can be found in \cite{CC,Ch,Ek,El}.

\subsubsection{Remark}
Here is an interpretation of \eqref{e:difference equation} via Cartier duality between $\sR$ and $\hat\BG_m$.

The Cartier dual of the embedding $\hat\BG_m\mono\BG_m$ is a morphism $\BZ\times\Spec\BZ\to\sR$, and the embedding
\begin{equation}  \label{e:Int to Fun(Z)}
H^0(\sR,\cO_{\sR})=\Int\mono\Fun (\BZ ,\BZ )
\end{equation}
is  the corresponding homomorphism of coordinate rings. As a $\BZ$-module, $H^0(\sR,\cO_{\sR})$ is the topological dual $(\BZ [[y-1]])^*$, and the map
\eqref{e:Int to Fun(Z)} is just the natural map $$\varphi :(\BZ [[y-1]])^*\to (\BZ [y,y^{-1}])^*.$$ 
So \eqref{e:difference equation} means that $\varphi$ is injective, and  $\im\varphi$ consists of those linear functionals on $\BZ [y,y^{-1}]$ that are trivial on $(y-1)^m\BZ [y,y^{-1}]$ for some $m$. This is, of course, true because $\BZ [[y-1]]$ is the $(y-1)$-adic completion of $\BZ [y,y^{-1}]$.

\subsection{The reduction of the scheme $\sR$ modulo $p^n$ and the $\lambda$-ring structure on $\Int$}
\subsubsection{The reduction of $\sR$ modulo $p^n$}
Let $p$ be a prime. If $A$ is a ring in which $p$ is nilpotent then $\hat\BG_m\otimes A$ is the inductive limit of $\mu_{p^n}\otimes A$. The Cartier dual of $\mu_{p^n}$ is 
$\BZ/p^n\BZ$. So 
\begin{equation} \label{e:2Mahler}
\sR\otimes A=(\BZ_p)_A:=\underset{n}{\underset{\longleftarrow}{\lim}} (\BZ/p^n\BZ)_A,
\end{equation}
where $(\BZ/p^n\BZ)_A$ is the constant ring scheme over $\Spec A$ with fiber $\BZ/p^n\BZ$. 

\subsubsection{Mahler's theorem}
Let $A=\BZ/p^n\BZ$. Combining \eqref{e:2Mahler} with the equality $H^0(\sR,\cO_{\sR})=\Int$, we get an isomorphism
\begin{equation} \label{e:3Mahler}
\Int/p^n\Int\iso \{\mbox{Locally constant functions }\BZ_p\to\BZ/p^n\BZ\};
\end{equation}
the map \eqref{e:3Mahler} is as follows: given a function $f\in\Int\subset\Fun (\BZ ,\BZ)$, we reduce it modulo $p^n$ and then extend from $\BZ$ to $\BZ_p$ by continuity.
The isomorphism \eqref{e:3Mahler} is due to K.~Mahler~\cite{Ma}. It is discussed, e.g., in \cite[Ch.~4]{La}. 

\begin{lem}   \label{l:Fr=id}
For every prime $p$, the Frobenius endomorphism of $\Int /p\Int$ equals the identity.
\end{lem}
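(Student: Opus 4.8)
The plan is to deduce this from Fermat's little theorem via the description of $\Int$ as the ring of integer-valued polynomials. Recall that the Frobenius endomorphism of a commutative $\BF_p$-algebra $R$ is the map $x\mapsto x^p$; thus the assertion to be proved is that $f^p\equiv f\pmod{p\Int}$ for every $f\in\Int$.

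First I would fix $f\in\Int$ and set $g:=f^p-f\in\Int\subset\BQ[u]$. For every $m\in\BZ$ one has $f(m)\in\BZ$, hence $f(m)^p\equiv f(m)\pmod p$ by Fermat's little theorem, so $g(m)\in p\BZ$ for all $m\in\BZ$. Then I would invoke the standard characterization $\Int=\{h\in\BQ[u]\mid h(\BZ)\subset\BZ\}$ recalled in \S\ref{ss:R in terms of Int}: the polynomial $g/p\in\BQ[u]$ takes integer values on all of $\BZ$, so $g/p\in\Int$. Therefore $f^p-f=g\in p\Int$, which is exactly the claim.

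The same argument can be phrased through Mahler's theorem: by \eqref{e:3Mahler} with $n=1$, the ring $\Int/p\Int$ is identified with the ring of locally constant functions $\BZ_p\to\BF_p$, and on any ring of $\BF_p$-valued functions the $p$-th power map is the identity since $a^p=a$ for all $a\in\BF_p$. I do not anticipate a genuine obstacle here; the only point deserving a moment's care is the step "$g$ vanishes modulo $p$ at every integer $\Rightarrow g\in p\Int$", which is nothing more than the fact that $\Int$ is saturated in $\BQ[u]$ (equivalently, that the $\binom{u}{n}$ form a $\BZ$-basis of $\Int$, as recorded in \S\ref{ss:R in terms of Int}).
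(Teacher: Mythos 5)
Your proof is correct. The paper itself disposes of this lemma in one line, observing that it follows either from Mahler's isomorphism \eqref{e:3Mahler} or from the difference-operator description \eqref{e:difference equation}; your second paragraph is precisely the first of these routes, and your primary argument (Fermat's little theorem plus the integer-valued characterization $\Int=\{h\in\BQ[u]\mid h(\BZ)\subset\BZ\}$, which immediately gives $f^p-f\in p\Int$) is a correct, self-contained elementary substitute that uses only facts the paper records in \S\ref{ss:R in terms of Int}. No gaps.
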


This well known fact follows from \eqref{e:3Mahler} or from \eqref{e:difference equation}.

\subsubsection{Wilkerson's theorem on $\lambda$-rings}  \label{sss:Wilkerson}
Any $\lambda$-ring $R$ is equipped with an action of the multiplicative monoid $\BN$; the endomorphism of $R$ corresponding to $n\in\BN$ is denoted by $\psi^n$ and called the $n$-th \emph{Adams operation}. So we get a functor from the category of $\lambda$-rings to that of rings equipped with $\BN$-action. C.~Wilkerson \cite{W} proved that this functor identifies the category of torsion-free $\lambda$-rings with the category of torsion-free rings equipped with an action of $\BN$ satisfying the following condition: $\psi^p (x)$ is congruent to $x^p$ modulo $p$ for every prime $p$ and every $x\in R$.

\subsubsection{The $\lambda$-ring structure on $\Int$}   \label{sss:lambda-ring structure on Int}
By \S\ref{sss:Wilkerson}, a torsion free ring $R$ such that for every prime $p$ the Frobenius endomorphism of $R/pR$ equals the identity is the same as a torsion-free $\lambda$-ring such that $\psi^n=\id$ for all $n$. It is known (see \cite{W,El}) that for such $R$ one has
\begin{equation}  \label{e:lambda-operations}
\lambda_n(x)=\frac{x(x-1)\ldots (x-n+1)}{n!}\quad \mbox{for all } n\in\BN, x\in R.
\end{equation}
By Lemma~\ref{l:Fr=id}, this applies to the ring $\Int$. On the other hand, in the case $R=\Int$ the $\lambda$-ring structure comes from the embedding $\Int\mono\Fun (\BZ ,\BZ)=\BZ\times\BZ\times\ldots$ and the $\lambda$-ring structure on $\BZ$, so \eqref{e:lambda-operations} is clear.

\subsubsection{Generators of $\Int\otimes\BZ_{(p)}$}  \label{sss:generators of Int otimesZ_p}
Fix a prime $p$. Let $\BZ_{(p)}$ be the localization of $\BZ$ at $p$ and $\Int_{(p)}:=\Int\otimes\BZ_{(p)}\subset\BQ [u]$. For $x\in\Ind_{(p)}$ set $\delta (x):=(x-x^p)/p$; then $\delta (x)\in\Int_{(p)}$ by Lemma~\ref{l:Fr=id}. The pair $(\Int_{(p)}, \delta :\Int_{(p)}\to\Int_{(p)})$ is a $\delta$-ring in the sense of \cite{JoyalDelta} and \cite{BS}. The following lemma is well known (e.g., see \cite[\S 3]{El}).

\begin{lem}   \label{l:generators of Int otimesZ_p}
(i) The elements $\delta^n(u)$, $n\in\BZ_+$, generate $\Int_{(p)}$ as an $\BZ_{(p)}$-algebra.

(ii) Elements of the form
\[
\prod_i (\delta^i(u))^{d_i}, \quad \mbox{where } 0\le d_i<p  \mbox{ for all } i \mbox{ and } d_i=0 \mbox{ for }i\gg 0 
\]
form a basis of the $\BZ_{(p)}$-module $\Int_{(p)}$.
\end{lem}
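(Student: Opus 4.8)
The plan is to compare the proposed monomial family with the $\BZ_{(p)}$-basis $\binom{u}{n}$, $n\ge 0$, of $\Int_{(p)}\subset\BQ[u]$ recalled just above. Write $m_i:=\delta^i(u)$; these lie in $\Int_{(p)}$ by Lemma~\ref{l:Fr=id} (since $\delta=(\id-\phi)/p$ and $\phi$ reduces mod $p$ to the Frobenius, which is $\id$ on $\Int/p\Int$), and for a finitely supported tuple $\mathbf d=(d_0,d_1,\dots)$ with $0\le d_i<p$ put $m_{\mathbf d}:=\prod_i m_i^{d_i}$. I would first record the elementary fact that, as polynomials in $u$, $m_i$ has degree exactly $p^i$ with leading coefficient $a_i\in\BQ^\times$ determined by $a_0=1$ and $a_{i+1}=-a_i^p/p$ (immediate from $m_{i+1}=(m_i-m_i^p)/p$ together with $\deg m_i<\deg m_i^p$); hence $v_p(a_i)=-\tfrac{p^i-1}{p-1}$. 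Consequently $m_{\mathbf d}$ is a nonzero polynomial of degree $|\mathbf d|:=\sum_i d_ip^i$ with leading coefficient $a_{\mathbf d}:=\prod_i a_i^{d_i}$, and $v_p(a_{\mathbf d})=-\sum_i d_i\tfrac{p^i-1}{p-1}=-v_p(|\mathbf d|!)$ by Legendre's formula.

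Since $\mathbf d\mapsto|\mathbf d|$ is the base-$p$ expansion, it is a bijection onto $\BZ_{\ge0}$, so the $m_{\mathbf d}$ are nonzero polynomials of pairwise distinct $u$-degrees and are therefore $\BQ$-linearly independent, a fortiori $\BZ_{(p)}$-linearly independent; this is one half of~(ii). For spanning, let $M\subseteq\Int_{(p)}$ be the $\BZ_{(p)}$-span of the $m_{\mathbf d}$; I would prove $\binom{u}{n}\in M$ for every $n$ by strong induction on $n$. Let $\mathbf d$ be the tuple of base-$p$ digits of $n$, so $|\mathbf d|=n$. The crucial point is that $m_{\mathbf d}$ and $\binom{u}{n}$ have the same degree $n$ and matching leading behaviour: put $c_n:=a_{\mathbf d}\,n!\in\BQ^\times$; then $v_p(c_n)=v_p(a_{\mathbf d})+v_p(n!)=0$, so $c_n\in\BZ_{(p)}^\times$, and the leading coefficient of $c_n\binom{u}{n}$ is $c_n/n!=a_{\mathbf d}$, the leading coefficient of $m_{\mathbf d}$. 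Hence $r_n:=m_{\mathbf d}-c_n\binom{u}{n}$ lies in $\Int_{(p)}$ and has $u$-degree $<n$, so --- the $\binom{u}{j}$ being a $\BZ_{(p)}$-basis with strictly increasing degrees --- $r_n$ is a $\BZ_{(p)}$-combination of $\binom{u}{0},\dots,\binom{u}{n-1}$, each of which lies in $M$ by the induction hypothesis (base case $n=0$: $m_{\mathbf 0}=1=\binom{u}{0}$). Since $m_{\mathbf d}\in M$ and $c_n$ is a unit, $\binom{u}{n}=c_n^{-1}(m_{\mathbf d}-r_n)\in M$. Thus $M=\Int_{(p)}$, giving~(ii); and~(i) follows immediately, since each $m_{\mathbf d}$ is a monomial in the $\delta^i(u)$, so those elements generate $\Int_{(p)}$ already as a $\BZ_{(p)}$-module, in particular as a $\BZ_{(p)}$-algebra.

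This argument uses only the already-established $\BZ_{(p)}$-basis $\{\binom{u}{n}\}$ of $\Int_{(p)}$, its $p$-torsion-freeness, and the inclusion $\delta(\Int_{(p)})\subseteq\Int_{(p)}$ from Lemma~\ref{l:Fr=id}. The one place needing care --- the ``main obstacle'', such as it is --- is the valuation bookkeeping: identifying $v_p(a_{\mathbf d})$ with $-v_p(|\mathbf d|!)$ via Legendre's formula and observing that the $u$-degree of $m_{\mathbf d}$ equals its ``base-$p$ weight'' $|\mathbf d|$. Once these are in place, both the linear independence and the inductive spanning step are formal.
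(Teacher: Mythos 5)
Your proof is correct and follows essentially the same route as the paper's: match the monomial $\prod_i(\delta^i(u))^{d_i}$ against $\binom{u}{n}$ for $n=\sum_i d_ip^i$ via degrees and leading coefficients, and use $v_p(n!)=\sum_i d_i(p^{i-1}+\dots+p+1)$ (Legendre) to see that the comparison constant is a $p$-adic unit, then induct on $n$. You have merely made explicit the recursion $a_{i+1}=-a_i^p/p$ and the downward induction that the paper leaves implicit.
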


\begin{proof}
It suffices to proof (ii). Let $n\ge 0$ be an integer. Write $n=\sum\limits_i d_ip^i$, where $0\le d_i<p$ for all $i$ and $d_i=0$ for $i\gg 0 $. There exists $c\in\BQ$ such that the polynomial 
$$\binom{u}{n}-c\prod\limits_i (\delta^i(u))^{d_i}$$
has degree $<n$. It remains to check that $c\in\BZ_{(p)}$. To do this, use that $n!\in p^m\cdot\BZ_{(p)}^\times$, where $m=\sum\limits_id_i(p^{i-1}+\ldots +p+1)$.
\end{proof}

\subsection{The ring scheme $\sR$ via Witt vectors}  \label{ss:R via W_big}
\subsubsection{The ring scheme $W_{\bbig}$}  \label{sss:W_big}
Let $W_{\bbig}$ be the ring scheme of ``big'' Witt vectors. Recall that for any ring $A$, the additive group of $W_{\bbig}(A)$ is the subgroup of $A[[z]]^\times$ that consists of all power series with constant them $1$. For each $n\in\BZ$ one has the \emph{Witt vector Frobenius} map $F_n:W_{\bbig}\to W_{\bbig}$, which is a ring scheme endomorphism; one has $F_mF_n=F_{mn}$ and $F_1=\id$. Recall  that the unit of $W_{\bbig}(A)$ corresponds to $1-z\in A[[z]]^\times$.

\subsubsection{The map $\sR\to W_{\bbig}$}  \label{sss:R to W}
By definition, an $A$-point of $\sR$ is an element $f\in A[[y-1]]$ satisfying the functional equation 
\begin{equation} \label{e:f(y_1y_2)}
f(y_1y_2)=f(y_1)f(y_2).
\end{equation}
 Associating to such $f$ the formal power series 
$f(1-z)\in A[[z]]^\times$, we get a group homomorphism $\sR (A)\to W_{\bbig}(A)$ functorial in $A$, i.e., a homomorphism of group schemes
\begin{equation}  \label{e:R to W}
\sR\to W_{\bbig}\, .
\end{equation}
This morphism is a closed immersion because \eqref{e:f(y_1y_2)} is a closed condition. Note that the map \eqref{e:R to W} takes $1\in\sR (\BZ)$ to $1\in W_{\bbig}(\BZ )$ (see the end of \S\ref{sss:W_big}).

\subsubsection{Remark}
Here is a slightly different way of thinking about \eqref{e:R to W}. Consider the unique homomorphism of unital rings $f:\BZ\to W_{\bbig} (\BZ )$. Then each component of the Witt vector $f(n)$ is an (integer-valued) polynomial in $n$, so we get an element of $W_{\bbig} (\Int )$, i.e., a morphism $\Spec\Int\to W_{\bbig}$. This is  \eqref{e:R to W}. 

\begin{prop}   \label{p:sR= W_{big}^F}
(i) The map \eqref{e:R to W} is a homomorphism of ring schemes.

(ii) It induces an isomorphism $\sR\iso W_{\bbig}^F$, where
\begin{equation}   \label{e:W^F_big}
W_{\bbig}^F:=\{w\in W_{\bbig} \,|\, F_n (w)=w \mbox{ for all } n\in\BN\}.
\end{equation}
\end{prop}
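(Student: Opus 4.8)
My plan is to reduce both parts to one computation of ghost components. Given an $A$-point $f$ of $\sR$, let $u(f)\in A$ denote its image under the homomorphism $\sR\to\BG_a$ of \eqref{e:R to G_a}; concretely $u(f)=f'(1)$, the coefficient of $y-1$ in $f\in A[[y-1]]$. The key claim is that every ghost component of the power series $f(1-z)\in W_{\bbig}(A)$ equals $u(f)$. It is enough to check this for the universal point over $B:=H^0(\sR,\cO_{\sR})$. Since $\sR$ is flat over $\BZ$, the ring $B$ is torsion-free, so $B\hookrightarrow B\otimes\BQ$; over $B\otimes\BQ$ the universal point is $y\mapsto y^{u}$ with $u=u(f_{\mathrm{univ}})\in B$, by \eqref{e:iso after censoring by Q} and Newton's binomial formula \eqref{e:2binomial formula}. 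Hence $f_{\mathrm{univ}}(1-z)=(1-z)^{u}$, and since $-z\,\tfrac{d}{dz}\log\!\bigl((1-z)^{u}\bigr)=uz/(1-z)=\sum_{n\ge1}u\,z^{n}$, all ghost components equal $u$; being a priori elements of $B$, they equal $u$ already in $B$, and the general case follows by base change.

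From this, part (i) is formal. The addition of $\sR=\HHom(\hat\BG_m,\BG_m)$ is pointwise multiplication of functions, so $f\mapsto f(1-z)$ sends $f_1+f_2$ to $f_1(1-z)\cdot f_2(1-z)$, the sum in $W_{\bbig}$; it sends the ring unit $\mathrm{id}_{\hat\BG_m}$ to $1-z$; and the multiplication of $\sR$ is composition, so it sends $f_1f_2$ to $f_1(f_2(1-z))=(f_1\circ f_2)(1-z)$, whose ghost components are all $(f_1\circ f_2)'(1)=f_1'(1)f_2'(1)$, i.e.\ the componentwise products of those of $f_1(1-z)$ and $f_2(1-z)$ — so, by injectivity of the ghost map over the torsion-free coordinate ring of $\sR\times\sR$, this is their product in $W_{\bbig}$. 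Likewise $F_n\!\bigl(f(1-z)\bigr)$ and $f(1-z)$ have the same ghost components, hence are equal (reduce once more to the torsion-free universal case), so \eqref{e:R to W} factors through $W_{\bbig}^F$; being a closed immersion into $W_{\bbig}$ with image in the closed subscheme $W_{\bbig}^F$, the induced map $\sR\to W_{\bbig}^F$ is again a closed immersion.

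For (ii) it remains to see this closed immersion is an isomorphism. Over $\BQ$ the ghost map identifies $W_{\bbig}^F\otimes\BQ$ with the diagonal of $\prod_{n\ge1}(\BG_a\otimes\BQ)$, hence with $\BG_a\otimes\BQ=\sR\otimes\BQ$, and our map is the identity there. Since $H^0(\sR,\cO_{\sR})=\Int$ is a free $\BZ$-module (\propref{p:Newton's description of sR}), it then suffices to show $W_{\bbig}^F$ is flat over $\BZ$: the ideal cutting out $\sR$ inside $\cO_{W_{\bbig}^F}$ is torsion (it vanishes after $\otimes\BQ$) and torsion-free (a submodule of $\cO_{W_{\bbig}^F}$), hence $0$. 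I expect this flatness statement to be the real point of the proof; everything above is formal manipulation with ghost components and flatness of $\sR$.

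To prove it I would work one prime $p$ at a time and use the $p$-typical decomposition $W_{\bbig}\otimes\BZ_{(p)}\cong\prod_{(n,p)=1}W\otimes\BZ_{(p)}$, under which the Frobenii $F_m$ with $(m,p)=1$ identify the factors with one another and $F_p$ acts on each factor as the $p$-typical Witt-vector Frobenius; hence $W_{\bbig}^F\otimes\BZ_{(p)}$ is the diagonal copy of $\{x\in W\mid Fx=x\}\otimes\BZ_{(p)}$. The coordinate ring of the latter is $\BZ_{(p)}[x_0,x_1,\dots]$ modulo the entries of $Fx-x$; the successive relations have the shape $p\,x_{n+1}=x_n-x_n^{p}+p\cdot(\text{earlier terms})$, so the ring is obtained by adjoining $x_1,x_2,\dots$ one at a time, and each step preserves $p$-torsion-freeness because $x_n-x_n^{p}=-\prod_{a\in\BF_p}(x_n-a)$ is squarefree, hence a non-zero-divisor, modulo $p$. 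The resulting algebra is exactly $\Int_{(p)}=\BZ_{(p)}[u,\delta u,\delta^2 u,\dots]$ of \lemref{l:generators of Int otimesZ_p} (with $x_n\leftrightarrow\delta^n u$). Thus $W_{\bbig}^F\otimes\BZ_{(p)}\cong\Spec\Int_{(p)}$ is flat over $\BZ_{(p)}$ for every $p$, so $W_{\bbig}^F$ is flat over $\BZ$, and moreover $\cO_{W_{\bbig}^F}=\bigcap_p\Int_{(p)}=\Int$, which re-establishes $\sR\iso W_{\bbig}^F$ directly. The one genuinely fiddly point is the bookkeeping in the $p$-typical decomposition together with the inductive torsion-freeness; I do not anticipate trouble anywhere else.
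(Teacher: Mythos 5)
Your proof is correct and follows essentially the same route as the paper: reduce (i) and (ii) to a check over $\BQ$ (which you make explicit via ghost components) using flatness of $\sR$ and the fact that \eqref{e:R to W} is a closed immersion, and then prove flatness of $W_{\bbig}^F$ over $\BZ$ by passing to the $p$-typical $W^F$ through the decomposition $W_{\bbig}\otimes\BZ_{(p)}\cong\prod W\otimes\BZ_{(p)}$. The only cosmetic difference is in the last step, where the paper works in Buium--Joyal coordinates (so the defining relations are exactly $px_{n+1}=x_n-x_n^p$, with no extra terms) and exhibits an explicit $\BZ$-basis of the coordinate ring of $W^F$, whereas you adjoin the $x_n$ one at a time and check $p$-torsion-freeness inductively.
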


\begin{proof}
We know that $\sR$ is flat over $\Spec\BZ$ and the morphism $\sR\to W_{\bbig}$ is a closed immersion. It is straightforward to check (i) and (ii) after base change to
$\Spec\BQ$. It remains to show that $W_{\bbig}^F$ is flat over $\Spec\BZ$. This follows from Lemmas~\ref{l:W_big & W}-\ref{l:flatness of W^F} below.
\end{proof}

\begin{lem}   \label{l:W_big & W}
Let $p$ be a prime and $W$ the ring scheme of $p$-typical Witt vectors. Let $F:W\to W$ be the Witt vector Frobenius and 
\begin{equation}   \label{e:W^F}
W^F:=\{w\in W \,|\, F (w)=w \}.
\end{equation}
Then the natural ring scheme morphism $W_{\bbig}\to W$ induces an isomorphism 
\begin{equation}
W_{\bbig}^F\otimes\BZ_{(p)}\iso W^F\otimes\BZ_{(p)}
\end{equation}
\end{lem}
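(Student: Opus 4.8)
The plan is to deduce the lemma from the classical decomposition of the big Witt ring scheme over $\BZ_{(p)}$ into a product of $p$-typical ones, combined with a bookkeeping of how the Witt-vector Frobenius operators act on the factors.

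First I would recall that for every $\BZ_{(p)}$-algebra $A$ there is an isomorphism of rings, natural in $A$,
$$W_{\bbig}(A)\;\cong\;\prod_{\substack{d\ge 1\\ (d,p)=1}} W(A);$$
write $w=(w^{(d)})_{(d,p)=1}$ for the image of $w\in W_{\bbig}(A)$. This is an isomorphism of ring schemes over $\BZ_{(p)}$, and under it the morphism $W_{\bbig}\to W$ appearing in the statement (truncation to the set of powers of $p$) becomes the projection onto the $d=1$ factor. Next I would record the action of the operators $F_n$ through this decomposition. Since $F_mF_n=F_{mn}$ and $F_1=\id$, it suffices to treat $F_\ell$ for $\ell$ prime. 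One has $F_p\bigl((w^{(d)})_d\bigr)=(Fw^{(d)})_d$, with $F$ the $p$-typical Frobenius, and for $\ell\neq p$ one has $F_\ell\bigl((w^{(d)})_d\bigr)=(w^{(\ell d)})_d$, i.e.\ $F_\ell$ merely reindexes the factors along multiplication by $\ell$ on the set $\{d:(d,p)=1\}$. Both formulas are immediate on ghost components from $(F_m w)_{(n)}=w_{(mn)}$, and since $\prod_d W$ is $\BZ_{(p)}$-flat this determines the morphisms $F_\ell$ on the nose.

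Granting this, the identification of fixed points is purely formal. An element $w\in W_{\bbig}(A)$ lies in $W_{\bbig}^F(A)$ exactly when $F_\ell w=w$ for all primes $\ell$. The relation $F_p w=w$ says $w^{(d)}\in W^F(A)$ for each $d$, while for $\ell\neq p$ the relation $F_\ell w=w$ says $w^{(\ell d)}=w^{(d)}$ for all $d$ prime to $p$. As every positive integer prime to $p$ is a finite product of primes $\neq p$, induction on the number of prime factors gives $w^{(d)}=w^{(1)}$ for all such $d$; conversely, if all $w^{(d)}$ coincide with a single element of $W^F(A)$, all the relations hold. Hence the natural morphism $W_{\bbig}\to W$, which under the decomposition is the projection to the $d=1$ summand, restricts to an isomorphism $W_{\bbig}^F\otimes\BZ_{(p)}\iso W^F\otimes\BZ_{(p)}$, the inverse being the map that places a given $p$-typical Witt vector into every factor. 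All of this is functorial in the $\BZ_{(p)}$-algebra $A$, so it is an isomorphism of schemes over $\BZ_{(p)}$.

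The one step that is not a formal manipulation is the input recalled in the second paragraph: the ring-scheme decomposition $W_{\bbig,\BZ_{(p)}}\cong\prod_{(d,p)=1}W_{\BZ_{(p)}}$ and the precise description of how the $F_\ell$ act on the summands. Both facts are classical in the structure theory of Witt vectors over $\BZ_{(p)}$, but I expect the main care to go into matching conventions — in particular checking that the morphism $W_{\bbig}\to W$ in the statement is the projection to the $d=1$ factor and is $F_p$-equivariant (which is transparent on ghost components, where on the relevant summand both sides are the index shift $i\mapsto i+1$ applied to $(w_{(p^i)})_{i\ge0}$).
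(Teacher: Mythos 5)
Your proof is correct and follows essentially the same route as the paper: the paper's own (very terse) argument likewise invokes the identification of $W_{\bbig}\otimes\BZ_{(p)}$ with a product of copies of $W\otimes\BZ_{(p)}$ indexed by integers prime to $p$, together with the standard description of the $F_n$ in terms of that decomposition. Your write-up simply supplies the details (the ghost-component verification and the induction showing all components coincide) that the paper leaves to the reader.
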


\begin{proof}
The proof is based on the identification of $W_{\bbig}\otimes\BZ_{(p)}$ with the product of infinitely many copies of $W\otimes\BZ_{(p)}$ (the copies are labeled by positive integers coprime to $p$) and the usual description of the morphisms $F_n:W_{\bbig}\otimes\BZ_{(p)}\to W_{\bbig}\otimes\BZ_{(p)}$ in terms of this identification.
\end{proof}

\begin{lem}   \label{l:flatness of W^F}
The scheme $W^F$ defined by \eqref{e:W^F} is flat over $\BZ$.
\end{lem}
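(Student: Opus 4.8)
The plan is to reduce flatness over $\BZ$ to a $p$-local statement and then, in the $p$-local case, to compute the coordinate ring of $W^F$ explicitly from the $\delta$-ring structure of $W$. Since $\BZ$ is a principal ideal domain, $W^F$ is flat over $\BZ$ iff its coordinate ring is torsion-free, so it suffices to check that $W^F\times_{\Spec\BZ}\Spec\BZ_{(\ell)}$ is flat over $\BZ_{(\ell)}$ for every prime $\ell$. For $\ell\ne p$ this is immediate: $p$ is invertible in $\BZ_{(\ell)}$, so the ghost map identifies $W_{\BZ_{(\ell)}}$ with $\prod_{n\ge0}(\BA^1)_{\BZ_{(\ell)}}$ as ring schemes, carrying $F$ to the shift $(a_n)_{n\ge0}\mapsto(a_{n+1})_{n\ge0}$; hence $W^F_{\BZ_{(\ell)}}$ is the diagonal, i.e.\ $(\BA^1)_{\BZ_{(\ell)}}$, which is flat. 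So the real content is the case $\ell=p$.

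For $\ell=p$ I would use the $\delta$-ring description of $W$ recalled in \S\ref{sss:Joyal}: the coordinate ring $R:=H^0(W_{\BZ_{(p)}},\cO_W)$ is the free $\delta$-ring over $\BZ_{(p)}$ on the $0$-th Witt component $w_0$, and the endomorphism of $R$ induced by $F\colon W\to W$ is the Frobenius lift $\phi$ of this $\delta$-structure. Writing $v_i:=\delta^i(w_0)$, one then has $R=\BZ_{(p)}[v_0,v_1,v_2,\dots]$ as a ring and $\phi(v_i)=v_i^p+p\,v_{i+1}$. Since $\phi$ is a ring homomorphism, the closed subscheme $W^F=\Eq(\id_W,F)\subset W$ is cut out by the ideal generated by the elements $\phi(v_i)-v_i=v_i^p-v_i+p\,v_{i+1}$; thus $W^F\times_{\Spec\BZ}\Spec\BZ_{(p)}=\Spec B$ with
\[
B=\BZ_{(p)}[v_0,v_1,v_2,\dots]\big/\bigl(v_i^p-v_i+p\,v_{i+1}:i\ge 0\bigr),
\]
and the problem is reduced to showing that $B$ is a flat $\BZ_{(p)}$-module.

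To prove that $B$ is in fact a free $\BZ_{(p)}$-module, I would write $B=\colim_N C_N$, where $C_N:=\BZ_{(p)}[v_0,\dots,v_N]/(v_i^p-v_i+p\,v_{i+1}:0\le i<N)$, and show by induction on $N$ that $C_N$ is free over $\BZ_{(p)}$ with basis the monomials $v_0^{d_0}\cdots v_{N-1}^{d_{N-1}}v_N^{e}$ with $0\le d_i<p$ and $e\ge 0$. In the step from $C_N$ to $C_{N+1}=C_N[v_{N+1}]/(v_N^p-v_N+p\,v_{N+1})$, spanning follows because the relation $v_N^p=v_N-p\,v_{N+1}$ rewrites any monomial of $v_N$-degree $\ge p$ as a $\BZ_{(p)}$-combination of monomials of strictly smaller $v_N$-degree, so the process terminates. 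Linear independence follows from the situation already understood over $\BQ$: after inverting $p$ the relations solve recursively as $v_{i+1}=(v_i-v_i^p)/p$, so $C_N\otimes\BQ\cong\BQ[v_0]$ with $v_i$ mapping to a polynomial of degree $p^i$, whence the listed monomials map to polynomials of pairwise distinct degrees $d_0+d_1p+\cdots+d_{N-1}p^{N-1}+e\,p^N$ (the base-$p$ expansion), which are linearly independent over $\BQ$. Passing to the colimit shows $B$ is $\BZ_{(p)}$-free, hence flat over $\BZ_{(p)}$, which finishes the proof.

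I expect the only genuinely delicate step to be the identification of the coordinate ring of $W^F$ over $\BZ_{(p)}$ with $\BZ_{(p)}[v_0,v_1,\dots]/(v_i^p-v_i+p\,v_{i+1})$, i.e.\ getting the exact shape of the defining relations out of the free-$\delta$-ring description of $W$; once that is in hand, everything else is elementary commutative algebra. (This identification incidentally yields $W^F_{\BZ_{(p)}}\cong\sR_{\BZ_{(p)}}$, consistent with \S\ref{ss:R via W_big}, although we do not need this here.)
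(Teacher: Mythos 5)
Your proof is correct and follows essentially the same route as the paper's (\S\ref{sss:proof of flatness of W^F}): Joyal's free-$\delta$-ring description of $H^0(W,\cO_W)$, the defining relations $x_n^p+px_{n+1}-x_n$, and a monomial basis with all exponents $<p$ whose linear independence is checked after tensoring with $\BQ$. The only difference is packaging: the paper runs this argument directly over $\BZ$ (Joyal's theorem already holds there), so your preliminary localization at each prime and the ghost-map treatment of the case $\ell\ne p$ are not needed, and your finite-level filtration $C_N$ is just a more explicit version of the paper's one-line generation claim.
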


Before proving the lemma, let us briefly recall the approach to $W$ developed by Joyal \cite{JoyalDelta}  (a detailed exposition of this approach can be found in \cite{BorgerCourse} and \cite[\S 1]{BorgerGurney}).

\subsubsection{Joyal's approach to $W$}  \label{sss:Joyal}
Let $C$ be the coordinate ring of $W$. 
Let $\phi:C\to C$ be the homomorphism corresponding to $F:W\to W$. 
The map $W\otimes\BF_p\to W\otimes\BF_p$ induced by $F$ is the usual Frobenius, so there is a map $\delta:C\to C$ such that $\phi(c)=c^p+p\delta (c)$ for all $c\in C$ (of course, the map $\delta$ is neither additive nor multiplicative). 

The pair $(C,\delta)$ is a $\delta$-ring in the sense of \cite{JoyalDelta} and \cite[\S 2]{BS}. The main theorem of \cite{JoyalDelta} says that $C$ is the \emph{free $\delta$-ring} on $x_0$, where $x_0\in C$ corresponds to the canonical homomorphism $W\to W/VW=\BG_a$. This means that as a ring, $C$ is freely generated by the elements $x_n:=\delta^n(x_0)$, $n\ge 0$. We have 
\begin{equation}   \label{e:phi(y_n)}
\phi(x_n)=x_n^p+px_{n+1}
\end{equation}

The elements $x_n$ (which are regular functions on $W$) are called \emph{Buium-Joyal coordinates} or \emph{Buium-Joyal components} (this terminology is introduced in \cite{BorgerGurney}).  For $n>1$ they are different from Witt components (i.e., the usual ones).

\subsubsection{Proof of Lemma~\ref{l:flatness of W^F}} \label{sss:proof of flatness of W^F}
Let $C$ be as in \S\ref{sss:Joyal}. Formula~\eqref{e:phi(y_n)} implies that the coordinate ring of $W^F$ is the quotient of $C$ by the ideal $I$ generated by the elements
\begin{equation}  \label{e:equations for W^F}
x_n^p+px_{n+1}-x_n, \quad n\in\BZ_+.
\end{equation}
This quotient is a free $\BZ$-module whose basis is formed by elements $\prod\limits_ix_i^{d_i}$, where $0\le d_i<p$ for all $i$ and $d_i=0$ for $i\gg 0 $. Indeed, these elements clearly generate $C/I$, and they are linearly independent in $(C/I)\otimes\BQ=\BQ [x_0]$. \qed

\section{The rescaled $\hat\BG_m$ and its Cartier dual}  \label{s:rescaled G_m and its Cartier dual}
As noted by the reviewer, a substantial part of this Appendix and the previous one is contained in \cite{MRT}.

\subsection{Rescaling $\hat\BG_m$}
\subsubsection{The formal group $H^!$}
Let $H^!$ be the formal group scheme over $\BA^1=\Spec\BZ [h]$ defined by the formal group law
\[
z_1*z_2=z_1+z_2+hz_1z_2.
\]
Note that $1+h\cdot (z_1*z_2)=(1+hz_1)(1+hz_2)$. So we have a homomorphism of formal groups over $\BA^1$
\begin{equation}  \label{e:H to hat G_m}
H^!\to\hat\BG_m\times\BA^1 , \quad z\mapsto 1+hz ,
\end{equation}
which induces an isomorphism over the locus $h\ne 0$.

After specializing $h$ to $1$ and $0$, the formal group $H^!$ becomes  $\hat\BG_m$ and $\hat\BG_a$, respectively. If you wish, $H^!$ is a deformation of $\hat\BG_m$ to $\hat\BG_a$.

\subsubsection{Remarks}
(i) The action of $\BG_m$ on $\BA^1$ by multiplication lifts to an action of $\BG_m$ on~$H^!$: namely, $\lambda\in\BG_m$ takes $(h,z)$ to $(\lambda h, \lambda^{-1}z)$.
So $H^!$ descends from $\BA^1$ to the quotient stack~$\BA^1/\BG_m$.

(ii) $H^!$ is obtained from $\hat\BG_m$ by rescaling depending on a parameter $h$. This is a particular case of the construction of \S\ref{ss:Deformation to Lie algebra}.

\subsubsection{Plan}
In \S\ref{ss:G in terms of Newton&Euler} (which is parallel to  \S\ref{ss:R in terms of Int}) we give a description of the Cartier dual $G^!$ of $H^!$.
In \S\ref{ss:G in terms of Witt}-\ref{ss:G^! in terms of Witt} we describe $G^!$ in terms of Witt vectors in two different ways; 
the description from  \S\ref{ss:G in terms of Witt} is quite parallel to  \S\ref{ss:R via W_big}.
In \S\ref{ss:lambda-ring structure on B_0} we discuss a certain $\lambda$-ring structure on the coordinate ring of $G^!$.

\subsection{The first description of the Cartier dual of $H^!$}  \label{ss:G in terms of Newton&Euler}
\subsubsection{The group scheme $G^!$}
Let $G^!$ be the Cartier dual of $H^!$; this is a flat affine scheme over $\BA^1=\Spec\BZ [h]$. The group scheme $\sR$ from \S\ref{ss:R in terms of Int} can be obtained from $G^!$ by specializing $h$ to~$1$. In this subsection we describe $G^!$ in the spirit of \S\ref{ss:R in terms of Int}. Later we will give two different descriptions of $G^!$
in terms of Witt vectors (see Propositions~\ref{p:G in terms of Witt} and \ref{p:G^!=G^!!}).

For any $\BZ [h]$-algebra $A$, an $A$-point of $G^!$ is a formal series $f\in 1+zA[[z]]\subset (A[[z]])^\times$ such that $f(z_1*z_2)=f(z_1)f(z_2)$. Associating $f'(0)$ to such $f$, we get a homomorphism
\begin{equation}   \label{e:G to G_a}
G^!\to\BG_a\times\BA^1
\end{equation}
of group schemes over $\BA^1=\Spec\BZ [h]$.
The coordinate ring of $\BG_a$ equals $\BZ [t]$, so \eqref{e:G to G_a} induces a homomorphism of $\BZ[h]$-algebras
\begin{equation}   \label{e:2 Z[u] to our ring}
\BZ [h,t]\to H^0(G^! ,\cO_{G^!}).
\end{equation}

By Lie theory, the homomorphism \eqref{e:G to G_a} becomes an isomorphism after base change to $\BA^1_\BQ=\Spec\BQ [h]$.
So we have a pairing $\BG_a\times H^!_{\BA^1_\BQ}\to\BG_m\times\BA^1_\BQ$, where $H^!_{\BA^1_\BQ}:=H^!\times_{\BA^1}\BA^1_\BQ$. The corresponding map
$\BG_a\times H^!_{\BA^1_\BQ}\to\BG_m$ is given by the formal series
\begin{equation}   \label{e:Euler series}
(1+hz)^{t/h}=\sum_{n=0}^\infty \frac{t(t-h)\ldots (t-h(n-1))}{n!}\cdot z^n\in (\BQ [h,t][[z]])^\times ,
\end{equation}
where $t$ is the coordinate on $\BG_a$ and $z$ is the coordinate on $H^!$. Note that after substituting $h=0$ the formal series \eqref{e:Euler series} becomes equal to $\exp (tz)$.

The homomorphism \eqref{e:2 Z[u] to our ring} becomes an isomorphism after tensoring by $\BQ$. Since $G^!$ is flat over $\BZ [h]$, we see that
$
\BZ [h,t]\subset H^0(G^! ,\cO_{G^!})\subset\BQ [h,t].
$

\begin{prop}   \label{p:G=Spec B_0}
(i) $H^0(G^! ,\cO_{G^!})=B_0$, where $B_0\subset\BQ [h,t]$ is the subring generated over $\BZ [h]$ by the polynomials  
\begin{equation}  \label{e:h-binomial coeffciients}
\frac{t(t-h)\ldots (t-h(n-1))}{n!}, \quad n\ge 0. 
\end{equation}

(ii) The polynomials \eqref{e:h-binomial coeffciients} form a basis of the $\BZ [h]$-module $B_0$.

(iii) The Hopf algebra structure on $B_0$ corresponding to the group structure on $G^!$ is given by $t\mapsto t\otimes 1+1\otimes t$.
\end{prop}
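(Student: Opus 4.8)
The plan is to mimic the proof of Proposition~\ref{p:Newton's description of sR}, now carrying $h$ along as a parameter. Throughout, write $b_n:=\frac{t(t-h)\cdots(t-(n-1)h)}{n!}\in\BQ[h,t]$ for the polynomials~\eqref{e:h-binomial coeffciients}, so that $b_0=1$ and $b_1=t$.

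For part~(i) I would argue as follows. By the functorial description of $G^!=\HHom(H^!,\BG_m)$ given above, $G^!$ is the closed subscheme of the affine scheme parametrizing power series $f=1+\sum_{n\ge1}c_nz^n$ that is cut out by the functional equation $f(z_1\ast z_2)=f(z_1)f(z_2)$ (with $z_1\ast z_2=z_1+z_2+hz_1z_2$); hence $H^0(G^!,\cO_{G^!})$ is generated over $\BZ[h]$ by the coefficients $c_n$ of the universal character. Now use the embedding $H^0(G^!,\cO_{G^!})\hookrightarrow H^0(G^!,\cO_{G^!})\otimes\BQ=\BQ[h,t]$ recorded in the paragraph preceding the statement: by formula~\eqref{e:Euler series} the universal character maps to $(1+hz)^{t/h}=\sum_{n\ge0}b_nz^n$, so $c_n\mapsto b_n$. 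Therefore $H^0(G^!,\cO_{G^!})=\BZ[h][b_1,b_2,\ldots]=B_0$.

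Part~(ii) is the real content. Linear independence of $\{b_n\}$ over $\BZ[h]$ is immediate since $\deg_t b_n=n$, so it remains to show $M:=\sum_{n\ge0}\BZ[h]b_n$ equals $B_0=\BZ[h][b_1,b_2,\ldots]$; as $b_0=1\in M$, this reduces to checking that $M$ is closed under multiplication, i.e.\ $b_mb_n\in M$. Here I would import the ring structure of the ring $\Int$ of integer-valued polynomials from Appendix~\ref{s:dual of hat G_m}: the substitution $t=hu$ identifies $\BQ[h,h^{-1},t]$ with $\BQ[h,h^{-1},u]$ and carries $b_n$ to $h^n\binom{u}{n}$. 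Writing $\binom{u}{m}\binom{u}{n}=\sum_{k=0}^{m+n}a^k_{m,n}\binom{u}{k}$ with $a^k_{m,n}\in\BZ$ (legitimate since $\{\binom{u}{k}\}_{k\ge0}$ is a $\BZ$-basis of $\Int$ and the product has $u$-degree $m+n$), we get $b_mb_n=\sum_{k=0}^{m+n}a^k_{m,n}h^{m+n-k}b_k$; all exponents $m+n-k$ are $\ge0$, so although this identity is derived in $\BQ[h,h^{-1},t]$ it is an identity of polynomials valid in $\BQ[h,t]$, and it exhibits $b_mb_n\in M$. Hence $M=B_0$ and the $b_n$ form a $\BZ[h]$-basis of $B_0$.

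Finally, part~(iii): the group law on $G^!=\HHom(H^!,\BG_m)$ is pointwise multiplication of characters, so if $f=1+c_1(f)z+\cdots$ and $g=1+c_1(g)z+\cdots$ are characters then $fg=1+(c_1(f)+c_1(g))z+\cdots$, whence $c_1$ is primitive; since $c_1=t$ under the identifications above, this is the claimed comultiplication $t\mapsto t\otimes1+1\otimes t$. The only genuine obstacle is part~(ii): parts~(i) and~(iii) are formal and parallel to the treatment of $\hat\BG_m$, whereas the integrality of the multiplication table of the $b_n$ really does rest on the corresponding classical fact for $\Int$, the one delicate point being to see that routing the computation through the localization $\BZ[h,h^{-1},t]$ introduces no denominators because only non-negative powers of $h$ occur.
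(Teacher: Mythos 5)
Your proposal is correct. Parts (i) and (iii) coincide with the paper's argument (the paper likewise reduces (i) to the reasoning of Proposition~\ref{p:Newton's description of sR} via the coefficients of the universal character, and gets (iii) from the fact that $t=c_1=f'(0)$ is the pullback of the coordinate on $\BG_a$). For part (ii), however, you take a genuinely different route. The paper stays entirely inside the $h$-deformed picture: it expands the identity $(1+hz_1)^{t/h}(1+hz_2)^{t/h}=(1+h(z_1\ast z_2))^{t/h}$ and compares coefficients of $z_1^mz_2^n$, which directly exhibits $b_mb_n$ as a $\BZ[h]$-linear combination of the $b_k$ with no reference to $\Int$. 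You instead specialize via $t=hu$, invoke the classical fact that the $\binom{u}{k}$ are a $\BZ$-basis of $\Int$ closed under multiplication, and recover integrality in $h$ by the degree count $k\le m+n$. Both arguments are complete; yours has the virtue of making transparent the Rees-algebra structure $B_0\iso\bigoplus_n h^n\Int_{\le n}$ that the paper only records afterwards in \S\ref{sss:B_0 in terms of Int} (formula~\eqref{e:B_0 in terms of Int}), at the cost of importing the basis property of $\Int$ from Appendix~\ref{s:dual of hat G_m}, whereas the paper's generating-function computation is self-contained and specializes at $h=1$ to a proof of that very fact.
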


\begin{proof}   
The proof of (i) is parallel to that of Proposition~\ref{p:Newton's description of sR}. 

Let us prove (ii). The product of two polynomials of the form \eqref{e:h-binomial coeffciients} can be represented as an $\BZ [h]$-linear combination of such polynomials using the formula
\[
(1+hz_1)^{t/h}(1+hz_2)^{t/h}=(1+h(z_1*z_2))^{t/h}, \quad \mbox{ where } z_1*z_2=z_1+z_2+hz_1z_2.
\]
So polynomials of the form \eqref{e:h-binomial coeffciients} generate $B_0$ as a $\BZ [h]$-module. They are linearly independent over $\BZ [h]$ because the polynomial \eqref{e:h-binomial coeffciients} has degree $n$ with respect to $u$.

Finally, (iii) is clear because $t$ is the pullback via \eqref{e:G to G_a} of the natural coordinate on~$\BG_a$.
\end{proof}

The following simple lemma is used in the proof of Proposition~\ref{p:G_Q^!=SpfB}(c$'$).

\begin{lem}  \label{l:simple lemma}
Let $m\in\BN$. Then

(i) the homomorphism $g_m:B_0\to B_0$ induced by the morphism $G^!\overset{m}\longrightarrow G^!$ takes $t$ to $mt$;

(ii)  $\frac{mt(mt-h)\ldots (mt-(n-1)h)}{n!}\in B_0$ for all $n$;

(iii) in $B_0[[z]]$ one has the equality
\[
\sum_{n=0}^\infty\frac{mt(mt-h)\ldots (mt-(n-1)h)}{n!}\cdot z^n=(1+hvz)^{\frac{t}{h}},
\]
where $v:=\frac{(1+hz)^m-1}{hz}\in\BZ [h,z]$ and $(1+hvz)^{\frac{t}{h}}:=\sum\limits_{n=0}^\infty\frac{t(t-h)\ldots (t-(n-1)h)}{n!}\cdot v^nz^n$.
\end{lem}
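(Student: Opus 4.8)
\textbf{Proof proposal for Lemma~\ref{l:simple lemma}.} The plan is to prove (i), then deduce (ii) and (iii) simultaneously from an explicit series identity.

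For (i): the morphism $G^!\overset{m}\longrightarrow G^!$ is multiplication by $m$ in the group structure, which by Proposition~\ref{p:G=Spec B_0}(iii) is the ``additive'' group law on $B_0$ with primitive element $t$. Hence the induced Hopf algebra endomorphism $g_m$ takes the primitive $t$ to $mt$ (for $m\ge 0$ this is immediate from the iterated comultiplication; alternatively it suffices to check on the dual side, where multiplication by $m$ on $H^!$ acts on $\Lie(H^!)$ by $m$, and $t$ is precisely the pullback of the coordinate on $\BG_a$ via \eqref{e:G to G_a}). Since $m\in\BN$ this requires no division.

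For (ii) and (iii): by Proposition~\ref{p:G=Spec B_0}, the pairing $G^!\times H^!\to\BG_m$ is given by the Euler series \eqref{e:Euler series}, i.e.\ by $(1+hz)^{t/h}=\sum_{n}\frac{t(t-h)\ldots(t-(n-1)h)}{n!}z^n$, with coefficients in $B_0$. Now compose the morphism $G^!\overset{m}\longrightarrow G^!$ with this pairing. On one hand, pulling back the pairing along $m\colon G^!\to G^!$ replaces $t$ by $g_m(t)=mt$ by part (i), giving the series $\sum_n \frac{mt(mt-h)\ldots(mt-(n-1)h)}{n!}z^n$; since this is a pullback of a morphism to $\BG_m$ whose coefficients lie in $B_0$, each coefficient $\frac{mt(mt-h)\ldots(mt-(n-1)h)}{n!}$ lies in $B_0$, which is (ii). On the other hand, $m\colon G^!\to G^!$ is Cartier dual to $m\colon H^!\to H^!$, so pairing $g_m(\xi)$ with $\zeta\in H^!$ equals pairing $\xi$ with $m\cdot\zeta$; the point $m\cdot z$ of $H^!$, written in the coordinate $z$, is $m\cdot z = \frac{(1+hz)^m-1}{h}=vz$ where $v=\frac{(1+hz)^m-1}{hz}\in\BZ[h,z]$ (this is just the $m$-fold $*$-product of $z$ with itself, since $1+h(z*\cdots*z)=(1+hz)^m$). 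Substituting $z\mapsto vz$ into \eqref{e:Euler series} gives $(1+hvz)^{t/h}=\sum_n\frac{t(t-h)\ldots(t-(n-1)h)}{n!}v^nz^n$. Equating the two expressions for the pulled-back pairing yields the identity in (iii).

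I do not expect any serious obstacle here: the lemma is essentially a bookkeeping exercise packaging the statement that multiplication by $m$ on the formal group $H^!$ sends $z$ to $vz$, dualized through the explicit pairing \eqref{e:Euler series}. The only mild subtlety is to make sure all manipulations of power series take place in $B_0[[z]]$ (rather than $\BQ[h,t][[z]]$), which is handled by observing that the coefficients on the left-hand side of (iii) are exactly the pulled-back coefficients of a morphism $G^!\to\BG_m$ and hence a priori lie in $H^0(G^!,\cO_{G^!})=B_0$; this is what makes (ii) a genuine consequence rather than an independent integrality check.
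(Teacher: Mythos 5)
Your proof is correct. Parts (i) and (ii) are handled exactly as in the paper: (i) is read off from Proposition~\ref{p:G=Spec B_0}(iii) (the primitivity of $t$), and (ii) follows because the expression in question is the image under the ring homomorphism $g_m:B_0\to B_0$ of the basis element $\frac{t(t-h)\cdots(t-(n-1)h)}{n!}\in B_0$. For (iii) you take a mildly different route: you interpret both sides as the pairing $G^!\times H^!\to\BG_m$ of \eqref{e:Euler series} precomposed with multiplication by $m$ on one factor or the other, and invoke bilinearity together with the computation $m\cdot z=\frac{(1+hz)^m-1}{h}=vz$ in $H^!$. The paper instead simply observes that $B_0[[z]]$ embeds in $\BQ[h,t][[z]]$ (as $B_0\subset\BQ[h,t]$), where the identity reduces to the classical fact $(1+hvz)^{t/h}=\bigl((1+hz)^m\bigr)^{t/h}=(1+hz)^{mt/h}$. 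The two arguments are essentially the same computation; yours makes the group-theoretic content explicit (which is pleasant and also re-derives (ii) for free), while the paper's is shorter since the integrality needed for (iii) is already secured by (ii) and one only needs an identity of rational power series.
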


\begin{proof}
Statement (i) follows from Proposition~\ref{p:G=Spec B_0}(iii). The expression from (ii) is just $g_m(t)$, where $g_m$ is as in (i); so (ii) is clear.
In statement (iii)  one can replace $B_0$ by $B_0\otimes\BQ=\BQ [h,t]$, so (iii) is classical.
\end{proof}

\subsubsection{The homomorphism $\sR\times\BA^1\to G^!$}
Recall that $\sR$ is the Cartier dual of $\hat\BG_m$. So the Cartier dual of \eqref{e:H to hat G_m} is a homomorphism
\begin{equation}  \label{e:R to G}
\sR\times\BA^1\to G^!
\end{equation}
of group schemes over $\BA^1=\Spec\BZ [h]$, which induces an isomorphism over the locus $h\ne 0$.

\subsubsection{Relation between $B_0$ and $\Int$} \label{sss:B_0 in terms of Int}
The map \eqref{e:R to G} induces a homomorphism of $\BZ [h]$-algebras 
\begin{equation} \label{e:B_0 to Int[h]}
B_0\to\Int [h], 
\end{equation}
which becomes an isomorphism after base change to $\BZ [h,h^{-1}]$. The homorphism \eqref{e:B_0 to Int[h]} takes $t$ to $hu$, so the polynomial \eqref{e:h-binomial coeffciients} goes to $h^n\binom{u}{n}$. 

Equip $\BQ [h,t]$ with the grading such that $\deg h=\deg t=1$, then $B_0$ is a graded subring of $\BQ [h,t]$. Equip $\Int [h]$ with the grading such that $\deg h=1$ and all elements of $\Int$ have degree $0$. Then the homomorphism \eqref{e:B_0 to Int[h]} is graded.

The subring $\Int\subset\BQ [u]$ from Proposition~\ref{p:Newton's description of sR} is filtered by degree of polynomials. Let $\Int_{\le n}$ be the $n$-th term of this filtration. It is easy to see that \eqref{e:B_0 to Int[h]} induces an isomorphism
\begin{equation}  \label{e:B_0 in terms of Int}
B_0\iso\bigoplus_n h^n\Int_{\le n}.
\end{equation}
Thus the graded $\BZ [h]$-algebra $B_0$ is obtained from the filtered ring $\Int$ by a very familiar procedure.

\subsubsection{Remarks}  \label{sss:remarks on B_0 in terms of Int}
(i) $B_0/hB_0=\gr\Int$ is the ring of divided powers polynomials in $u$.

(ii) One can rewrite \eqref{e:B_0 in terms of Int} as an isomorphism
\begin{equation}   \label{e:2 B_0 in terms of Int}
B_0\iso \Int [h]\cap\BQ [h,hu]\subset\BQ [h,u];  
\end{equation}
under this isomorphism $t\in B_0$ corresponds to $hu\in\BQ [h,u]$. 
Note that \eqref{e:2 B_0 in terms of Int} induces an isomorphism
$B_0\otimes\BQ\iso\BQ [h,hu]\subset\BQ [h,u]$.
\

\subsection{$\lambda$-ring structure on $B_0$}  \label{ss:lambda-ring structure on B_0}
\subsubsection{Notation}
Let $q:=h+1\in\BZ [h]$; then $\BZ [h]=\BZ [q]$. 

\subsubsection{A $\lambda$-ring structure on $\BZ [h]$}   \label{sss:lambda-ring structure on Z [h]}
By \S\ref{sss:Wilkerson}, there is a unique $\lambda$-ring structure on $\BZ [h]=\BZ [q]$ such that $\psi^n(q)=q^n$ for all $n\in\BN$.

Another way to get this $\lambda$-ring structure is to chose a field $k$ and to identify $\BZ [q, q^{-1}]$ (resp.~$\BZ [q]$) with the Grothendieck ring of the category of finite-dimensional representations of $(\BG_m)_k$ (resp.~of the multiplicative monoid over $k$) so that $q$ identifies with the class of the tautological $1$-dimensional representation.

Let us note that the $\lambda$-ring $\BZ [q]$ is studied in Pridham's article  \cite{Pri}.

\medskip

In the next lemma we define a $\lambda$-ring structure on $B_0$; the definition will be motivated by Lemma~\ref{l:motivation of lambda-structure}(ii).

\begin{lem} \label{l:B_0 as lambda-ring}
Consider $B_0$ as a graded ring (see \S\ref{sss:B_0 in terms of Int}). For $n\in\BN$ let $\psi^n$ be the endomorphism of $B_0$ whose restriction to the $m$-th graded piece of $B_0$ is multiplication by $(\frac{q^n-1}{q-1})^m$. Then

(i) the endomorphisms $\psi^n$ define a $\lambda$-ring structure on $B_0$;

(ii) in $B_0$ one has $\psi^n (q)=q^n$, so the map $\BZ[q]=\BZ[h]\mono B_0$ is a homomorphism of $\lambda$-rings; 

(iii) the diagram
\[
\xymatrix{
B_0\ar[r]^-\Delta\ar[d]_{\psi^n} & B_0\otimes_{\BZ[q]}B_0\ar[d]^{\psi^n\otimes\psi^n}\\
B_0\ar[r]^-\Delta & B_0\otimes_{\BZ[q]}B_0
}
\]
commutes, where $\Delta$ is the coproduct. 
\end{lem}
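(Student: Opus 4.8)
\textbf{Proof proposal for Lemma~\ref{l:B_0 as lambda-ring}.}

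The plan is to verify all three statements by exploiting the torsion-freeness of $B_0$ together with Wilkerson's criterion (\S\ref{sss:Wilkerson}), and to reduce everything about the comultiplication to a calculation on the single generator $t$. First I would observe that each $\psi^n$ as defined is a well-defined ring endomorphism of $B_0$: since $B_0=\bigoplus_m (B_0)_m$ is graded and $\psi^n$ acts on $(B_0)_m$ by the scalar $\left(\frac{q^n-1}{q-1}\right)^m\in\BZ[q]$, multiplicativity follows because $(B_0)_m\cdot (B_0)_{m'}\subset (B_0)_{m+m'}$ and the scalars multiply accordingly; additivity within each graded piece is clear. One also checks $\psi^1=\id$ (the scalar is $1$ in every degree) and $\psi^m\circ\psi^n=\psi^{mn}$, because on $(B_0)_m$ the composite scalar is $\left(\frac{q^{n}-1}{q-1}\right)^m\cdot\left(\frac{q^{mn}-1}{q^n-1}\right)^m=\left(\frac{q^{mn}-1}{q-1}\right)^m$. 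Thus $n\mapsto\psi^n$ is an action of the multiplicative monoid $\BN$ by ring endomorphisms.

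For (i), by Wilkerson's theorem (\S\ref{sss:Wilkerson}) it suffices to know that $B_0$ is torsion-free (true by Proposition~\ref{p:G=Spec B_0}(ii), since it is free as a $\BZ[h]$-module and $\BZ[h]$ is torsion-free over $\BZ$) and that for every prime $p$ and every $x\in B_0$ one has $\psi^p(x)\equiv x^p \pmod p$. Because $\psi^p$ and $x\mapsto x^p$ are both ring endomorphisms of $B_0$, the set of $x$ satisfying the congruence is a subring; hence it is enough to check it on a generating set. By Proposition~\ref{p:G=Spec B_0}(ii) the ring $B_0$ is generated over $\BZ$ by $q$ and by the elements $\beta_n:=\frac{t(t-h)\ldots(t-(n-1)h)}{n!}$, which lie in degree $n$. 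On $q$ (degree $1$) we have $\psi^p(q)=\frac{q^p-1}{q-1}\cdot q$, but actually $q\in(B_0)_1$ means $\psi^p(q)=\frac{q^p-1}{q-1}\,q$; I must reconcile this with the claimed $\psi^p(q)=q^p$ in (ii) — the point is precisely that $q=1+h$ and $h\in(B_0)_1$, so $\psi^p(q)=\psi^p(1)+\psi^p(h)=1+\frac{q^p-1}{q-1}h=1+(q^{p-1}+\dots+1)(q-1)=q^p$, which gives (ii) directly and shows $\psi^p(q)-q^p=0$, a fortiori $\equiv 0\pmod p$. On $\beta_n$ one computes $\psi^p(\beta_n)=\left(\tfrac{q^p-1}{q-1}\right)^n\beta_n$; using $\tfrac{q^p-1}{q-1}\equiv (q-1)^{p-1}\pmod p$ — indeed modulo $p$ one has $q^p-1\equiv(q-1)^p$ — and comparing with $\beta_n^p$ via the explicit description $B_0\cong\bigoplus_n h^n\Int_{\le n}$ from \eqref{e:B_0 in terms of Int} (where $\beta_n\mapsto h^n\binom{u}{n}$ and the congruence reduces to Lemma~\ref{l:Fr=id}, $\Fr=\id$ on $\Int/p\Int$, together with $h^p\equiv h^{?}$ bookkeeping in the grading), one gets the required congruence. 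This is the step I expect to be the main technical obstacle: making the degree/grading bookkeeping for $h$ versus $q$ consistent, and matching $\psi^p(\beta_n)$ with $\beta_n^p$ modulo $p$ through the isomorphism with $\bigoplus_n h^n\Int_{\le n}$. Once this is done, Wilkerson gives a unique $\lambda$-ring structure with these Adams operations, and (i) follows.

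For (ii), the computation $\psi^n(q)=\psi^n(1+h)=1+\frac{q^n-1}{q-1}h=q^n$ above already shows $\psi^n(q)=q^n$ for all $n$, and since the $\lambda$-ring structure on $\BZ[q]=\BZ[h]$ from \S\ref{sss:lambda-ring structure on Z [h]} is the unique one with $\psi^n(q)=q^n$, the inclusion $\BZ[q]\mono B_0$ intertwines the Adams operations; by Wilkerson's equivalence (or by the fact that a ring map commuting with all $\psi^n$ between torsion-free $\lambda$-rings is a $\lambda$-homomorphism) this is a homomorphism of $\lambda$-rings. For (iii), since $B_0$ and $B_0\otimes_{\BZ[q]}B_0$ are torsion-free and both $\Delta\circ\psi^n$ and $(\psi^n\otimes\psi^n)\circ\Delta$ are ring homomorphisms $B_0\to B_0\otimes_{\BZ[q]}B_0$, it suffices to check they agree on the algebra generators $q$ and $t$. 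On $q$: $\Delta(q)=q\otimes 1$ (the base ring is $\BZ[q]$), and both sides give $q^n\otimes 1$. On $t$: by Proposition~\ref{p:G=Spec B_0}(iii), $\Delta(t)=t\otimes 1+1\otimes t$; since $t\in(B_0)_1$, $\psi^n(t)=\frac{q^n-1}{q-1}t$, so $\Delta(\psi^n(t))=\frac{q^n-1}{q-1}(t\otimes 1+1\otimes t)=(\psi^n\otimes\psi^n)(t\otimes 1+1\otimes t)=(\psi^n\otimes\psi^n)(\Delta(t))$. Hence the diagram commutes. \qed
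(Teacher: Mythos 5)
Your proof is correct and follows essentially the same route as the paper's: deriving $\psi^n(q)=q^n$ from $h=q-1$ being homogeneous of degree $1$, using Wilkerson's criterion together with the identification $B_0\cong\bigoplus_n h^n\Int_{\le n}$ and Lemma~\ref{l:Fr=id} for (i), and checking (iii) rationally on $q$ and $t$. The only wording slip is that $q$ and $t$ generate $B_0\otimes\BQ=\BQ[h,t]$ rather than $B_0$ itself — your torsion-freeness remark already supplies the needed reduction — and the mod-$p$ ``bookkeeping'' you flag does go through exactly as outlined, since $\psi^p(\beta_n)\equiv h^{n(p-1)}\beta_n\equiv h^{np}\tbinom{u}{n}\equiv\beta_n^p\pmod{pB_0}$.
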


\begin{proof}
By the definition of $\psi^n:B_0\to B_0$, in the ring $B_0$ we have $\psi^n(q-1)=q^n-1$ (because $q-1=h$ is in the degree $1$ graded piece) and therefore
$\psi^n(q)=q^n$.

Let us prove (i). It is easy to check that $\psi^n\circ\psi^{n'}=\psi^{nn'}$. So by \S\ref{sss:Wilkerson}, it remains to check that for every prime $p$ the endomorphism of $B_0/pB_0$ induced by $\psi^p$ equals the Frobenius. This follows from \eqref{e:B_0 in terms of Int}, the fact that $\psi^p(h)\equiv h^p \mod p$, and Lemma~\ref{l:Fr=id}, which says that the Frobenius endomorphism of $\Int/p\Int$ equals the identity.

To prove (iii), recall that $B_0\otimes\BQ=\BQ[h,t]$, $\Delta (t)=t\otimes 1+1\otimes t$ (see Proposition~\ref{p:G=Spec B_0}(iii)) and 
$\psi^n(t)=\frac{q^n-1}{q-1}\cdot t$.
\end{proof}

\subsubsection{The morphisms $\Psi_n:G^!\to G^!$}   \label{sss:Psi_n}
The endomorphisms $\psi^n\in\End\BZ [h]$ and $\psi^n\in\End B_0$ induce maps $\Psi_n:\BA^1\to\BA^1$ and $\Psi_n:G^!\to G^!$. 
By Lemma~\ref{l:B_0 as lambda-ring}(ii), the diagram
\[
\xymatrix{
G^!\ar[r]^{\Psi_n} \ar[d] & G^!\ar[d]\\
\BA^1\ar[r]^{\Psi_n} & \BA^1
}
\]
commutes, so we get a morphism
\begin{equation}  \label{e:G^! to Psi_n^*G^!}
G^!\to\Psi_n^*G^!
\end{equation}
of schemes over $\BA^1$, where $\Psi_n^*G^!$ is the pullback of $G^!$ via $\Psi_n:\BA^1\to\BA^1$. 
Moreover, \eqref{e:G^! to Psi_n^*G^!} is a group homomorphism by Lemma~\ref{l:B_0 as lambda-ring}(iii).

\begin{lem}  \label{l:motivation of lambda-structure}
(i) Let $n\in\BN$. Let $\Psi_n^*H^!$ be the pullback of $H^!$ via $\Psi_n:\BA^1\to\BA^1$. Then there is a unique group homomorphism
\begin{equation}   \label{e:Psi_n^*H^! to H^!}
\Psi_n^*H^!\to H^!
\end{equation}
which makes the following diagram commute:
\begin{equation}   \label{e:diagram defining our map}
\xymatrix{
\Psi_n^*H^!\ar[r] \ar[rd]& H^!\ar[d]   \\
& \hat\BG_m\times\BA^1  
}
\end{equation}
Here the vertical arrow is the map \eqref{e:H to hat G_m} and the diagonal one is its pullback via $\Psi_n:\BA^1\to\BA^1$.

(ii) The homomorphisms \eqref{e:G^! to Psi_n^*G^!} and \eqref{e:Psi_n^*H^! to H^!}  are Cartier dual to each other.
\end{lem}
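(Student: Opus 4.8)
The plan is to prove (i) first and then deduce (ii) by a formal duality argument. For (i), the point is that $\Psi_n:\BA^1\to\BA^1$ is the morphism $h\mapsto$ (class such that) $q\mapsto q^n$, i.e., $1+h\mapsto(1+h)^n$. So the pullback $\Psi_n^*H^!$ is the formal group over $\Spec\BZ[h]$ with group law $z_1*z_2=z_1+z_2+((1+h)^n-1)z_1z_2$, and the diagonal arrow in \eqref{e:diagram defining our map} is $z\mapsto 1+((1+h)^n-1)z=(1+h)^n(1+\ldots)$; more precisely it is the composite of $\Psi_n^*H^!\to\hat\BG_m\times\BA^1$, $z\mapsto 1+((1+h)^n-1)z$. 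First I would observe that since $1+((1+h)^n-1)z=(1+hvz)\cdot(\text{something})$ where $v=\frac{(1+hz)^n-1}{hz}$ — wait, rather, the cleanest formulation: $1+((1+h)^n-1)z_1*z_2$ must map to $(1+hw_1)(1+hw_2)$ for the images $w_i$ we seek. Concretely, define the map $\Psi_n^*H^!\to H^!$ in the coordinate $z$ by $z\mapsto w:=vz$ where $v=v(h,z):=\frac{(1+hz)^n-1}{hz}\in\BZ[h,z]$ (this is a polynomial since the numerator is divisible by $hz$; note $v\equiv 1$ mod the augmentation ideal, so it is an honest morphism of based formal disks, and in fact an isomorphism onto its image is not claimed — only a homomorphism). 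One checks $1+h\cdot(v z)=1+((1+h)^n-1)z$ after the substitution coming from $\Psi_n$, wait — I must be careful: the target $H^!$ has group law with parameter $h$, while the source has parameter "$(1+h)^n-1$". The map $z\mapsto vz$ intertwines them precisely because $1+h\cdot v(h,z)z=(1+hz)^n$ and the map \eqref{e:H to hat G_m} for the source sends $z\mapsto 1+((1+h)^n-1)z=(1+h\cdot(\text{its }v))$... The key algebraic identity is $(1+hz)^n = 1+h\cdot v(h,z)\cdot z$, which is exactly the definition of $v$. So the diagram \eqref{e:diagram defining our map} commutes with the vertical arrow $w\mapsto 1+hw$, the diagonal arrow $z\mapsto(1+hz)^n = 1+((1+h)^n-1)z$, and the horizontal arrow $z\mapsto vz$. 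That this horizontal arrow is a group homomorphism $\Psi_n^*H^!\to H^!$ then follows because $z\mapsto(1+hz)^n$ is a homomorphism to $\hat\BG_m\times\BA^1$ and the vertical arrow $w\mapsto 1+hw$ is a monomorphism of formal groups over $\BA^1$ (it is a closed immersion after inverting $h$, and in general its kernel is trivial because $1+hw=1\Rightarrow w=0$ on a based formal disk); hence the factorization, if it exists, is unique and is automatically a homomorphism. Uniqueness is then immediate from the monomorphism property.

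For the existence of the factorization through $H^!$ (rather than merely a map to $\hat\BG_m\times\BA^1$): I would argue that $z\mapsto vz$ visibly lands in $H^!=\Spf\BZ[h][[z]]$, and the group-law compatibility is the identity $v(h,z_1*'z_2)\cdot(z_1*'z_2)=v(h,z_1)z_1 * v(h,z_2)z_2$ where $*'$ is the source law and $*$ the target law; this unwinds (multiply both sides' "$1+h\cdot$") to $(1+hz_1)^n(1+hz_2)^n=(1+h z_1)^n (1+hz_2)^n$ using $1+h(z_1*'z_2)=(1+h)^n$-rescaled product — i.e., $1+h'(z_1*'z_2)=(1+h'z_1)(1+h'z_2)$ with $h'=(1+h)^n-1$, raised appropriately. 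This is routine and I would not grind it out.

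For (ii), the statement is that \eqref{e:G^! to Psi_n^*G^!} and \eqref{e:Psi_n^*H^! to H^!} are Cartier dual. Both are morphisms of flat commutative group schemes / formal groups over $\BA^1=\Spec\BZ[h]$, and Cartier duality is an (anti)equivalence on the relevant categories (formal groups whose Cartier duals are flat affine, by the lemmas in \S\ref{ss:def of Polyd}--\S\ref{ss:def of 1-dim formal group}). So it suffices to identify one as the dual of the other. I would compute the dual of \eqref{e:Psi_n^*H^! to H^!} on coordinate rings: the dual map goes $H^0(G^!,\cO_{G^!})=B_0\to\Psi_n^*B_0$, and I claim it is exactly $\psi^n$ of Lemma~\ref{l:B_0 as lambda-ring}. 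Indeed, the pairing $(1+hz)^{t/h}$ from \eqref{e:Euler series} is the universal pairing $G^!\times H^!\to\BG_m\times\BA^1$; precomposing with $z\mapsto vz$ on the $H^!$ factor gives $(1+h\cdot vz)^{t/h}=(1+hz)^{nt/h}$, and by Lemma~\ref{l:simple lemma}(iii) (with $m=n$) this equals $\sum_n\frac{nt(nt-h)\cdots}{n!}z^n$, i.e., the effect of the substitution $t\mapsto$ "the element whose pairing is rescaled", which on the $m$-th graded piece of $B_0$ is multiplication by $(q^n-1)/(q-1)$ since $t$ lives in degree $1$ and $\psi^n(t)=\frac{q^n-1}{q-1}t=\frac{(1+h)^n-1}{h}t$, noting $\frac{(1+h)^n-1}{h}=v(h,0)$... here I need $v(h,1)$ actually, $=\frac{(1+h)^n-1}{h}$, matching. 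This pins down the dual of \eqref{e:Psi_n^*H^! to H^!} as $\Psi_n$ on $G^!$, which is \eqref{e:G^! to Psi_n^*G^!} by definition (\S\ref{sss:Psi_n}).

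The main obstacle I anticipate is purely bookkeeping: keeping straight the two copies of the parameter (the source group's parameter is $h'=(1+h)^n-1$, pulled back along $\Psi_n$, versus the target's $h$) and making sure the factorization through $H^!$ — not just through $\hat\BG_m\times\BA^1$ — is literally the polynomial map $z\mapsto v(h,z)z$ with $v\in\BZ[h,z]$, so that everything is defined integrally and no denominators sneak in. Once the identity $(1+hz)^n=1+h\,v(h,z)\,z$ is in hand, both parts are formal consequences of Cartier duality plus Lemma~\ref{l:simple lemma}(iii).
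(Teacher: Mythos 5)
There is a genuine error at the heart of part (i): the map you write down is the wrong one. You set $w=v(h,z)\,z$ with $v(h,z)=\frac{(1+hz)^n-1}{hz}$, i.e.\ $1+hw=(1+hz)^n$; this is the multiplication-by-$n$ endomorphism $[n]:H^!\to H^!$, not a homomorphism out of $\Psi_n^*H^!$. The diagonal arrow of \eqref{e:diagram defining our map} is $z\mapsto 1+((1+h)^n-1)z$ (you identify it correctly at first), and your claimed identity ``$(1+hz)^n=1+((1+h)^n-1)z$'' is false for $n\ge 2$ (the left side has a nonzero $z^2$-term $\binom{n}{2}h^2z^2$), so your diagram does not commute. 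The map that \emph{does} make the diagram commute is forced by $1+(q-1)z=1+(q^n-1)y$ to be the \emph{linear} rescaling $z=\frac{q^n-1}{q-1}\,y=(1+q+\dots+q^{n-1})y$, which is exactly what the paper uses; that it is a homomorphism from the group law $y_1+y_2+(q^n-1)y_1y_2$ to $z_1+z_2+(q-1)z_1z_2$ is a one-line check, and uniqueness holds because $q-1$ is a nonzerodivisor in $\BZ[q][[y]]$ (your ``monomorphism'' claim for $z\mapsto 1+hz$ is literally false over $q=1$, but this torsion-freeness argument salvages uniqueness).

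The error propagates to (ii): dualizing your map against the pairing \eqref{e:Euler series} gives $(1+hz)^{nt/h}$, i.e.\ $t\mapsto nt$, which is the dual of $[n]$ and \emph{not} the map \eqref{e:G^! to Psi_n^*G^!}, whose effect on $t$ is $\psi^n(t)=\frac{q^n-1}{q-1}t$ (these agree only after setting $q=1$) --- hence the confusion in your text about $v(h,0)$ versus $v(h,1)$. With the corrected linear map the computation is clean: precomposing the pairing with $y\mapsto\frac{q^n-1}{q-1}y$ yields $(1+(q^n-1)y)^{t/(q-1)}$, so the dual sends $t\mapsto\frac{q^n-1}{q-1}t=\psi^n(t)$, and since $t$ generates $B_0\otimes\BQ$ over $\BQ[q]$ and $B_0$ is torsion-free, this pins the dual down as $\Psi_n$. (Your overall strategy for (ii) --- identify the dual via the explicit universal pairing rather than via the intermediate diagram with $\sR\times\BA^1$ that the paper uses --- is legitimate; only the input from (i) is wrong.)
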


\begin{proof}
(i) $H^!$ is the formal group over $\BA^1$ given by the group law $z_1*z_2=z_1+z_2+(q-1)z_1z_2$. 
So $\Psi_n^*H^!$ is given by the group law $y_1*y_2=y_1+y_2+(q^n-1)y_1y_2$. The homomorphism \eqref{e:Psi_n^*H^! to H^!} is given by $z=\frac{q^n-1}{q-1}\cdot y$.

(ii) The Cartier dual of the vertical arrow of \eqref{e:diagram defining our map} is the homomorphism $f:\sR\times\BA^1\to G^!$ from \eqref{e:R to G}. So it suffices to check  commutativity of the diagram
\[
\xymatrix{
\Psi_n^*G^! & G^!\ar[l]   \\
& \sR\times\BA^1 \ar[ul]^{\Psi_n^*(f)} \ar[u]_f
}
\]
whose horizontal arrow is \eqref{e:G^! to Psi_n^*G^!}. This is equivalent to commutativity of the diagram
\begin{equation}     \label{e:Psi_n in terms of sR times A^1}
\xymatrix{
\sR\times\BA^1\ar[r]^-f \ar[d]_{\id_{\sR}\times\Psi_n} & G^!\ar[d]^{\Psi_n}\\
\sR\times\BA^1\ar[r]^-f & G^!
}
\end{equation}
and then (after passing to coordinate rings) to commutativity of the diagram
\[
\xymatrix{
\Int\otimes\BZ [h]\ar[d]_{\id\otimes\psi^n}\ & B_0 \ar[d]^{\psi^n}\ar[l] \\
\Int\otimes\BZ [h]& B_0 \ar[l]
}
\]
in which each horizontal arrow is the homomorphism \eqref{e:B_0 to Int[h]}. The commutativity of the latter diagram is clear from the definition of $\psi^n:B_0\to B_0$ from
Lemma~\ref{l:B_0 as lambda-ring}.
\end{proof}

\subsubsection{The $\delta$-ring $B_0\otimes\BZ_{(p)}$}  \label{sss:B_0 as delta-ring}
Fix a prime $p$. Let $\BZ_{(p)}$ be the localization of $\BZ$ at $p$. Let $\phi\in\End (B_0\otimes\BZ_{(p)})$ be induced by $\psi^p\in\End B_0$.
For every $b\in B_0\otimes\BZ_{(p)}$, the element $\delta (b):=\frac{\phi (b)-b^p}{p}$ belongs to $B_0\otimes\BZ_{(p)}$ by Lemma~\ref{l:B_0 as lambda-ring}(i).
The map $\delta :B_0\otimes\BZ_{(p)}\to B_0\otimes\BZ_{(p)}$ makes $B_0\otimes\BZ_{(p)}$ into a $\delta$-ring in the sense of \cite{JoyalDelta} and \cite{BS}. The subring
$\BZ_{(p)}[q]\subset B_0\otimes\BZ_{(p)}$ is a $\delta$-subring.

By the definition of $\psi^p$ (see  Lemma~\ref{l:B_0 as lambda-ring}), the element $t\in B_0\otimes\BZ_{(p)}$ satisfies the relation 
$\psi^p (t):=\frac{q^p-1}{q-1}\cdot t$ or equivalently,
\begin{equation}   \label{e:defining relation}
t^p+p\delta (t)=\Phi_p(q)\cdot t.
\end{equation}
On the other hand, let $C$ be the $\delta$-algebra over $\BZ_{(p)}[q]$ with a single generator (denoted by $t$) and the \emph{defining} relation \eqref{e:defining relation}. We claim that \emph{the canonical homomorphism $C\to B_0\otimes\BZ_{(p)}$ is an isomorphism.} Indeed, elements of the form
\[
\prod_i (\delta^i(t))^{d_i}, \quad \mbox{where } 0\le d_i<p  \mbox{ for all } i \mbox{ and } d_i=0 \mbox{ for }i\gg 0 
\]
generate\footnote{To see this, note that $\delta^(t)^p=\phi (\delta^i(t))-p\delta^{i+1}(t)=\delta^i(\Phi_p(q)\cdot t)-p\delta^{i+1}(t)$.} the $\BZ_{(p)}[q]$-module $C$ and form a basis of the $\BZ_{(p)}[q]$-module $B_0\otimes\BZ_{(p)}$ (the latter is similar to Lemma~\ref{l:generators of Int otimesZ_p}).

\subsubsection{Some generalizations} 
The generalizations discussed here are not used in the rest of the article.

(i) In \S\ref{sss:lambda-ring structure on Z [h]} we set $\psi^n(h):=(1+h)^n-1$. This choice of $\psi^n$ is motivated by our interest in the $q$-de Rham prism. On the other hand, one could set $\psi^n(h):=h^n$ and define $\psi^n:B_0\to B_0$ by setting $\psi^n(b)=h^{m(n-1)}$ for $b$ in the $m$-th graded piece of $B_0$. Then we would still get a $\lambda$-ring structure on $\BZ [h]$ and $B_0$; moreover, Lemmas~\ref{l:B_0 as lambda-ring} and \ref{l:motivation of lambda-structure} would remain valid.

(ii) In \S\ref{sss:B_0 as delta-ring} we considered the $\delta$-ring structure on $B_0\otimes\BZ_{(p)}$ corresponding to the endomorphism of $B_0\otimes\BZ_{(p)}$ that acts on the $m$-th graded piece as multiplication by $((1+h)^p-1)/h)^m$. If we replace $((1+h)^p-1)/h$ by any polynomial $f\in\BZ_{(p)}[h]$ congruent to $h^{p-1}$ modulo $p$ we would still get a $\delta$-ring structure on $B_0\otimes\BZ_{(p)}$ such that the elements $\delta^i(t)$, $i\ge 0$, generate $B_0\otimes\BZ_{(p)}$ over $\BZ_{(p)}[h]$.

\subsection{The group scheme $G^!$ in terms of Witt vectors.\,I}  \label{ss:G in terms of Witt}
\subsubsection{$\lambda$-schemes}
By a $\lambda$-scheme we mean a scheme $X$ equipped with a collection of endomorphisms $\Psi_n:X\to X$, $n\in\BN$, such that $\Psi_m\circ\Psi_n=\Psi_{mn}\,$, $\Psi_1=\id$, and for every prime $p$ the morphism $\Psi_p:X\otimes\BF_p\to X\otimes\BF_p$ equals $\Fr_{X\otimes\BF_p}\,$. (This definition is good enough for us because we will be dealing with schemes flat over $\BZ$.) Similarly to \S\ref{sss:group delta-scheme} we have the notion of group $\lambda$-scheme over a $\lambda$-scheme.

\subsubsection{Plan of \S\ref{ss:G in terms of Witt}-\ref{ss:G^! in terms of Witt}} 
$G^!$ is a group $\lambda$-scheme over the $\lambda$-scheme $\BA^1=\Spec\BZ [q]$ (see \S\ref{sss:lambda-ring structure on Z [h]}-\ref{sss:Psi_n}).
We will describe two realizations of  
this group $\lambda$-scheme in terms of Witt vectors, denoted by $G^{!?}$ and $G^{!!}$. The definitions of $G^{!?}$ and $G^{!!}$ are given in \S\ref{sss:G^!?} and \S\ref{sss:G^!!}, respectively. According to Propositions~\ref{p:G in terms of Witt} and \ref{p:G^!=G^!!}, the group $\lambda$-schemes $G^!$, $G^{!?}$, and $G^{!!}$ are canonically isomorphic.

Probably $G^{!!}$ is better than $G^{!?}$ (this opinion is influenced, in part, by my correspondence with Lance Gurney). 
However, let us start with $G^{!?}$, which is obtained by rescaling \S\ref{ss:R via W_big} in a straightforward way.

\subsubsection{Definition of $G^{!?}$} \label{sss:G^!?}
Let $W_{\bbig}$ be the ring scheme of ``big'' Witt vectors (so $W_{\bbig}\times\BA^1$ is a ring scheme over $\BA^1$).
Define $G^{!?}\subset W_{\bbig}\times\BA^1$ to be the following subgroup:
\begin{equation} \label{e:G^!? in terms of big Witt}
G^{!?}:= \{(w,q)\in W_{\bbig}\times\BA^1 \,|\, F_m (w)=[q-1]^{m-1}w \mbox{ for all } m\in\BN\}.
\end{equation}
For $n\in\BN$ define $\Psi_n:G^{!?}\to G^{!?}$ by 
\begin{equation}  \label{e:2 Psi_n(w,q)}
\Psi_n(w,q)=(\Big [\frac{q^n-1}{q-1}\Big ] \cdot w,q^n).
\end{equation}
It is easy to check that for each prime $p$ the morphism $\Psi_p:G^{!?}\otimes\BF_p\to G^{!?}\otimes\BF_p$ is equal to the Frobenius.
So $G^{!?}$ is a group $\lambda$-scheme over $\BA^1=\Spec\BZ [q]$.

\subsubsection{Remark}
Let $p$ be a prime and $W$ the ring scheme of $p$-typical Witt vectors. Similarly to the proof of Lemma~\ref{l:W_big & W}, one shows that the canonical epimorphism
$W_{\bbig}\epi W$ induces an isomorphism
\begin{equation}   \label{e:G^!? p-typically}
G^{!?}\otimes\BZ_{(p)}\iso \{(w,q)\in W\times\BA^1_{\BZ_{(p)}} \,|\, F (w)=[q-1]^{p-1}\cdot w\}.
\end{equation}

\begin{lem}   \label{l:flatness of G^!?}
$G^{!?}$ is flat over $\BZ [q]$.
\end{lem}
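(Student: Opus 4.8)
The goal is to prove that $G^{!?}$, defined by \eqref{e:G^!? in terms of big Witt}, is flat over $\BZ[q]=\BZ[h]$. My plan is to reduce to the $p$-typical picture and then imitate the flatness argument of Lemma~\ref{l:flatness of W^F} (using Joyal's free $\delta$-ring description of $W$), with $q-1$ playing a purely formal role as a coefficient. First I would observe that flatness over $\BZ[q]$ can be checked after base change along $\BZ[q]\to\BZ_{(p)}[q]$ for every prime $p$ (since $\bigcap_p\BZ_{(p)}=\BZ$ and $\BZ[q]$ is a domain, a $\BZ[q]$-module is flat iff its localizations at all primes $p$ of $\BZ$ are flat — more precisely iff it is torsion-free, once one knows each piece is). By the isomorphism \eqref{e:G^!? p-typically}, $G^{!?}\otimes\BZ_{(p)}$ is the closed subscheme of $W\times\BA^1_{\BZ_{(p)}}$ cut out by the single equation $F(w)=[q-1]^{p-1}\cdot w$, and its flatness over $\BZ_{(p)}[q]$ is what remains.

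\textbf{The main computation.} Let $C$ be the coordinate ring of $W$, which by \S\ref{sss:Joyal} is the free $\delta$-ring on the Buium--Joyal generator $x_0$, i.e.\ freely generated as a ring by $x_n:=\delta^n(x_0)$, with $\phi(x_n)=x_n^p+px_{n+1}$ (formula \eqref{e:phi(y_n)}). The coordinate ring of $W_{\BZ_{(p)}[q]}:=W\times\BA^1_{\BZ_{(p)}[q]}$ is thus $\BZ_{(p)}[q]\otimes C$, a free $\delta$-ring over $\BZ_{(p)}[q]$ on $x_0$ (with $q$ a $\delta$-constant). The equation $F(w)=[q-1]^{p-1}w$ translates into $\phi(x_0)=(q-1)^{p-1}x_0$ together with all its $\delta$-iterates; since the Teichm\"uller map is a $\delta$-morphism, $[q-1]^{p-1}$ corresponds to a $\delta$-element, and the ideal $I$ of $G^{!?}\otimes\BZ_{(p)}$ in $\BZ_{(p)}[q]\otimes C$ is generated by $\phi(x_n)-(q-1)^{p-1}x_n = x_n^p+px_{n+1}-(q-1)^{p-1}x_n$ for $n\ge 0$ (this is the $q$-deformed analog of \eqref{e:equations for W^F}, and the precise form of the lower-order terms is where a short calculation is needed). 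Then, exactly as in \S\ref{sss:proof of flatness of W^F}, I would use these relations to solve for $x_{n+1}$ in terms of $x_n$ and conclude that the monomials $\prod_i x_i^{d_i}$ with $0\le d_i<p$ and $d_i=0$ for $i\gg0$ span $(\BZ_{(p)}[q]\otimes C)/I$ as a $\BZ_{(p)}[q]$-module; linear independence follows because after inverting $p$ these relations become $x_{n+1}=p^{-1}((q-1)^{p-1}x_n-x_n^p)$, so $(\BZ_{(p)}[q]\otimes C)/I\otimes\BQ\cong\BQ[q][x_0]$ is free on exactly these monomials' images in degree $<p$. Hence $(\BZ_{(p)}[q]\otimes C)/I$ is a free $\BZ_{(p)}[q]$-module, which gives flatness of $G^{!?}\otimes\BZ_{(p)}$ over $\BZ_{(p)}[q]$.

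\textbf{Assembling the conclusion.} Having shown $G^{!?}\otimes\BZ_{(p)}$ is $\BZ_{(p)}[q]$-free for every $p$, I would note that the coordinate ring of $G^{!?}$ is then $p$-torsion-free for all $p$, hence torsion-free over $\BZ[q]$; combined with the fact (visible from \eqref{e:G^!? in terms of big Witt} and the $\BZ_{(p)}$-decomposition of $W_{\bbig}$ used in Lemma~\ref{l:W_big & W}) that flatness over $\BZ[q]$ is equivalent to torsion-freeness plus flatness over each $\BZ_{(p)}[q]$, this yields flatness of $G^{!?}$ over $\BZ[q]$. Alternatively, and perhaps more cleanly, one can run the Joyal/free-$\delta$-ring argument directly over $\BZ[q]$ for the big Witt ring $W_{\bbig}$ using the analog of \S\ref{sss:Joyal} for $W_{\bbig}$ (as referenced in \S\ref{ss:R via W_big}), so that no reduction to a single prime is needed; I would present whichever is shorter.

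\textbf{Expected main obstacle.} The only real subtlety is pinning down the exact generators of the ideal $I$: one must check that imposing $F(w)=[q-1]^{p-1}w$ (a condition on big, or $p$-typical, Witt vectors) is equivalent to the $\delta$-relation $\phi(x_0)=(q-1)^{p-1}x_0$ on the Buium--Joyal coordinate plus its $\delta$-iterates, and that the iterated relations indeed express each $x_{n+1}$ with leading term $p^{-1}$ times a polynomial in $x_0,\dots,x_n,q$ with $\BZ_{(p)}[q]$ coefficients — i.e.\ that passing from $F(w)=[q-1]^{p-1}w$ to the Buium--Joyal presentation does not introduce denominators beyond the single $p$ already present in \eqref{e:phi(y_n)}. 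This is a finite, mechanical check, analogous to the one implicit in \S\ref{sss:B_0 as delta-ring}, and I do not expect genuine difficulty, only bookkeeping.
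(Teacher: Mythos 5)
Your strategy is exactly the paper's: reduce to the $p$-typical statement via \eqref{e:G^!? p-typically} and then rerun the Buium--Joyal argument of \S\ref{sss:proof of flatness of W^F} with $q$-deformed relations. The one place you go wrong is precisely the step you flagged as the main obstacle: the $n$-th generator of the ideal is not $x_n^p+px_{n+1}-(q-1)^{p-1}x_n$ but
\[
x_n^p+px_{n+1}-(q-1)^{p^n(p-1)}x_n .
\]
The exponent grows with $n$ because the $n$-th Buium--Joyal component of $[a]\cdot w$ equals $a^{p^n}$ times the $n$-th component of $w$: since $\delta([a])=0$, one has $\delta([a]v)=[a]^p\delta(v)=[a^p]\delta(v)$, so each application of $\delta$ raises the Teichm\"uller coefficient to the $p$-th power. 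Equivalently, pulling back $x_n=\delta^n(x_0)$ along multiplication by $[q-1]^{p-1}$ gives $(q-1)^{(p-1)p^n}x_n$, while pulling it back along $F$ gives $\phi(x_n)=x_n^p+px_{n+1}$; the ideal of the equalizer is generated by the differences. So the discrepancy sits in the coefficient of $x_n$ itself, not in ``lower-order terms'' as you anticipated, and your heuristic of $\delta$-iterating $\phi(x_0)=(q-1)^{p-1}x_0$ with $(q-1)^{p-1}$ treated as a scalar is what led you astray. Fortunately the error is harmless for flatness: each corrected relation still expresses $px_{n+1}$ as a polynomial in $x_0,\dots,x_n,q$ containing the monomial $x_n^p$, so the monomials $\prod_i x_i^{d_i}$ with $0\le d_i<p$ and $d_i=0$ for $i\gg 0$ still span the quotient over $\BZ_{(p)}[q]$ and remain independent after tensoring with $\BQ$ (their leading terms in $x_0$ have pairwise distinct degrees $\sum_i d_ip^i$). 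With that correction, your argument coincides with the paper's proof.
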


\begin{proof}
It suffices to show that the r.h.s of \eqref{e:G^!? p-typically} is flat over $\BZ_{(p)} [q]$.
The argument is parallel to 
that of \S\ref{sss:proof of flatness of W^F}, but the role of the elements
\[
x_n^p+px_{n+1}-x_n
\]
from 
\S\ref{sss:proof of flatness of W^F} is played by $x_n^p+px_{n+1}-(q-1)^{p^n (p-1)}x_n\,$.
\end{proof}

\subsubsection{A homomorphism $G^!\to W_{\bbig}\times\BA^1$}   \label{sss:G to W_big}
For any $\BZ [q]$-algebra $A$, an $A$-point of $G^!$ is an element $f\in 1+zA[[z]]\subset (A[[z]])^\times$ satisfying the functional equation 
\begin{equation} \label{e:f(x_1)f(x_2)}
f(z_1)f(z_2)=f(z_1+z_2+(q-1)z_1z_2).
\end{equation}
 Associating to such $f$ the formal power series 
\begin{equation}   \label{e:f(-x)}
f(-z)\in 1+zA[[z]]=W_{\bbig}(A),
\end{equation}
we get a group homomorphism $G^! (A)\to 1+zA[[z]]=W_{\bbig}(A)$ functorial in $A$, i.e., a homomorphism of group schemes over $\BA^1$
\begin{equation}  \label{e:G to W_big}
i: G^!\mono W_{\bbig}\times\BA^1\, .
\end{equation}
The morphism \eqref{e:G to W_big} is a closed immersion because \eqref{e:f(x_1)f(x_2)} is a closed condition.

\subsubsection{Relation to the homomorphism $\sR\to W_{\bbig}$}  \label{sss:[h]}
It is easy to check that after the specialization $q=2$ (i.e., $q-1=1$) the homomorphism \eqref{e:G to W_big} becomes the homomorphism
\begin{equation}  \label{e:2 R to W}
\sR\iso W_{\bbig}^F\mono W_{\bbig}\, 
\end{equation}
from \S\ref{sss:R to W} (the minus sign in \eqref{e:f(-x)} was introduced to ensure this). 
Moreover, one has  the following

\begin{lem}   \label{l:[h]}
(i)  The following diagram commutes: 
\begin{equation}  \label{e:[h]}
\xymatrix{
\sR\times\BA^1\ar[r] \ar@{^{(}->}[d] & G^!\ar@{^{(}->}[d]^i\\
W_{\bbig}\times\BA^1\ar[r]^{[q-1]} & W_{\bbig}\times\BA^1
}
\end{equation}
Here the upper horizontal arrow is \eqref{e:R to G}, the lower horizontal arrow is multiplication by the Teichm\"uller representative $[q-1]\in W_{\bbig}(\BZ [q])$, the right vertical arrow is \eqref{e:G to W_big}, and the left  vertical arrow comes from \eqref{e:2 R to W}. 

(ii) After base change to the open subset $\Spec\BZ [q,(q-1)^{-1}]\subset\Spec \BZ [q]=\BA^1$, the horizontal arrows of \eqref{e:[h]} become isomorphisms.
\end{lem}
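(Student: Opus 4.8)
The plan is to prove (i) by a direct computation on $A$-points, tracing an element around the square using the explicit description of each of the four maps, and then to read off (ii) from facts already recorded above.

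First I would fix a $\BZ[q]$-algebra $A$, write $h:=q-1$, and take an $A$-point of $\sR\times\BA^1$: a power series $g\in A[[y-1]]$ with $g(y_1y_2)=g(y_1)g(y_2)$ (hence $g(1)=1$), together with the value of $h$. Going right, the homomorphism \eqref{e:R to G} --- being the Cartier dual of $z\mapsto 1+hz$ --- sends $g$ to the $A$-point $f$ of $G^!$ with $f(z)=g(1+hz)$; then $i$ (see \S\ref{sss:G to W_big}) sends $f$ to $f(-z)=g(1-hz)\in 1+zA[[z]]=W_{\bbig}(A)$. Going down first, the map \eqref{e:2 R to W} sends $g$ to $g(1-z)\in W_{\bbig}(A)$, and then I must compute its product with the Teichm\"uller element $[h]\in W_{\bbig}(\BZ[q])$. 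The one nontrivial input is the identity $[a]=1-az$ in the power-series model $W_{\bbig}(A)=1+zA[[z]]$ (consistent with the unit being $1-z$), together with the defining property $(1-az)\cdot(1-bz)=1-abz$ of the big-Witt multiplication in this convention; these give that multiplication by $[a]$ is the substitution $z\mapsto az$ on power series, so $[h]\cdot g(1-z)=g(1-hz)$. The two paths coincide, which is exactly commutativity of the diagram.

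For (ii): after base change to $\Spec\BZ[q,(q-1)^{-1}]$ the element $q-1$ is invertible, so $[q-1]$ becomes a unit of $W_{\bbig}(\BZ[q,(q-1)^{-1}])$ with inverse $[(q-1)^{-1}]$; hence the lower arrow (multiplication by $[q-1]$) is an automorphism of the additive group scheme $W_{\bbig}\times\Spec\BZ[q,(q-1)^{-1}]$. The upper arrow \eqref{e:R to G} was already observed to be an isomorphism over the locus $h\neq 0$ --- it is the Cartier dual of $H^!\to\hat\BG_m\times\BA^1$, which is an isomorphism over $h\neq 0$ by the remark following \eqref{e:H to hat G_m} --- and that locus is precisely $\Spec\BZ[q,(q-1)^{-1}]$.

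I do not expect any real obstacle: the lemma is a bookkeeping statement. The only place demanding care is matching the two substitutions used to land in $W_{\bbig}$ --- the map $\sR\to W_{\bbig}$ uses $y\mapsto 1-z$ while the map $i\colon G^!\to W_{\bbig}$ uses $z\mapsto -z$ on the $H^!$-coordinate --- and checking that they are intertwined by the substitution $z\mapsto (q-1)^{-1}(y-1)$ implicit in inverting $z\mapsto 1+hz$. This is the calculation that makes the minus sign in \eqref{e:f(-x)} the correct normalization and that pins the Teichm\"uller twist on the bottom row to be exactly $[q-1]$ rather than, say, $[(q-1)^{-1}]$.
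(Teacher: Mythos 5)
Your proof is correct and follows essentially the same route as the paper: the paper's (very terse) proof consists precisely of the observation that multiplication by the Teichm\"uller element $[q-1]$ in $W_{\bbig}(A)=1+zA[[z]]$ is the substitution $z\mapsto (q-1)z$, after which chasing $g$ around the square gives $g(1-(q-1)z)$ along both paths, exactly as you compute. Part (ii) is likewise handled as you do, the upper arrow being an isomorphism over $q\ne 1$ by the remark following \eqref{e:R to G} and the lower arrow because $[q-1]$ becomes a unit.
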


\begin{proof}
Recall that for any $\BZ [q]$-algebra $A$, multiplication by $[q-1]$ in $W_{\bbig}(A)$ takes a formal series $g(x)\in 1+xA[[x]]=W_{\bbig}(A)$ to $g((q-1)x)$. The rest is straightforward.
\end{proof}

\subsubsection{Remarks}   \label{sss:compatibility with lambda-structure}
(i) By Lemma~\ref{l:Fr=id}, $\sR$ is a $\lambda$-scheme with $\Psi_n=\id$ for all $n$. 
Moreover, commutativity of \eqref{e:Psi_n in terms of sR times A^1} means that the upper horizontal arrow of \eqref{e:[h]} is a morphism of $\lambda$-schemes.

(ii) The lower horizontal arrow of \eqref{e:[h]} induces a morphism  $W_{\bbig}^F\times\BA^1\to G^{!?}$ of $\lambda$-schemes over $\BA^1$, which becomes an isomorphism over the locus $q\ne 1$.

\begin{prop}   \label{p:G in terms of Witt}
The homomorphism \eqref{e:G to W_big} induces an isomorphism
\begin{equation} \label{e:G in terms of big Witt}
G^!\iso G^{!?} 
\end{equation}
of $\lambda$-schemes over $\BA^1$. 
\end{prop}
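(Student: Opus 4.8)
The plan is to prove that the closed immersion $i\colon G^!\mono W_{\bbig}\times\BA^1$ from \eqref{e:G to W_big} lands inside the subscheme $G^{!?}$ defined by \eqref{e:G^!? in terms of big Witt}, and then to check that the resulting map $G^!\to G^{!?}$ is an isomorphism of $\lambda$-schemes by exploiting the flatness of both sides over $\BZ[q]$.

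First I would verify that $i$ factors through $G^{!?}$. By definition $i$ sends an $A$-point $f\in 1+zA[[z]]$ of $G^!$ (satisfying \eqref{e:f(x_1)f(x_2)}) to the Witt vector $w=f(-z)\in W_{\bbig}(A)$. Recall that for the big Witt vectors, the Frobenius $F_m$ acts on $g(z)\in W_{\bbig}(A)=1+zA[[z]]$ in a way governed by the ghost components, and multiplication by the Teichm\"uller element $[a]\in W_{\bbig}(A)$ sends $g(z)\mapsto g(az)$. The condition $f(z_1)f(z_2)=f(z_1+z_2+(q-1)z_1z_2)$ says precisely that $z\mapsto f(-z)$ intertwines the formal group law of $H^!$ with the obvious addition, equivalently that $f$ defines a homomorphism $H^!\to\hat\BG_m$ of formal groups. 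The key computation is the one already packaged in Lemma~\ref{l:[h]}: the diagram \eqref{e:[h]} commutes, so over the locus $q\neq 1$ (where $q-1$ is invertible) the map $i$ is, up to the automorphism ``multiplication by $[q-1]$'', the inclusion $\sR\times\BA^1=W_{\bbig}^F\times\BA^1\mono W_{\bbig}\times\BA^1$ from Proposition~\ref{p:sR= W_{big}^F}. Since $W_{\bbig}^F$ is cut out by $F_m(w)=w$, multiplying by $[q-1]$ and using that $F_m$ is a ring homomorphism with $F_m([q-1])=[q-1]^m$ (as $F_m$ acts on Teichm\"uller elements by raising to the $m$-th power) turns the equations $F_m(w)=w$ into $F_m([q-1]w)=[q-1]^m w=[q-1]^{m-1}\cdot([q-1]w)$. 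Hence the image of $i$ satisfies the defining equations \eqref{e:G^!? in terms of big Witt} of $G^{!?}$ at least over $\Spec\BZ[q,(q-1)^{-1}]$; because $G^!$ is flat over $\BZ[q]$, hence $\BZ[q]$-torsion-free, and $\Spec\BZ[q,(q-1)^{-1}]$ is schematically dense in $\BA^1$ (indeed $q-1$ is a nonzerodivisor on $\BZ[q]$), the equations $F_m(w)=[q-1]^{m-1}w$, which hold on the dense open, hold identically. So $i$ factors through a homomorphism $j\colon G^!\to G^{!?}$ over $\BA^1$.

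Next I would show $j$ is an isomorphism. Both $G^!$ (flat over $\BZ[q]$ since it is a Cartier dual of a formal group, by the general lemma on Cartier duals) and $G^{!?}$ (flat over $\BZ[q]$ by Lemma~\ref{l:flatness of G^!?}) are flat over $\BZ[q]$, and $j$ is a closed immersion since $i$ is. To conclude $j$ is an isomorphism it suffices to check it is an isomorphism after base change to $\Spec\BZ[q,(q-1)^{-1}]$ and, separately, after base change along each $\BF_p$-point, or more efficiently: a closed immersion of flat $\BZ[q]$-schemes that is an isomorphism over the schematically dense open $\Spec\BZ[q,(q-1)^{-1}]$ and whose source and target have the same fibers need a little care, so instead I would argue directly on coordinate rings. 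Over $\Spec\BZ[q,(q-1)^{-1}]$, $j$ is an isomorphism by Lemma~\ref{l:[h]}(ii) together with Proposition~\ref{p:sR= W_{big}^F}. For the behavior at $q=1$: specializing, $G^!|_{q=1}=\hat G$ where $\hat G$ is the Cartier dual of $\hat\BG_a$ (since $H^!|_{q=1}=\hat\BG_a$), and similarly one computes $G^{!?}|_{q=1}$ from \eqref{e:G^!? in terms of big Witt} by setting $q-1=0$, which gives $\{w\in W_{\bbig}\mid F_m(w)=0\text{ for }m>1\}$, and one checks this agrees with the Cartier dual of $\hat\BG_a$ compatibly with $j$. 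A clean way to package all of this: the closed immersion $j$ corresponds to a surjection of $\BZ[q]$-algebras $H^0(G^{!?})\epi H^0(G^!)$; both rings are flat (torsion-free) over $\BZ[q]$, the surjection becomes an isomorphism after inverting $q-1$, hence its kernel $K$ is a $(q-1)$-power-torsion submodule of a torsion-free module, so $K=0$.

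Finally I would check that $j$ is a morphism of $\lambda$-schemes, i.e.\ $j\circ\Psi_n=\Psi_n\circ j$ for all $n$, where $\Psi_n$ on $G^!$ is as in \S\ref{sss:Psi_n} and on $G^{!?}$ is \eqref{e:2 Psi_n(w,q)}. This again reduces, via Lemma~\ref{l:[h]} and Remark~\ref{sss:compatibility with lambda-structure}, to the commutativity of \eqref{e:Psi_n in terms of sR times A^1}, which was already established, plus the observation that multiplication by $[q-1]$ intertwines the identity map $\Psi_n=\id$ on $W_{\bbig}^F$ with multiplication by $[\tfrac{q^n-1}{q-1}]$, because $\Psi_n$ on $\BA^1=\Spec\BZ[q]$ sends $q\mapsto q^n$ and $F_n([q-1])=[q-1]^n=[q^n-1]\cdot[\tfrac{q-1}{q^n-1}]^{-1}$ — more simply, $[q-1]\cdot[\tfrac{q^n-1}{q-1}]=[q^n-1]=\Psi_n([q-1])$ in $W_{\bbig}(\BZ[q])$ since $[\,\cdot\,]$ is multiplicative. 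The one genuine obstacle I anticipate is the bookkeeping of the Frobenius and Teichm\"uller formulas for \emph{big} Witt vectors — making sure that $F_m([a])=[a^m]$ and that $F_m$ really does act on $g(z)\in 1+zA[[z]]$ in the way needed to turn $F_m(w)=w$ into $F_m([q-1]w)=[q-1]^{m-1}w$ after translation by the automorphism of \eqref{e:[h]}; but all of this is routine once one uses the ghost-component description, and in any case most of it is already subsumed in Lemma~\ref{l:[h]}, whose proof the text calls ``straightforward''. So the real content of the proof is just: factor $i$ through $G^{!?}$ using density of $q-1\neq0$, then kill the kernel by torsion-freeness, then transport the $\lambda$-structure compatibility from \eqref{e:Psi_n in terms of sR times A^1}.
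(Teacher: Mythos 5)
Your proposal is correct and follows essentially the same route as the paper: the paper's proof consists precisely of the reduction you describe (flatness of $G^!$ and $G^{!?}$ over $\BZ[q]$, the fact that $i$ is a closed immersion, and the isomorphism over the locus $q\ne 1$ supplied by Lemma~\ref{l:[h]} and \S\ref{sss:compatibility with lambda-structure}), leaving implicit the torsion-freeness bookkeeping that you spell out. Your explicit verification that the image of $i$ satisfies the equations $F_m(w)=[q-1]^{m-1}w$, and that the resulting closed immersion onto $G^{!?}$ has trivial kernel on coordinate rings, is exactly the content the paper's two-line argument is invoking.
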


\begin{proof}
The schemes $G^!$ and  $G^{!?}$ are flat over $\BZ [q]$ (for $G^{!?}$ this is Lemma~\ref{l:flatness of G^!?}). The morphism $i:G^!\to W_{\bbig}\times\BA^!$ is a closed immersion. So it remains to show that $i$ induces an isomorphism of $\lambda$-schemes $G^!_{q\ne 1}\iso G^{!?}_{q\ne 1}$, where $G^!_{q\ne 1}$ and $G^{!?}_{q\ne 1}$ are the restrictions of $G^!$ and $G^{!?}$ to the locus $q\ne 1$. This follows from Lemma~\ref{l:[h]} and \S\ref{sss:compatibility with lambda-structure}.
\end{proof}

\subsection{The group scheme $G^!$ in terms of Witt vectors.\,II}  \label{ss:G^! in terms of Witt}
This subsection is a non-$p$-typical version of \S\ref{sss:G_Q^!=G_Q^!!}. Part (iii) of Lemma~\ref{l:2 G^! to W_big} is somewhat surprising.

\subsubsection{Recollections on $G^!$}
$G^!$ is the group scheme over $\BA^1=\Spec\BZ [q]$ such that for any $\BZ [q]$-algebra $A$, $G^!(A)$ is the group of elements $f\in 1+zA[[z]]\subset (A[[z]])^\times$ satisfying the functional equation
\begin{equation} \label{e:f(z_1)f(z_2)}
f(z_1)f(z_2)=f(z_1+z_2+(q-1)z_1z_2).
\end{equation}
Recall that $H^0(G^!,\cO_{G^!})=B_0$, where $B_0$ is as in Proposition~\ref{p:G=Spec B_0}. The $\lambda$-scheme structure on $G^!$ was defined in 
\S\ref{sss:lambda-ring structure on Z [h]}-\ref{sss:Psi_n}.

\subsubsection{Definition of $G^{!!}$} \label{sss:G^!!}
Define $G^{!!}\subset W_{\bbig}\times\BA^1$ to be the following subgroup:
\begin{equation} \label{e:G^!! in terms of big Witt}
G^{!!}:= \{(w,q)\in W_{\bbig}\times\BA^1 \,|\, F_m (w)=\frac{[q]^m-1}{[q]-1}\cdot w \mbox{ for all } m\in\BN\},
\end{equation}
where $\frac{[q]^m-1}{[q]-1}:=1+[q]+\ldots+[q]^{m-1}$.
Define $\Psi_n:G^{!!}\to G^{!!}$ by the following very simple formula:
\begin{equation}  \label{e:3 Psi_n(w,q)}
\Psi_n(w,q)=(F_n(w),q^n).
\end{equation}
Then $G^{!?}$ is a group $\lambda$-scheme over $\BA^1=\Spec\BZ [q]$.

\subsubsection{Remark}
Let $p$ be a prime and $W$ the ring scheme of $p$-typical Witt vectors. Similarly to the proof of Lemma~\ref{l:W_big & W}, one shows that the canonical epimorphism
$W_{\bbig}\epi W$ induces an isomorphism
\begin{equation}   \label{e:G^!! p-typically}
G^{!!}\otimes\BZ_{(p)}\iso \{(w,q)\in W\times\BA^1_{\BZ_{(p)}} \,|\, F (w)=\Phi_p([q])\cdot w\}.
\end{equation}

\begin{lem} \label{l:2 G^! to W_big}
Equip $W_{\bbig}$ with the $\lambda$-scheme structure given by the Frobenius endomorphisms $F_n:W_{\bbig}\to W_{\bbig}$, $n\in\BN$.
Equip $G^!$ and $\BA^1=\Spec\BZ [q]$ with the $\lambda$-structure from \S\ref{sss:lambda-ring structure on Z [h]}-\ref{sss:Psi_n}.
Let $\pi :W_{\bbig}\epi\BG_a$ be the morphism that takes a Witt vector to its first component. Then

(i) there exists a unique morphism
\begin{equation}  \label{e:2 G^! to W_big}
G^!\to W_{\bbig}\times\BA^1 
\end{equation}
of $\lambda$-schemes over $\BA^1$ whose composition with the projection $W_{\bbig}\times\BA^1\to W_{\bbig}\overset{\pi}\longrightarrow\BG_a$
is given by the element $t\in B_0=H^0(G^! ,\cO_{G^!})$ from Proposition~\ref{p:G=Spec B_0};

(ii) the map \eqref{e:2 G^! to W_big} is a group homomorphism; 

(iii) the morphism \eqref{e:2 G^! to W_big} has the following explicit description: 
for any $\BZ [q]$-algebra $A$, it takes a formal series  $f\in 1+zA[[z]]$ 
satisfying \eqref{e:f(z_1)f(z_2)} to the formal series  $f (\frac{z}{z-1} )$
viewed as an element of $W_{\bbig}(A)$.
\end{lem}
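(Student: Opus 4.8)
The strategy is to produce the morphism \eqref{e:2 G^! to W_big} first by an abstract argument using the universal property of $W_{\bbig}$ as the cofree $\lambda$-scheme, then to check that it is a group homomorphism, and finally to identify it with the explicit formula in (iii). For the existence in (i): the ring scheme $W_{\bbig}$ is cofree on $\BG_a$ in the category of $\lambda$-schemes (this is the ``big'' analog of Joyal's description recalled in \S\ref{sss:Joyal} and is exactly the content of the functor $W_{\bbig}$ being right adjoint to the forgetful functor from $\lambda$-schemes to schemes). Hence a morphism of $\lambda$-schemes $G^!\to W_{\bbig}\times\BA^1$ over the $\lambda$-scheme $\BA^1$ is the same as a morphism of plain $\BA^1$-schemes $G^!\to\BG_a\times\BA^1$, i.e. an element of $H^0(G^!,\cO_{G^!})=B_0$; I would take the element $t\in B_0$ from Proposition~\ref{p:G=Spec B_0}. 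Uniqueness is then automatic. (One should note, as in \S\ref{sss:using delta-ring structure} and \S\ref{sss:R via W_big}, that $G^!$ is flat over $\BZ[q]$, so there is no subtlety about torsion when invoking the $\lambda$-structure.)

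For (ii): the map \eqref{e:2 G^! to W_big} is a group homomorphism iff the corresponding ring-scheme map on coordinate rings is compatible with the coproducts. Since everything is flat over $\BZ[q]$, it suffices to check this after $\otimes\BQ$, where by Lie theory $G^!\otimes\BQ\cong\BG_a\otimes\BQ[q]$ and $t$ is the standard additive coordinate; and $W_{\bbig}^F$ is the cofree $\lambda$-scheme on $\BG_a$ so the composite $G^!\to W_{\bbig}$ with $\pi$ is additive, hence the whole map is additive by the universal property (the additivity of $\pi\circ(\text{our map})$ propagates to all Witt components because $F_n$-fixed points are determined by their first component). Alternatively, and perhaps more cleanly, I would note that by Lemma~\ref{l:B_0 as lambda-ring}(iii) the endomorphisms $\psi^n$ commute with $\Delta$, which is precisely the statement that the cofree-$\lambda$-scheme comparison map respects the Hopf structure; combined with $\Delta(t)=t\otimes 1+1\otimes t$ this gives additivity. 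I would also record that the image of \eqref{e:2 G^! to W_big} lands in $G^{!!}$: this is the assertion that $F_n(w)=\frac{[q]^n-1}{[q]-1}\cdot w$, which follows from $\psi^n(t)=\frac{q^n-1}{q-1}\cdot t$ (the $\lambda$-structure on $B_0$) together with the fact that under our comparison map the Teichm\"uller multiplication on $W_{\bbig}$ corresponds to multiplication by the scalar $\frac{q^n-1}{q-1}$ on the first graded piece.

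For (iii): I would verify directly that the explicit assignment $f(z)\mapsto f\!\left(\tfrac{z}{z-1}\right)$, regarded as a power series in $1+zA[[z]]=W_{\bbig}(A)$, is well defined (note $\tfrac{z}{z-1}\in zA[[z]]$, so the substitution makes sense and the constant term is $1$ since $f(0)=1$), is functorial in $A$, and has the property that its composition with $\pi$ equals $t$. The last point is a computation with the ``Euler series'' \eqref{e:Euler series}: the logarithmic derivative at $z=0$ of $f\!\left(\tfrac{z}{z-1}\right)$ — equivalently the first Witt component — works out to $f'(0)=t$ because $\tfrac{z}{z-1}=-z-z^2-\cdots$ has derivative $-1$ at $0$, and the first Witt (ghost) component of $1+a_1z+\cdots$ in the $W_{\bbig}$-convention of \S\ref{sss:W_big} is $-a_1$; so one needs to keep careful track of the sign conventions exactly as in \S\ref{sss:[h]}, where the minus sign was introduced precisely to make things match. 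One then checks this explicit map is a $\lambda$-morphism (i.e. intertwines $\Psi_n$ on $G^!$ with $(F_n,q\mapsto q^n)$) by a direct substitution: $F_n$ on $W_{\bbig}$ corresponds to the operation on power series that the functional equation \eqref{e:f(z_1)f(z_2)} together with the change of variable $z\mapsto\tfrac{z}{z-1}$ turns into precisely $\Psi_n$. By the uniqueness in (i), this explicit map coincides with the abstractly constructed one.

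\textbf{Main obstacle.} The conceptual content is light; the real work — and the place errors creep in — is the sign/convention bookkeeping in part (iii): reconciling the $W_{\bbig}$-convention (unit $\leftrightarrow 1-z$, so ghost components carry signs) with the substitution $z\mapsto\tfrac{z}{z-1}$ and with the Euler-series description \eqref{e:Euler series} of the pairing. The ``somewhat surprising'' aspect flagged before the lemma — that the formula involves $\tfrac{z}{z-1}$ rather than something more symmetric — is exactly a manifestation of this convention, and getting it exactly right (so that $q=2$ recovers \S\ref{sss:[h]} and so that the first component is $+t$ not $-t$) is the step I would allot the most care to.
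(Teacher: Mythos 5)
Your proposal is correct and follows essentially the same route as the paper: part (i) via the fact that $H^0(W_{\bbig},\cO)$ is the free $\lambda$-ring on $\pi$ (equivalently, your cofree-$\lambda$-scheme formulation), and part (iii) by checking that the explicit series $f(\tfrac{z}{z-1})$ realizes the abstractly characterized morphism, then invoking uniqueness. Two small divergences: the paper gets (ii) for free from (iii), since both group laws are multiplication of power series and the substitution $z\mapsto\tfrac{z}{z-1}$ obviously respects products — your flatness/rationalization (or Hopf-compatibility of the $\psi^n$) argument works but is more effort than needed; and for the $\lambda$-equivariance in (iii), the efficient version of your "direct substitution" is to compute all ghost components at once, i.e.\ to verify $-z\frac{d}{dz}\log f(\tfrac{z}{z-1})=t\cdot\sum_{n\ge 1}\frac{q^n-1}{q-1}z^n$ for the universal $f=(1+(q-1)z)^{t/(q-1)}$ using $1+(q-1)\tfrac{z}{z-1}=\tfrac{1-qz}{1-z}$ (legitimate since $B_0$ is torsion-free, so ghost components determine the Witt vector); your sign bookkeeping for the first component is right.
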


\begin{proof}
The coordinate ring of the $\lambda$-scheme $W_{\bbig}$ is known to be the free $\lambda$-ring on a single generator $\pi$. This implies (i). Statement (ii) follows from (iii).

Let us prove (iii). Our map $G^!\to W_{\bbig}$ is given by the unique element of  $W_{\bbig}(B_0)$ whose $n$-th ghost component equals 
$\psi^n(t)=\frac{q^n-1}{q-1}\cdot t\in B_0$ (see the definition of $\psi^n$ in Lemma~\ref{l:B_0 as lambda-ring}).
Recall that the universal solution to  \eqref{e:f(z_1)f(z_2)} is given by 
\[
f(z)=(1+(q-1)z)^{t/(q-1)}.
\]
So it remains to check that for this $f$ one has
\[
-z\frac{d}{dz}\log f\Big (\frac{z}{z-1}\Big )=t\cdot\sum_{n=1}^\infty \frac{q^n-1}{q-1}\cdot z^n.
\]
This is straightforward; one uses that $1+(q-1)z/(z-1)=(1-qz)/(1-z)$.
\end{proof}

\begin{prop}   \label{p:G^!=G^!!}
The homomorphism \eqref{e:2 G^! to W_big} induces an isomorphism
\begin{equation} \label{e:G^!=G^!!}
G^!\iso G^{!!}, 
\end{equation}
where $G^{!!}\subset W_{\bbig}\times\BA^1$ is as in \S\ref{sss:G^!!}.
\end{prop}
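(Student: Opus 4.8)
The strategy is the same as in the proof of Proposition~\ref{p:G in terms of Witt}: reduce to an open locus where everything is visibly an isomorphism, and then invoke flatness to conclude that the closed immersion is an isomorphism globally. First I would check that the morphism \eqref{e:2 G^! to W_big} actually lands in $G^{!!}$. Since \eqref{e:2 G^! to W_big} is a morphism of $\lambda$-schemes over $\BA^1$ with $\Psi_n$ on the source given by the $\lambda$-structure of \S\ref{sss:lambda-ring structure on Z [h]}-\ref{sss:Psi_n} and $\Psi_n$ on $W_{\bbig}$ given by $F_n$, the $n$-th ghost component of the image Witt vector is $\psi^n(t)=\frac{q^n-1}{q-1}\cdot t\in B_0$; equivalently, using the explicit formula in Lemma~\ref{l:2 G^! to W_big}(iii), the image of $f\in 1+zA[[z]]$ is the Witt vector $w=f(\tfrac{z}{z-1})$, and one computes directly (as at the end of the proof of Lemma~\ref{l:2 G^! to W_big}) that $F_m(w)=\frac{[q]^m-1}{[q]-1}\cdot w$ for all $m$. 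This is the defining equation of $G^{!!}$ in \eqref{e:G^!! in terms of big Witt}, so \eqref{e:2 G^! to W_big} factors through a homomorphism $G^!\to G^{!!}$, and it is a closed immersion because $G^!\mono W_{\bbig}\times\BA^1$ is (the map $G^!\to W_{\bbig}$ is injective on points: $f(\tfrac{z}{z-1})$ determines $f$).

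Next I would establish flatness of $G^{!!}$ over $\BZ[q]$, exactly parallel to Lemma~\ref{l:flatness of G^!?}. By the isomorphism \eqref{e:G^!! p-typically} it suffices to show that $\{(w,q)\in W\times\BA^1_{\BZ_{(p)}}\mid F(w)=\Phi_p([q])\cdot w\}$ is flat over $\BZ_{(p)}[q]$; using Joyal's Buium-Joyal coordinates $x_n$ on $W$ (see \S\ref{sss:Joyal}) and formula \eqref{e:phi(y_n)}, the coordinate ring is $C\otimes\BZ_{(p)}[q]$ modulo the ideal generated by $x_n^p+px_{n+1}-\Phi_p(q)^{p^n}x_n$ (the exponent coming from $\phi^n$ applied to the Teichm\"uller element, just as the $(q-1)^{p^n(p-1)}$ appears in the proof of Lemma~\ref{l:flatness of G^!?}); this quotient is a free $\BZ_{(p)}[q]$-module with basis the monomials $\prod_i x_i^{d_i}$, $0\le d_i<p$, $d_i=0$ for $i\gg 0$, by the same argument as in \S\ref{sss:proof of flatness of W^F}. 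So $G^{!!}$ is flat over $\BZ[q]$; and $G^!$ is flat over $\BZ[q]$ by \S\ref{ss:G in terms of Newton&Euler} (Proposition~\ref{p:G=Spec B_0}(ii) exhibits $B_0$ as a free $\BZ[h]$-module).

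With both sides flat and the map a closed immersion, it remains to check that $G^!\to G^{!!}$ becomes an isomorphism after restriction to the open locus $q\ne 1$ of $\Spec\BZ[q]$, i.e., after inverting $q-1$. Over this locus the homomorphism \eqref{e:H to hat G_m} $H^!\to\hat\BG_m\times\BA^1$ is an isomorphism, hence so is its Cartier dual $\sR\times\BA^1\to G^!$ (this is Lemma~\ref{l:[h]}(ii) combined with \S\ref{sss:compatibility with lambda-structure}(i), but one only needs the simpler statement that $G^!_{q\ne 1}\cong \sR\times\Spec\BZ[q,(q-1)^{-1}]$ as $\lambda$-schemes). On the $G^{!!}$ side, over $q\ne 1$ one has $\frac{[q]^m-1}{[q]-1}=([q]^m-1)([q]-1)^{-1}$ with $[q]-1$ invertible in $W_{\bbig}(\BZ[q,(q-1)^{-1}])$ since its first ghost component $q-1$ is a unit; the substitution $w\mapsto [q-1]^{-1}w$ (or rather the appropriate rescaling, cf.\ the relation between $G^{!!}$ and $G^{!?}$ and Lemma~\ref{l:[h]}) identifies $G^{!!}_{q\ne 1}$ with $W_{\bbig}^F\times\Spec\BZ[q,(q-1)^{-1}]=\sR\times\Spec\BZ[q,(q-1)^{-1}]$ compatibly with $\Psi_n$, and under these identifications \eqref{e:2 G^! to W_big} becomes the identity. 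Hence $G^!_{q\ne 1}\iso G^{!!}_{q\ne 1}$, and by flatness a closed immersion of flat $\BZ[q]$-schemes that is an isomorphism after inverting $q-1$ and which is surjective on $\BF_p$-fibers (checked via \eqref{e:G^!! p-typically}, since $\Psi_p$ is the Frobenius on both sides) is an isomorphism. The main obstacle I anticipate is bookkeeping the precise normalization: matching the minus sign and the $z\mapsto\frac{z}{z-1}$ substitution in Lemma~\ref{l:2 G^! to W_big}(iii) with the rescaling $[q-1]$-twist relating $G^{!!}$ to $W_{\bbig}^F$, so that the $q\ne 1$ comparison genuinely comes out to the identity rather than merely to \emph{some} isomorphism; once that is pinned down the flatness argument is routine.
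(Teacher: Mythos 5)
Your overall architecture (closed immersion into $W_{\bbig}\times\BA^1$, flatness of the target, isomorphism over a dense locus, conclude by torsion-freeness) is sound and mirrors the paper's proof of Proposition~\ref{p:G in terms of Witt}; but the step where you identify $G^{!!}_{q\ne 1}$ is broken. You claim $[q]-1$ is invertible in $W_{\bbig}(\BZ[q,(q-1)^{-1}])$ ``since its first ghost component $q-1$ is a unit''. For big Witt vectors this criterion fails: the $n$-th ghost component of $[q]-1$ is $q^n-1$, and $q^2-1=(q-1)(q+1)$ is not a unit in $\BZ[q,(q-1)^{-1}]$ (specialize $q\mapsto -1$ over $\BZ[1/2]$; the second ghost component of $[-1]-1$ vanishes, and ghost components are ring homomorphisms, so $[q]-1$ cannot be a unit). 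Hence the rescaling $v\mapsto([q]-1)\cdot v$ does map $W_{\bbig}^F$ into $G^{!!}$ but is \emph{not} an isomorphism over $q\ne1$, and your identification of $G^{!!}_{q\ne1}$ with $\sR\times\Spec\BZ[q,(q-1)^{-1}]$ collapses. This is exactly the difference between $G^{!?}$, whose multiplier $[q-1]^{m-1}=[(q-1)^{m-1}]$ is Teichm\"uller and becomes a unit once $q-1$ does, and $G^{!!}$, whose multiplier $\frac{[q]^m-1}{[q]-1}$ is a sum of Teichm\"ullers. The repair within your strategy is to compare over $\BQ$ rather than over $q\ne1$: via ghost components one checks that $G^{!!}\otimes\BQ\cong\Spec\BQ[q,a_1]$ ($a_1$ the first ghost component, since $a_m=\frac{q^m-1}{q-1}a_1$ is forced and consistent), that \eqref{e:2 G^! to W_big} sends $t\mapsto a_1$, and then flatness of $G^{!!}$ over $\BZ[q]$ (hence $\BZ$-torsion-freeness of its coordinate ring) plus the closed-immersion property kills the ideal.

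The same Teichm\"uller issue infects your flatness argument: the ideal of the $p$-typical model \eqref{e:G^!! p-typically} is \emph{not} generated by $x_n^p+px_{n+1}-\Phi_p(q)^{p^n}x_n$. Since $\Phi_p([q])=1+[q]+\cdots+[q]^{p-1}$ is not Teichm\"uller, multiplication by it is not diagonal on Buium--Joyal components; already the $n=1$ relation is $x_1^p+px_2=\Phi_p(q^p)\,x_1+\frac{\Phi_p(q^p)-\Phi_p(q)^p}{p}\,x_0^p$, with coefficient $\Phi_p(q^p)$ rather than $\Phi_p(q)^p$ and an extra $x_0^p$ term. Flatness is still true, but the clean route is the one the paper takes: present the coordinate ring of the $p$-typical model as the $\delta$-algebra over $\BZ_{(p)}[q]$ with single generator $t=x_0$ and single relation $t^p+p\delta(t)=\Phi_p(q)t$, and exhibit the basis $\prod_i(\delta^i t)^{d_i}$, $0\le d_i<p$, as in \S\ref{sss:B_0 as delta-ring}. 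In fact the paper's proof of Proposition~\ref{p:G^!=G^!!} avoids the dense-open argument altogether: it localizes at each prime $p$ and matches this $\delta$-ring presentation of $G^{!!}\otimes\BZ_{(p)}$ against the identical presentation of $B_0\otimes\BZ_{(p)}$, exactly as in \S\ref{sss:G_Q^!=G_Q^!!}.
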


\begin{proof} 
It suffices to show that \eqref{e:2 G^! to W_big} induces an isomorphism $G^{!}\otimes\BZ_{(p)}\iso G^{!!}\otimes\BZ_{(p)}$ for each prime $p$. The description of
$G^{!!}\otimes\BZ_{(p)}$ from \eqref{e:G^!! p-typically} allows one to prove this quite similarly to \S\ref{sss:G_Q^!=G_Q^!!}.
\end{proof}

\bibliographystyle{alpha}

\end{document}